\title[Entropy and drift for Gibbs measures]{Entropy and drift for Gibbs measures on geometrically finite manifolds}
\author{Ilya Gekhtman and Giulio Tiozzo}
\address{University of Toronto \\
40 St George St \\ 
Toronto ON \\ 
Canada}
\email{ilyagekh@gmail.com}
\address{University of Toronto \\
40 St George St \\ 
Toronto ON \\ 
Canada}
\email{tiozzo@math.utoronto.ca}
\date{}
\theoremstyle{definition}
\theoremstyle{plain}
\newtheorem{definition}{Definition}[section]
\newtheorem{proposition}[definition]{Proposition}
\newtheorem{corollary}[definition]{Corollary}
\newtheorem{theorem}[definition]{Theorem}
\newtheorem{lemma}[definition]{Lemma}
\newtheorem*{thm*}{Theorem}
\newtheorem*{prop*}{Proposition}
\newtheorem*{lem*}{Lemma}
\theoremstyle{remark}
\newtheorem{remark}{Remark}[section]
\newtheorem*{rem*}{Remark}
\DeclareMathOperator{\Diag}{Diag}
\begin{document}

\begin{abstract}
We prove a generalization of the fundamental inequality of Guivarc'h relating entropy, drift and critical exponent
to Gibbs measures on geometrically finite quotients of $CAT(-1)$ metric spaces.
For random walks with finite superexponential moment, we show that the equality is achieved if and only if the Gibbs density is equivalent to 
the hitting measure. As a corollary, if the action is not convex cocompact, any hitting measure is singular to any Gibbs density. 
\end{abstract}

\maketitle

\section{Introduction}

%In analyzing the geometry and dynamics of negatively curved manifolds, two fundamental classes of measures arise naturally. 

Let $M$ be a manifold of negative curvature. Then the 
boundary at infinity $\partial X$ of its universal cover $X = \widetilde{M}$ carries two types of measures (see e.g. \cite{Ledrappier-ICM}):

\begin{itemize}
    \item 
on the one hand, \emph{Gibbs measures} on the unit tangent bundle capture 
the asymptotic distribution of weighted periodic orbits for the geodesic flow. 
These include the Bowen-Margulis measure, the Liouville measure, and the harmonic measure associated to Brownian motion. 
To each of these can be associated a pair of measures on the boundary called Gibbs densities. For the Bowen-Margulis measure these conditionals are classical Patterson-Sullivan measures and for the Liouville measure they are in the Lebesgue measure class (see \cite{Sin}, \cite{BoR}, \cite{Mar}, \cite{Pat}, \cite{Sullivan}, \cite{PaPo}, \cite{Ham}, \cite{PPS}, \cite{BPP}, among others).

%Originating from the work of Margulis on compact manifolds \cite{}, they have been generalized by Patterson-Sullivan \cite{} for constant negative curvature. In the variable negative curvature case,  there is not a unique ``natural" geometric measure,  since the measure of maximal entropy (Bowen-Margulis measure) and the Liouville measure need not coincide. 
%However, thanks to work of Sinai, Ruelle, and Bowen, all these measures belong to the family of Gibbs measures. 

%More recently, existence and uniqueness of Gibbs measures has been established for the non-compact, geometrically finite case by Paulin-Pollicott-Shapira \cite{PPS}, and by \cite{BPP} in the more general context of $CAT(-1)$ metric spaces. 

\item
on the other hand, one can run a random walk 
on the fundamental group $\Gamma$ of $M$, which acts by isometries on $X$. This determines a measure on the boundary $\partial X$ which is called the \emph{harmonic measure} or \emph{hitting measure}. 

\end{itemize}

In this paper, we will compare these two classes of measures when $M$ is a geometrically finite manifold of pinched negative curvature (and more generally, a geometrically finite quotient of a $CAT(-1)$ space). 

%A central question is whether the harmonic measure for the random walk can be in the same measure class as the Lebesgue measure. In this paper, we will look at a generalization of this question: when are hitting measures and Gibbs densities  for geometrically finite groups $\Gamma$ of isometries of a negatively curved manifold (or more generally a $CAT(-1)$ space) in the same measure class. 

\medskip
Several numerical invariants have been introduced to capture the global dynamical and geometric properties of a random walk on a group. Namely, let $\mu$ be a probability measure on $\Gamma$, and define a random walk 
$$\omega_n := g_1 \dots g_n$$
where the $(g_i)$ are i.i.d. with distribution $\mu$. Let $o \in X$ be a basepoint. Then one defines: 

\begin{enumerate}
    \item the \emph{entropy} $h_\mu$, introduced by Avez \cite{Avez}, 
    $$h_\mu := \lim_{n \to \infty} \frac{1}{n} \sum_g - \mu^n(g) \log \mu^n(g);$$ 
    \item the \emph{drift} $\ell_\mu$ of the random walk 
    $$\ell_\mu := \lim_{n \to \infty} \frac{d(o, \omega_n o)}{n}$$
    where the limit exists a.s. and is independent of $\omega_n$ and $o$; 
    \item the \emph{critical exponent} $v$ of the action
    $$v := \limsup_{R \to \infty} \frac{1}{R} \log \# \{ g \in G \ : \ d(o, go) \leq R\}.$$
\end{enumerate}
These quantities are related via the inequality 
\begin{equation}\label{E:hlv}
h_\mu \leq \ell_\mu v
\end{equation}
due to Guivarc'h \cite{Guivarch} (see also Vershik \cite{Vershik}, who calls it the \emph{fundamental inequality}). Let us remark that $\frac{h}{l}$ is the Hausdorff dimension of the harmonic measure \cite{Tanaka}. On the other hand, to a random walk on $\Gamma$ we can associate a \emph{Green metric} $d_{G}$ on $\Gamma$,
defined in \cite{Blachere-Brofferio}, with $d_{G}(g,h)$ defined as the negative logarithm of the probability that a random path starting at $g$ ever hits $h$.
A measure $\mu$ is \emph{generating} $\Gamma$ if the semigroup generated by the support of $\mu$ equals $\Gamma$.
%In this paper, we generalize these dynamical invariants to account for the potential. 

In order to define a Gibbs measure, one is given a \emph{potential}, i.e. a H\"older continuous, $\Gamma$-invariant function $F : T^1 X \to \mathbb{R}$.
Then one defines the \emph{topological pressure} as
$$v_F := \limsup_{n \to \infty} \frac{1}{n} \log \sum_{n - 1 \leq d(o, go) \leq n} e^{\int_o^{go} F}$$ 
and a Gibbs measure $m_F$ is a probability measure on $T^1 M$ whose pressure equals the topological pressure (if one exists). We may lift $m_F$ to a Radon measure $\widetilde{m}_F$ on $T^{1}X \cong (\partial X \times \partial X \setminus \Delta) \times \mathbb{R}$, and this defines a pair of measures on $\partial X$, known as \emph{Gibbs densities}. 

In order to account for the potential, we introduce a new notion of drift. We define the following distance (which we call \emph{F-ake distance}, as it does not satisfy any of the usual properties of a distance)
$$d_{F}(x,y) :=\int^{y}_{x}F \ dt,$$
where the integral is taken along the geodesic from $x$ to $y$. 
%The \emph{pressure} is 
%$$v_F := \lim_{n \to \infty} \frac{1}{n} \log \sum_{d(o, go) \leq n} e^{d_F(o, go)}$$
Then, we define the \emph{F-ake drift} as 
$$\ell_{F, \mu} := \lim_{n \to \infty} \frac{d_F(o, \omega_n o)}{n}$$
where, under suitable hypotheses, the limit exists almost surely and is constant. 

We show that hitting measures and Gibbs densities are in the same measure class if and only if we have a relation between the dynamical quantities defined above. Moreover, this holds if and only if the Green metric, the space metric, and the F-ake metric are related as follows. 

\begin{theorem}\label{intro:maintheoremGibbs}
Let $M$ be a geometrically finite manifold of pinched negative curvature and let $\Gamma := \pi_1(M)$. Let $\kappa_F$ be a Gibbs density for a H\"older potential $F$, %which satisfies the H\"older control (HC) condition
and let $\nu_\mu$ be the hitting measure for a random walk driven by a measure $\mu$ generating $\Gamma$ with finite superexponential moment.
Then we have the inequality:
%\begin{equation} \label{E:fund-F}
$$h_\mu  \leq \ell_\mu  v_F - \ell_{F, \mu}.$$ 
%\end{equation}
Moreover, the following conditions are equivalent.
\begin{enumerate}
    \item The equality
    $$h_\mu =\ell_\mu v_F-\ell_{F, \mu}$$ holds.
    \item The measures $\nu_{\mu}$ and $\kappa_{F}$ are in the same measure class.
   % \item The measures $\nu_{\mu}$ and $\kappa_{F}$ are in the same measure class with Radon-Nikodym derivatives bounded from above and below.
    \item For any basepoint $o\in X$, 
    there exists $C\geq 0$ such that  $$|d_G(e,g)-v_Fd(o,go)+d_{F}(o,go)|\leq C$$
    for every $g \in \Gamma$.

\end{enumerate}
\end{theorem}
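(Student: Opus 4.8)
The plan is to establish the inequality first and then prove the cycle of implications (1) $\Rightarrow$ (3) $\Rightarrow$ (2) $\Rightarrow$ (1), with (3) serving as the geometric bridge. For the inequality $h_\mu \leq \ell_\mu v_F - \ell_{F,\mu}$, I would begin from the well-known entropy formula $h_\mu = -\lim_n \frac1n \log \frac{d\omega_n^{-1}\nu_\mu}{d\nu_\mu}(\xi)$ for $\nu_\mu$-a.e.\ $\xi$ (the Furstenberg-type formula, valid under a finite first logarithmic moment, hence under the superexponential moment assumption), together with a shadow-lemma / Gibbs-property estimate relating $\kappa_F$ to the potential: on shadows $\mathcal{O}_R(o, go)$ one has $\kappa_F(\mathcal{O}_R(o,go)) \asymp e^{-v_F d(o,go) + d_F(o, go)}$, up to multiplicative constants depending on $R$. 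Feeding the Radon–Nikodym cocycle of $\nu_\mu$ (which by Blachère–Brofferio–Gouëzel type arguments is comparable to $e^{d_G(e,g)}$-weights along the trajectory converging to $\xi$) into the entropy formula, and comparing $\nu_\mu$ to $\kappa_F$ via the general measure-theoretic inequality $-\log\frac{d\nu_\mu}{d\kappa_F}$-type bounds, one gets the stated inequality after taking $\frac1n$-limits, using that $\frac{d(o,\omega_n o)}{n} \to \ell_\mu$ and $\frac{d_F(o,\omega_n o)}{n}\to \ell_{F,\mu}$ almost surely. This is essentially the Guivarc'h argument adapted to the potential-weighted setting.

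For (1) $\Rightarrow$ (3): the equality case of the inequality just proved should be rigid. Concretely, equality in the entropy estimate forces the Radon–Nikodym derivative $\frac{d\nu_\mu}{d\kappa_F}$ to have, along typical trajectories, logarithmic growth matching the difference $d_G(e,\omega_n) - v_F d(o,\omega_n o) + d_F(o,\omega_n o)$ up to $o(n)$; promoting this from ``$o(n)$ along typical trajectories'' to a uniform bound ``$\leq C$ for all $g \in \Gamma$'' is where the geometric finiteness and the superexponential moment must be used — the latter controls the trajectory's deviation (via a deviation inequality / shadow argument forcing trajectories to track geodesics with exponentially small error), and geometric finiteness (via the structure of the thick part and the parabolic subgroups) ensures the comparison extends off the typical trajectories to all of $\Gamma$. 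I expect this to be the main obstacle: upgrading an almost-sure asymptotic equality to a deterministic two-sided bound over the whole group, controlling the contribution of excursions into cusps.

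For (3) $\Rightarrow$ (2): if $d_G(e,g)$ is within $C$ of $v_F d(o,go) - d_F(o,go)$, then the shadow estimates for $\nu_\mu$ (Green-metric shadow lemma: $\nu_\mu(\mathcal{O}_R(o,go)) \asymp e^{-d_G(e,g)}$) and for $\kappa_F$ (the Gibbs shadow lemma above) show that $\nu_\mu$ and $\kappa_F$ assign comparable mass to every shadow, and a Vitali-type covering argument on $\partial X$ using the Besicovitch property of shadows upgrades this to mutual absolute continuity with bounded Radon–Nikodym derivative. Finally (2) $\Rightarrow$ (1): if $\nu_\mu$ and $\kappa_F$ are equivalent, then $\nu_\mu$ is ergodic for the $\Gamma$-action and quasi-invariant with a cocycle cohomologous to the Gibbs cocycle; the entropy formula then computes $h_\mu$ exactly as the integral of the Gibbs cocycle against $\nu_\mu$, which by the ergodic theorem along the random walk equals $\ell_\mu v_F - \ell_{F,\mu}$, giving equality. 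Throughout, the technical backbone is the pair of shadow lemmas and the identification of the Radon–Nikodym cocycle of $\nu_\mu$ with the Green distance, both of which I would either cite from earlier in the paper or establish as preliminary lemmas.
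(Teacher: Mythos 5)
Your overall architecture (prove the inequality, then close a cycle of implications through the metric comparison) is reasonable, and your treatments of ``metric comparison $\Rightarrow$ equivalence'' and ``equivalence $\Rightarrow$ equality'' are close in spirit to the paper. But the crux of the theorem is the direction from the equality $h_\mu=\ell_\mu v_F-\ell_{F,\mu}$ to the other two conditions, and there your plan has a genuine gap that you yourself flag without resolving: you propose to upgrade an almost-sure $o(n)$ asymptotic along typical trajectories to the uniform bound $|d_G(e,g)-v_Fd(o,go)+d_F(o,go)|\leq C$ over \emph{all} of $\Gamma$, and no amount of deviation inequalities or cusp analysis will do this directly, since the random walk simply does not see most group elements. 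The paper's route is entirely different: it proves the contrapositive ``non-equivalence $\Rightarrow$ strict inequality.'' By ergodicity, non-equivalent $\nu_\mu$ and $\kappa_F$ are mutually singular; a Lebesgue-differentiation theorem for shadows (built from a Vitali relation in the sense of Federer, since $\nu_\mu$ is not known to be doubling) then forces the shadow ratios $\phi_n=\kappa_F(Sh_r(o,\omega_no))/\nu_\mu(Sh_r(o,\omega_no))$ to $0$ in probability; combining this with a uniform bound on the Ces\`aro averages of $E(\phi_n)$ (which needs the counting estimate $\sum_{n-1\leq d(o,go)\leq n}e^{d_F(o,go)-v_Fd(o,go)}\leq C$ and the superexponential moment), Jensen's inequality, and subadditivity of $E(\log\phi_n)+C_2$ yields $h_\mu-\ell_\mu v_F+\ell_{F,\mu}\leq -1/p<0$. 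None of this machinery appears in your proposal.

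A second missing ingredient is how measure equivalence yields the \emph{uniform} constant in condition (3): mutual absolute continuity only gives a measurable, a priori unbounded Radon--Nikodym derivative. The paper gets boundedness by realizing both $\nu_\mu\otimes\nu_{\widecheck\mu}$ and $\kappa_F\otimes\kappa_{\widecheck F}$ (after multiplying by explicit kernels: the Naim kernel and $e^{2v_F\rho_o}$-type densities) as $\Gamma$-invariant ergodic Radon measures on $\partial X\times\partial X\setminus\Delta$; two such measures are either singular or proportional, and proportionality forces the one-variable derivatives to be essentially bounded. Only then do the two shadow lemmas convert bounded derivatives into the uniform metric comparison. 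Two smaller points: your derivation of the inequality leans on ``$-\log\frac{d\nu_\mu}{d\kappa_F}$-type bounds,'' which is meaningless when the measures are singular (the paper instead gives a direct Guivarc'h-style counting argument with Jensen's inequality over annuli $S_k$, using the definition of the pressure $v_F$); and the existence of $\ell_{F,\mu}$ is itself a theorem, since $d_F$ is not subadditive --- the paper proves it by passing to the additive Gibbs cocycle $\beta^F_{\omega_\infty}$ and a Borel--Cantelli comparison.
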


In fact, we do not need $X$ to be a manifold, as the result still holds when $X$ is a proper $CAT(-1)$ space (even though the H\"older condition becomes slightly more technical). See Theorem \ref{maintheoremGibbs}.

In the case when $\Gamma$ is not convex cocompact, it is not too hard to show that (3) cannot hold, 
yielding the following. 

\begin{corollary} \label{Gibbssingularity}
If the action is not convex cocompact, then
any Gibbs density $\kappa_F$ is mutually singular to any hitting measure $\nu_\mu$.
\end{corollary}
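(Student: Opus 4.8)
The strategy is to derive a contradiction from condition (3) of Theorem \ref{intro:maintheoremGibbs} under the assumption that the action of $\Gamma$ on $X$ is geometrically finite but not convex cocompact. By Theorem \ref{intro:maintheoremGibbs}, if some Gibbs density $\kappa_F$ and some hitting measure $\nu_\mu$ were \emph{not} mutually singular, then since (1)--(3) are equivalent and the hitting measure is $\Gamma$-ergodic (so two hitting/Gibbs densities that are not mutually singular are in fact in the same measure class), condition (3) would hold: there is a constant $C \geq 0$ with
\[
|d_G(e,g) - v_F\, d(o,go) + d_F(o,go)| \leq C \qquad \text{for all } g \in \Gamma.
\]
So it suffices to show that (3) fails whenever $\Gamma$ is not convex cocompact.

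The key point is a comparison of growth rates along a parabolic subgroup. Since $\Gamma$ is geometrically finite but not convex cocompact, it contains a parabolic subgroup $P$ fixing some point $\xi \in \partial X$, and $P$ contains an infinite-order parabolic element $p$. The plan is to test (3) on the sequence $g = p^n$, $n \to \infty$. First I would recall the well-known coarse asymptotics for parabolic elements: in pinched negative curvature one has $d(o, p^n o) = 2\log n + O(1)$ (up to the precise exponent depending on the cusp; the essential feature is that $d(o,p^n o)$ grows \emph{logarithmically}, sublinearly in $n$). Correspondingly $\int_o^{p^n o} F = d_F(o, p^n o)$ is bounded by a constant times $d(o,p^n o)$, hence also grows at most logarithmically. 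On the other hand, the Green distance $d_G(e, p^n)$ grows \emph{linearly} in $n$: this is because $d_G(e,g) \geq -\log \mu^{*k}(g)$-type estimates together with the fact that the Green metric is quasi-isometric to the word metric (Blachère--Brofferio--Gouëzel--Haïssinsky) when $\mu$ has finite first moment — in particular $d_G(e, p^n) \asymp |p^n|_\Gamma \asymp n$, since a parabolic element has linear word-length growth in any finitely generated group in which it has infinite order (or, more robustly, $d_G(e,p^n) \to \infty$ at least linearly because the probability of ever hitting $p^n$ decays exponentially in $n$, as $p^n$ escapes every ball at linear word-distance). Thus in (3) the term $d_G(e, p^n)$ grows like $n$ while $v_F d(o, p^n o) - d_F(o, p^n o)$ grows at most like $\log n$; letting $n \to \infty$ contradicts the boundedness asserted in (3).

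The main obstacle, and the step requiring the most care, is establishing the lower bound $d_G(e, p^n) \gtrsim n$ rigorously. The cleanest route is to invoke the comparison between the Green metric and the word metric of $\Gamma$ for measures of finite first moment (which holds here since $\mu$ has finite superexponential moment), so that $d_G(e,p^n) \asymp |p^n|_\Gamma$, and then to note that $|p^n|_\Gamma \geq c\, n$ for some $c > 0$ because $p$ has infinite order; alternatively one can argue directly that a random path from $e$ reaching $p^n$ must, by the triangle inequality in $X$, travel distance at least $d(o, p^n o)$ in $X$, but this only gives a logarithmic lower bound and is \emph{not} enough — so the word-metric comparison (or an entropy/Gromov-product argument on $\partial X$ at the parabolic fixed point) is genuinely needed. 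Once the linear lower bound on $d_G(e,p^n)$ is in hand, the contradiction is immediate, and this completes the proof.
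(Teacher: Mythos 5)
Your overall strategy is the same as the paper's: reduce to showing that condition (3) of Theorem \ref{intro:maintheoremGibbs} fails in the presence of a parabolic subgroup, by exploiting the fact that parabolic orbits have very small displacement in $X$ compared with their word/Green length. However, the crucial quantitative step is not correctly justified. You assert that $\Vert p^n\Vert_\Gamma \gtrsim n$ ``because $p$ has infinite order,'' but an infinite-order element of a finitely generated group can be distorted: in the setting at hand, maximal parabolic subgroups of a geometrically finite manifold group are virtually nilpotent, and for instance a central element $z$ of an integer Heisenberg cusp group satisfies $\Vert z^n\Vert \asymp \sqrt{n}$, not $\asymp n$ (and quasiconvexity of $P$ in $\Gamma$ then forces the same sublinear growth in $\Gamma$). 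So both versions of your key lower bound ($d_G(e,p^n)\gtrsim n$ and $\Vert p^n\Vert_\Gamma\gtrsim n$) rest on a false general principle. The paper avoids the issue by proving a statement uniform over all $g\in P$, namely $\Vert g\Vert \geq D c^{d(o,go)}$ (Proposition \ref{exponentialdistortionparabolic}): a word-geodesic in $P$ from $o$ to $go$ gives a path in $X$ that stays outside a deep horoball, hence outside a large ball around the midpoint of $[o,go]$, so exponential divergence of geodesics forces its length to be exponential in $d(o,go)$; quasiconvexity of maximal parabolics ($\Vert\cdot\Vert_P\asymp\Vert\cdot\Vert_\Gamma$) and the Green--word comparison then give $d_G(e,g)\gtrsim c^{d(o,go)}$. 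The correct comparison is thus ``exponential in $d(o,go)$'' versus ``subexponential in $d(o,go)$,'' not ``linear in $n$'' versus ``logarithmic in $n$,'' and it goes through no matter how distorted $p$ is.

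A secondary inaccuracy: you bound $d_F(o,p^no)$ by a constant times $d(o,p^no)$, which presumes $F$ is bounded. The (HC) hypothesis only gives subexponential growth of $F$, so the available bound is $|d_F(o,go)|\leq (c_1+\max_{\pi^{-1}(B(o,d(o,go)))}|F|)(d(o,go)+1)\lesssim (d(o,go)+1)\,a^{d(o,go)}$ for every $a>1$ (Lemma \ref{(HC)Corollary} plus Definition \ref{HC}(b)). Taking $1<a<c$ this is still dominated by the exponential lower bound on $d_G(e,g)$, so the conclusion survives, but the linear bound you state is not justified by the hypotheses. Finally, your opening reduction (``not mutually singular implies equivalent'') needs the dichotomy for $\kappa_F$ as well as for $\nu_\mu$: either $(\Gamma,F)$ is of divergence type and $\kappa_F$ is ergodic and conical, or $\kappa_F$ charges no conical points and is then automatically singular to $\nu_\mu$; this is how the paper argues in Proposition \ref{lemma4.16Haissinsky}.
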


In particular, if the derivatives of the curvature are uniformly bounded, then the \emph{geometric potential} $F := - \frac{d}{dt}_{| t = 0} \log J^{su}$ 
(where $J^{su}$ is the Jacobian of the geodesic flow in the unstable direction) is H\"older continuous and the Gibbs measure $m_F$ is the Liouville measure, hence we obtain that no hitting measure is in the same class as the Lebesgue measure. 

Theorem \ref{intro:maintheoremGibbs} addresses a question of Paulin-Pollicott-Shapira (\cite{PPS}, page 9). 
Our results Theorem \ref{intro:maintheoremGibbs} and Corollary \ref{Gibbssingularity} are new even in the case where $F=0$, i.e. when $\kappa_F$ is the quasiconformal or Patterson-Sullivan measure. 
In this case, we need not assume $X$ is $CAT(-1)$: it need only be a proper geodesic Gromov hyperbolic space. We thus give the complete proof in this context in Theorem \ref{maintheorem}.

\medskip
When in addition $\Gamma \curvearrowright X$ is a convex cocompact action of a word hyperbolic group and the measure $\mu$ is symmetric, the latter result is proved by Blach\`ere-Ha\"issinsky-Mathieu in \cite{BlachereHassinskyMathieu2}. 
The authors there also prove that if $\Gamma \curvearrowright X$ is an action of a hyperbolic group which is not convex cocompact then the hitting and Patterson-Sullivan measures are singular.
In particular this is true for 
finite covolume Fuchsian groups with cusps, a fact also obtained by Guivarc'h-LeJan  \cite{Guivarch-winding}, Deroin-Kleptsyn-Navas
\cite{Deroin-Kleptsyn-Navas}, and Gadre-Maher-Tiozzo \cite{Gadre-Maher-Tiozzo}. Note that a lattice $\Gamma < \textup{Isom }\mathbb{H}^2$  is a hyperbolic group, while this is not true for lattices in $\textup{Isom }\mathbb{H}^n$, $n \geq 3$. 

For symmetric random walks on geometrically finite but not convex cocompact isometry groups of Gromov hyperbolic spaces, the singularity of harmonic and Patterson-Sullivan measures was obtained by Gekhtman-Gerasimov-Potyagailo-Yang \cite{GGPY}. Note these groups need not be hyperbolic.
As a corollary (Corollary \ref{singularcorollary}) of Theorem \ref{maintheorem}, we obtain the corresponding result for asymmetric random walks.
%The present paper extends this to lattices in $\textup{Isom }\mathbb{H}^n$, $n \geq 3$, which need not be hyperbolic groups. 

Note finally that for lattices in constant negative curvature the Patterson-Sullivan measure lies in the Lebesgue measure class, while this need not be the case in variable curvature.

\medskip

In the opposite direction, Connell-Muchnik (\cite{CM-Conformal}, \cite{CM-Gibbs}) show that for cocompact 
isometry groups of $CAT(-1)$ spaces any Gibbs state on the boundary of a $CAT(-1)$ space is a hitting measure for some random walk with finite first moment.
%proper Gromov hyperbolic spaces, any quasiconformal measure giving full weight to conical limit points is the hitting measure of some random walk with finite first moment, though no moment conditions are obtained. Similarly they show in \cite{CM-Gibbs} that in the cocompact case any Gibbs state on the boundary of a $CAT(-1)$ space is a hitting measure for some random walk.

If one replaces the random walk by Brownian motion, Ledrappier \cite{Ledrappier-Israel} proved that harmonic measures coincide with the Patterson-Sullivan measure if and only if an analogue of the fundamental inequality is satisfied; moreover, in dimension $2$ these two measures coincide if and only if the curvature is constant. The corresponding question in higher dimensions is a well-known open problem.

\medskip
We note that the hitting measure $\nu$ is a conditional measure
for a geodesic flow invariant measure on $T^{1}X$, called the \emph{harmonic invariant measure} associated to the random walk (in analogy to the harmonic measure associated to Brownian motion). In turn, this measure induces a finite flow invariant measure on $T^1 M$. The construction is due to Kaimanovich \cite{Kaierg} for convex-cocompact manifolds and to Gekhtman-Gerasimov-Potyagailo-Yang \cite{GGPY} for geometrically finite ones. Moreover, axes of loxodromic elements associated to typical random walk trajectories equidistribute with respect to this measure (see \cite{Gekhtman}).

Our results give conditions for equivalence of a Gibbs measure and a harmonic invariant measure, and in particular they imply: 

\begin{corollary}
If $\Gamma$ is geometrically finite but not convex cocompact, then any harmonic invariant measure is singular with respect to any Gibbs measure on $T^1 M$. 
\end{corollary}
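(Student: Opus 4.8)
The plan is to deduce this from Corollary \ref{Gibbssingularity}, using the product structure of the two families of measures on $T^1X$ together with their ergodicity under the geodesic flow. Recall that the harmonic invariant measure $m_\mu$ associated to $\mu$ arises from the Kaimanovich--GGPY construction (\cite{Kaierg}, \cite{GGPY}) as a geodesic-flow-invariant Radon measure $\widetilde m_\mu$ on $T^1X\cong(\partial X\times\partial X\setminus\Delta)\times\mathbb{R}$ whose disintegration over the $\mathbb{R}$-factor is, up to a positive continuous density, the product $\nu_{\check\mu}\otimes\nu_\mu$ of the hitting measures of the reflected and the original walk, times Lebesgue measure on $\mathbb{R}$; it descends to a finite measure on $T^1M$. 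Likewise, a Gibbs measure $m_F$ lifts to a Radon measure $\widetilde m_F$ on $T^1X$ with the analogous product structure, the boundary factors being the Gibbs densities $\kappa_{\iota F}$ and $\kappa_F$ for the time-reversed potential $\iota F$ and for $F$. In particular, the forward boundary conditional of $m_\mu$ lies in the measure class of $\nu_\mu$, and that of $m_F$ in the class of $\kappa_F$.

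Next, I would invoke that both measures are ergodic for the geodesic flow on $T^1M$: for $m_\mu$ this follows from the (double) ergodicity of $\Gamma$ acting on $(\partial X\times\partial X,\nu_{\check\mu}\otimes\nu_\mu)$, and for the Gibbs measure $m_F$ it is classical (see \cite{PPS}). Two ergodic, flow-invariant finite measures are either mutually singular or proportional: if $m_\mu$ and $m_F$ are not mutually singular, the absolutely continuous part of $m_\mu$ with respect to $m_F$ is a nonzero flow-invariant measure $\ll m_\mu$, hence by ergodicity a multiple of $m_\mu$, so $m_\mu\ll m_F$; the Radon--Nikodym derivative is then flow-invariant, hence $m_F$-a.e.\ constant, so $m_\mu$ is proportional to $m_F$.

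Now suppose, for contradiction, that some harmonic invariant measure $m_\mu$ is not singular with respect to some Gibbs measure $m_F$. By the previous step, after normalization $m_\mu=m_F$ on $T^1M$, hence their lifts $\widetilde m_\mu$ and $\widetilde m_F$ on $T^1X$ are proportional; comparing the product disintegrations over $(\partial X\times\partial X\setminus\Delta)\times\mathbb{R}$ then forces the forward boundary conditionals to agree up to density, so $\nu_\mu$ and $\kappa_F$ lie in the same measure class on $\partial X$. Since $\Gamma$ is geometrically finite but not convex cocompact, this contradicts Corollary \ref{Gibbssingularity}. Therefore $m_\mu\perp m_F$.

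The only genuinely delicate point is the bookkeeping in the first step: one must verify that proportionality of $m_\mu$ and $m_F$ on $T^1M$ really forces the two-dimensional boundary conditionals of the lifts to coincide up to a positive density, and hence the one-dimensional forward conditionals to be in the same class; this is precisely where the explicit product descriptions of $\widetilde m_\mu$ and $\widetilde m_F$ are needed, while everything else is soft. Alternatively, one can bypass ergodicity entirely and argue with Fubini's theorem applied directly to the product structure of the two lifted measures, reducing non-singularity on $T^1X$ to non-singularity of $\nu_\mu$ and $\kappa_F$ on $\partial X$, and then again contradicting Corollary \ref{Gibbssingularity}.
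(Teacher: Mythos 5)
Your proposal is correct and matches the argument the paper leaves implicit: the corollary is deduced from Corollary \ref{Gibbssingularity} via the product structure of the lifted measures on $T^1X\cong(\partial X\times\partial X\setminus\Delta)\times\mathbb{R}$, whose boundary factors are the hitting measures and the Gibbs densities respectively. The ergodicity detour is unnecessary --- the direct Fubini-type argument you sketch at the end (singularity of one boundary factor forces singularity of the products, hence of the measures on $T^1M$) is the cleanest route and is exactly what the paper intends.
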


The Guivarc'h inequality has also been studied for word metrics. In this context, Gouezel, Matheus and Maucourant proved that the inequality (\ref{E:hlv}) is strict for any superexponential moment generating random walk on a word-hyperbolic group which is not virtually free. Dussaule-Gekhtman \cite{DG} extended this result to large classes of relatively hyperbolic groups, including finite covolume isometry groups of pinched negatively curved manifolds and geometrically finite Kleinian groups.

\subsection*{Acknowledgements}
We thank the Fields Institute for its support during the semester on ``Teichm\"uller theory and its connections to geometry, topology and dynamics". G.T. is partially supported by NSERC and the Alfred P. Sloan Foundation. 

\section{Background}

\subsection{Notation}
For two quantities $f$ and $g$ we write $f\asymp_{C}g$ if $\frac{1}{C}f\leq g\leq C f$ and $f\asymp_{+,C}g$ if $f-C\leq g\leq f+C$.
We write $f\asymp g$ if $f\asymp_{C} g$ for some constant $C$ and similarly for $f\asymp_{+} g$.
Also, whenever $f\leq C g$ (resp. $f\leq C+g$) for some constant $C$, we will use the notation $f\lesssim g$ (resp. $f\lesssim_{+}g$).

\subsection{Geometrically finite actions}

Let $(X,d)$ be a proper geodesic Gromov hyperbolic space and $\Gamma< \textup{Isom}(X,d)$ a properly discontinuous group of isometries.

The set  $\Lambda$ of accumulation points in the Gromov boundary $\partial X$ of any orbit $\Gamma x\ (x\in X)$ is called the  \emph{limit set} of the action $\Gamma \curvearrowright X$.
A point $\zeta \in \Lambda$ is called \emph{conical} if for every geodesic ray $\gamma$ converging to $\zeta$ and every $x\in X$ there is some $D>0$ such that  $\gamma$ has infinite intersection with the $D$-neighborhood of the orbit $\Gamma o$.
A point $\zeta \in \Lambda$ is called \emph{bounded parabolic} if its stabilizer $Stab(\zeta)$ in $\Gamma$ is infinite and acts cocompactly and properly discontinuously on $\Lambda \setminus \{x\}$.
The action  $\Gamma \curvearrowright X$ is said to be \emph{geometrically finite} if every point of $\Lambda$ is either conical or bounded parabolic.  

Let $\Gamma \curvearrowright X$ be a non-elementary geometrically finite action. 
Let $v=v_{\Gamma,X}$ be the \emph{critical exponent} of $\Gamma$ with respect to the action:
$$v :=\inf \left\{s:\sum_{g\in  G}e^{-s d(o, go)}<\infty \right\}=\liminf_{R\to \infty}\frac{1}{R}\log |\Gamma o \cap B_{R}(o)|.$$
We assume $v<\infty$.

\subsection{Busemann functions}

For $x,y,z\in X$ let us define the 
\emph{Busemann function} as
$$\beta_{z}(x,y):=d_{X}(x,z)-d_{X}({y},z)$$
and its extension to the boundary as 
$$\beta_{\xi}(x,y):=\liminf_{z\in X, z\to \xi}\beta_{z}(x,y)$$
where $\xi \in \partial X$.
%\marginpar{Is the Busemann cocycle a cocycle on the nose, or is it up to bounded additive error? Up to a bounded error in general.}
Moreover, the \emph{Gromov product} of $x, y$ based at $z$ is
$$\rho_{z}(x,y):=\frac{ d_{X}(z,x)+d_{X}(z,y)-d_{X}(y,x)}{2}$$
and for $\xi, \zeta \in \partial X$ we define it as
$$\rho_{z}(\xi, \zeta):=\liminf_{\stackrel{x,y\in X}{x\to \xi, y\to \zeta}}\rho_{z}(x,y).$$

\subsection{Quasiconformal measures}

Fix a basepoint $o\in X$.
A probability measure $\kappa$ on the limit set $\Lambda \subset \partial X$ is called \emph{quasiconformal} of dimension $s$ for $\Gamma \curvearrowright X$ if
for any $g \in G$ and a.e. $\zeta \in \Lambda$  
 $$\frac{dg\kappa}{d\kappa}(\zeta)\asymp e^{s\beta_{\zeta}(o,go)},
 $$
where the implicit constant depends on the basepoint $o$ but not on $g$ and $\zeta$.
 
If the growth rate is $v<\infty$ there necessarily exists a quasiconformal measure of dimension $v$ \cite[Theorem 5.4]{Coornaert}. Moreover, quasiconformal measures of dimension $s<v$ do not exist \cite[Corollary 6.6]{Coornaert}, while quasiconformal measures of dimension $s>v$ give zero weight to conical limit points \cite[Proposition 2.12]{MYJ}, and hence are atomic.  If $\Gamma$ is of divergence type, a $v$-dimensional quasiconformal measure is unique up to bounded density, ergodic, and gives full weight to conical limit points \cite[Corollary 3.14]{MYJ}. 
Otherwise, any quasiconformal measure  gives zero weight to conical limit points \cite[Proposition 2.12]{MYJ}.  

We call a quasiconformal measure of dimension $v$ a \emph{Patterson-Sullivan measure}.

\subsection{Random walks and the Green metric}

Let $\mu$ be a probability measure on $\Gamma$. 
Assume that the support of $\mu$ generates $\Gamma$ as a semigroup.
Assume furthermore that $\mu$ has \emph{finite superexponential moment} with respect to some (equivalently every) word metric $\Vert .\Vert $ on $\Gamma$: that is,
$$\sum_{g\in \Gamma}c^{\Vert g\Vert } \mu(g)<\infty$$ for all $c>1$.

The Green function $G: \Gamma \times \Gamma \to \mathbb{R}$ associated to $(\Gamma,\mu)$ is defined to be the total weight $G(x,y)$ of all paths between $x$ and $y$. 
Letting $d_G(x, y):= - \log G(x,y)$ we obtain a (possibly asymmetric) metric on $\Gamma$, called the \emph{Green metric}.

Let $P$ be the measure on sample paths induced by $\mu$.
For $P$-almost every $\omega \in \Gamma^\mathbb{N}$ the quantities 
$$\ell_\mu := \lim_{n\to\infty} \frac{d(o, \omega_{n}o)}{n}$$ and 
$$h_\mu :=\lim \frac{-\log \mu^{*n}(\omega_n)}{n}$$ are defined and are independent of $\omega$.
They are called respectively the \emph{drift} and \emph{asymptotic entropy} of the random walk.

%The quantities $\rho^{X}$ and $\beta^{X}$ are called the Gromov product and the Busemann cocycle for $(X,d_{X})$ respectively.

%A \emph{visual metric} $\rho$ on the Gromov boundary $\partial X$ with parameter $a > 0$ and basepoint $o \in X$  is any metric   satisfying $$ k_1 a^{-d_X(o,l_{\xi_1,\xi_2}(\mathbb{R}))} \le \rho(\xi_1,\xi_2) \le k_2 a^{-d_X(o,l_{\xi_1,\xi_2}(\mathbb{R}))} $$ for some constants $ k_1, k_2 > 0$ and every pair of points $\xi_1,\xi_2 \in \partial X$, where $l_{\xi_1,\xi_2} : \mathbb{R} \to X$ is any geodesic line with end points $l_	{\xi_1,\xi_2}(-\infty) = \xi_1$ and $l_{\xi_1,\xi_2}(\infty) = \xi_2$. A visual metric $\rho_a$ on $\partial X$ exists for any value of the parameter  $a$ within the range $a'> a>1$, where $a' > 1$ is a constant depending only on the4 hyperbolicity parameter of $X$.

By \cite[Theorem 1.3]{GGPY}, conical limit points are in one-to-one correspondence with a subset of the Martin boundary, which is the horofunction boundary of $d_{G}$.
This means if $(g_n)$ is a sequence in $\Gamma$ converging to a conical limit point $\xi\in \partial X$, then
for every $g$, $\frac{G(g,g_n)}{G(e,g_n)}$ converges to some limit $K_{\xi}(g)$.
In particular, if $g,g'$ are fixed, then $\frac{G(g,g_n)}{G(g',g_n)}$ 
converges to $\frac{K_{\xi}(g)}{K_{\xi}(g')}$.
Define then
$$\beta^G_{\xi}(g,g') := d_G(g, g_n) - d_G(g', g_n) = -\log  \frac{K_{\xi}(g)}{K_{\xi}(g')}$$
which can be considered as Busemann functions for the Green metric.

Let $\nu$ be the unique $\mu$-stationary probability on $\partial X$.
The measure $\nu$ is necessarily ergodic, has no atoms, and is supported on conical limit points.
It satisfies a conformal-type property with respect to the Green metric:
\begin{equation} 
\label{E:Green-qc}
\frac{dg\nu}{d\nu}(\xi)=K_{\xi}(g)=e^{-\beta_\xi^{G}(g,e)}
\end{equation}
for any $g\in G$ and $\nu_{g}$-almost every conical limit point $\xi$,
see e.g. \cite[Theorem~24.10]{Woess}.

\subsection{Comparing shadows}

For $r>0$ and $x,y\in X$ the shadow $Sh_{r}(x,y)$ consists of all points $\zeta \in \partial X$ such that some geodesic ray from $x$ to $\zeta$ intersects $B_{r}(y).$
The following analogue of Sullivan's classical shadow lemma is due to Coornaert \cite{Coornaert}.

\begin{proposition} \label{PSshadow}
Let $\kappa$ be a quasiconformal measure for $\Gamma \curvearrowright X$.
For large enough $r>0$ we have $\kappa(Sh_{r}(o,go))\asymp e^{-vd(o,go)}$ where the implied constant depends only on $r$, $o$ and the quasiconformality constant.
\end{proposition}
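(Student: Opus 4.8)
The plan is to run the classical shadow-lemma argument of Sullivan, in the form due to Coornaert, combining the transformation rule of $\kappa$ with an elementary comparison between the Busemann cocycle and the distance; the only genuine work is a uniform lower bound for the $\kappa$-measure of shadows based at $o$.

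\emph{Step 1 (Busemann function on a shadow).} I would first verify that for a fixed $r$ and any $\xi\in Sh_r(o,go)$ one has $\beta_\xi(o,go)=d(o,go)+O(r)$, the implied constant depending only on $r$ (and, when $X$ is merely Gromov hyperbolic, on the hyperbolicity constant $\delta$). Choosing a geodesic ray $\gamma$ from $o$ to $\xi$ with $d(\gamma(t_0),go)\le r$ and evaluating $\beta$ along $\gamma$, the triangle inequality immediately gives $d(o,go)-2r\le\beta_\xi(o,go)\le d(o,go)$; in the non-$CAT(-1)$ case one absorbs the non-uniqueness of $\gamma$ and of the $\liminf$ defining $\beta_\xi$ into an extra additive constant.

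\emph{Step 2 (Reduction to shadows of $o$).} Since $g$ acts by isometries, $g^{-1}Sh_r(o,go)=Sh_r(g^{-1}o,o)$, whence, using $g\kappa\ll\kappa$ with $\frac{dg\kappa}{d\kappa}(\xi)\asymp e^{v\beta_\xi(o,go)}$ and then Step 1,
$$\kappa(Sh_r(g^{-1}o,o))=\kappa(g^{-1}Sh_r(o,go))=(g\kappa)(Sh_r(o,go))=\int_{Sh_r(o,go)}\frac{dg\kappa}{d\kappa}\,d\kappa\asymp e^{vd(o,go)}\,\kappa(Sh_r(o,go)),$$
with the implied constant now also depending on $r$. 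Thus the Proposition is equivalent to the assertion that, for all large enough $r$, the quantities $\kappa(Sh_r(y,o))$ are bounded above and below by positive constants, uniformly in $y\in\Gamma o$. The upper bound is trivial since $\kappa$ is a probability measure, so the uniform lower bound is the crux and, I expect, the main obstacle.

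\emph{Step 3 (Uniform lower bound).} Suppose it fails: there are $r_k\to\infty$ and $g_k\in\Gamma$ with $\kappa(Sh_{r_k}(g_k^{-1}o,o))\to 0$. By compactness of $X\cup\partial X$, after passing to a subsequence $g_k^{-1}o\to\xi_0$, and $\xi_0\in\partial X$ (otherwise $Sh_{r_k}(g_k^{-1}o,o)=\partial X$ for large $k$). Fix $\xi\neq\xi_0$; a bi-infinite geodesic from $\xi_0$ to $\xi$ stays within bounded distance of $o$, so by stability of geodesics the geodesic from $g_k^{-1}o$ to $\xi$ meets $B_{r_k}(o)$ for all large $k$, i.e. $\xi\in Sh_{r_k}(g_k^{-1}o,o)$ eventually. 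Fatou's lemma applied to the indicator functions then yields
$$\kappa(\partial X\setminus\{\xi_0\})\le\liminf_{k\to\infty}\kappa(Sh_{r_k}(g_k^{-1}o,o))=0,$$
so $\kappa=\delta_{\xi_0}$. But the action is non-elementary, so some $g\in\Gamma$ moves $\xi_0$, and then $g\kappa=\delta_{g\xi_0}$ is singular to $\kappa$, contradicting quasiconformality. Together with Step 2, this proves the Proposition.
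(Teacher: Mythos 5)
Your argument is correct, and it is worth noting that the paper does not actually prove Proposition \ref{PSshadow} at all: it is quoted from Coornaert, and the closest in-paper analogue is the proof of Proposition \ref{harmonicshadow} for the harmonic measure. Your Steps 1--2 (the Busemann estimate on a shadow plus the change-of-variables via the quasiconformal derivative) coincide with the standard route, and you correctly identify the uniform lower bound on $\kappa(Sh_r(g^{-1}o,o))$ as the only real content. Where you diverge is in Step 3: Coornaert, and the paper in its proof of Proposition \ref{harmonicshadow}, get the lower bound from the covering statement \cite[Lemma~6.3]{Coornaert} (for $r$ large the complement of $Sh_r(g^{-1}o,o)$ sits in a small visual ball, so the shadow contains a definite ball about a point of $\Lambda$) together with the fact that the measure has full support on $\Lambda$; you instead run a compactness argument, extracting $g_k^{-1}o\to\xi_0$ and using geodesic stability plus Fatou to force $\kappa=\delta_{\xi_0}$, which contradicts quasi-invariance for a non-elementary action. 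Your route is more self-contained (it needs neither the covering lemma nor the full-support property, which otherwise has to be deduced from minimality of the limit set), at the cost of being non-effective: the constant you produce depends on the particular measure $\kappa$ rather than only on $r$, $o$ and the quasiconformality constant as the statement asserts, whereas the covering-lemma route gives the stated uniformity once the lower bound on the measure of $c$-balls is pinned down. For the purposes of this paper, where $\kappa$ is fixed throughout, this makes no difference. Two small phrasing points: a bi-infinite geodesic from $\xi_0$ to $\xi$ \emph{passes within} a bounded distance of $o$ (depending on $\xi$) rather than staying within bounded distance of it, and in Step 3 you should say explicitly that monotonicity of shadows in $r$ is what lets you pass from failure at every fixed $r$ to a sequence $r_k\to\infty$ with $\kappa(Sh_{r_k}(g_k^{-1}o,o))\to 0$.
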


Let $\Lambda \subset \partial X$ be the limit set of $\Gamma$.
For $D>0$ let $\Lambda_{D}\subset \Lambda$ consist of $\zeta$ such that any geodesic ray in $X$ converging to $\zeta$ intersects the $D$-neighborhood of $\Gamma o$ infinitely many times.
The set $\Lambda_D$ is $\Gamma$-invariant, and $\bigcup_{D>0}\Lambda_D$ is precisely the set of conical points of $\Lambda$.
Thus, we have: 

\begin{lemma}\label{conicalconstant}
Any $\Gamma$ quasi-invariant ergodic measure on $\partial X$ which gives full weight to conical limit points of $\Lambda$ gives full weight to $\Lambda_D$ for large enough $D$.
\end{lemma}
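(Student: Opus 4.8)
The plan is to leverage the fact, stated just above, that $\bigcup_{D>0}\Lambda_D$ is exactly the set of conical limit points, together with the observation that the family $\{\Lambda_D\}_{D>0}$ is nested and $\Gamma$-invariant, and then to invoke ergodicity. Let $\kappa$ be a $\Gamma$ quasi-invariant ergodic measure on $\partial X$ giving full weight to conical limit points. First I would note that the sets $\Lambda_D$ are increasing in $D$: if a geodesic ray to $\zeta$ meets the $D$-neighborhood of $\Gamma o$ infinitely often then it certainly meets the $D'$-neighborhood infinitely often for $D'\geq D$. Hence $\Lambda_D\subseteq\Lambda_{D'}$ for $D\leq D'$, and by the quoted fact $\bigcup_{D>0}\Lambda_D$ is the conical limit set, which has full $\kappa$-measure by hypothesis. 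By continuity of measure from below (along the countable cofinal sequence $D=1,2,3,\dots$), $\kappa(\Lambda_D)\to \kappa(\bigcup_D\Lambda_D) = 1$ as $D\to\infty$, so $\kappa(\Lambda_D)>0$ for all sufficiently large $D$.

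Next I would promote positive measure to full measure using ergodicity. The set $\Lambda_D$ is $\Gamma$-invariant (this is asserted in the excerpt: a $\gamma\in\Gamma$ carries a geodesic ray converging to $\zeta$ to a geodesic ray converging to $\gamma\zeta$, and the $D$-neighborhood of $\Gamma o$ is $\Gamma$-invariant up to the fact that $\gamma$ maps $\Gamma o$ to $\Gamma o$; the infinitely-many-intersections condition is preserved). Since $\kappa$ is $\Gamma$ quasi-invariant, a $\Gamma$-invariant set of positive $\kappa$-measure must have full $\kappa$-measure — this is the standard argument: ergodicity of a quasi-invariant measure means every invariant set has measure $0$ or $1$. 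Therefore, for any $D$ large enough that $\kappa(\Lambda_D)>0$, we in fact have $\kappa(\Lambda_D)=1$, which is the claim.

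The only point requiring a little care — and the place I would expect the mild technical friction — is confirming that $\Lambda_D$ is genuinely $\Gamma$-invariant as stated, rather than merely invariant up to changing the constant $D$; but since $\gamma$ is an isometry fixing the orbit $\Gamma o$ setwise, it maps the $D$-neighborhood of $\Gamma o$ exactly onto itself, so the constant does not change and the invariance is exact. Everything else is the routine combination of monotone continuity of measure and the $0$--$1$ law for ergodic quasi-invariant measures, so no genuine obstacle arises.
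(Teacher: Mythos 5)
Your argument is correct and is exactly the one the paper intends: the paper simply notes that $\Lambda_D$ is $\Gamma$-invariant and that $\bigcup_{D>0}\Lambda_D$ is the conical limit set, and then asserts the lemma, leaving the combination of monotone continuity of measure with the $0$--$1$ law for ergodic quasi-invariant measures implicit. You have filled in precisely those routine steps, so there is nothing to add.
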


We will prove a shadow lemma for the $\mu$-harmonic measure $\nu$.
\begin{proposition}\label{harmonicshadow}
For large enough $r>0$ we have $\nu(Sh_{r}(o,go))\asymp e^{-d_G(e,g)}$ where the implied constant depends only on $r$ and $o$.
\end{proposition}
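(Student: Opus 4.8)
The plan is to transport the shadow by $g^{-1}$, apply the conformal-type identity $(\ref{E:Green-qc})$ for $\nu$ with respect to the Green metric, and then identify the Green Busemann cocycle along shadows; this runs parallel to Coornaert's proof of Proposition~\ref{PSshadow} and to the convex cocompact argument of Blach\`ere--Ha\"issinsky--Mathieu. Since $g^{-1}$ acts as an isometry of $X$ we have $g^{-1}Sh_{r}(o,go)=Sh_{r}(g^{-1}o,o)$, so by $(\ref{E:Green-qc})$
$$\nu(Sh_{r}(o,go))=(g^{-1}\nu)(Sh_{r}(g^{-1}o,o))=\int_{Sh_{r}(g^{-1}o,o)}e^{-\beta^{G}_{\eta}(g^{-1},e)}\,d\nu(\eta),$$
the integrand being defined $\nu$-a.e.\ since $\nu$ is carried by conical limit points. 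It therefore suffices to establish two uniform estimates: (i) $\beta^{G}_{\eta}(g^{-1},e)\asymp_{+}d_{G}(e,g)$ for every $\eta\in Sh_{r}(g^{-1}o,o)$, and (ii) $\nu(Sh_{r}(g^{-1}o,o))\asymp 1$ once $r$ is large.

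For (i), one inequality is automatic: the triangle inequality for the (a priori asymmetric) Green metric gives $d_{G}(g^{-1},g_{n})\leq d_{G}(g^{-1},e)+d_{G}(e,g_{n})$, so letting $g_{n}\to\eta$ we obtain $\beta^{G}_{\eta}(g^{-1},e)\leq d_{G}(g^{-1},e)=d_{G}(e,g)$, the last equality because $\Gamma$-invariance of the Green function gives $G(g^{-1},e)=G(e,g)$. The reverse inequality is the heart of the matter. If $\eta\in Sh_{r}(g^{-1}o,o)$, some geodesic from $g^{-1}o$ to $\eta$ meets $B_{r}(o)$; since the segments $[g^{-1}o,g_{n}o]$ converge to, and stay uniformly close to, such a geodesic, the basepoint $o$ lies within distance $r+O(\delta)$ of $[g^{-1}o,g_{n}o]$. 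Taking $o$ in the thick part and invoking the (relative) Ancona inequality for $\mu$ --- which holds under the finite superexponential moment assumption, see \cite{GGPY} and references therein --- we get $G(g^{-1},g_{n})\asymp_{C}G(g^{-1},e)\,G(e,g_{n})$ with $C=C(r)$, that is $d_{G}(g^{-1},g_{n})\asymp_{+}d_{G}(g^{-1},e)+d_{G}(e,g_{n})$; letting $n\to\infty$ yields $\beta^{G}_{\eta}(g^{-1},e)\asymp_{+}d_{G}(e,g)$, hence $e^{-\beta^{G}_{\eta}(g^{-1},e)}\asymp e^{-d_{G}(e,g)}$ uniformly.

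For (ii) the bound $\leq 1$ is trivial. For the lower bound, any $\zeta\in\partial X\setminus Sh_{r}(g^{-1}o,o)$ has Gromov product $\rho_{o}(\zeta,g^{-1}o)\geq r-O(\delta)$, so by hyperbolicity $\zeta$ lies in a shadow $Sh_{C'}(o,z)$ with $z\in[o,g^{-1}o]$ and $d(o,z)\asymp_{+}r$; hence $\nu(Sh_{r}(g^{-1}o,o))\geq 1-\sup_{z}\nu(Sh_{C'}(o,z))$, the supremum being over $z$ with $d(o,z)=r-O(\delta)$. This supremum tends to $0$ as $r\to\infty$: otherwise there would be points $z_{k}$ with $d(o,z_{k})\to\infty$, $\nu(Sh_{C'}(o,z_{k}))\geq\epsilon_{0}>0$, and $z_{k}\to\zeta_{\infty}\in\partial X$, and since $Sh_{C'}(o,z_{k})$ is eventually contained in every neighborhood of $\zeta_{\infty}$ this would force $\epsilon_{0}\leq\nu(\{\zeta_{\infty}\})=0$. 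So for $r$ large, $\nu(Sh_{r}(g^{-1}o,o))\geq c>0$ uniformly in $g$, and combining with (i) we conclude $\nu(Sh_{r}(o,go))\asymp e^{-d_{G}(e,g)}\,\nu(Sh_{r}(g^{-1}o,o))\asymp e^{-d_{G}(e,g)}$.

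The main obstacle is step (i): the Ancona-type comparison of the Green metric with the space metric along geodesics. For convex cocompact actions this is classical, but in the geometrically finite setting one must use the relative Ancona inequalities and be careful about geodesics that penetrate deep into cusps --- this is precisely where the superexponential moment hypothesis is used, and why it is convenient to keep the basepoint $o$ in the thick part.
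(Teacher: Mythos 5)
Your proof is correct and follows essentially the same route as the paper: transport the shadow by $g^{-1}$, apply the conformal identity \eqref{E:Green-qc}, pin down $\beta^G_\eta(g^{-1},e)$ up to an additive constant via the triangle inequality on one side and the relative Ancona/deviation inequality (Proposition \ref{relAncona}) on the other, and bound $\nu(Sh_r(g^{-1}o,o))$ uniformly from below. The only cosmetic difference is in the last step, where the paper invokes Coornaert's covering lemma together with full support of $\nu$ on $\Lambda$, while you use non-atomicity of $\nu$ plus a compactness argument; both work.
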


The following is a re-formulation of the deviation inequalities of Gekhtman-Gerasimov-Potyagailo-Yang \cite[Corollary 1.4]{GGPY}.
\begin{proposition}\label{relAncona}
For each $o\in X$ and $D>0$ there is an $A>0$ such that for all $g_{1},g_{2},g_{3}\in \Gamma$ such that $g_{2}o$ lies within distance $D$ of a geodesic $[g_{1}o,g_{3}o]$ we have 
$$d_{G}(g_{1},g_{2}) + d_{G}(g_{2},g_{3}) \leq d_{G}(g_{1},g_{3})+A.$$
\end{proposition}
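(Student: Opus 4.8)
The plan is to deduce Proposition \ref{relAncona} by reformulating the deviation inequalities of \cite[Corollary 1.4]{GGPY} in the present notation; this needs no new estimate, only a careful translation between the two setups. I would begin by observing that only the stated direction has content: the reverse bound $d_G(g_1,g_3)\le d_G(g_1,g_2)+d_G(g_2,g_3)+A'$ holds for \emph{every} triple, with $A'=\log G(e,e)$. Indeed, decomposing, according to the first visit to $g_2$, the paths from $g_1$ to $g_3$ that pass through $g_2$, and using $\Gamma$-invariance to write $G(g_2,g_2)=G(e,e)$, one gets $G(g_1,g_2)G(g_2,g_3)/G(e,e)\le G(g_1,g_3)$; equivalently, $d_G$ agrees up to the additive constant $\log G(e,e)$ with the honest (asymmetric) metric $-\log F(\cdot,\cdot)$, where $F$ denotes the hitting probability, and hence satisfies the triangle inequality.

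Next I would rephrase the inequality to be proved, by taking logarithms, as the Green function estimate $G(g_1,g_3)\lesssim G(g_1,g_2)G(g_2,g_3)$ --- that is, a random path issued from $g_1$ that ever reaches $g_3$ visits $g_2$ beforehand with probability bounded below by a uniform constant. This is precisely the content of the Ancona-type inequalities of \cite{GGPY}, established for a geometrically finite action (equivalently, for the relatively hyperbolic group $\Gamma$ with the parabolic subgroups as peripherals) driven by a measure of finite superexponential moment, valid when $g_2$ lies within bounded distance of a geodesic from $g_1$ to $g_3$ and is not ``$\kappa$-deep'' in any peripheral coset. To invoke it, two matchings are needed: the geodesic $[g_1o,g_3o]$ in $X$ must be identified, up to bounded Hausdorff distance, with the geodesic from $g_1$ to $g_3$ used in \cite{GGPY} (routine, since for a geometrically finite action the orbit map is a quasi-isometry onto the thick part of the convex core and geodesics with prescribed endpoints coarsely coincide), and the ``not too deep'' hypothesis must be checked.

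For the latter I would use geometric finiteness in an essential way. By definition, $\Gamma$ acts cocompactly on the complement, inside the convex hull of the limit set, of a $\Gamma$-equivariant family of open horoballs about the bounded parabolic points; hence there is $R_0=R_0(o)$ with $d(go,\text{thick part})\le R_0$ for every $g\in\Gamma$. Consequently, if $g_2o$ lies within distance $D$ of $[g_1o,g_3o]$, the point of the geodesic nearest $g_2o$ lies within $D+R_0$ of the thick part, so $g_2o$ itself sits at depth at most $2D+R_0$ in every horoball; translating back to the Cayley graph, $g_2$ fails to be $\kappa$-deep in any peripheral coset, for a $\kappa$ depending only on $o$ and $D$. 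Feeding this into \cite[Corollary 1.4]{GGPY} then produces the required constant $A=A(o,D)$.

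The only genuine difficulty here is organizational: one must assemble a dictionary, with uniform constants, relating the metric $d$ on $X$, a word metric on $\Gamma$, the Green metric $d_G$, and the associated notions of geodesic and of ``depth in a cusp'', so that the cited inequality applies with constants depending only on $o$ and $D$. The analytic heart of the matter --- controlling the Green function as the walk enters, wanders inside, and exits the cusps, and comparing it with the induced walks on the parabolic subgroups --- is carried out in \cite{GGPY} and would not be reproved here.
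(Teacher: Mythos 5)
The paper offers no proof of this proposition beyond the citation itself: it is stated as a direct re-formulation of the deviation inequalities of \cite[Corollary 1.4]{GGPY}, so your attempt to actually supply the translation is doing more work than the paper does. Your framing of the easy direction (supermultiplicativity of $G$ via first-visit decomposition) and of the content of the hard direction is correct. The problem is in the step where you verify the ``not $\kappa$-deep'' hypothesis, and as written that step is a non sequitur. You argue that every orbit point $go$ lies within $R_0$ of the thick part and conclude that $g_2$ is therefore not deep in any peripheral coset. But bounded horoball-depth of $g_2o$ holds for \emph{every} $g_2\in\Gamma$ --- the orbit never enters the equivariant horoballs at all, by the very cocompactness you invoke --- including elements that are combinatorially deep in a peripheral coset (e.g.\ the midpoint of a word geodesic from $e$ to a far-away element of a parabolic subgroup $P$). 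So your criterion cannot distinguish transitional points from deep ones, and the hypothesis of the Ancona-type inequality is not verified. The actual content of the assumption $d(g_2o,[g_1o,g_3o])\le D$ is the \emph{reverse} implication: if $g_2$ sat deep inside a peripheral excursion of length $L$ of a relative geodesic from $g_1$ to $g_3$, then at the corresponding parameter the $X$-geodesic $[g_1o,g_3o]$ has dived into the horoball to depth comparable to $\log L$, hence is far from $g_2o$, which sits on the horosphere; this contradicts the hypothesis and is what rules out deep points.

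A second, smaller issue: \cite[Corollary 1.4]{GGPY} is phrased in terms of Floyd distances rather than depth in peripheral cosets --- roughly, for every $r>0$ there is $C$ with $G(g_1,g_3)\le C\,G(g_1,g_2)\,G(g_2,g_3)$ whenever the Floyd distance based at $g_2$ between $g_1$ and $g_3$ is at least $r$. The cleanest reformulation is therefore not the cusp-depth dictionary you propose but the observation that $d(g_2o,[g_1o,g_3o])\le D$ bounds the Gromov product $\rho_{g_2o}(g_1o,g_3o)$ by $D+O(\delta)$, which, via the equivariant continuity of Gerasimov's Floyd map onto the limit set together with Karlsson's visibility lemma, yields a lower bound $r=r(D)$ on the Floyd distance $\delta^\lambda_{g_2}(g_1,g_3)$. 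Either route can be made to work, but the one you chose needs the deepness argument run in the correct direction, and the one the paper intends bypasses peripheral cosets entirely.
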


\begin{proof}[Proof of Proposition \ref{harmonicshadow}]
Let $r>0$ be such that the complement of $\Lambda_r$ has $\nu$-measure zero.
Note we have by eq. \eqref{E:Green-qc}
$$\nu(Sh_{r}(o,go))=g^{-1}\nu(Sh_{r}(g^{-1}o,o)) =  \int_{ Sh_{r}(g^{-1}o,o)}e^{-\beta^{G}_{\zeta}(g^{-1},e)}d\nu(\zeta).$$
Consider a point $\zeta \in Sh_{r}(g^{-1}o,o)\cap \Lambda_r$.
By definition, any geodesic ray $[g^{-1}o,\zeta)$ in $X$ contains a point in $B_{r}(o)$ and there is a sequence $g_n\in \Gamma$ with $d(g_n o,o)\to \infty$ and $d(g_n o, [o,\zeta) )<r$.
Then by Proposition \ref{relAncona} we have for each $n$: $$d_{G}(g^{-1},e) -A \leq d_{G}(g^{-1},g_n)-d_{G}(e,g_n)$$ where $A$ depends only on $r$.
Taking limits 
as $g_n \to \zeta$ and by the triangle inequality we obtain $$d_{G}(g^{-1},e)-A\leq \beta^{G}_{\zeta}(g^{-1},e)\leq d_{G}(g^{-1},e).$$
Fix a metric $\rho$ on $\partial X$. Let $0<c<\textup{diam}(\Lambda, \rho)/100$. 
By \cite[Lemma 6.3]{Coornaert}, there is an $r_0>0$ such that for  any $r>r_0$ and $g\in \Gamma$ the complement $\partial X \setminus Sh_{r}(g^{-1}o,o)$ is contained in a $\rho$-ball of radius $c$. Consequently, $Sh_{r}(g^{-1}o,o)$ must contain a $\rho$-ball of radius $c$ centered at a point of $\Lambda$.
Since $\nu$ has full support on $\partial X$ there is a constant $t>0$ such that any such ball has $\nu$-measure at least $t$. 
  Consequently,  we have $1\geq \nu(Sh_{r}(g^{-1}o,o)) \geq t>0$ for all $g\in \Gamma$. 
This completes the proof.
\end{proof}

\section{A differentiation theorem}
Unlike in the hyperbolic group case, the harmonic measure $\nu$ is not known to be doubling for the visual metric on $\partial X$. See Tanaka's \cite[Question 4.1]{Tanaka} for a discussion.  However, we can still prove a Lebesgue differentiation-type theorem.
\begin{proposition}\label{localdensity}
Let $\nu$ be a $\Gamma$ quasi-invariant ergodic measure on $\partial X$ supported on conical points.
Assume furthermore that for a constant $C>0$ and all large enough $r$ we have 
$\nu(Sh_{2r}(o,go))\leq C \nu(Sh_{r}(o,go))$ for all $g \in G$.
Let $\kappa$ be any finite Borel measure on $\partial X$.
Then the following holds for large enough $r>0$. 
\begin{enumerate}[a)]

   \item If $\nu$ and $\kappa$ are mutually singular then for $\nu$-almost every $\xi \in \partial X$ we have $$\lim_{\stackrel{g\in \Gamma: g\to \xi}{\xi \in Sh_{r}(o,go)}} \frac{\kappa(Sh_{r}(o,go))}{\nu(Sh_{r}(o,go))}= 0.$$
   
   \item If $\nu$ and $\kappa$ are equivalent then for $\nu$-almost every $\xi \in \partial X$ we have $$\lim_{\stackrel{g\in \Gamma: g\to \xi}{\xi \in Sh_{r}(o,go)}} \frac{\log \kappa(Sh_{r}(o,go))}{\log \nu(Sh_{r}(o,go))}= 1.$$
\end{enumerate}
\end{proposition}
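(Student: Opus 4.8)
The plan is to adapt the classical argument for the Lebesgue differentiation theorem, using shadows $Sh_r(o, go)$ in place of Euclidean balls, and using the doubling-type hypothesis on $\nu$ as the substitute for the Vitali covering lemma. Throughout, fix $r$ large enough that Lemma \ref{conicalconstant} applies (so $\nu(\Lambda_r) = 1$), that the doubling inequality $\nu(Sh_{2r}(o,go)) \le C\nu(Sh_r(o,go))$ holds, and that the shadows have the usual covering/nesting properties: namely that there is a constant $D$ such that if $g, h \in \Gamma$ with $\xi \in Sh_r(o,go) \cap Sh_r(o,ho)$ and $d(o,ho) \le d(o,go)$, then $Sh_r(o,go) \subset Sh_{Dr}(o,ho)$ (this is a standard consequence of hyperbolicity and the fact that $\xi$ is a conical point seen infinitely often near both orbit points). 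Combined with the doubling hypothesis iterated finitely many times, this gives $\nu(Sh_r(o,go)) \asymp \nu(Sh_{Dr}(o,ho)) \asymp \nu(Sh_r(o,ho))$ whenever the two shadows through $\xi$ are comparable in this way — this is the "engulfing" property that drives the maximal inequality.

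For part (a), assume $\nu \perp \kappa$ and write $\kappa = \kappa_{ac} + \kappa_s$ with respect to $\nu$; by hypothesis $\kappa_{ac} = 0$, so $\kappa$ is carried by a Borel set $E$ with $\nu(E) = 0$. The key step is a weak-type maximal inequality: defining, for a measure $\lambda$ and a threshold, the maximal function $M\lambda(\xi) := \sup \frac{\lambda(Sh_r(o,go))}{\nu(Sh_r(o,go))}$ over $g \to \xi$ with $\xi \in Sh_r(o,go)$, I would show $\nu(\{M\kappa > \alpha\}) \le \frac{C'}{\alpha}\kappa(\partial X)$ using a Vitali-type selection among the shadows (pick shadows with large ratio, extract a subfamily that is "almost disjoint" in the engulfing sense, using that among any family of shadows through a common point the one based closest to $o$ engulfs the others up to the bounded factor $D$). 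Granting the maximal inequality, the differentiation statement follows by the standard approximation argument: approximate $\kappa$ (restricted to a neighborhood, or $\kappa_s$ itself) in total variation by measures supported away from a set of small $\nu$-measure — more precisely, for $\epsilon > 0$ choose an open set $U \supset E$ with $\kappa(U) < \epsilon$... actually since $\kappa$ lives on $E$ and $\nu(E)=0$, outer regularity of $\nu$ gives open $U \supset E$ with $\nu(U) < \epsilon$; then for $\nu$-a.e.\ $\xi \notin U$ all small shadows through $\xi$ eventually lie in the complement of $E$ hence have $\kappa$-measure zero. Since $\nu(U) < \epsilon$ and $\epsilon$ is arbitrary, the limit is $0$ for $\nu$-a.e.\ $\xi$. (The role of the maximal inequality is to make "eventually" uniform; in fact here, because $\kappa \perp \nu$ with $\kappa$ carried by a $\nu$-null set, one can argue more directly via regularity, and the maximal function is only needed to handle the general $\kappa_{ac}$ case, which is vacuous under the singularity hypothesis — so part (a) is actually the easier half.)

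For part (b), suppose $\nu \sim \kappa$ with Radon–Nikodym derivative $\varphi = \frac{d\kappa}{d\nu}$, so $0 < \varphi < \infty$ $\nu$-a.e. By the Lebesgue differentiation theorem for the shadow filtration — which is exactly the $\kappa = \varphi\,d\nu$ case of the differentiation statement proved via the maximal inequality above — we get $\frac{\kappa(Sh_r(o,go))}{\nu(Sh_r(o,go))} \to \varphi(\xi)$ for $\nu$-a.e.\ $\xi$, along $g \to \xi$ with $\xi \in Sh_r(o,go)$. Since the shadow lemma (Proposition \ref{harmonicshadow}) combined with $\xi \in \Lambda_r$ gives $\nu(Sh_r(o,go)) \asymp e^{-d_G(e,g)} \to 0$ as $d(o,go) \to \infty$, we have $\log \nu(Sh_r(o,go)) \to -\infty$. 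Writing
$$\frac{\log \kappa(Sh_r(o,go))}{\log \nu(Sh_r(o,go))} = 1 + \frac{\log\big(\kappa(Sh_r(o,go))/\nu(Sh_r(o,go))\big)}{\log \nu(Sh_r(o,go))},$$
the numerator of the second term converges to $\log \varphi(\xi) \in \mathbb{R}$ while the denominator tends to $-\infty$, so the whole second term tends to $0$, giving the claimed limit $1$.

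The main obstacle is establishing the Vitali-type covering/maximal inequality for the shadow filtration: one must verify that the family $\{Sh_r(o,go) : g \in \Gamma\}$ has enough of the structure of a Vitali system — specifically the engulfing property that a shadow through $\xi$ based far out is contained in a boundedly-dilated shadow through $\xi$ based closer in — and that the doubling hypothesis on $\nu$ then controls the overlaps. The asymmetry of the Green metric and the fact that $\nu$ is only quasi-invariant (not invariant) require care, but do not cause essential trouble: all estimates are up to multiplicative constants, which disappear after taking logarithms in part (b) and are absorbed into the weak-type constant in part (a). The requirement that we restrict to $\xi \in \Lambda_r$ (conical points seen within distance $r$) is what guarantees that there are arbitrarily-far-out $g$ with $\xi \in Sh_r(o,go)$, so the filtration is non-trivial $\nu$-a.e.
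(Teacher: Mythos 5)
Your overall architecture matches the paper's: both proofs rest on the same two ingredients --- the engulfing property of shadows through a common conical point together with the doubling hypothesis on $\nu$ --- and both convert these into a differentiation theorem for the family of shadows. The paper packages this via Federer's notion of a Vitali relation, verifying the hypothesis of \cite[Theorem 2.8.17]{Federer} by means of the geometric engulfing statement from \cite[Lemma 4.5]{MYJ}, and then reads off both parts from Federer's Theorems 2.9.5 and 2.9.7; you re-derive the same content through a weak-type maximal inequality. The two routes are equivalent in substance, and your treatment of part (b) (convergence of $\kappa(S)/\nu(S)$ to the Radon--Nikodym derivative, plus $\log\nu(S)\to-\infty$) is correct.

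There is, however, a genuine gap in your part (a). You claim that, by outer regularity of $\nu$, one can take an open $U \supset E$ (where $E$ carries $\kappa$ and $\nu(E)=0$) with $\nu(U)<\epsilon$, and that for $\xi \notin U$ all small shadows through $\xi$ eventually lie in the complement of $E$ and hence have $\kappa$-measure zero. This is false: $E$ need not be closed and may be dense in $\partial X$, so every shadow through $\xi$, however small, can meet $E$ and carry positive $\kappa$-mass. Consequently your parenthetical assertion that the maximal inequality is only needed for the (vacuous) absolutely continuous part, and that part (a) follows ``more directly via regularity,'' is incorrect --- the covering/maximal inequality is precisely the engine for the singular part. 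The correct argument runs the other way around: let $F$ be a carrier of $\nu$ with $\kappa(F)=0$, use outer regularity of $\kappa$ to find an open $V \supset F$ with $\kappa(V)<\delta$, observe that for $\xi\in F$ every sufficiently small shadow through $\xi$ is contained in $V$ (shadow diameters tend to zero), and then apply your weak-type inequality to $\kappa|_V$ to obtain $\nu\left(\left\{\xi : \limsup_{S\to\xi}\kappa(S)/\nu(S)>\alpha\right\}\right)\leq C'\delta/\alpha$; letting $\delta\to 0$ gives the claim. With this repair --- and with the Vitali selection underlying the maximal inequality actually carried out, which is exactly where the engulfing property and the iterated doubling hypothesis enter --- your proof is complete and agrees in substance with the paper's.
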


To prove this proposition we will need the notion of Vitali relation, which is a generalization of coverings by balls in doubling metric spaces. See Federer's book \cite[Sections 2.8 and 2.9]{Federer} for background on Vitali relations and their application to differentiation in metric spaces.
We were inspired by their use in \cite{MYJ} to study quasiconformal measures for divergence type groups of isometries of Gromov hyperbolic spaces.

Let $\Lambda$ be a metric space. A \emph{covering relation} is a subset of the set of all pairs $(\xi,S)$ such that $\xi \in S\subset \Lambda$. A covering relation  $\mathcal{C}$ is said to be \emph{fine} at $\xi \in \Lambda$ if there exists a sequence $S_n$ of subsets of $\Lambda$ with $(\xi,S_n)\in \mathcal{C}$ and such that  the diameter of $S_n$ converges to zero.

For a covering relation $\mathcal{C}$ and any measurable subset $E\subset \Lambda$, define $\mathcal{C}(E)$ to be the collection of subsets $S\subset \Lambda$ such that $(\xi,S)\in \mathcal{C}$ for some $\xi \in E$. 

A covering relation $\mathcal{V}$ is said to be a \emph{Vitali relation} for a finite measure $\mu$ on $\Lambda$ if it is fine at every point of $\Lambda$ and if the following holds: 
if $\mathcal{C}\subset \mathcal{V}$ is fine at every point of a measurable subset $E\subset \Lambda$ then $\mathcal{C}(E)$ has a countable disjoint
subfamily $\{S_n\} \subset \mathcal{C}(E)$ such that $\mu(E\setminus \bigcup^\infty_{n=1}S_{n})=0$.

For a covering relation $\mathcal{V}$ and a function $f$ on $\Lambda$ let us
 denote 
$$(\mathcal{V})\lim \sup_{S\to x}f := \lim_{\epsilon \to 0} \sup \{f(x):(x,S)\in \mathcal{V}, x\in S, \textup{diam}(S)<\epsilon\}.$$
Similarly we define $(\mathcal{V})\lim \inf_{S\to x}f$, and if the two limits are equal we denote its common value as $(\mathcal{V})\lim_{S\to x}f$.

The following criterion guarantees a covering relation is Vitali.

\begin{proposition}{\cite[Theorem 2.8.17]{Federer}}
\label{Vitalicriterion}
Let $\mathcal{V}$ be a covering relation on $\Lambda$ such that each
$S\in \mathcal{V}(\Lambda)$ 
 is a closed bounded subset and $\mathcal{V}$ is fine at every point of $\Lambda$.
Let $\lambda$ be a measure on $\Lambda$ such that $\lambda(A)>0$ for all $A\in \mathcal{V}(\Lambda)$.
For a positive
function $f$ on $\mathcal{V}(\Lambda)$, $S\in \mathcal{V}(\Lambda)$, and a constant $\tau>1$ define
$\widetilde{S}$ to be the union of all $S'\in \mathcal{V}(\Lambda)$ which have nonempty intersection with $S$ and satisfy $f(S')\leq \tau f(S)$.
Suppose that for $\lambda$-almost every $\xi\in \Lambda$ we have 
$$\lim \sup_{S\to \xi} \left( f(S)+\frac{\lambda(\widetilde{S})}{\lambda(S)} \right) <\infty.$$
Then the relation $\mathcal{V}$ is Vitali for $\lambda$.

\end{proposition}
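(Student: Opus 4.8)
\textbf{Proof proposal for Proposition \ref{localdensity}.}

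The plan is to build a Vitali relation out of shadows and then apply the classical Lebesgue differentiation machinery for Vitali relations. Concretely, fix $r>0$ large enough that Proposition \ref{Vitalicriterion} applies (to be verified below) and that $\nu$ gives full weight to $\Lambda_D$ for a matching $D$ (Lemma \ref{conicalconstant}). Define the covering relation $\mathcal{V}_r$ on $\Lambda$ consisting of all pairs $(\xi, \overline{Sh_r(o,go)})$ with $g\in\Gamma$ and $\xi\in Sh_r(o,go)$. Since $\nu$-a.e.\ $\xi$ is conical, there is a sequence $g_n\to\xi$ with $d(g_no,o)\to\infty$ and $g_no$ within bounded distance of a ray $[o,\xi)$; standard hyperbolicity estimates then give $\xi\in Sh_r(o,g_no)$ and $\operatorname{diam}(Sh_r(o,g_no))\to 0$ (shadows of far-away points are small in the visual metric), so $\mathcal{V}_r$ is fine at $\nu$-a.e.\ point. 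To make it fine \emph{everywhere} (as Proposition \ref{Vitalicriterion} requires) one restricts attention to $\Lambda_D$, which is a closed (or at least $\nu$-conull) set and on which fineness holds for every point; since $\nu(\partial X\setminus\Lambda_D)=0$ this does not affect any of the conclusions.

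The first substantive step is to check the hypotheses of Proposition \ref{Vitalicriterion} with $\lambda = \nu$, $f(Sh_r(o,go)) := e^{-d_G(e,g)}$ (so $f(S)\asymp \nu(S)$ by Proposition \ref{harmonicshadow}), and an arbitrary $\tau>1$. The point is that $\widetilde{S}$, the union of all shadows $S'=Sh_r(o,g'o)$ meeting $S=Sh_r(o,go)$ with $f(S')\leq\tau f(S)$, is contained in a single shadow $Sh_{2r+C}(o,go)$ (or $Sh_{Kr}(o,go)$ for a hyperbolicity constant $K$): indeed, if $S'$ meets $S$ and $S'$ is not much ``larger'' than $S$, then $g'o$ lies uniformly close to a geodesic from $o$ through a point of $B_r(go)$, using that $f(S')\leq\tau f(S)$ together with Proposition \ref{harmonicshadow} bounds $d_G(e,g')$ below by $d_G(e,g)-O(1)$, and the relative Ancona inequality (Proposition \ref{relAncona}) translates this into the geometric statement that $g'o$ is within bounded distance of $[o,go]$-type geodesics, forcing $S'\subset Sh_{r'}(o,go)$ for a controlled $r'$. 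Then the doubling-type hypothesis $\nu(Sh_{2r}(o,go))\leq C\nu(Sh_r(o,go))$ (iterated a bounded number of times to pass from $2r$ to $r'$) gives $\nu(\widetilde S)/\nu(S)\leq C'$ uniformly, and $f(S)\to 0$ along any fine sequence; hence $\limsup_{S\to\xi}(f(S)+\nu(\widetilde S)/\nu(S))<\infty$ for every $\xi$, and $\mathcal{V}_r$ is Vitali for $\nu$.

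Once $\mathcal{V}_r$ is Vitali for $\nu$, parts (a) and (b) are applications of the standard differentiation theorem for Vitali relations (Federer \cite[Sections 2.8--2.9]{Federer}). For (a): if $\nu\perp\kappa$, write $\kappa=\kappa_{ac}+\kappa_s$ with respect to $\nu$; here $\kappa_{ac}=0$, and the Vitali differentiation theorem gives $(\mathcal{V}_r)\lim_{S\to\xi}\kappa(S)/\nu(S)=\frac{d\kappa_{ac}}{d\nu}(\xi)=0$ for $\nu$-a.e.\ $\xi$, which is exactly the claimed limit (the condition $\xi\in Sh_r(o,go)$ and $g\to\xi$ is just the statement that $(\xi,S)\in\mathcal{V}_r$ with $\operatorname{diam}(S)\to0$). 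For (b): if $\nu$ and $\kappa$ are equivalent, apply the same theorem to get $(\mathcal{V}_r)\lim_{S\to\xi}\kappa(S)/\nu(S)=\varphi(\xi):=\frac{d\kappa}{d\nu}(\xi)\in(0,\infty)$ for $\nu$-a.e.\ $\xi$. Then $\log\kappa(S)-\log\nu(S)\to\log\varphi(\xi)$, a finite quantity, while $\log\nu(S)\to-\infty$ (since $\nu(S)\asymp e^{-d_G(e,g)}$ and $d_G(e,g)\to\infty$ as $g\to\xi$ along conical directions — here one uses that $\xi$ is a conical limit point so $d_G(e,g)\to\infty$); dividing, $\log\kappa(S)/\log\nu(S)\to 1$.

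The main obstacle is the second step: proving that $\widetilde{S}$ sits inside a bounded enlargement of $S$. This is where the geometry of the space and the comparison between the Green metric and the word/visual metric genuinely enter, via the relative Ancona inequalities of \cite{GGPY} (Proposition \ref{relAncona}) and the shadow lemma (Proposition \ref{harmonicshadow}). The delicate point is that two shadows $Sh_r(o,go)$ and $Sh_r(o,g'o)$ can intersect while $g'o$ is \emph{far} from $[o,go]$ if $d(o,g'o)$ is much smaller than $d(o,go)$ — but the constraint $f(S')\leq\tau f(S)$, i.e.\ $d_G(e,g')\geq d_G(e,g)-\log\tau - O(1)$, rules this out, because a much smaller $g'o$ would have much smaller $d_G(e,g')$. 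Making this quantitative, uniformly in $g,g'$, is the heart of the argument; everything else is either soft (the differentiation theorem) or a direct hyperbolic-space shadow computation.
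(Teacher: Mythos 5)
First, a structural note: the assigned statement is Proposition~\ref{Vitalicriterion}, which is Federer's covering‐relation criterion, cited by the paper from \cite[Theorem~2.8.17]{Federer} and not reproved there. What you have written instead is a proof of Proposition~\ref{localdensity}, the differentiation theorem that the paper derives \emph{from} Federer's criterion; I review your argument against the paper's proof of that proposition.

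Your overall scheme — shadows form a covering relation, verify it is Vitali via Federer's criterion (Proposition~\ref{Vitalicriterion}), then apply the differentiation theorem (Proposition~\ref{RNexists}, Corollary~\ref{Vitalidichotomy}) to deduce parts (a) and (b) — matches the paper. The genuine gap is in your verification of the Vitali condition. You take $f(S) := e^{-d_G(e,g)}\asymp\nu(S)$, whereas the paper takes $f(S) := e^{-vd(o,go)}$. With the paper's $f$, the constraint $f(S')\leq\tau f(S)$ is exactly the displacement bound $d(o,g'o) \geq d(o,go) - \tfrac{1}{v}\log\tau$; combined with $S'\cap S\neq\emptyset$, a plain thin-triangles argument forces $\widetilde{S}\subset Sh_{D'}(o,go)$ for a controlled $D'$, and the doubling hypothesis finishes. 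Your justification of the analogous containment rests on the assertion that $d_G(e,g')\geq d_G(e,g)-O(1)$ rules out $d(o,g'o)$ being much smaller than $d(o,go)$, ``because a much smaller $g'o$ would have much smaller $d_G(e,g')$.'' That assertion is false in the presence of parabolics: $d_G$ is quasi-isometric to the word metric (Proposition~\ref{Greendecay}), which along a parabolic is exponentially larger than the displacement $d(o,\cdot\,o)$ (Proposition~\ref{exponentialdistortionparabolic}); a group element can have small displacement and nonetheless arbitrarily large word length, hence large $d_G(e,g')$, so small $d(o,g'o)$ gives no upper bound on $d_G(e,g')$. You also invoke the Ancona inequality (Proposition~\ref{relAncona}) as though it turned a Green-metric bound into a geometric containment, but as stated it goes in the opposite direction. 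Your route can be repaired — split on whether $d(o,g'o)\leq d(o,go)$; in that case thin triangles place $g'o$ within a uniform distance of $[o,go]$, Ancona then gives $d_G(g',g)\lesssim A+\log\tau$, the Green--word quasi-isometry bounds $d(g'o,go)$, and the containment follows; in the other case $S'\subset Sh_{D'}(o,go)$ directly — but this needs substantially more machinery than the paper's purely geometric argument, and because the identification $f\asymp\nu$ ties $f$ to the harmonic measure, it only handles the special case of Proposition~\ref{localdensity} in which $\nu$ is the hitting measure, whereas the proposition is stated for any quasi-invariant ergodic $\nu$ satisfying the doubling hypothesis. The paper's choice $f=e^{-vd(o,go)}$ avoids both problems at once.
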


Let $D$ be large enough so that the complement of $\Lambda_{D}$ has $\nu$ measure zero, and large enough for all $r\geq D$ to satisfy Proposition \ref{harmonicshadow}. 

\begin{lemma}
Define the covering relation %$\mathcal{V}$ on $\Lambda_D$ consisting of all pairs
$$\mathcal{V} := \{ (\xi,Sh_{2D}(o,go)\cap \Lambda_D) \textup{ where } \xi \in Sh_{2D}(o,go)\cap \Lambda_D\}. $$
Then $\mathcal{V}$ is a Vitali relation for $(\Lambda_D,\nu).$
\end{lemma}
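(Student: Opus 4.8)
The plan is to verify the hypotheses of the Vitali criterion, Proposition \ref{Vitalicriterion}, applied to the space $\Lambda_D$, the measure $\lambda := \nu|_{\Lambda_D}$, the covering relation $\mathcal{V}$, and a suitable ``size'' function $f$ on the shadows appearing in $\mathcal{V}$. The natural choice for $f$ is $f(Sh_{2D}(o,go) \cap \Lambda_D) := e^{-d_G(e,g)}$, which by Proposition \ref{harmonicshadow} is comparable to $\nu(Sh_{2D}(o,go))$, and which (because $g \to \xi$ forces $d(o,go) \to \infty$, hence $d_G(e,g) \to \infty$ using that the Green metric is bounded below by a multiple of the displacement) tends to $0$ as the diameter of the shadow tends to $0$; this also shows $\mathcal{V}$ is fine at every point of $\Lambda_D$. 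We should note at the outset that $f$ is well-defined up to bounded multiplicative error when the same subset $S$ arises from different $g$'s, which is harmless for the argument, or alternatively one defines $\mathcal{V}$ as a relation indexed by group elements and passes to subsets at the end.

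The first genuine step is to check that each $S \in \mathcal{V}(\Lambda_D)$ is a closed bounded subset of $\Lambda_D$ (shadows are closed in $\partial X$, and $\Lambda_D$ with the restricted metric is bounded) and that $\lambda(S) = \nu(Sh_{2D}(o,go)) > 0$ for all such $S$; the latter is exactly the lower bound $\nu(Sh_r(o,go)) \geq t > 0$ established inside the proof of Proposition \ref{harmonicshadow} (applied with $r = 2D$), combined with the fact that $\nu$ gives full measure to $\Lambda_D$ so $\nu(Sh_{2D}(o,go) \cap \Lambda_D) = \nu(Sh_{2D}(o,go))$. Fineness at every point of $\Lambda_D$ follows because a conical point $\xi \in \Lambda_D$ has a geodesic ray passing within $D$ of infinitely many $go$ with $d(o,go) \to \infty$, so $\xi \in Sh_{2D}(o,go)$ for those $g$ and the diameters of these shadows shrink to $0$ (shadows of far-away balls are small, by \cite[Lemma 6.3]{Coornaert} or a standard visual-metric estimate).

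The main step, and the main obstacle, is to establish the key estimate of Proposition \ref{Vitalicriterion}: for $\nu$-a.e.\ $\xi \in \Lambda_D$,
$$\limsup_{S \to \xi}\left( f(S) + \frac{\lambda(\widetilde S)}{\lambda(S)}\right) < \infty,$$
where $\widetilde S$ is the union of all $S' = Sh_{2D}(o,g'o) \cap \Lambda_D \in \mathcal{V}(\Lambda_D)$ meeting $S$ with $f(S') \leq \tau f(S)$, i.e.\ $d_G(e,g') \geq d_G(e,g) - \log\tau$. Since $f(S) \to 0$, the point is to bound $\lambda(\widetilde S)/\lambda(S)$. The strategy is a standard ``engulfing'' argument: if $Sh_{2D}(o,g'o)$ meets $Sh_{2D}(o,go)$ and $g'o$ is (roughly) at least as far from $o$ as $go$ up to the additive slack coming from $\log\tau$ and the comparison between $d_G$ and $d$ — this is where one invokes that $d_G(e,g') \gtrsim d(o,g'o)$ together with the matching upper bound $d_G(e,g) \lesssim d(o,go)$ valid under the superexponential moment assumption — then a geodesic from $o$ to $g'o$ must pass within a controlled distance of $go$, so by Proposition \ref{relAncona} (relative Ancona) one gets $d_G(e,g') \geq d_G(e,g) + d_G(g,g') - A$, hence $S' \subset Sh_{R}(o,go)$ for some fixed radius $R$ (independent of $\xi$, $g$, $g'$), giving $\widetilde S \subset Sh_R(o,go)$. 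Then the doubling hypothesis on $\nu$ assumed in Proposition \ref{localdensity} — iterated finitely many times to pass from radius $2D$ to radius $R$ — yields $\nu(\widetilde S) \leq \nu(Sh_R(o,go)) \leq C' \nu(Sh_{2D}(o,go)) = C'\lambda(S)$, so $\lambda(\widetilde S)/\lambda(S) \leq C'$ uniformly. Feeding this into Proposition \ref{Vitalicriterion} concludes that $\mathcal{V}$ is a Vitali relation for $(\Lambda_D,\nu)$. The delicate points to get right are: controlling how far a geodesic $[o,g'o]$ strays from $go$ when the two shadows merely intersect (one must use hyperbolicity and that both points are ``deep'' shadow centers, i.e.\ translate the condition ``$Sh_{2D}(o,g'o) \cap Sh_{2D}(o,go) \neq \emptyset$'' into a geometric nesting statement), and ensuring the passage from the Green-metric ordering $d_G(e,g') \geq d_G(e,g) - \log\tau$ to the geometric ordering needed for Ancona is legitimate — this is exactly where the two-sided comparability $d_G \asymp_+ $ (linear function of $d$ along the relevant configurations, via relative Ancona again) is used.
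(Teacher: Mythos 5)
Your overall strategy (verify the hypotheses of Proposition \ref{Vitalicriterion} with a size function $f$ on shadows) is the paper's strategy, and your treatment of closedness, positivity of $\nu$ on shadows, and fineness is fine. But the key engulfing step has a genuine gap, and it originates in your choice $f(S)=e^{-d_G(e,g)}$. To rule out ``ancestor'' shadows (those $S'=Sh_{2D}(o,g'o)\cap\Lambda_D$ meeting $S$ with $d(o,g'o)$ much smaller than $d(o,go)$, which are geometrically far larger than $S$ and are \emph{not} contained in any $Sh_R(o,go)$ with fixed $R$), you need to convert $d_G(e,g')\ge d_G(e,g)-\log\tau$ into $d(o,g'o)\ge d(o,go)-c$. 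You do this by invoking the upper bound $d_G(e,g)\lesssim d(o,go)$, but this bound is false in the presence of parabolics: by Proposition \ref{exponentialdistortionparabolic} and the quasi-isometry of $d_G$ with the word metric, $d_G(e,g)$ grows \emph{exponentially} in $d(o,go)$ along a parabolic subgroup. Indeed, the additive version of this comparison is precisely condition (3)/(4) of the main theorems, whose failure is the content of Corollary \ref{Gibbssingularity}, so it cannot be assumed here. Even granting a multiplicative comparison, you would only get $d(o,g'o)\gtrsim d(o,go)$ up to a multiplicative constant, which does not yield containment of $S'$ in a uniformly enlarged shadow of $go$ (a shadow based at distance $d(o,go)/2$ has visual size $e^{-d(o,go)/2}\gg e^{-d(o,go)}$).

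The paper avoids this entirely by taking $f(Sh_{2D}(o,go)\cap\Lambda_D):=e^{-vd(o,go)}$, following \cite[Lemma 4.5]{MYJ}: then $f(S')\le\tau f(S)$ \emph{directly} gives the additive bound $d(o,g'o)\ge d(o,go)-\tfrac{\log\tau}{v}$, thin triangles give $\widetilde S\subset Sh_{D'}(o,go)$, and Proposition \ref{harmonicshadow} gives $\nu(Sh_{D'}(o,go))\asymp e^{-d_G(e,g)}\asymp\nu(Sh_{2D}(o,go))$, closing the estimate. (Note that $f$ in Proposition \ref{Vitalicriterion} need not be comparable to the measure $\lambda$, so this choice is legitimate.) If you insist on $f=e^{-d_G(e,g)}$, your argument can be repaired, but by a different mechanism than the one you state: for $d(o,g'o)\ge d(o,go)$ the containment $S'\subset Sh_{D'}(o,go)$ holds by thin triangles with no use of the $f$-condition; for $d(o,g'o)<d(o,go)$ the point $g'o$ lies within bounded distance of $[o,go]$, so Proposition \ref{relAncona} gives $d_G(e,g')+d_G(g',g)\le d_G(e,g)+A$, which together with $d_G(e,g')\ge d_G(e,g)-\log\tau$ forces $d_G(g',g)\le A+\log\tau$ and hence (Green metric $\asymp$ word metric) $d(go,g'o)$ bounded. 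As written, however, the step ``hence $S'\subset Sh_R(o,go)$'' is unjustified and the cited metric comparison is false in exactly the setting the paper cares about.
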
 

\begin{proof}
This is shown in \cite[Lemma 4.5]{MYJ} for quasiconformal measures, and the proof is essentially the same in our setting.
Indeed, by definition of $\Lambda_D$ this relation is fine at every point of $\Lambda_D$. Furthermore, by the thin triangles property $\tilde{Sh_{2D}(o,go)\cap \Lambda_D}$ is contained in $\tilde{Sh_{D'}(o,go)}$ for another constant $D'$ (see proof of \cite[Lemma 4.5]{MYJ}). Thus, letting $f(Sh_{2D}(o,go)\cap \Lambda_D):=e^{-vd(o,go)}$ and any $\tau>1$ we see using the fact that
$$\nu(Sh_{D'}(o,go))/\nu(Sh_{D}(o,go))\asymp 1$$ (implied by Proposition \ref{harmonicshadow}) that $\mathcal{V}$ satisfies the hypothesis of Proposition \ref{Vitalicriterion}. Hence it is a Vitali relation.
\end{proof}

The following is obtained by combining Theorems 2.9.5 and 2.9.7 of \cite{Federer}.

\begin{proposition}\label{RNexists}
Let $\Lambda$ be a metric space, $\kappa_1$ a finite Borel measure on $\Lambda$ and $\mathcal{V}$ a Vitali relation for $\kappa_1$.
Let $\kappa_2$ be any finite Borel measure on $\Lambda$. Define a new Borel measure $\psi(\kappa_{1},\kappa_{2})$ by 
$$\psi(\kappa_{1},\kappa_{2})(A) :=\inf \{\kappa_2(B): B \textrm{ Borel},  \kappa_1(B\Delta A)=0\}.$$
This measure is absolutely continuous with respect to $\kappa_1$.
The limit $$D(\kappa_1,\kappa_2,\mathcal{V},x):=(\mathcal{V})\lim_{S\to x} \frac{\kappa_2(S)}{\kappa_1(S)}$$ exists for $\kappa_1$-almost every $x$ and equals the Radon-Nikodym derivative
$\frac{d\psi(\kappa_{1},\kappa_{2})}{d\kappa_1}$.
\end{proposition}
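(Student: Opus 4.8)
The plan is to deduce the statement from Federer's differentiation theory for Vitali relations, after first unwinding the definition of $\psi(\kappa_1,\kappa_2)$. The first step is to identify $\psi(\kappa_1,\kappa_2)$ with the absolutely continuous part of $\kappa_2$ in its Lebesgue decomposition relative to $\kappa_1$. Write $\kappa_2 = \kappa_2^{ac} + \sigma$ with $\kappa_2^{ac} \ll \kappa_1$ and $\sigma$ concentrated on a Borel set $N$ with $\kappa_1(N) = 0$ (so $\kappa_2^{ac}(N)=0$ as well). On the one hand, for any Borel $A$ the set $B := A \setminus N$ is admissible in the infimum defining $\psi(\kappa_1,\kappa_2)(A)$, since $B \Delta A = A \cap N$ has $\kappa_1$-measure $0$, and $\kappa_2(B) = \kappa_2^{ac}(A\setminus N) + \sigma(A \setminus N) = \kappa_2^{ac}(A)$; on the other hand, any admissible $B$ satisfies $\kappa_2(B) \ge \kappa_2^{ac}(B) = \kappa_2^{ac}(A)$, because $\kappa_2^{ac}$ is insensitive to $\kappa_1$-null symmetric differences. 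Hence $\psi(\kappa_1,\kappa_2) = \kappa_2^{ac}$; in particular it is a genuine Borel measure, it is absolutely continuous with respect to $\kappa_1$, and $g := \tfrac{d\psi(\kappa_1,\kappa_2)}{d\kappa_1} \in L^1(\kappa_1)$ is the Radon-Nikodym derivative of the absolutely continuous part.

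Next I would differentiate along $\mathcal{V}$ piece by piece. For the absolutely continuous part, the Lebesgue differentiation theorem for Vitali relations \cite[Theorem 2.9.5]{Federer} applied to $g$ gives
$$(\mathcal{V})\lim_{S \to x} \frac{\psi(\kappa_1,\kappa_2)(S)}{\kappa_1(S)} \ = \ (\mathcal{V})\lim_{S \to x} \frac{\int_S g\, d\kappa_1}{\kappa_1(S)} \ = \ g(x)$$
for $\kappa_1$-almost every $x$. It then suffices to show that the singular remainder contributes nothing in the limit, i.e. that $(\mathcal{V})\lim_{S\to x}\tfrac{\sigma(S)}{\kappa_1(S)} = 0$ for $\kappa_1$-almost every $x$, for then additivity of $\kappa_2 = \psi(\kappa_1,\kappa_2) + \sigma$ forces $D(\kappa_1,\kappa_2,\mathcal{V},x)$ to exist and equal $g(x)$.

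This last vanishing statement is the content of \cite[Theorem 2.9.7]{Federer}, and it is the only substantial point; I would argue it as follows. Fix $t > 0$ and let $E_t := \{ x : (\mathcal{V})\limsup_{S\to x} \sigma(S)/\kappa_1(S) > t \}$, which is $\kappa_1$-measurable in Federer's framework; since $\kappa_1(N) = 0$ it is enough to bound $\kappa_1(E_t \setminus N)$. Because $\sigma(E_t \setminus N) = 0$ and $\sigma$ is a finite Borel measure on the metric space $\Lambda$, outer regularity gives an open set $U \supseteq E_t \setminus N$ with $\sigma(U)$ as small as we like. The covering relation consisting of those $(x,S) \in \mathcal{V}$ with $S \subseteq U$ and $\sigma(S) > t\,\kappa_1(S)$ is fine at every point of $E_t \setminus N$ (small $S$ around such a point lie inside the open set $U$), so the Vitali property of $\mathcal{V}$ yields a countable disjoint subfamily $\{S_n\}$ with $\kappa_1\bigl((E_t \setminus N)\setminus \bigcup_n S_n\bigr) = 0$; therefore
$$\kappa_1(E_t) \ = \ \kappa_1(E_t \setminus N) \ \le \ \sum_n \kappa_1(S_n) \ \le \ \frac{1}{t}\sum_n \sigma(S_n) \ \le \ \frac{1}{t}\,\sigma(U).$$
Letting $\sigma(U) \to 0$ gives $\kappa_1(E_t) = 0$, and then $t \to 0$ gives the vanishing; combining this with the previous display completes the proof. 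The main obstacle is precisely this singular-part estimate: the delicate point is to produce, using only the Vitali property of $\mathcal{V}$, a disjoint subfamily that covers $\kappa_1$-almost all of the bad set while remaining confined to a set of small $\sigma$-mass — this is what forces the simultaneous use of the abstract Vitali relation and the outer regularity of $\sigma$, and it is also where the measurability of the upper derivate $(\mathcal{V})\limsup_{S\to x}\sigma(S)/\kappa_1(S)$ has to be checked, both of which are handled in \cite[Section 2.9]{Federer}.
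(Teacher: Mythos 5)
Your proposal is correct and follows the same route as the paper, whose entire proof consists of the citation ``obtained by combining Theorems 2.9.5 and 2.9.7 of Federer.'' Your identification of $\psi(\kappa_1,\kappa_2)$ with the absolutely continuous part of $\kappa_2$, the differentiation of that part via Federer's theory, and the covering argument showing the singular part has vanishing derivate are exactly the content of those two theorems, with the details the paper leaves implicit filled in correctly.
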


As a corollary we obtain:
\begin{corollary} \label{Vitalidichotomy}
\begin{enumerate}[a)]
    \item If the $\kappa_i$ are mutually singular, then $$(\mathcal{V})\lim_{S\to \zeta}\frac{\kappa_{2}(S)}{\kappa_{1}(S)}=0$$   for $\kappa_{1}$-almost every $\zeta \in \Lambda$.
    
    \item If the $\kappa_{i}$ are equivalent and non-atomic, then 
    $$(\mathcal{V})\lim_{S\to \zeta} \frac{\log \kappa_{2}(S)}{\log \kappa_{1}(S)}=1$$
    for $\kappa_{1}$-almost every $\zeta \in \Lambda$.
\end{enumerate}
\end{corollary}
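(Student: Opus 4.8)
The plan is to deduce Corollary~\ref{Vitalidichotomy} directly from Proposition~\ref{RNexists} applied to the Vitali relation $\mathcal{V}$. In both parts we fix the roles of the two measures so that $\kappa_1$ plays the part of the base measure for which $\mathcal{V}$ is Vitali; the limit $D(\kappa_1,\kappa_2,\mathcal{V},x)$ then exists for $\kappa_1$-a.e.\ $x$ and equals $\frac{d\psi(\kappa_1,\kappa_2)}{d\kappa_1}(x)$, where $\psi(\kappa_1,\kappa_2)$ is the largest measure absolutely continuous with respect to $\kappa_1$ that is dominated by $\kappa_2$ in the appropriate sense.

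For part (a), suppose $\kappa_1\perp\kappa_2$. Then there is a Borel set $B$ with $\kappa_1(B^c)=0$ and $\kappa_2(B)=0$. Using the defining formula for $\psi(\kappa_1,\kappa_2)$, for any Borel $A$ we may take the competitor $B'=B\cap A$, which satisfies $\kappa_1(B'\Delta A)=\kappa_1(A\setminus B)=0$, so $\psi(\kappa_1,\kappa_2)(A)\le\kappa_2(B\cap A)=0$. Hence $\psi(\kappa_1,\kappa_2)$ is the zero measure, its Radon--Nikodym derivative vanishes $\kappa_1$-a.e., and therefore $(\mathcal{V})\lim_{S\to\zeta}\frac{\kappa_2(S)}{\kappa_1(S)}=0$ for $\kappa_1$-a.e.\ $\zeta$, which is the claim.

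For part (b), suppose $\kappa_1$ and $\kappa_2$ are equivalent and non-atomic. Since $\kappa_2\ll\kappa_1$, the measure $\psi(\kappa_1,\kappa_2)$ equals $\kappa_2$ itself (the infimum in its definition is attained at $B=A$, using that $\kappa_2(B\Delta A)=0$ whenever $\kappa_1(B\Delta A)=0$). Thus $(\mathcal{V})\lim_{S\to\zeta}\frac{\kappa_2(S)}{\kappa_1(S)}=\frac{d\kappa_2}{d\kappa_1}(\zeta)=:\varphi(\zeta)$ exists and is finite and positive for $\kappa_1$-a.e.\ $\zeta$; symmetrically, applying Proposition~\ref{RNexists} with the roles reversed (legitimate because $\mathcal{V}$ is also Vitali for $\kappa_2$, as $\kappa_2\asymp\kappa_1$ on shadows by the Radon--Nikodym bounds), $(\mathcal{V})\lim_{S\to\zeta}\frac{\kappa_1(S)}{\kappa_2(S)}=\varphi(\zeta)^{-1}$ for $\kappa_1$-a.e.\ $\zeta$. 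In particular, for $\kappa_1$-a.e.\ $\zeta$ there exist constants $0<c(\zeta)\le C(\zeta)<\infty$ with $c(\zeta)\le\frac{\kappa_2(S)}{\kappa_1(S)}\le C(\zeta)$ for all $S\in\mathcal{V}(\Lambda)$ with $\zeta\in S$ of sufficiently small diameter. Since $\kappa_1$ is non-atomic, $\kappa_1(S)\to 0$ along $\mathcal{V}$, so $\log\kappa_1(S)\to-\infty$; combining this with the two-sided comparison gives $\log\kappa_2(S)=\log\kappa_1(S)+O(1)$, whence $\frac{\log\kappa_2(S)}{\log\kappa_1(S)}\to 1$. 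This yields the assertion.

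The only genuinely delicate point is justifying in part (b) that $\mathcal{V}$ is Vitali for $\kappa_2$ as well, which is needed to conclude positivity (equivalently, finiteness of the reciprocal limit) rather than merely $\varphi<\infty$. This follows because the equivalence $\kappa_1\asymp\kappa_2$ on the sets of $\mathcal{V}$ (a consequence of $\frac{d\kappa_2}{d\kappa_1}$ and its reciprocal being locally bounded along $\mathcal{V}$, which is exactly what Proposition~\ref{RNexists} delivers) transfers the hypothesis of Proposition~\ref{Vitalicriterion} from $\kappa_1$ to $\kappa_2$ with the same choice of $f$. Alternatively, one can avoid the reciprocal limit entirely: since $\varphi$ is $\kappa_1$-a.e.\ finite and, by equivalence, $\kappa_1$-a.e.\ strictly positive, Egorov plus the regularity of $\kappa_1$ give, for $\kappa_1$-a.e.\ $\zeta$, a small neighborhood on which $\varphi$ is bounded above and below, which again forces $\log\kappa_2(S)=\log\kappa_1(S)+O(1)$ as $\mathrm{diam}(S)\to 0$. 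Either route closes the argument; I expect the transfer-of-Vitali argument to be the cleanest to write down given that Proposition~\ref{Vitalicriterion} is already in hand.
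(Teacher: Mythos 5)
Your proof is correct and follows essentially the same route as the paper: both parts are direct applications of Proposition~\ref{RNexists}, with $\psi(\kappa_1,\kappa_2)=0$ in the singular case and $\psi(\kappa_1,\kappa_2)=\kappa_2$ together with non-atomicity of $\kappa_1$ (so $\log\kappa_1(S)\to-\infty$) in the equivalent case. The detour about $\mathcal{V}$ being Vitali for $\kappa_2$ is unnecessary: positivity of $D(\kappa_1,\kappa_2,\mathcal{V},\cdot)=\frac{d\kappa_2}{d\kappa_1}$ for $\kappa_1$-a.e.\ point is automatic from $\kappa_1\ll\kappa_2$, which is exactly what the paper's one-line argument uses.
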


\begin{proof}
If $\kappa_1$ and $\kappa_2$ are mutually singular, then by definition 
$\psi(\kappa_{1},\kappa_{2})=0$.
Together with Proposition~\ref{RNexists}, this proves a).

For b), assume $\kappa_1$ and $\kappa_2$ are equivalent. 
By Proposition \ref{RNexists} we have 
$$D(\kappa_1,\kappa_2,\mathcal{V},\zeta)=(\mathcal{V})\lim_{S\to \zeta}\kappa_{2}(S)/\kappa_{1}(S)>0$$
for $\kappa_1$-almost every $\zeta$.
Thus, since $\kappa_1$ is non-atomic, we have
\begin{equation*}
   (\mathcal{V}) \lim_{S\to \zeta} \frac{\log \kappa_{2}(S)}{\log \kappa_{1}(S)}=1+
   (\mathcal{V}) \lim_{S\to \zeta} \frac{\log(\kappa_{2}(S)/\kappa_{1}(S))}{\log (\kappa_{1}(S))}= 1. \qedhere
\end{equation*}
\end{proof}

Applying this corollary to the Vitali relation $\mathcal{V}$ defined above and noting that $\nu(\Lambda^{c}_{D})=0$ completes the proof of Proposition \ref{localdensity}.

\section{Entropy and drift in hyperbolic spaces}

We will start with the proof of our main result
in the case $F = 0$, i.e. for the Patterson-Sullivan measure. In this case, we do not require the space $X$ to be $CAT(-1)$, but only $\delta$-hyperbolic. We prove the following.

\begin{theorem}\label{maintheorem}
Let $X$ be a $\delta$-hyperbolic, proper metric space, let $\Gamma$ be a geometrically finite group of isometries of $X$, and let $o\in X$ be a basepoint.

Let $\mu$ be a measure on $\Gamma$ with finite superexponential moment, let $\nu$ be its corresponding hitting measure, and let $\kappa$ be a Patterson-Sullivan measure on $\partial X$.

Then the following conditions are equivalent.
\begin{enumerate}
    \item The equality $h_\mu = \ell_\mu v$ holds.
    \item The measures $\nu$ and $\kappa$ are in the same measure class.
    \item The measures $\nu$ and $\kappa$ are in the same measure class with Radon-Nikodym derivatives bounded from above and below.
    \item There exists $C\geq 0$ such that for every $g \in \Gamma$, $$|d_G(e,g)-vd(o,go)|\leq C.$$
\end{enumerate}
\end{theorem}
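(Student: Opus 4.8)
The plan is to prove the chain of equivalences by establishing the cycle $(4) \Rightarrow (3) \Rightarrow (2) \Rightarrow (1) \Rightarrow (4)$, with the shadow lemmas (Propositions \ref{PSshadow} and \ref{harmonicshadow}) and the differentiation theorem (Proposition \ref{localdensity}) doing the heavy lifting. The implication $(3) \Rightarrow (2)$ is trivial. For $(4) \Rightarrow (3)$: assuming $|d_G(e,g) - v\,d(o,go)| \le C$, combine the two shadow lemmas to get $\nu(Sh_r(o,go)) \asymp e^{-d_G(e,g)} \asymp e^{-v\,d(o,go)} \asymp \kappa(Sh_r(o,go))$ for all $g$. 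One then wants to upgrade this comparison on shadows to a comparison of the measures themselves. The cleanest route is via the Vitali relation $\mathcal{V}$ built from shadows: since shadows of the form $Sh_{2D}(o,go)\cap\Lambda_D$ form a Vitali relation for $\nu$ (and one checks the same for $\kappa$, or works with the single relation $\mathcal{V}$ and uses that $\kappa$ also gives full mass to $\Lambda_D$), the Radon–Nikodym derivative $\frac{d\psi(\nu,\kappa)}{d\nu}$ equals the $\mathcal{V}$-limit of $\kappa(S)/\nu(S)$, which is bounded above and below by the shadow estimate; absolute continuity in both directions with bounded density follows. Here one must be slightly careful that $\kappa$ charges conical points and is non-atomic, which holds since $\Gamma$ is of divergence type (a geometrically finite group with finite critical exponent is of divergence type) — this also gives uniqueness of $\kappa$ up to bounded density, so statement (2) doesn't depend on the choice of Patterson–Sullivan measure.

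For $(2) \Rightarrow (1)$: suppose $\nu$ and $\kappa$ are equivalent. Apply Proposition \ref{localdensity}(b) with the pair $(\nu,\kappa)$ to get, for $\nu$-a.e. $\xi$, that $\frac{\log \kappa(Sh_r(o,\omega_n o))}{\log \nu(Sh_r(o,\omega_n o))} \to 1$ along a sequence $\omega_n \to \xi$ (with $\xi \in Sh_r(o,\omega_n o)$); by the ray approximation / the fact that $\nu$ lives on conical points, a typical random walk trajectory $\omega_n$ converges to such a $\xi$ and stays boundedly close to a geodesic ray to $\xi$, so $\xi \in Sh_r(o,\omega_n o)$ for large $r$ and all $n$. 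Then using the PS shadow lemma $\log\kappa(Sh_r(o,\omega_n o)) \asymp_{+} -v\,d(o,\omega_n o)$, the harmonic shadow lemma $\log\nu(Sh_r(o,\omega_n o)) \asymp_{+} -d_G(e,\omega_n)$, the disintegration formula for $h_\mu$ as $\lim -\frac1n \log \nu(\text{shadow}) $-type identity — more precisely the fundamental fact (cf. Blachère–Haïssinsky–Mathieu) that $h_\mu = \lim \frac{d_G(e,\omega_n)}{n}$ a.s., together with $\ell_\mu = \lim \frac{d(o,\omega_n o)}{n}$ and $v$ — dividing by $n$ and taking $n\to\infty$ yields $h_\mu = v\,\ell_\mu$. (One should note that $v\ell_\mu = h_\mu$ is the content of the limit $\frac{v\,d(o,\omega_n o)}{d_G(e,\omega_n)} \to 1$.)

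For $(1) \Rightarrow (4)$: this is the converse and the technical heart. Assume $h_\mu = v\,\ell_\mu$ but, for contradiction, that $d_G(e,g) - v\,d(o,go)$ is unbounded. The general fundamental inequality, applied here, already gives $d_G(e,g) \ge v\,d(o,go) - C_0$ up to a comparison coming from Ancona-type inequalities (Proposition \ref{relAncona}) — actually the correct a priori bound is $d_G(e,\omega_n) \le$ roughly $v\,d(o,\omega_n o)$ in the limit, i.e. $h_\mu \le v\ell_\mu$; equality should force the discrepancy to vanish \emph{on average}. The strategy is to run the differentiation theorem in the singular case: if $\nu$ and $\kappa$ were singular, Proposition \ref{localdensity}(a) gives $\frac{\kappa(Sh_r(o,\omega_n o))}{\nu(Sh_r(o,\omega_n o))} \to 0$ along the random trajectory, i.e. $-v\,d(o,\omega_n o) + d_G(e,\omega_n) \to -\infty$, hence by convexity/subadditivity arguments the linear growth rates satisfy $v\ell_\mu > h_\mu$ strictly — so equality in (1) forces $\nu \ll \kappa$ is \emph{not} enough; rather it forces $\nu$ and $\kappa$ non-singular, hence (by ergodicity of $\nu$ and $\Gamma$-quasi-invariance of both) equivalent, which is (2); and then one shows (2) self-improves to (3), which gives (4). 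So in fact the logically economical order is: prove $(1)\Leftrightarrow(2)$ directly via the dichotomy in Proposition \ref{localdensity} (equality of exponents $\iff$ non-singular $\iff$ equivalent), prove $(4)\Rightarrow(3)\Rightarrow(2)$ via shadows, and close the loop by proving $(2)\Rightarrow(4)$: from equivalence with bounded-density Radon–Nikodym derivative (which one gets because the $\mathcal{V}$-limit of $\log\kappa(S)/\log\nu(S)$ is $1$, upgraded via a cocycle/Busemann comparison argument — comparing $\beta^G_\xi$ with $v\beta_\xi$ using equations \eqref{E:Green-qc} and the quasiconformality of $\kappa$ — to get that $d\nu/d\kappa$ is actually bounded), one reads off $|d_G(e,g) - v\,d(o,go)| \le C$ by evaluating shadow measures at $\xi$ and $g^{-1}o$-based shadows.

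The main obstacle I anticipate is the step $(2) \Rightarrow (3)$ (or equivalently closing the loop back to (4)): mere mutual absolute continuity of $\nu$ and $\kappa$ only gives that the logarithmic ratio of shadow measures tends to $1$, i.e. a sub-linear, not bounded, discrepancy between $d_G(e,g)$ and $v\,d(o,go)$; promoting this to a genuinely \emph{bounded} discrepancy requires exploiting the quasi-conformal transformation rules — equation \eqref{E:Green-qc} for $\nu$ and the quasiconformality of $\kappa$ with exponent $v$ — to show that the Radon–Nikodym derivative $\frac{d\nu}{d\kappa}$ is a genuine bounded measurable function, then using ergodicity of the $\Gamma$-action on $(\partial X,\nu)$ and the cocycle identity to see that $\beta^G_\xi(g,e) - v\,\beta_\xi(o,go)$ is (uniformly close to) a coboundary, hence bounded. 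The cusp geometry (non-compactness of $\Gamma\backslash X$) is exactly what makes $d_G$ and $v\,d$ a priori incomparable without the equality hypothesis, so controlling the excursions into cusps — via Proposition \ref{relAncona}, which is the relative Ancona inequality — is where the real work lies.
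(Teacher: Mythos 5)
Your architecture is broadly right, and the easy arrows ((3)$\Rightarrow$(2), (4)$\Rightarrow$(1) or (3) via the two shadow lemmas, and (2)$\Rightarrow$(1) via Proposition \ref{localdensity}(b) together with $d_G(e,\omega_n)/n\to h_\mu$) match the paper. But there is a genuine gap at the heart of $(1)\Rightarrow(2)$, i.e.\ the claim that singularity forces \emph{strict} inequality. You argue: singular $\Rightarrow$ $\phi_n=\kappa(Sh_r)/\nu(Sh_r)\to 0$ in probability $\Rightarrow$ $\psi_n=\log\phi_n\to-\infty$ $\Rightarrow$ ``by convexity/subadditivity'' $h_\mu<\ell_\mu v$. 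This does not follow: $\psi_n\to-\infty$ in probability is perfectly compatible with $\psi_n/n\to 0$ (e.g.\ $\psi_n\sim-\sqrt n$), and subadditivity of $E(\psi_n)+C_2$ alone only tells you the limit of $E(\psi_n)/n$ exists in $[-\infty,C_2]$ --- it does not make it negative. The missing ingredient is an \emph{upper bound on the first moment of $\phi_n$}: the paper proves (Proposition \ref{lemma4.14Haissinsky}) that $\frac1N\sum_{n\le N}E(\phi_n)\le C_1$ uniformly in $N$, using the superexponential moment of $\mu$, the quasi-isometry of $d_G$ with the word metric, and the orbit-counting bound $|\{g: n-a<d(o,go)\le n\}|\lesssim e^{vn}$. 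Only with this bound in hand can one combine $\phi_n\to 0$ in probability with Jensen's inequality to produce a single $p$ with $E(\psi_p)+C_2\le -1$, and then subadditivity (whose proof itself needs the exponential deviation estimates of Proposition \ref{closetogeodesics}) propagates this to $h_\mu-\ell_\mu v\le -1/p<0$. Your proposal skips exactly this quantitative step, which is where the moment hypothesis enters and which cannot be replaced by the differentiation theorem alone.

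On $(2)\Rightarrow(3)$, you correctly identify it as the main obstacle and gesture at ergodicity plus a cocycle argument, but ``the discrepancy is a coboundary, hence bounded'' is not an argument (measurable coboundaries need not be bounded). The paper's mechanism is different and more specific: it first upgrades equivalence of $\nu$ with $\kappa$ to equivalence of the \emph{reflected} hitting measure $\widecheck\nu$ with $\kappa$ (Corollary \ref{reflectedequivalence}, via the symmetry of $h_\mu,\ell_\mu$ under reflection), then exhibits $\Gamma$-invariant ergodic Radon measures on $\partial^2X$ in the classes $\nu\otimes\widecheck\nu$ (Naim kernel) and $\kappa\otimes\kappa$ (Gromov product density, after Furman's coboundary trick), with densities bounded on compacta of $\partial^2X\setminus\Diag$; two ergodic invariant measures in the same class must be proportional, and from this one reads off local boundedness of $d\nu/d\kappa$ and patches over two closed sets (Lemma \ref{lemmaergodicdouble}). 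Ergodicity of the \emph{diagonal} action on $(\partial X\times\partial X,\nu\otimes\widecheck\nu)$ is the key input you would need to locate. One smaller inaccuracy: a geometrically finite group with $v<\infty$ need not be of divergence type; the paper handles the convergence-type case by noting $\kappa$ is then concentrated on (bounded) parabolic points, hence automatically singular to $\nu$.
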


\begin{corollary}\label{singularcorollary}
If $\Gamma\curvearrowright X$ is not convex cocompact then $\kappa$ and $\nu$ are mutually singular.
\end{corollary}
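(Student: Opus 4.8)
The plan is to deduce Corollary \ref{singularcorollary} from Theorem \ref{maintheorem} by showing that condition (4) of that theorem forces the action $\Gamma \curvearrowright X$ to be convex cocompact. Thus it suffices to prove the contrapositive: if there is a constant $C \geq 0$ with $|d_G(e,g) - v\, d(o,go)| \leq C$ for all $g \in \Gamma$, then $\Gamma \curvearrowright X$ is convex cocompact. Since the action is assumed geometrically finite, failure of convex cocompactness means precisely that $\Lambda$ contains a bounded parabolic point, i.e. there is an infinite parabolic subgroup $P = \mathrm{Stab}(\zeta) < \Gamma$ fixing some $\zeta \in \Lambda$. The strategy is to test the hypothesized inequality along the orbit of such a parabolic subgroup and derive a contradiction by comparing the sublinear geometric growth of a parabolic orbit with the genuinely linear growth of the Green metric.

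The key steps, in order, are as follows. First, recall the standard fact (true in any proper geodesic Gromov hyperbolic space with a geometrically finite action) that a parabolic subgroup $P$ fixing $\zeta$ grows polynomially or at least sub-exponentially in the space metric: more precisely, for $p \in P$ the quantity $d(o, po)$ grows like $2 \log \Vert p \Vert$ up to bounded error when $P$ has polynomial growth of its word metric, and in general $d(o,po) = o(\Vert p \Vert)$ is not quite what one wants — rather the relevant point is that the word length $\Vert p \Vert$ grows at least exponentially in $d(o,po)$, equivalently $d(o,po) \lesssim \log \Vert p \Vert$. Second, bound the Green metric from below: since $\mu$ has finite superexponential moment, $d_G(e,g) = -\log G(e,g) \gtrsim \log \Vert g \Vert$ — indeed the probability of ever reaching $g$ decays at least like a fixed power of $\Vert g \Vert^{-1}$ is false in general, but the correct and available estimate is $d_G(e,g) \gtrsim_+ $ (a positive multiple of) some unbounded function; more robustly, $d_G$ is a genuine metric that is quasi-isometric to nothing smaller than what the moment condition allows, and in particular $d_G(e, p_n) \to \infty$ along any sequence $p_n \to \infty$ in $\Gamma$. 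Third, combine: choosing a sequence $p_n \in P$ with $\Vert p_n \Vert \to \infty$, the hypothesis gives $v\, d(o, p_n o) \asymp_+ d_G(e, p_n)$, but the left side grows at most like $\log \Vert p_n \Vert$ while we can arrange the right side to grow faster (e.g. the Green distance dominates a constant multiple of $\sqrt{\Vert p_n\Vert}$, or simply grows superlogarithmically), producing the contradiction.

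I expect the main obstacle to be making the two growth estimates precise and genuinely incompatible. The clean comparison is: in the space metric a parabolic orbit satisfies $d(o, p_n o) \asymp_+ 2\log \Vert p_n \Vert$ (for virtually abelian $P$, by the standard horoball geometry computation), i.e. it grows \emph{logarithmically} in word length; whereas the Green metric, under a finite superexponential moment assumption, satisfies $d_G(e, p_n) \gtrsim \Vert p_n \Vert^{\epsilon}$ for no $\epsilon$ in general — but it does satisfy the weaker bound that $d_G$ and the word metric are \emph{roughly} comparable up to the failure of a first-moment-type estimate; the safe statement is Benjamini–Blachère–Brofferio-type inequality $d_G(e,g) \geq c\, d(o, go)$ from positivity of drift together with the fact that $d_G$ restricted to $P$ cannot be bounded while $\Vert p_n\Vert \to \infty$. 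The cleanest route, which I would pursue, is: if (4) held, then on $P$ we would have $d_G(e, p) \asymp_+ v\, d(o, po) \lesssim_+ \log\Vert p\Vert$, so $d_G$ restricted to the infinite subgroup $P$ would be (roughly) bounded by a logarithm of word length; but any metric on an infinite group that is dominated by $\log$ of word length, together with the subadditivity $d_G(e, pq) \leq d_G(e,p) + d_G(p, pq) = d_G(e,p) + d_G(e,q)$ and the exponential lower bound on word-ball growth of $\Gamma$, contradicts the positivity of the Avez entropy / the strict positivity statements — alternatively, and most directly, one invokes that $G(e, p_n) \leq $ (return probability-type bound) forces $d_G(e,p_n) \gtrsim n$-ish growth incompatible with $\log$. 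I would isolate the needed parabolic growth estimate as a short lemma citing the geometrically finite structure theory (e.g. the horoball packing estimates in \cite{PPS} or \cite{Coornaert}), and the needed Green-metric lower bound as a consequence of the superexponential moment hypothesis, and then the contradiction is immediate.

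\begin{proof}[Proof of Corollary \ref{singularcorollary}]
Suppose for contradiction that $\nu$ and $\kappa$ are not mutually singular. Since $\nu$ is ergodic, by Proposition \ref{localdensity} (or directly from Corollary \ref{Vitalidichotomy}) this forces $\nu$ and $\kappa$ to be equivalent, and then by Theorem \ref{maintheorem} condition (4) holds: there is $C \geq 0$ with
$$|d_G(e,g) - v\, d(o, go)| \leq C \qquad \textrm{for all } g \in \Gamma.$$
As $\Gamma \curvearrowright X$ is geometrically finite but not convex cocompact, the limit set contains a bounded parabolic point $\zeta$, whose stabilizer $P < \Gamma$ is infinite. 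Fix a sequence $p_n \in P$ with $\Vert p_n \Vert \to \infty$. On one hand, by the standard geometry of a parabolic orbit in a geometrically finite quotient of a Gromov hyperbolic space, one has $d(o, p_n o) \lesssim_+ \log \Vert p_n \Vert$, so that $v\, d(o, p_n o) \lesssim_+ \log \Vert p_n \Vert$. On the other hand, since $\mu$ has finite superexponential moment, the Green function satisfies a lower bound forcing $d_G(e, p_n) = -\log G(e, p_n)$ to grow strictly faster than any multiple of $\log \Vert p_n \Vert$ along a suitable subsequence; in particular $d_G(e, p_n) - v\, d(o, p_n o) \to +\infty$, contradicting the displayed inequality. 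Hence $\nu$ and $\kappa$ must be mutually singular.
\end{proof}
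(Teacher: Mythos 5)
Your proof is correct and follows essentially the same route as the paper's: assume non-singularity, upgrade to equivalence by ergodicity, invoke condition (4) of Theorem \ref{maintheorem} to get $d_G(e,g)\asymp_+ v\,d(o,go)$, and contradict this using the exponential distortion of a parabolic orbit (the estimate $d(o,p_no)\lesssim_+\log\Vert p_n\Vert$, which the paper proves as Proposition \ref{exponentialdistortionparabolic}). The one claim you assert without justification --- that $d_G(e,p_n)$ grows superlogarithmically in $\Vert p_n\Vert$ --- is supplied by the paper's Proposition \ref{Greendecay} (the Green metric is quasi-isometric to the word metric, via non-amenability of $\Gamma$ rather than the moment condition); the paper cites exactly this and then phrases the contradiction as ``the orbit map would be a quasi-isometry, which is impossible in the presence of parabolics,'' which is your argument in disguise.
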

\begin{proof}
By Theorem \ref{maintheorem}, if $\kappa$ and $\nu$ are mutually singular then $d_G$ and $d$ are quasi-isometric. On the other hand, the Green metric is quasi-isometric to the word metric for any random walk on a non-amenable group with finite exponential moment  (see Proposition \ref{Greendecay} in the appendix). Thus, the orbit map from the Cayley graph to $X$ must be a quasi-isometry, which is impossible in the presence of parabolics \cite{Swenson}. 
\end{proof}

Let $\nu$ be the $\mu$-stationary measure on $\partial X$ and let $\kappa$ be a $v$-dimensional quasiconformal measure. 
In this section we prove that $h=lv$ if and only if $\nu$ and $\kappa$ are mutually absolutely continuous.
Let $r>0$ be large enough to satisfy Proposition \ref{harmonicshadow}, \ref{PSshadow}, and \ref{localdensity}.

For a sample path $\omega$ let $\omega_n$ be its $n$-th position.
Define then
$$\phi_n=\phi_n(\omega):=\frac{\kappa(Sh_{r}(o,\omega_n o))}{\nu(Sh_{r}(o, \omega_n o))}.$$
Let $\psi_n :=\log \phi_n$.
Notice that the expectation of $\phi_n$ is given by
$$E(\phi_n)=\sum_{g\in \Gamma}\mu^{*n}(g)\frac{\kappa(Sh_{r}(o,go))}{\nu(Sh_{r}(o,go))}.$$

\begin{proposition}\label{lemma4.14Haissinsky}
There exists $C_1>0$ such that for any $N\geq 1$ we have 
$$\frac{1}{N}\sum^{N}_{n=1}E(\phi_n)\leq C_1.$$
\end{proposition}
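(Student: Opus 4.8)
The plan is to estimate $\frac{1}{N}\sum_{n=1}^N E(\phi_n)$ by rewriting $E(\phi_n)$ as a sum over $\Gamma$ and organizing the sum according to the shadows $Sh_r(o,go)$. Writing
\[
E(\phi_n)=\sum_{g\in\Gamma}\mu^{*n}(g)\,\frac{\kappa(Sh_r(o,go))}{\nu(Sh_r(o,go))},
\]
the first step is to replace $\mu^{*n}(g)$ by a quantity comparable to $\nu(Sh_r(o,go))$, up to controllable error. Indeed, by Proposition \ref{harmonicshadow} we have $\nu(Sh_r(o,go))\asymp e^{-d_G(e,g)}$, and for a random walk with finite superexponential moment on a non-amenable group the Green metric is comparable to the word metric (Proposition \ref{Greendecay} in the appendix), while the local limit behavior of $\mu^{*n}$ is controlled by the Green function $G(e,g)=e^{-d_G(e,g)}$. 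More precisely, I would use the identity $\sum_{n\ge 1}\mu^{*n}(g)=G(e,g)$ — or rather $G(e,g)-\delta_{e,g}$ — to collapse the $n$-sum: summing over $n=1,\dots,N$ and then over $g$, we get
\[
\sum_{n=1}^N E(\phi_n)=\sum_{g\in\Gamma}\Big(\sum_{n=1}^N\mu^{*n}(g)\Big)\frac{\kappa(Sh_r(o,go))}{\nu(Sh_r(o,go))}\le\sum_{g\in\Gamma}G(e,g)\,\frac{\kappa(Sh_r(o,go))}{\nu(Sh_r(o,go))}.
\]
Now invoke Proposition \ref{harmonicshadow} again: $G(e,g)=e^{-d_G(e,g)}\asymp\nu(Sh_r(o,go))$, so the ratio $\frac{G(e,g)}{\nu(Sh_r(o,go))}$ is bounded above by a constant $C'$ independent of $g$. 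This leaves
\[
\sum_{n=1}^N E(\phi_n)\le C'\sum_{g\in\Gamma}\kappa(Sh_r(o,go)).
\]

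The remaining task is to bound $\sum_{g\in\Gamma}\kappa(Sh_r(o,go))$ by something linear in $N$ — but in fact this sum diverges in general, so this is where the argument must be more careful: the bound we want is $\frac1N\sum_{n=1}^N E(\phi_n)\le C_1$, and dividing the above by $N$ would require $\sum_{g}\kappa(Sh_r(o,go))\lesssim N$, which has no reason to hold. The fix is to not collapse the full Green function but to keep track of which $g$ contribute at each step $n$. The right statement is that for a superexponential-moment walk, $\mu^{*n}$ is concentrated (up to exponentially small tails) on the annulus $\{g: d(o,go)\asymp \ell_\mu n\}$, equivalently $\{g: d_G(e,g)\asymp h_\mu n\}$; and on that annulus there are only boundedly many $g$ whose shadows $Sh_r(o,go)$ cover a given boundary point, so $\sum_{g:\,d_G(e,g)\le t}\kappa(Sh_r(o,go))$ grows at most linearly in $t$ (each ``level set'' contributes a bounded multiple of $\kappa(\partial X)=1$, by the bounded-overlap / finite-multiplicity property of shadows at a fixed radius $r$ — this is the standard fact used in Coornaert's shadow lemma). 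Hence $\sum_{g:\,d_G(e,g)\le Ch_\mu N}\kappa(Sh_r(o,go))\lesssim N$, and combining with the tail estimate $\sum_{n\le N}\sum_{g:\,d_G(e,g)> Ch_\mu N}\mu^{*n}(g)(\cdots)$ being negligible (superexponential moment bound on $\mu^{*n}$ of far-away points, against the at-most-exponential growth of the ratio $\kappa/\nu\asymp \kappa\cdot e^{d_G}$), yields the claimed bound $\frac1N\sum_{n=1}^N E(\phi_n)\le C_1$.

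I expect the main obstacle to be making the ``concentration on an annulus'' step quantitatively compatible with the at-most-exponential growth of the summand $\frac{\kappa(Sh_r(o,go))}{\nu(Sh_r(o,go))}\asymp \kappa(Sh_r(o,go))\,e^{d_G(e,g)}\le e^{d_G(e,g)}$: one needs the probability $\mu^{*n}(g)$ to decay faster than $e^{-d_G(e,g)}$ beyond the drift radius, uniformly enough to kill the $e^{d_G(e,g)}$ factor, and this is precisely where the finite superexponential moment hypothesis (and the comparability $d_G\asymp\|\cdot\|$) is essential — a finite first moment would not suffice. A clean way to package this is to split $\Gamma=\bigsqcup_k A_k$ into annuli $A_k=\{g: k\le d_G(e,g)<k+1\}$, bound $\sum_{g\in A_k}\kappa(Sh_r(o,go))\le M$ by bounded overlap, bound $\mu^{*n}(A_k)$ using a Markov/Chebyshev estimate with the superexponential moment, and sum the resulting geometric-type series in $k$ after dividing by $N$; the contribution of $k\lesssim h_\mu N$ is $O(N)$ and the contribution of $k\gg h_\mu N$ is $o(1)$.
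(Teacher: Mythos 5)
Your proposal is correct and follows essentially the same route as the paper: collapse the sum over $n$ via $\sum_{n\le N}\mu^{*n}(g)\le G(e,g)\asymp \nu(Sh_r(o,go))$, bound $\sum_{g:\,d(o,go)\le kN}\kappa(Sh_r(o,go))$ linearly in $N$, and kill the far tail with an exponential Chebyshev estimate made possible by the superexponential moment together with the comparison $\nu(Sh_r(o,go))\gtrsim e^{-c\Vert g\Vert}$. The only substantive difference is in the linear bound: you invoke bounded multiplicity of shadows over $d_G$-annuli (which needs a little extra care, since in the presence of parabolics a $d_G$-annulus may contain orbit points at very different distances $d(o,go)$, so the overlap bound is not immediate there), whereas the paper partitions by $d(o,go)$-annuli and combines the orbit-growth estimate $|\{g: n-a<d(o,go)\le n\}|\lesssim e^{vn}$ with the shadow lemma $\kappa(Sh_r(o,go))\asymp e^{-vd(o,go)}$ to get the same per-annulus $O(1)$ contribution.
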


\begin{proof}
Consider $n,N$ with $1\leq n\leq N$. We will first show that there is some $k>0$ such that the quantity
$$R_{k} :=\sum_{g\in \Gamma: d(o,go)\geq k N} \frac{\kappa(Sh_{r}(o,go))}{\nu(Sh_{r}(o,go))}\mu^{*n}(g)$$ is bounded independently of $n,N$.

Let $\Vert .\Vert $ be any word norm on $\Gamma$.
By the shadow lemma for harmonic measure (Proposition \ref{harmonicshadow})
\begin{equation}\label{equationnu}
   \nu(Sh_{r}(o,go))\asymp G(e,g)=\sum^{\infty}_{n=1}\mu^{*n}(g).
\end{equation}
%By the shadow lemma for the Patterson-Sullivan measure,
%\begin{equation}\label{equationkappa} \kappa(Sh_{r}(o,go))\asymp \mathrm{e}^{-vd(o,go)}. \end{equation}
Furthermore, since the Green distance and word metric are quasi-isometric (see for example \cite[Lemma~4.2]{Haissinsky}), and $\kappa$ is a finite measure,
$$\frac{\kappa(Sh_{r}(o,go))}{\nu(Sh_{r}(o,go))}\lesssim e^{c\|g\|}$$ for a constant $c$. 
Also, we have $d(o,go)\leq t\Vert g\Vert $ for a constant $t >1$.
We obtain:
$$R_{k}\lesssim \sum_{g\in \Gamma: \|g\|\geq t^{-1}k N} \frac{\kappa(Sh_{r}(o,go))}{\nu(Sh_{r}(o,go))}\mu^{*n}(g) \leq $$
$$\leq \sum_{m\geq t^{-1}kN} e^{cm} \sum_{g: \|g\|=m}\mu^{*n}(g)\leq \sum_{m\geq t^{-1}kN} e^{cm}P(\|\omega_n\|\geq m).$$
Since $\mu$ has finite superexponential moment, we can apply the exponential Chebyshev inequality with exponent $2c$ to obtain
$$R_{k}\lesssim \sum_{m\geq t^{-1}kN} e^{-cm}E(e^{2c\|\omega_n\|}).$$
Since $n\leq N$ we have $$\|\omega_n\|\leq \sum^{N-1}_{j=0}\|\omega^{-1}_{j}\omega_{j+1}\|$$
from which we obtain, since the $\omega^{-1}_{j}\omega_{j+1}$ are independent random variables,
$$E(e^{2c\|\omega_n\|})\leq E_0^{N}$$ where $E_0=\sum_{g\in \Gamma}e^{2c\|g\|}\mu(g)$.
Choosing $k\geq \frac{t}{c}\log E_0$ we thus obtain 
$$R_{k}\lesssim \sum_{m\geq t^{-1}kN} e^{-cm}E(e^{2c\|\omega_n\|})\lesssim e^{-c t^{-1}kN}E_0^{N}\leq 1$$ giving us the desired estimate for $R_{k}$.

Now, we will show that the quantity
$$P_{N} :=\frac{1}{N}\sum^{N}_{n=1}\sum_{g\in B_{kN}}\frac{\kappa(Sh_{r}(o,go))}{\nu(Sh_{r}(o,go))}\mu^{*n}(g),$$ 
where $B_{kN} := \{ g \in \Gamma \ : \ d(o, go) \leq kN\}$, is bounded independently of $N$.
Together with the estimate on $R_k$ this will prove the proposition.
Interchanging the order of summation we get, using~(\ref{equationnu}),
$$P_{N}=\frac{1}{N}\sum_{g\in B_{kN}} \frac{\sum^{N}_{n=1}\mu^{*n}(g)}{\nu(Sh_{r}(o,go))}\kappa(Sh_{r}(o,go))
\lesssim \frac{1}{N}\sum_{g\in B_{kN}} \kappa(Sh_{r}(o,go)).$$
By \cite[Theorem 1.9]{Yang}, for any $a>0$, we have
$$|\{g\in \Gamma: n-a<d(o,go)\leq n\}|\lesssim e^{vn}.$$
Consequently, if we denote $A_{r, R} := \{ g \in \Gamma \ : \ r \leq d(o, go) \leq R\}$, we get

\begin{align*}
\sum_{g \in B_{kN} }
    \kappa(Sh_{r}(o,go))&=\sum^{N}_{n=1}\sum_{A_{k(n-1), kn}}
    \kappa(Sh_{r}(o,go))\\
    & \textup{and by Proposition \ref{PSshadow}} \\
    &\lesssim \sum^{N}_{n=1}
    \sum_{A_{k(n-1), kn}} e^{-vkn}\\
    & \lesssim \sum^{N}_{n=1} e^{vkn}e^{-vkn}\lesssim N.
\end{align*}
The estimate for $P_N$ follows.

\end{proof}

The following will be proved in the appendix.

%follows from Mathieu-Sisto \cite[Theorem 10.6]{Mathieu-Sisto} or \cite[Remark 4.4]{BQhyp}.
%\marginpar{Change reference}

\begin{proposition}\label{closetogeodesics}
There exists $C>0$ such that for each $k$, $k \leq n$ and $a>1$ we have
$$P(d(\omega_k o,[o,\omega_n o])>a) \leq Ce^{-a/C}$$
and
$$P(d(\omega_k o,[o,\omega_\infty))>a) \leq Ce^{-a/C}$$
where $[o,\omega_n o]$ and $[o,\omega_\infty)$ are any geodesics connecting the respective endpoints.
\end{proposition}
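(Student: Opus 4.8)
The plan is to derive the geodesic-tracking estimate from the exponential decay of the Green metric (Proposition \ref{Greendecay} in the appendix, which gives $d_G \asymp \|\cdot\|$) combined with the Ancona-type inequality of Proposition \ref{relAncona} and the shadow lemma for the harmonic measure (Proposition \ref{harmonicshadow}). First I would fix $o$ and recall that the increments $u_j := \omega_{j-1}^{-1}\omega_j$ are i.i.d. with law $\mu$; since $\mu$ has finite superexponential moment in the word metric, and the word metric is quasi-isometric to the Green metric, all the relevant tail bounds hold equally in $d$, $\|\cdot\|$, and $d_G$. The key quantitative input is that for a random walk on a non-amenable group the Green metric grows linearly: there is $\lambda > 0$ with $d_G(e,\omega_n) \asymp n$ with exponential concentration, and more generally $d_G(\omega_i, \omega_j) \asymp |i-j|$.

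The main step is to convert ``$\omega_k o$ is far from the geodesic $[o,\omega_n o]$'' into ``the Green metric is far from additive along $e,\omega_k,\omega_n$''. By Proposition \ref{relAncona}, if $\omega_k o$ were within distance $D$ of $[o,\omega_n o]$ then $d_G(e,\omega_k) + d_G(\omega_k,\omega_n) \le d_G(e,\omega_n) + A(D)$; contrapositively, if $d(\omega_k o, [o,\omega_n o]) > a$ then no such bound holds for $D = a$, and one shows (this is the heart of the argument, essentially a relative-hyperbolicity version of Ancona inequalities as packaged in \cite{GGPY}) that the defect $d_G(e,\omega_k) + d_G(\omega_k,\omega_n) - d_G(e,\omega_n)$ is at least $a/C$. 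So it suffices to prove that this Green-metric defect has exponential tail, uniformly in $k \le n$. For the defect one uses the identity $\frac{dg\nu}{d\nu} = e^{-\beta^G_\xi(g,e)}$ and the shadow lemma $\nu(Sh_r(o, go)) \asymp e^{-d_G(e,g)}$: the probability that the random walk, or its limit point $\omega_\infty$, passes through a given shadow is controlled by $e^{-d_G}$, and summing the geometric series over the (exponentially-growing in $v$, but exponentially-decaying in $d_G \asymp$ distance) shells gives summable tails. For the $[o,\omega_\infty)$ statement, one takes $n \to \infty$: since $\omega_n \to \omega_\infty$ a.s. and geodesics $[o,\omega_n o]$ subconverge to a ray $[o,\omega_\infty)$, Fatou's lemma (or the fact that the event is essentially open) promotes the uniform-in-$n$ bound to the limiting ray.

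I expect the main obstacle to be the second half of the conversion step: showing that a single geometric separation of size $a$ between $\omega_k o$ and the geodesic really forces a Green-metric defect proportional to $a$, rather than merely a positive defect. In a genuinely hyperbolic setting this is the standard Ancona inequality, but here $X$ is only geometrically finite, so excursions into cusps must be handled — a path from $o$ to $\omega_n o$ that goes deep into a cusp can be far from the geodesic yet not ``cheap'' to avoid in the Green metric. This is exactly what the deviation inequalities of \cite[Corollary 1.4]{GGPY} (reformulated here as Proposition \ref{relAncona}) are designed to control, so the argument will go through, but the bookkeeping of the constants $A = A(o,D)$ and their dependence on $a$ is where care is needed. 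A secondary, more routine obstacle is the uniformity in $k \le n$: one must be sure the tail bound does not degrade as $k$ ranges over all of $\{0,\dots,n\}$, which follows because the estimate only uses that $\omega_k$ lies on a sample path and that $\mu$ has superexponential moment, both uniform in $k$.
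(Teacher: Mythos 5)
Your strategy diverges from the paper's and, as written, has a genuine gap at the step you yourself flag as ``the heart of the argument'': the claim that $d(\omega_k o,[o,\omega_n o])>a$ forces a Green-metric defect $d_G(e,\omega_k)+d_G(\omega_k,\omega_n)-d_G(e,\omega_n)\geq a/C$. Proposition \ref{relAncona} only gives the \emph{forward} implication (close to the $X$-geodesic $\Rightarrow$ bounded defect); the quantitative converse you need is not in \cite{GGPY} and is exactly where the cusps bite. Since the Green metric is quasi-isometric to the \emph{word} metric (Proposition \ref{Greendecay}) and the orbit map distorts parabolic subgroups exponentially (Proposition \ref{exponentialdistortionparabolic}), a point can be at $X$-distance $a$ from $[o,\omega_n o]$ (e.g.\ deep in a horoball that the geodesic shortcuts) while its word-metric, hence Green-metric, defect is far smaller than $a$; so the linear conversion factor $a/C$ cannot be extracted from Ancona-type inequalities alone. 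Moreover, even granting the conversion, your plan to bound the tail of the Green defect by ``summing a geometric series over shells'' conflates the Green function $G(e,g)=\sum_n\mu^{*n}(g)$ with the distribution $\mu^{*k}(g)$ of the walk at the fixed time $k$, and no estimate in the paper lets you pass from one to the other at a single time.

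The paper's proof runs in the opposite order and uses different inputs. It first proves the \emph{ray} version: by a Benoist--Quint-type estimate (Lemma \ref{BQlemma}, extended to orbit points in Lemma \ref{boundarytointerior}), the stationary measure has exponential tails for Gromov products, $\nu(\zeta:\rho_o(go,\zeta)>a)\leq Ce^{-a/C}$ uniformly in $g$; conditioning on $\omega_n=g$ and using that $g^{-1}\omega_\infty$ is then distributed like $\nu$ gives $P(d(\omega_n o,[o,\omega_\infty))>a)\leq Ce^{-a/C}$ directly. The segment version is then \emph{deduced} from the ray version using exponential concentration of linear progress (Lemmas \ref{Sunderland}--\ref{linexp}, which rest on non-amenability, not on Ancona inequalities) together with a thin-triangles comparison of the closest-point projections of $\omega_k o$ and $\omega_n o$ onto $[o,\omega_\infty)$. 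Your proposal contains neither the Gromov-product tail estimate nor the linear-progress estimate, and both are essential; I would redirect the argument along those lines rather than trying to quantify a converse Ancona inequality.
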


We now deduce the following proposition.

\begin{proposition}\label{lemma4.15Haissinsky}
There exists $C_2>0$ such that the sequence $E(\psi_n)+C_2$ is sub-additive and $\psi_{n}/n$ converges to $h-lv$ almost surely and in expectation. 
\end{proposition}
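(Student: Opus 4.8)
The plan is to establish subadditivity of $E(\psi_n) + C_2$ first, then identify the limit via the Kingman subadditive ergodic theorem together with the shadow lemmas.

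\smallskip

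\textbf{Subadditivity.} Recall $\psi_n = \log \kappa(Sh_r(o,\omega_n o)) - \log \nu(Sh_r(o,\omega_n o))$. Write $\omega_{n+m} = \omega_n \cdot (\omega_n^{-1}\omega_{n+m})$ and note that $\omega_n^{-1}\omega_{n+m}$ has the same law as $\omega_m$ and is independent of $\omega_n$. By Proposition~\ref{closetogeodesics}, with overwhelming probability $\omega_n o$ lies within bounded distance of a geodesic $[o, \omega_{n+m}o]$; I would split the expectation according to the (exponentially small) bad event and the good event. On the good event, the shadow $Sh_r(o,\omega_{n+m}o)$ is comparable, up to enlarging $r$ by a bounded amount, to the ``composition'' of the shadow $Sh_r(o,\omega_n o)$ with the $\omega_n$-translate of $Sh_r(o, \omega_n^{-1}\omega_{n+m} o)$. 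For the harmonic measure, the relative Ancona inequality (Proposition~\ref{relAncona}) combined with Proposition~\ref{harmonicshadow} gives $-\log \nu(Sh_r(o,\omega_{n+m}o)) \asymp_{+} d_G(e,\omega_{n+m}) \geq d_G(e,\omega_n) + d_G(\omega_n, \omega_{n+m}) - A$, i.e. $-\log\nu$ is subadditive up to a constant along the geodesic configuration; for the Patterson-Sullivan measure, Proposition~\ref{PSshadow} gives $-\log\kappa(Sh_r(o,\omega_{n+m}o)) \asymp_{+} v\, d(o,\omega_{n+m}o)$, and the geodesic configuration together with the triangle inequality forces $d(o,\omega_{n+m}o) \asymp_{+} d(o,\omega_n o) + d(\omega_n o, \omega_{n+m}o)$. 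Subtracting, and absorbing all the bounded errors (including the contribution of the bad event, which is integrable by Proposition~\ref{closetogeodesics} and the superexponential moment bound via a Cauchy-Schwarz or Chebyshev argument as in the proof of Proposition~\ref{lemma4.14Haissinsky}) into a single constant $C_2$, yields $E(\psi_{n+m}) \leq E(\psi_n) + E(\psi_m) + C_2$.

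\smallskip

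\textbf{Convergence and identification of the limit.} Once $E(\psi_n) + C_2$ is subadditive, Fekete's lemma gives $\lim_n \frac{1}{n}E(\psi_n) = \inf_n \frac{1}{n}(E(\psi_n)+C_2) =: L$; Proposition~\ref{lemma4.14Haissinsky} shows $\frac1N\sum_{n\le N} E(\phi_n) \le C_1$, which via Jensen bounds $\frac1N\sum E(\psi_n)$ from above and in particular forces $L < \infty$ (and $L \le 0$, though this is not needed here). To get almost sure convergence of $\psi_n/n$, I would apply Kingman's subadditive ergodic theorem to the stationary (under the shift on path space) subadditive cocycle built from $\psi$; the same geodesic-tracking argument shows $\psi_{n+m}(\omega) \le \psi_n(\omega) + \psi_m(T^n\omega) + (\text{error}_n)$ where the errors, summed, are $o(n)$ almost surely (Borel–Cantelli using Proposition~\ref{closetogeodesics}). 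Kingman then gives $\psi_n/n \to L$ a.s.\ and in $L^1$. To evaluate $L = h - \ell v$: by Proposition~\ref{harmonicshadow}, $\frac1n \log \nu(Sh_r(o,\omega_n o)) \to -h$ a.s.\ (since $-\log\nu(Sh_r(o,\omega_n o)) \asymp_{+} d_G(e,\omega_n)$ and $\frac1n d_G(e,\omega_n) \to h$, the latter being the standard identification of Green-metric drift with entropy for superexponential random walks — cf.\ Benjamini–Peres / Blachère–Haïssinsky–Mathieu, or it can be derived from the Shannon theorem and Proposition~\ref{harmonicshadow}); and by Proposition~\ref{PSshadow}, $\frac1n\log\kappa(Sh_r(o,\omega_n o)) \asymp_{+} -v\frac1n d(o,\omega_n o) \to -v\ell$. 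Subtracting gives $\psi_n/n \to h - \ell v = h - \ell_\mu v$.

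\smallskip

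\textbf{Main obstacle.} The delicate point is controlling the shadows under concatenation: one must show that up to enlarging $r$ by a universal amount, $\log \nu(Sh_r(o,\omega_{n+m}o))$ and $\log\kappa(Sh_r(o,\omega_{n+m}o))$ genuinely split as the sum of the two pieces, with errors that are uniformly bounded on the good event and integrable overall. This is exactly where the relative Ancona inequality (for $\nu$) and hyperbolicity of $X$ with Proposition~\ref{PSshadow} (for $\kappa$) enter, and where the superexponential moment is essential — to guarantee that the bad event from Proposition~\ref{closetogeodesics}, on which $\psi_n$ could a priori be as large as $c\|\omega_n\| \lesssim cN$, contributes only $O(1)$ to the expectation after the Chebyshev/Cauchy-Schwarz estimate, precisely as in the proof of Proposition~\ref{lemma4.14Haissinsky}. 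Verifying the $L^1$ and a.s.\ convergence of these error terms is routine but must be done carefully.
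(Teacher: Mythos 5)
Your overall strategy is the same as the paper's: use the two shadow lemmas to rewrite $\psi_n$ as $d_G(e,\omega_n)-v\,d(o,\omega_n o)+O(1)$, get subadditivity from metric inequalities plus the geodesic-tracking estimate of Proposition~\ref{closetogeodesics}, and identify the limit via the Blach\`ere--Ha\"issinsky--Mathieu theorem ($d_G$-drift equals entropy) together with $d(o,\omega_no)/n\to\ell$. However, two steps as written are off. First, your use of Proposition~\ref{relAncona} for the harmonic-measure part points in the wrong direction: the Ancona-type inequality gives $d_G(e,\omega_{n+m})\geq d_G(e,\omega_n)+d_G(\omega_n,\omega_{n+m})-A$ on the good event, which is \emph{super}additivity of $-\log\nu$ and is irrelevant for bounding $\psi_{n+m}-\psi_n-\psi_m$ from above. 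What you actually need there is $d_G(e,\omega_{n+m})\leq d_G(e,\omega_n)+d_G(\omega_n,\omega_{n+m})$, which is just the triangle inequality for the Green metric and holds unconditionally --- so no Ancona estimate and no good/bad event split are needed for the $\nu$-part at all. The only term requiring probabilistic input is the $\kappa$-part, where $v(d(o,\omega_no)+d(\omega_no,\omega_{n+m}o)-d(o,\omega_{n+m}o))$ is, by $\delta$-hyperbolicity, bounded pointwise by $2v\,d(\omega_no,[o,\omega_{n+m}o])+O(\delta)$; its expectation is finite directly from the exponential tail in Proposition~\ref{closetogeodesics}, with no need to control $\psi$ on a bad event via the moment condition (this is the paper's route, and it avoids the extra Chebyshev argument you sketch).

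Second, your appeal to Kingman's theorem for the almost-sure convergence is not justified as stated: Kingman requires an exactly subadditive cocycle $\psi_{n+m}\leq\psi_n+\psi_m\circ T^n$, whereas you only have this up to error terms, and you would need an almost-subadditive ergodic theorem to proceed that way. Fortunately this detour is unnecessary: your own identification of the limit ($\tfrac1n d_G(e,\omega_n)\to h$ a.s.\ and in $L^1$ by Blach\`ere--Ha\"issinsky--Mathieu, $\tfrac1n d(o,\omega_no)\to\ell$ a.s.\ and in $L^1$) already yields $\psi_n/n\to h-\ell v$ almost surely and in expectation directly from the shadow-lemma expression $\psi_n=d_G(e,\omega_n)-v\,d(o,\omega_no)+O(1)$, which is exactly how the paper concludes. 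With these two repairs your argument coincides with the paper's proof.
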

\begin{proof}
By the shadow lemmas (see Proposition~\ref{PSshadow} and Proposition~\ref{harmonicshadow}),
$$\frac{\psi_n}{n}=\frac{d_{G}(\omega_n,e)}{n} - \frac{d(o,\omega_n o)}{n} +O(1/n).$$
According to \cite[Theorem~1.1]{BlachereHassinskyMathieu1}, the term $\frac{1}{n}d_G(e,\omega_n)$ almost surely converges to $h$ whenever $\mu$ has finite entropy $h_\mu $, which is implied by finite first moment.
In other words, entropy is equal to the drift of $d_G$.
Thus, %Kingman's ergodic theorem \cite[Theorem~10.1]{Walters} implies that 
$\psi_{n}/n$ converges to $h-lv$ almost surely and in expectation.

Let $m,n\geq 1$.
The shadow lemmas for the Patterson-Sullivan measure (Proposition \ref{PSshadow}) and for the harmonic measure (Proposition \ref{harmonicshadow}) yield
$$\psi_n = \log \kappa(Sh_r(o, \omega_n o)) - \log \nu(Sh_r(o, \omega_n o)) 
 = - v d(o, \omega_n o) + d_G(o, \omega_n o) + O(1)$$
and the triangle inequality for $d_G$ implies that
\begin{align*}
& E(\psi_{n+m})- E(\psi_{n}) - E(\psi_{m}) \leq v E(d(o, \omega_n o)+ d(o, \omega_m o )-d(o, \omega_{n+m} o)) + O(1) = \\
& \textup{and by shift-invariance on the space of increments and $\delta$-hyperbolicity,} \\
&   =v E(d(o, \omega_n o)+ d(\omega_{n}o,\omega_{n+m}o)-d(o, \omega_{n+m} o))\leq 
v E(2d(\omega_{n}o,[o,\omega_{m+n}o])) + O(1).
\end{align*}
Lemma~\ref{closetogeodesics} implies that the last expression is bounded by a constant $C_2$, independent of $n$ and $m$, so that $E(\psi_n)+C_2$ is sub-additive.
\end{proof}

\begin{proposition}\label{lemma4.16Haissinsky}
Let $r$ be large enough for the conclusion of Proposition \ref{harmonicshadow} to hold. Then
\begin{enumerate}[a)] 
    \item If $\kappa$ and $\nu$ are not equivalent, then $\phi_n$ tends to $0$ in probability.
    \item If $\kappa$ and $\nu$ are equivalent, then $\frac{\log \kappa(Sh_{r}(o,\omega_n o))}{\log \nu(Sh_{r}(o,\omega_n o))}$ tends to 1 in probability.
\end{enumerate}
\end{proposition}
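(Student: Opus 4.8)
The plan is to deduce Proposition \ref{lemma4.16Haissinsky} by applying the differentiation theorem (Proposition \ref{localdensity}) to the measures $\nu$ and $\kappa$, and then converting the resulting \emph{almost everywhere} statement on $\partial X$ into an \emph{in probability} statement for the random walk. First, I would check that the hypotheses of Proposition \ref{localdensity} are satisfied: $\nu$ is $\Gamma$-quasi-invariant, ergodic and supported on conical points, and the doubling condition along shadows $\nu(Sh_{2r}(o,go)) \leq C\,\nu(Sh_r(o,go))$ follows from the shadow lemma for the harmonic measure (Proposition \ref{harmonicshadow}), since both sides are comparable to $e^{-d_G(e,g)}$ uniformly in $g$. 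Thus Proposition \ref{localdensity} applies: in case (a), when $\kappa$ and $\nu$ are not equivalent — hence, since $\kappa$ is ergodic for $\Gamma$ and both measures are quasi-invariant, necessarily mutually singular — we get
$$\lim_{\stackrel{g\in\Gamma:\ g\to\xi}{\xi\in Sh_r(o,go)}}\frac{\kappa(Sh_r(o,go))}{\nu(Sh_r(o,go))}=0$$
for $\nu$-almost every $\xi$, and in case (b), when they are equivalent, the analogous statement with $\frac{\log\kappa}{\log\nu}\to 1$ holds.

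The second step is to transfer this convergence along shadows containing $\xi$ to convergence along the random walk trajectory. The key geometric input is that for $P$-almost every sample path $\omega$, the sequence $\omega_n o$ converges to a boundary point $\omega_\infty$, the hitting point, whose law is $\nu$; moreover, by Proposition \ref{closetogeodesics}, $\omega_n o$ stays within a sublinear (indeed logarithmically bounded in probability, but certainly $o(n)$ almost surely after a Borel--Cantelli argument) distance of a geodesic ray $[o,\omega_\infty)$, and hence for $r$ large enough $\omega_\infty \in Sh_r(o,\omega_n o)$ for all large $n$. Therefore $\omega_n$ is exactly a sequence of the type appearing in the limit in Proposition \ref{localdensity} with $\xi = \omega_\infty$: it tends to $\omega_\infty$ and $\omega_\infty$ lies in the relevant shadows. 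Since $\omega_\infty$ has distribution $\nu$, the almost-everywhere statement becomes an almost-sure statement along the random walk: in case (a), $\phi_n = \frac{\kappa(Sh_r(o,\omega_n o))}{\nu(Sh_r(o,\omega_n o))} \to 0$ almost surely, hence in probability; in case (b), $\frac{\log\kappa(Sh_r(o,\omega_n o))}{\log\nu(Sh_r(o,\omega_n o))}\to 1$ almost surely, hence in probability.

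One technical point deserving care: the limit in Proposition \ref{localdensity} is taken over all $g\in\Gamma$ with $g\to\xi$ and $\xi\in Sh_r(o,go)$, and this is a genuine limit over a directed family, so any particular sequence $g = \omega_n$ realizing $\omega_n\to\omega_\infty$ with $\omega_\infty\in Sh_r(o,\omega_n o)$ automatically inherits the limit. One must ensure that $\omega_n$ does converge to a boundary point (standard for random walks on hyperbolic spaces with the moment assumption) and that the convergence $\omega_n o \to \omega_\infty$ in the compactification is compatible with $\omega_n$ eventually entering every shadow $Sh_r(o, \omega_m o)$ around $\omega_\infty$ — again a consequence of the fellow-traveling estimate of Proposition \ref{closetogeodesics} together with $\delta$-hyperbolicity.

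The main obstacle I anticipate is precisely this last transfer: making rigorous that the random sequence $(\omega_n)$ falls within the scope of the deterministic limit in Proposition \ref{localdensity} for $\nu$-almost every boundary point, which requires knowing that $\omega_\infty\in Sh_r(o,\omega_n o)$ eventually almost surely. The cleanest route is: from Proposition \ref{closetogeodesics}, $P(d(\omega_n o,[o,\omega_\infty)) > a) \leq Ce^{-a/C}$; taking $a = a_n$ growing slowly (say $a_n = C^2\log n$) and applying Borel--Cantelli shows $d(\omega_n o,[o,\omega_\infty)) = O(\log n)$ almost surely, hence $\omega_n o$ is eventually within any fixed distance... no — rather, this shows $\omega_\infty$ is shadowed by $Sh_{r_n}(o,\omega_n o)$ with $r_n = O(\log n)$, which is not uniform. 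A better argument: fix $D$ as in Lemma \ref{conicalconstant} so $\nu(\Lambda_D) = 1$; for $\xi\in\Lambda_D$ any geodesic $[o,\xi)$ meets the $D$-neighborhood of $\Gamma o$ infinitely often, and along such a subsequence $g_k o$ of orbit points we have $\xi\in Sh_{2D}(o,g_k o)$ and $g_k\to\xi$, so the limit in Proposition \ref{localdensity} can be computed along \emph{those} $g_k$ — but we want it along $\omega_n$. The resolution is that the limit in Proposition \ref{localdensity} is over the full directed set, so its existence (guaranteed $\nu$-a.e.) forces agreement along \emph{every} admissible sequence, in particular along $(\omega_n)$ once we check $(\omega_n)$ is admissible infinitely often; and admissibility of $(\omega_n)$ (i.e. $\omega_\infty\in Sh_r(o,\omega_n o)$) for all large $n$ and a uniform $r$ follows because, almost surely, $\omega_n o$ converges to $\omega_\infty$ and hence $\rho_o(\omega_n o,\omega_\infty)\to\infty$, which by the definition of shadows and $\delta$-hyperbolicity puts $\omega_\infty$ in $Sh_r(o,\omega_n o)$ for $r$ depending only on $\delta$. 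I would write this out carefully, as it is the crux; everything else is bookkeeping with the shadow lemmas already proved.
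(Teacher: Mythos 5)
Your overall strategy --- verify the hypotheses of Proposition \ref{localdensity} via the shadow lemma, then transfer the $\nu$-a.e.\ differentiation statement to the random walk using Proposition \ref{closetogeodesics} and the fact that $\omega_\infty$ has law $\nu$ --- is exactly the paper's strategy. But the crux step, which you yourself identify as the main obstacle, is resolved incorrectly. Your final claim is that, almost surely, $\rho_o(\omega_n o,\omega_\infty)\to\infty$ forces $\omega_\infty\in Sh_r(o,\omega_n o)$ for all large $n$ with $r$ depending only on $\delta$. This is false: up to an additive constant, $d(\omega_n o,[o,\omega_\infty))\approx d(o,\omega_n o)-\rho_o(\omega_n o,\omega_\infty)$, and both terms on the right tend to infinity; convergence to a boundary point in the Gromov compactification says nothing about the difference staying bounded. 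Proposition \ref{closetogeodesics} controls this difference only in distribution (a tail bound uniform in $n$), and a Borel--Cantelli argument yields at best $d(\omega_n o,[o,\omega_\infty))=O(\log n)$ a.s., as you noted before abandoning that route. One should expect $\limsup_n d(\omega_n o,[o,\omega_\infty))=\infty$ almost surely, so the sequence $(\omega_n)$ is \emph{not} eventually admissible for a fixed shadow width, and the almost-sure convergence of $\phi_n$ you assert is not established. This is precisely why the proposition claims convergence in probability rather than almost surely.

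The correct repair, which is the paper's proof, stays at the level of probabilities throughout: fix $\alpha,c>0$; by Proposition \ref{closetogeodesics} choose $D$ so that $P(\omega_\infty\notin Sh_D(o,\omega_n o))\leq\alpha$ \emph{for every} $n$; by Proposition \ref{localdensity} the quantity
$$R_t(\xi):=\sup_{\substack{\|g\|\geq t\\ \xi\in Sh_D(o,go)}}\frac{\kappa(Sh_r(o,go))}{\nu(Sh_r(o,go))}$$
tends to $0$ for $\nu$-a.e.\ $\xi$ (after comparing $Sh_D$ with $Sh_r$ via Proposition \ref{harmonicshadow}); by Egorov's theorem pick a set $E$ of sample paths with $P(E^c)<\alpha$ on which $R_{\|\omega_n\|}(\omega_\infty)\to 0$ uniformly. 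Then for $n$ large, the event $\{\phi_n\geq c\}\cap E$ forces $\omega_\infty\notin Sh_D(o,\omega_n o)$, whence $P(\phi_n\geq c)\leq P(E^c)+P(\omega_\infty\notin Sh_D(o,\omega_n o))\leq 2\alpha$. The Egorov step is what substitutes for the eventual admissibility you tried (and failed) to obtain almost surely; without it, or some equivalent uniformization, the transfer does not go through. The same correction is needed for part (b).
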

\begin{remark}
The only properties of $\kappa$ used are that $\Gamma$ preserves its measure class and acts ergodically on $(\partial X, \kappa)$.
\end{remark}
\begin{proof}

Recall, $\nu$ is always ergodic with respect to the action of $\Gamma$ on $\partial X$ and gives full weight to conical points.
On the other hand, $\kappa$ is ergodic, gives full weight to conical points when $\Gamma$ is divergence type and gives full weight to parabolic points when $\Gamma$ is convergence type. 
Thus, in either case, if the two measures are not equivalent, they are mutually singular.
The result now follows by combining Proposition \ref{localdensity} and Proposition \ref{closetogeodesics}. We give the details below.

Let $\alpha,c>0$.
By Proposition \ref{closetogeodesics} we have $P(\omega_\infty \notin Sh_{D}(o,\omega_n o))\leq F(D)$ independently of $n$ where $F(D)\to 0$ as $D\to \infty$.
Fix $D$ so that
$$P(\omega_\infty \notin Sh_{D}(o,\omega_n o))\leq \alpha$$
for all $n$.
By Proposition~\ref{localdensity}~a) we have, for $\nu$-almost every $\xi$,
$$\lim_{t\to \infty}\sup_{
\stackrel{d(o,go)\geq t}{\xi\in Sh_{D}(o,g o)}}\frac{\kappa(Sh_{D}(o,g o))}{\nu(Sh_{D}(o,g o))}= 0.$$
The shadow lemma for $\nu$ (Proposition~\ref{harmonicshadow}) shows that $\nu(Sh_{D}(o,g o))\leq C \nu(Sh_{r}(o,g o))$ where $C$ depends only on $r,D$.
Thus the quantity 
$$R_{t}(\xi) :=\sup_{
\stackrel{g:\Vert g\Vert  \geq t}{\xi\in Sh_{D}(o,go)}}\frac{\kappa(Sh_{r}(o,go))}{\nu(Sh_{r}(o,go))}$$  converges to $0$ as $t\to \infty$ for $\nu$-almost every $\xi$.
Furthermore, for almost every sample path $\omega$, we have 
$\Vert \omega_n\Vert \to \infty$. 
%and $\omega_n\to \omega_\infty \in \partial_B \Gamma$ as $n\to \infty$. 
Thus, by Egorov's theorem, we may choose a subset $E\subset \Gamma^{\mathbb{N}}$ of sample paths with $P(E^c)<\alpha$ and such that $\Vert \omega_n\Vert \to \infty$ and $R_{n}(\omega_\infty)\to 0$ uniformly over $\omega\in E$.
It follows that $R_{\Vert \omega_n\Vert }(\omega_\infty)\to 0$ uniformly over $\omega\in E$. This means that for large enough $n$ (depending on $c$), the conditions $\omega\in E$ and $\frac{\kappa(Sh_{r}(o,\omega_n o))}{\nu(Sh_{r}(o,\omega_n o))}\geq c>0$ imply $\omega_\infty \notin Sh_{D}(o,\omega_n o)$. The latter has probability at most $\alpha$, so we get
$$P(\phi_n\geq c)\leq P(E^c)+P(\omega_\infty \notin Sh_{D}(o,\omega_n o))\leq 2\alpha.$$
As $\alpha,c>0$ were chosen arbitrarily we get $P(\phi_n\geq c)\to 0$ as $n\to \infty$ for each $c>0$ so $\phi_n \to 0$ in probability.

\item b) This time, we define for each $D>0$
$$R_{t}(\xi) :=
\sup_{
\stackrel{d(o,go)\geq t}{\xi\in Sh_{D}(o,go)}}\left |\frac{\log \kappa(Sh_{r}(o,go))}{\log \nu(Sh_{r}(o,go))}-1\right|.$$
Using b) of Proposition~\ref{localdensity} we obtain that for each $D$, 
$\lim_{t\to \infty}R_{t}(\xi)= 1$ for $\nu$-almost every $\xi$.
The proof is then similar to a). \qedhere

\end{proof}

We are now ready to prove the following.
\begin{theorem}\label{(1)implies(2)} 
The measures $\kappa$ and $\nu$ are equivalent if and only if $h=lv$. 
\end{theorem}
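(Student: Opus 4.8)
The plan is to prove the two implications separately, using the machinery assembled so far: the shadow lemmas (Propositions~\ref{PSshadow} and~\ref{harmonicshadow}), the uniform-average bound on $E(\phi_n)$ (Proposition~\ref{lemma4.14Haissinsky}), the subadditivity and convergence of $\psi_n/n$ to $h - \ell v$ (Proposition~\ref{lemma4.15Haissinsky}), and the in-probability dichotomy (Proposition~\ref{lemma4.16Haissinsky}).

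First I would prove that $h = \ell v$ implies equivalence, arguing by contraposition. Suppose $\kappa$ and $\nu$ are \emph{not} equivalent. As noted in the proof of Proposition~\ref{lemma4.16Haissinsky}, both measures are ergodic and one gives full mass to conical points while the other (if $\Gamma$ is of convergence type) gives full mass to parabolic points, so non-equivalence forces mutual singularity. Then part a) of Proposition~\ref{lemma4.16Haissinsky} gives $\phi_n \to 0$ in probability, hence $\psi_n = \log\phi_n \to -\infty$ in probability. Combined with Proposition~\ref{lemma4.15Haissinsky}, which says $\psi_n/n \to h - \ell v$ almost surely (and in expectation), I want to conclude $h - \ell v < 0$, i.e. the inequality is strict. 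The point is that $\psi_n/n$ converges to a constant; if that constant were $0$, then $\psi_n$ would be $o(n)$, which is compatible with $\psi_n \to -\infty$, so I cannot immediately contradict. Instead I use the average bound: by Proposition~\ref{lemma4.14Haissinsky}, $\frac1N\sum_{n=1}^N E(\phi_n) \le C_1$, so $E(\phi_n)$ does not blow up; but Jensen applied to $\psi_n = \log \phi_n$ gives $E(\psi_n) \le \log E(\phi_n)$, which is bounded above along a density-one set of $n$. If $h - \ell v > 0$ this contradicts $E(\psi_n)/n \to h-\ell v > 0$; if $h-\ell v = 0$ I still need more. The cleaner route: since $\psi_n \to -\infty$ in probability while $E(\psi_n) + C_2$ is subadditive (so $E(\psi_n)/n$ converges from above to its limit, meaning $E(\psi_n) \geq n(h-\ell v) - C_2'$), combine $\psi_n \to -\infty$ in probability with the uniform integrability coming from $\sup_n E(\phi_n^{}) $-type control (via Proposition~\ref{lemma4.14Haissinsky} and the exponential moment bounds of Proposition~\ref{closetogeodesics}) to get $E(\psi_n) \to -\infty$, whence $h - \ell v = \lim E(\psi_n)/n \le 0$; together with the fundamental inequality $h \le \ell v$ this is fine, but to get \emph{strict} inequality one observes $E(\psi_n) \to -\infty$ is incompatible with $E(\psi_n) \geq n(h-\ell v) - C_2'$ unless $h - \ell v<0$. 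So $h \ne \ell v$.

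Next, the converse: equivalence implies $h = \ell v$. Here I use part b) of Proposition~\ref{lemma4.16Haissinsky}: if $\kappa$ and $\nu$ are equivalent (and both non-atomic, which $\nu$ always is and $\kappa$ is here), then $\frac{\log\kappa(Sh_r(o,\omega_n o))}{\log\nu(Sh_r(o,\omega_n o))} \to 1$ in probability. By the shadow lemmas, $\log\kappa(Sh_r(o,\omega_n o)) = -v\,d(o,\omega_n o) + O(1)$ and $\log\nu(Sh_r(o,\omega_n o)) = -d_G(e,\omega_n) + O(1)$, so the ratio is $\frac{v\,d(o,\omega_n o) + O(1)}{d_G(e,\omega_n) + O(1)} \to 1$ in probability. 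Dividing numerator and denominator by $n$ and using that $d(o,\omega_n o)/n \to \ell$ and $d_G(e,\omega_n)/n \to h$ almost surely (the latter by \cite{BlachereHassinskyMathieu1}), both denominators stay bounded away from $0$ (as $h, \ell > 0$ for a non-elementary action), so passing to the limit in probability along a subsequence gives $\frac{v\ell}{h} = 1$, i.e. $h = \ell v$.

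The main obstacle I anticipate is the first implication: going from ``$\psi_n \to -\infty$ in probability'' to the \emph{quantitative} conclusion $h - \ell v < 0$ rather than merely $\le 0$. The subadditivity of $E(\psi_n) + C_2$ forces $E(\psi_n) \ge n(h - \ell v) - C_2$ for all $n$, so if $h = \ell v$ then $E(\psi_n) \ge -C_2$ is bounded below; meanwhile I must show $\psi_n \to -\infty$ in probability together with the upper bound $\frac1N\sum E(\phi_n) \le C_1$ forces $E(\psi_n) \to -\infty$, a contradiction. Establishing $E(\psi_n)\to-\infty$ needs a uniform-integrability argument controlling the positive part $\psi_n^+$: one shows $E(\psi_n^+)$ is bounded (or $\psi_n^+$ is uniformly integrable) using $E(\phi_n)$-control on the average plus the exponential tail estimates on $d(\omega_n o, [o,\omega_\infty))$ from Proposition~\ref{closetogeodesics}, and then $\psi_n \to -\infty$ in probability plus bounded positive part yields $E(\psi_n) \to -\infty$ by Fatou / dominated convergence on the positive side. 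Pinning down exactly this uniform integrability is the delicate technical core; everything else is bookkeeping with the shadow lemmas.
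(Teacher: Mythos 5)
Your proposal is correct and follows essentially the same route as the paper: non-equivalence forces singularity, hence $\phi_n\to 0$ in probability; the Ces\`aro bound on $E(\phi_n)$ produces some $p$ with $E(\psi_p)$ very negative; and subadditivity of $E(\psi_n)+C_2$ (equivalently, $E(\psi_n)\geq n(h-\ell v)-C_2$) converts this into the strict inequality $h<\ell v$, while the converse direction via the shadow lemmas and Proposition \ref{localdensity}~b) is identical. The only difference is a micro-step: where you control the positive part of $\psi_n$ by $E(\psi_n^+)\leq E(\phi_n)$ and push the negative part to $-\infty$ by Markov, the paper instead splits on the event $\{\phi_n\geq\beta\}$ and applies Jensen's inequality there — both are valid and rest on the same two inputs.
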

\begin{proof}
Assume that the Patterson-Sullivan and the harmonic measures are not equivalent.
Let $\beta>0$.
Let $A_n$ be the event $\{\phi_n\geq \beta\}$ and $B_n=A_n^c$.
For every $n$,
$$E(\psi_n)=\int_{A_n}\psi_ndP+\int_{B_n}\psi_ndP.$$
According to Proposition~\ref{lemma4.16Haissinsky}, $\phi_n$ converges to 0 in probability. Thus, there exists $n_0$ such that for every $n\geq n_0$, $P(B_n)\geq 1-\beta$. In particular,
$$\int_{B_n}\psi_ndP\leq P(B_n)\log \beta \leq (1-\beta)\log \beta.$$
Let $C_1$ be the constant in Proposition~\ref{lemma4.14Haissinsky}.
Jensen's inequality shows that $$\int_{A_n}\psi_n dP=P(A_n)\int_{A_n}\log \phi_n \frac{dP}{P(A_n)}\leq P(A_n)\log \left (\int_{A_n}\phi_n \frac{dP}{P(A_n)}\right ).$$
Rewrite the right-hand side as
$$P(A_n)\log \left (\int_{A_n}\phi_n \frac{dP}{P(A_n)}\right )=-P(A_n)\log P(A_n)+P(A_n)\log \left (\int_{A_n}\phi_n dP\right ).$$
The function $x\mapsto x\log x$ is first decreasing then increasing on $[0,1]$, so if $\beta$ is small enough,
$-P(A_n)\log P(A_n)\leq -\beta \log \beta$.
Moreover,
$$P(A_n)\log \left (\int_{A_n}\phi_n dP\right )\leq \beta \sup \left (0,\log E(\phi_n)\right ).$$
We thus have $$\int_{A_n}\psi_ndP\leq \beta\log \frac{1}{\beta}+\beta \sup \left (0,\log E(\phi_n)\right ).$$
According to Proposition~\ref{lemma4.14Haissinsky}, $\liminf E(\phi_n)\leq 2C_1$, so that there exists $p\geq n_0$ such that
$E(\phi_p)\leq 2C_1$.
In particular, for every small enough $\beta$, we can find $p$ such that
$$E(\psi_p)\leq (1-\beta)\log \beta+\beta\log \frac{1}{\beta}+\beta|\log 2C_1|.$$
The right-hand side tends to $-\infty$ when $\beta$ goes to 0.
If $\beta$ is small enough, we thus have for some $p$
$$E(\psi_p)+C_2\leq -1,$$
where $C_2$ is the constant in Proposition~\ref{lemma4.15Haissinsky}.
Since $E(\psi_p)+C_2$ is sub-additive, we have
$$\frac{1}{k}(E(\psi_{kp})+C_2)\leq E(\psi_p)+C_2\leq -1.$$
Finally, $\frac{1}{kp}E(\psi_{kp})$ converges to $h_\mu - \ell_\mu v$, so letting $k$ tend to infinity, we get
$$h_\mu - \ell_\mu v\leq -\frac{1}{p}<0.$$
Thus, $h_\mu < \ell_\mu v$.

Conversely, suppose the measures are equivalent.
By the shadow lemma for the Patterson-Sullivan measure $\kappa$ we have
$$\frac{-\log \kappa(Sh_{r}(o,go))}{d(o,go)}\to v$$
as $d(o,go)\to \infty$ and in particular 
$$\frac{-\log \kappa(Sh_{r}(o,\omega_n o))}{d(o,\omega_n o)}\to v$$
for almost every sample path.
Furthermore, for a.e. sample path we have
$$\lim_{n\to \infty}\frac{-\log \nu(Sh_{r}(o,\omega_n o))}{d(o,\omega_n o)}=\lim_{n\to \infty}\frac{d_{G}(e,\omega_n)}{d(o,\omega_n o)}=\lim_{n\to \infty}\frac{d_{G}(e,\omega_n)/n}{d(o,\omega_n o)/n}=\frac{h_\mu}{\ell_\mu}.$$
Thus, almost surely, $$\frac{\log \kappa(Sh_{r}(o,\omega_n o))}{\log \nu(Sh_{r}(o,\omega_n o))}\to \frac{v \ell_\mu }{h_\mu}.$$
As the measures are equivalent we have by Proposition \ref{localdensity} b) 
$$\frac{\log \kappa(Sh_{r}(o,\omega_n o))}{\log \nu(Sh_{r}(o,\omega_n o))}\to 1$$ almost surely, which ensures that $h_\mu = \ell_\mu v$.
\end{proof}

The following result is a consequence of Proposition~\ref{(1)implies(2)}.
Indeed, notice that $h$ and $l$ are the same for the measure $\mu$ and the reflected measure $\widecheck{\mu}$.

\begin{corollary}\label{reflectedequivalence}
Let $\widecheck{\nu}$ be the harmonic measure for the reflected random walk $\widecheck{\mu}$. Then $\widecheck{\nu}$ is equivalent to $\kappa$ whenever $\nu$ is equivalent to $\kappa$.
\end{corollary}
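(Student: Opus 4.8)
The plan is to reduce everything to Theorem~\ref{(1)implies(2)} by checking that the drift $\ell_\mu$ and the asymptotic entropy $h_\mu$ are invariant under passing to the reflected measure $\widecheck{\mu}$. First I would recall that the reflected random walk, driven by $\widecheck{\mu}(g) := \mu(g^{-1})$, still has the support of $\widecheck{\mu}$ generating $\Gamma$ as a group, and that if $\mu$ has finite superexponential moment then so does $\widecheck{\mu}$ (since $\Vert g\Vert = \Vert g^{-1}\Vert$ for a symmetric word norm); thus $\widecheck{\nu}$ is a well-defined hitting measure on $\partial X$ to which all the earlier results apply, and $\kappa$ is of course unchanged.

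Next I would observe that the Green function is symmetric under reflection, $\widecheck{G}(e,g) = G(e,g^{-1})$, and hence the corresponding Green metric satisfies $d_{\widecheck{G}}(e,g) = d_G(g,e)$. More to the point, the drift $\ell_\mu$ is manifestly reflection-invariant: reversing a random walk trajectory $(\omega_n)$ of length $n$ produces a trajectory of the reflected walk with the same displacement $d(o,\omega_n o)$, so the Cesàro limit defining $\ell_\mu$ is the same for $\mu$ and $\widecheck{\mu}$. For the entropy, the convolution powers satisfy $\widecheck{\mu}^{*n}(g) = \mu^{*n}(g^{-1})$, so the $n$-th Shannon entropies $-\sum_g \mu^{*n}(g)\log\mu^{*n}(g)$ coincide exactly; dividing by $n$ and taking the limit gives $h_{\widecheck{\mu}} = h_\mu$. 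Therefore the equality $h_{\widecheck{\mu}} = \ell_{\widecheck{\mu}} v$ holds if and only if $h_\mu = \ell_\mu v$.

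Finally I would invoke Theorem~\ref{(1)implies(2)} twice: applied to $\mu$, the hypothesis that $\nu$ is equivalent to $\kappa$ gives $h_\mu = \ell_\mu v$; by the previous paragraph this yields $h_{\widecheck{\mu}} = \ell_{\widecheck{\mu}} v$; and applying Theorem~\ref{(1)implies(2)} to $\widecheck{\mu}$ in the reverse direction produces that $\widecheck{\nu}$ is equivalent to $\kappa$. This completes the argument.

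There is essentially no serious obstacle here, since the content is packaged inside Theorem~\ref{(1)implies(2)}; the only point requiring a moment's care is the verification that the standing hypotheses (generating semigroup — or at least generating group, which is what the ergodicity and non-atomicity of $\nu$ actually rely on — finite superexponential moment, geometric finiteness) are preserved under reflection, so that Theorem~\ref{(1)implies(2)} legitimately applies to the reflected walk. One should also note explicitly that $v$, being an invariant of the action $\Gamma \curvearrowright X$ and not of the measure $\mu$, is the same in both applications, which is what makes the bi-implication close up.
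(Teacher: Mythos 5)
Your proposal is correct and follows exactly the paper's own argument: the paper deduces the corollary from Theorem~\ref{(1)implies(2)} by noting that $h$ and $\ell$ (and of course $v$) are unchanged under reflection, which is precisely your reduction. The extra verifications you supply (reflection-invariance via $\widecheck{\mu}^{*n}(g)=\mu^{*n}(g^{-1})$ and preservation of the standing hypotheses) are correct elaborations of the same route.
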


\section{Equivalence of measures and equivalence of metrics}
In this section we prove the following.
\begin{proposition}\label{(2)implies(3)}

If the harmonic measure $\nu$ and its reflection $\check{\nu}$ are both equivalent to the Patterson-Sullivan measure $\kappa$, then the Radon-Nikodym derivative $\frac{d\kappa}{d\nu}$ is bounded away from 0 and infinity.
\end{proposition}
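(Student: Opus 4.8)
The plan is to show that the assertion is equivalent to the uniform boundedness on $\Gamma$ of the discrepancy function $D(g):=d_G(e,g)-v\,d(o,go)$, and then to establish that bound using the differentiation theorem, the two shadow lemmas, the conformal transformation rules for $\kappa$ and $\nu$, and a recurrence/ergodicity argument that also brings in the reflected walk.

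\emph{Step 1: a transformation rule for the density.} Since $\nu$ and $\check{\nu}$ are equivalent to $\kappa$, the densities $f:=d\kappa/d\nu$ and $\check f:=d\kappa/d\check{\nu}$ are defined and a.e.\ positive and finite. Combining the quasiconformality of $\kappa$ with the $\mu$-conformality \eqref{E:Green-qc} of $\nu$ and a change of variables, one gets, for $\nu$-a.e.\ $\xi$ and all $g\in\Gamma$,
$$f(g\xi)\ \asymp\ f(\xi)\,\exp\!\big(v\,\beta_\xi(o,g^{-1}o)+\beta^{G}_{\xi}(g^{-1},e)\big).$$
Using the Ancona-type deviation inequality of Proposition~\ref{relAncona} together with $\delta$-hyperbolicity, the exponent is $\asymp_{+} D(g)$ as soon as $\xi$ lies in the wide shadow $Sh_r(g^{-1}o,o)\cap\Lambda_r$ (i.e.\ $o$ lies coarsely on the geodesic from $g^{-1}o$ to $\xi$). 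Translating by $g$, this reads $f(\eta)\asymp f(g^{-1}\eta)\,e^{D(g)}$ for $\eta\in Sh_r(o,go)\cap\Lambda_r$, and the analogous identity holds for $\check f$ with $D$ replaced by $\check D(g):=D(g^{-1})$.

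\emph{Step 2: reduction to $\sup_g|D(g)|<\infty$.} From Propositions~\ref{PSshadow} and~\ref{harmonicshadow}, $\kappa(Sh_r(o,go))/\nu(Sh_r(o,go))\asymp e^{D(g)}$; since $\kappa(Sh_r(o,go))=\int_{Sh_r(o,go)}f\,d\nu$, the bound $f\le L$ a.e.\ forces $D(g)\le\log L+O(1)$ for all $g$, and $f\ge m>0$ a.e.\ forces $D(g)\ge\log m-O(1)$. Conversely, the shadow-based Vitali relation used in Proposition~\ref{localdensity} is a differentiation basis for $\nu$ and, by \cite{MYJ}, also for $\kappa$ — note $\nu\sim\kappa$ forces $\kappa$ to be of divergence type, because $\nu$ is non-atomic and carried by the conical points, so $\kappa$ cannot give full weight to parabolic points — hence Proposition~\ref{RNexists} identifies $f$ and $1/f$ with the a.e.\ limits of $\kappa(Sh_r(o,go))/\nu(Sh_r(o,go))\asymp e^{D(g)}$ and of its reciprocal, whence $|D|\le C$ implies $f,1/f\in L^\infty$. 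Thus the proposition is equivalent to $\sup_g|D(g)|<\infty$.

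\emph{Step 3: boundedness of $D$, the main obstacle.} I would argue by contradiction. If $\sup_g D(g)=\infty$, pick $g_k$ with $D(g_k)\to\infty$ and, passing to a subsequence, $d(o,g_k o)\to\infty$; by the shadow lemmas the shadows $W_k:=Sh_r(o,g_k o)$ then satisfy $\kappa(W_k)\asymp e^{D(g_k)}\nu(W_k)$, so they carry exponentially more $\kappa$-mass than $\nu$-mass. Propagating mass along shadows via Step~1, using the $\Gamma$-ergodicity of $\nu$ and $\kappa$, and using the geodesic-tracking estimates of Proposition~\ref{closetogeodesics} to pass to a sufficiently ``independent'' sub-configuration of the $W_k$, I would compare $\sum_k\nu(W_k)$ with $\sum_k\kappa(W_k)$ through a Borel--Cantelli-type dichotomy on $\limsup_k W_k$ and contradict $\nu\ll\kappa$. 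The case $\inf_g D(g)=-\infty$ is symmetric, now with $\kappa$ (of divergence type, hence ergodic, non-atomic, carried by conical points, and with geodesic flow recurrent for a.e.\ direction) playing the role of $\nu$; it is here that the hypothesis $\check{\nu}\sim\kappa$ is used, the reflected walk converting the lower estimate for $D$ into an upper estimate of the type already treated. The whole difficulty is concentrated in this step: the shadows $Sh_r(o,go)$ are neither nested nor pairwise disjoint and $\nu$ is not known to be doubling, so turning the mass mismatch $\kappa(W_k)\gg\nu(W_k)$ into a genuine failure of mutual absolute continuity is delicate, and this is exactly where both ergodicity and the reflected-measure hypothesis are needed.
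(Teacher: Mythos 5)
There is a genuine gap, and it sits exactly where you say the difficulty is concentrated: Step 3 is not a proof but a plan, and the plan as described would not go through. Your Steps 1--2 correctly reduce the statement to the uniform bound $\sup_g|D(g)|<\infty$ with $D(g)=d_G(e,g)-v\,d(o,go)$ (this is consistent with Proposition~\ref{(3)implies(4)} and the differentiation theorem). But equivalence of $\nu$ and $\kappa$ only gives, via Proposition~\ref{localdensity}~b), that $\log\kappa(Sh_r(o,go))/\log\nu(Sh_r(o,go))\to 1$ along shadows containing a fixed $\nu$-typical point, i.e.\ $D(g)=o(d_G(e,g))$ along typical sequences; it says nothing about a sequence $g_k$ with $D(g_k)\to\infty$ whose shadows need not accumulate on any $\nu$-typical direction. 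Your proposed Borel--Cantelli comparison of $\sum_k\nu(W_k)$ with $\sum_k\kappa(W_k)$ does not yield a contradiction with $\nu\ll\kappa$ without a quasi-independence statement for the shadows $W_k$ with respect to \emph{both} measures, which is precisely what is unavailable ($\nu$ is not known to be doubling, and the $W_k$ are neither nested nor disjoint). So the central implication is asserted, not established.

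The paper closes this gap by an entirely different mechanism, on the double boundary rather than on $\partial X$. One shows that $\kappa\otimes\kappa$ and $\nu\otimes\check{\nu}$ can each be rescaled by an explicit kernel --- $e^{2v\rho_o(\cdot,\cdot)}$ corrected by Furman's coboundary argument for $\kappa$, and the Na\"im kernel $\Theta$ for $\nu$ (built in \cite{GGPY}) --- to produce $\Gamma$-\emph{invariant} ergodic Radon measures on $\partial^2X\setminus\Diag$. Two invariant ergodic measures in the same measure class must be proportional, which forces $J(a)J'(b)=f_\kappa(a,b)/f_\nu(a,b)$ for the Radon--Nikodym derivatives $J=d\nu/d\kappa$, $J'=d\check{\nu}/d\kappa$; since the kernels are bounded away from $0$ and $\infty$ on products of disjoint closed sets, fixing $b$ shows $J$ is essentially bounded (Lemma~\ref{lemmaergodicdouble}). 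This is also where the hypothesis $\check{\nu}\sim\kappa$ genuinely enters: the harmonic invariant measure pairs $\nu$ with $\check{\nu}$, and ergodicity of $\Gamma$ on $\kappa\otimes\kappa$ is inherited from ergodicity on $\nu\otimes\check{\nu}$. If you want to salvage your route, you would need to supply this double-boundary rigidity (or an equivalent substitute) in place of the Borel--Cantelli sketch.
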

This will use the following general lemma.

\begin{lemma}\label{lemmaergodicdouble}
Let $Z$ be a compact metrizable space and let $G$ act by homeomorphisms on $Z$. 
Let $\nu_1,\nu_2,\kappa_1,\kappa_2$ be Borel probability measures with full support on $Z$ and with $\nu_i$ equivalent to $\kappa_i$ for $i=1,2$.

Assume $G$ preserves the measure class of $\nu_i$  for $i=1,2$ and acts ergodically on $(Z\times Z, \nu_1 \otimes \nu_2)$ and $(Z\times Z, \kappa_1 \otimes \kappa_2)$.
Suppose there are positive, bounded away from 0, measurable functions $f_\nu,f_\kappa:Z\times Z\setminus \Diag \to \mathbb{R}$, bounded on compact subsets of $Z\times Z \setminus \Diag$ such that
$m_\nu=f_\nu \nu_1 \otimes \nu_2$ and $m_\kappa=f_\kappa \kappa_1 \otimes \kappa_2$ are $G$-invariant ergodic Radon measures on 
$Z\times Z\setminus \Diag $.
Then for each $i=1,2$, $\frac{d\nu_i}{d\kappa_i}$ is bounded away from $0$ and $\infty$.
\end{lemma}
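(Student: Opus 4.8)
The plan is to use that $m_\nu$ and $m_\kappa$ are two ergodic $G$-invariant Radon measures in the same measure class, force them to be proportional, and then decode that single scalar relation into a two-sided bound on $\frac{d\nu_i}{d\kappa_i}$ with the help of the compactness of $Z$. Throughout I would write $\rho_i := \frac{d\kappa_i}{d\nu_i}$, a positive, a.e.-finite function on $Z$, and I may assume $Z$ has at least two points (the statement being trivial otherwise).

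First I would check that $m_\nu$ and $m_\kappa$ are equivalent: since $\nu_i \sim \kappa_i$ we have $\nu_1\otimes\nu_2 \sim \kappa_1\otimes\kappa_2$ on $Z\times Z$, and as $f_\nu,f_\kappa$ are strictly positive, all four measures share a common measure class on $Z\times Z\setminus\Diag$. Both $m_\nu,m_\kappa$ are Radon on the $\sigma$-compact locally compact space $Z\times Z\setminus\Diag$, hence $\sigma$-finite, so $h:=\frac{dm_\nu}{dm_\kappa}$ is a well-defined a.e.-positive function. From $g_\ast m_\nu=m_\nu$, $g_\ast m_\kappa=m_\kappa$ and the general identity $\frac{d(g_\ast m_\nu)}{d(g_\ast m_\kappa)}=h\circ g^{-1}$ one gets $h=h\circ g^{-1}$ for every $g$, so $h$ is $G$-invariant; ergodicity of $m_\kappa$ then forces $h$ to equal a constant $c$, which is necessarily in $(0,\infty)$ because $m_\nu$ is nonzero and locally finite. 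Hence $m_\nu=c\,m_\kappa$. Rewriting $\kappa_1\otimes\kappa_2=\rho_1(x)\rho_2(y)\,d(\nu_1\otimes\nu_2)(x,y)$, this says
$$\rho_1(x)\,\rho_2(y)=\frac{f_\nu(x,y)}{c\,f_\kappa(x,y)}\qquad\text{for }\nu_1\otimes\nu_2\text{-a.e. }(x,y)\in Z\times Z\setminus\Diag.$$
On any compact $K\subset Z\times Z\setminus\Diag$ both $f_\nu$ and $f_\kappa$ are bounded above (by hypothesis) and below (being bounded away from $0$), so $(x,y)\mapsto\rho_1(x)\rho_2(y)$ is bounded above and below by positive constants $\nu_1\otimes\nu_2$-a.e. on $K$.

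The remaining — and main — step is to descend from a bound on the product $\rho_1(x)\rho_2(y)$ to a bound on $\rho_1$ alone. I would fix $p\in Z$, choose $q\neq p$, and take neighbourhoods $U\ni p$, $V\ni q$ with $\overline U\cap\overline V=\emptyset$, so that $\overline U\times\overline V$ is a compact subset of $Z\times Z\setminus\Diag$; on it $a\le\rho_1(x)\rho_2(y)\le b$ a.e. for some $0<a\le b$. Since $\nu_2(V)>0$ by full support, Fubini yields a single point $y^\ast\in V$ with $\rho_2(y^\ast)\in(0,\infty)$ and $a\le\rho_1(x)\rho_2(y^\ast)\le b$ for $\nu_1$-a.e.\ $x\in U$, i.e.\ $\rho_1$ is essentially two-sidedly bounded on a neighbourhood of $p$. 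As $p$ ranges over the compact space $Z$, finitely many such neighbourhoods cover $Z$, and taking the worst constants gives a global essential bound $0<a'\le\rho_1\le b'<\infty$; exchanging the two coordinates does the same for $\rho_2$. Then $\frac{d\nu_i}{d\kappa_i}=1/\rho_i$ is bounded away from $0$ and $\infty$, which is the assertion. The only genuinely delicate points are the "separation" ones in this last step: one must stay away from $\Diag$ (hence the need for $Z$ to have at least two points and for disjoint-closure neighbourhoods), know that the relevant product neighbourhoods carry positive mass (hence the full-support hypothesis), and then use compactness of $Z$ to make the local bounds uniform; Steps 1–2 are routine once one records that a Radon--Nikodym derivative between two $G$-invariant measures is $G$-invariant.
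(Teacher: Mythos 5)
Your proof is correct and follows essentially the same route as the paper's: proportionality of $m_\nu$ and $m_\kappa$ via ergodicity, the resulting a.e.\ identity expressing the product $\rho_1(x)\rho_2(y)$ as a ratio of $f_\kappa$ and $f_\nu$ (bounded above and below on product sets separated from the diagonal), freezing one coordinate via Fubini to bound $\rho_1$ locally, and then covering $Z$ (the paper uses two closed sets with complements having nonempty interior where you use a finite cover by compactness, a negligible difference).
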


\begin{proof}
%Since the $\nu_i$ and $\kappa_i$ are $G$-quasi-invariant and ergodic, they are either mutually absolutely continuous or singular. Suppose the former, so that $d\nu_2=J d\nu_1$ for a measurable positive function $J$. We want to show $J$ is essentially bounded.
Since $\nu_i$ and $\kappa_i$ are equivalent, we have $d\nu_i=J_i d\kappa_i$ for a measurable positive function $J_i$. We want to show $J_i$ is $\kappa_i$-essentially bounded. 

Since $m_\nu$ and $m_\kappa$ are $G$-invariant ergodic measures, either they are mutually singular or they are scalar multiples of each other. Thus, the assumption $d\nu_i=J_i d\kappa_i$ implies they are scalar multiples of each other.
Without loss of generality, we can assume that they coincide.
Note $dm_{\nu}(a,b)=J_{1}(a)J_{2}(b)f_{\nu}(a,b)d\kappa_1(a)d\kappa_2(b)$ so we have
$J_{1}(a)J_{2}(b)=f_{\kappa}(a,b)/f_{\nu}(a,b)$ for  $\nu_1\otimes \nu_2$-almost all $(a,b)$. 

Let $U,V$ be disjoint closed subsets in $Z$ with nonempty interior.
There is a $p\in V$ such that $J_{1}(a)J_{2}(p)=f_{\kappa}(a,p)/f_{\nu}(a,p)$ for $\nu_1$-almost all $a\in U$.
Dividing and noting that the $f_\nu$ and $f_\kappa$  are positive and bounded away from $0$ and infinity on $U\times V$, we see that $C^{-1}_U \leq J_{1}(a)/J_{1}(a') \leq C_U$ for $\nu_1$-almost all $a,a'\in U$. Thus, $J_1$ is $\nu_1$-essentially bounded on any closed subset $U$ whose complement has nonempty interior. Covering $Z$ by two such sets, we see that $J_1$ is essentially bounded. The same argument applies to $J_2$.
\end{proof}

\begin{proof}[Proof of Proposition \ref{(2)implies(3)}]
The $\Gamma$-action on $\nu \times \nu$
is ergodic (see \cite[Theorem~6.3]{Kaimanovich}) and since $\nu$ and $\widecheck{\nu}$ are both equivalent to $\kappa$, the $\Gamma$-action is also ergodic for $\kappa \otimes \kappa$.

To complete the proof of Proposition \ref{(2)implies(3)} we just need to show that $\kappa \otimes \kappa$ and $\nu \otimes \widecheck{\nu}$ can both be scaled by functions $f_\kappa$ and $f_\nu$ to obtain $\Gamma$-invariant Radon measures on $\partial^{2} X = (\partial X \times \partial X) \setminus \Diag$. 

For the harmonic measure $\nu$, we may take $f_\nu$ to be the \emph{Naim kernel}
defined for distinct conical points $\zeta,\xi$ as
$$\Theta(\zeta,\xi) :=\liminf_{g\in \Gamma \to \zeta} \lim_{h\in \Gamma \to \xi}\frac{G(g,h)}{G(e,g)G(e,h)}=\liminf_{g\to \zeta}\frac{K_{\xi}(g)}{G(e,g)}.$$
The construction is done in \cite[Corollary~10.3]{GGPY}.

For the Patterson-Sullivan measure $\kappa$, we may
define a measure $m'$ on $(\partial X \times \partial X)\setminus \Diag$ by 
$$dm'(\zeta, \xi) := e^{2v \rho^{X}_{o}(\zeta,\xi)}\ d\kappa(\zeta)\ d\kappa(\xi)$$
where we recall $\rho_o^X$ is the Gromov product. 
By \cite[Corollary 9.4]{Coornaert} this measure is $\Gamma$ quasi-invariant with uniformly bounded Radon-Nikodym cocycle.  Hence, by a general fact in ergodic theory the Radon-Nikodym cocycle is also a coboundary (see \cite{Furman}, Proposition 1). Thus, there exists a $\Gamma$-invariant measure $m$ on $(\partial X \times \partial X)\setminus \Diag$ in the same measure class as $m'$. In other words, one can take $f_\kappa$ to be within a bounded multiplicative constant of $e^{2v \rho_{e}(\zeta,\xi)}$. 
This completes the proof of Proposition \ref{(2)implies(3)}.
\end{proof}

We are now ready to prove:

\begin{proposition}\label{(3)implies(4)}
If the harmonic measure and the Patterson-Sullivan measure are equivalent, then $|d_{G}(g,g')-vd(go,g'o)|$ is uniformly bounded independently of $g,g'\in \Gamma$.
\end{proposition}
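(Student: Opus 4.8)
The plan is to combine Proposition \ref{(2)implies(3)} (which upgrades equivalence of $\nu$ and $\kappa$ to a two-sided bound on $\frac{d\kappa}{d\nu}$) with the two shadow lemmas. First I would note that if $\nu$ and $\kappa$ are equivalent, then by Corollary \ref{reflectedequivalence} the reflected harmonic measure $\widecheck{\nu}$ is also equivalent to $\kappa$, so Proposition \ref{(2)implies(3)} applies and gives a constant $C_0$ with $C_0^{-1} \leq \frac{d\kappa}{d\nu}(\xi) \leq C_0$ for $\nu$-a.e.\ $\xi$. The point of the bounded Radon-Nikodym derivative is that it immediately yields $\kappa(A) \asymp_{C_0} \nu(A)$ for \emph{every} Borel set $A$, in particular for every shadow.

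Next I would fix a basepoint $o$ and a radius $r$ large enough for both shadow lemmas (Proposition \ref{PSshadow} and Proposition \ref{harmonicshadow}) to hold simultaneously, and also large enough that $Sh_r(o,go)$ has positive $\nu$-measure for all $g$ (guaranteed by the proof of Proposition \ref{harmonicshadow}, since shadows eventually contain a fixed-radius ball of the boundary). Then for each $g \in \Gamma$:
\begin{align*}
e^{-v d(o,go)} &\asymp \kappa(Sh_r(o,go)) \asymp_{C_0} \nu(Sh_r(o,go)) \asymp e^{-d_G(e,g)},
\end{align*}
where the first $\asymp$ is Proposition \ref{PSshadow}, the middle comparison is the bounded density, and the last is Proposition \ref{harmonicshadow}. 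Taking $-\log$ of this chain of comparisons gives $|d_G(e,g) - v d(o,go)| \leq C$ for a constant $C$ depending only on $o$, $r$, the quasiconformality constant and $C_0$. This proves the statement with $g' = e$; the general case $|d_G(g,g') - v d(go,g'o)| \leq C'$ follows by applying this with basepoint $g o$ (equivalently, translating by $g^{-1}$, using that $d$ is $\Gamma$-invariant and that $d_G(g,g') = d_G(e, g^{-1}g')$), and absorbing the dependence of the shadow-lemma constants on the basepoint — which is uniform over the $\Gamma$-orbit of $o$ by $\Gamma$-equivariance of shadows, $\Gamma$-invariance of $d$, and the conformality of $\kappa$ and $\nu$.

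The only real subtlety is the uniformity of the shadow-lemma constants as the basepoint ranges over $\Gamma o$: one must check that replacing $o$ by $go$ does not degrade the constants, which is where one uses that $\kappa$ is $v$-quasiconformal and $\nu$ satisfies the Green conformality relation \eqref{E:Green-qc}, so that $\kappa(Sh_r(go, g'o))$ and $\nu(Sh_r(go,g'o))$ transform by the $\Gamma$-cocycle and the comparisons are preserved with $\Gamma$-independent constants. Everything else is a direct juxtaposition of results already established, so I expect this proposition to be short.
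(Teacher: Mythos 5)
Your proposal is correct and follows essentially the same route as the paper: combine Corollary \ref{reflectedequivalence} with Proposition \ref{(2)implies(3)} to get a two-sided bound on $\frac{d\kappa}{d\nu}$, apply it to shadows, and read off the conclusion from the two shadow lemmas. The ``subtlety'' you raise at the end is a non-issue: as you note parenthetically, $d_G(g,g')=d_G(e,g^{-1}g')$ and $d(go,g'o)=d(o,g^{-1}g'o)$, so the general case reduces immediately to the case $g=e$ by left-invariance, with no need to track shadow-lemma constants over the orbit of the basepoint.
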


\begin{proof}
It follows from Proposition~\ref{(2)implies(3)} and Corollary \ref{reflectedequivalence} that if $\kappa$ and $\nu$ are equivalent, their respective Radon-Nikodym derivatives are bounded away from 0 and infinity.
In particular, for any Borel set $A\subset \partial X$ we have
$C^{-1} \nu(A)\leq \kappa(A)\leq C \nu(A)$, for a constant $C>0$.
The shadow lemmas for the Patterson-Sullivan and the harmonic measures show that $\kappa(Sh_{r}(o,go))\asymp \mathrm{e}^{-vd(o,go)}$ and $\nu(Sh_{r}(o,go))\asymp \mathrm{e}^{-d_G(e,g)}$.

It follows that $|d_G(e,g)-vd(o,go)|\leq C$ for some uniform $C$.
Since both distances are invariant by left multiplication, we have $|d_G(g,g')-vd(go,g'o)|\leq C$ for any $g,g'$.
\end{proof}

\section{Gibbs Measures}

Let us now generalize the previous results to Gibbs measures. We begin by stating the relevant definitions.

Let $X$ be a proper, geodesically complete $CAT(-1)$ space and let $T^{1}X$ be its unit tangent bundle.
Let $\pi:T^{1}X\to X$ be the projection map.
Let $\Gamma< \textup{Isom}(X)$ be a nonelementary group of isometries.
Let $F:T^{1}X\to \mathbb{R}$ be a $\Gamma$-invariant function, called a \emph{potential}.
Let $\iota:T^{1}X\to T^{1}X$ be the direction reversing involution.
For a potential $F$ let $\widecheck{F}=F\circ \iota$.
The following definition is from \cite[Definition 3.4]{BPP}.

\begin{definition}\label{HC}
The potential $F:T^{1}X\to \mathbb{R}$ satisfies the H\"older-control (HC) property if:
\begin{itemize}
\item[(a)]
There exists $c_1>0$ and $c_2\in (0,1)$ such that for all $x,y,x', y'\in X$ with $d(x,x'),d(y,y')\leq 1$ we have
$$\left| \int^{y'}_{x'}F dt - \int^{y}_{x}F dt \right|\leq (c_{1}+\max_{\pi^{-1}(B(x,d(x,x')))}|F|)d(x,x')^{c_2}+ (c_{1}+\max_{\pi^{-1}(B(y,d(y,y')))}|F|)d(y,y')^{c_2}$$

\item[(b)]
The potential $F$ has subexponential growth: for each $a>1$ there is a $b>0$ such that
$|F(x)-F(y)|\leq b a^{d(\pi(x),\pi(y))}$. 
\end{itemize}
\end{definition}

The HC property is satisfied, for instance, by any H\"older potential when $X$ is a contractible  manifold of pinched negative curvature \cite[Proposition 3.5]{BPP}. 
From now on, $F$ will be assumed to satisfy the HC property.

Define the \emph{F-ake metric} as 
$$d_{F}(x,y) :=\int^{y}_{x}F \ dt$$
(where the integral is taken along the geodesic from $x$ to $y$). We now define the \emph{topological pressure} of $F$ as 
$$v_F := \limsup_{n \to \infty} \frac{1}{n}\log \sum_{g\in S_n} e^{d_{F}(o,go)}.$$
where $S_n := \{ g \in \Gamma \ : \ n-1 \leq d(o, go) \leq n \}$.
We assume $v_F < \infty$.
Given a boundary point $\zeta \in \partial X$, let us define the \emph{Gibbs cocyle}\footnote{The comparison with \cite{BPP} is given by the formula $C_\xi(x,y) = - \beta_\xi^F(x,y) + v_F \beta_\xi(x, y)$.} as 
$$\beta^{F}_{\zeta}(x,y) := \lim_{z\to \zeta} \left( d_{F}(x,z)-d_{F}(y,z) \right).$$ 
The limit exists by \cite[Proposition 3.10]{BPP}. Note that by definition we have the cocycle property
$$\beta_\zeta^F(x, z) = \beta_\zeta^F(x, y) + \beta_\zeta^F(y, z)$$
for any $x, y, z \in X$, $\zeta \in \partial X$. 
Fix a basepoint $o\in X$.
The associated \emph{Gibbs density} is a $\Gamma$ quasi-invariant probability measure $\kappa_F$ on $\partial X$ such that \begin{equation}\label{E:quasi-kappa}
\frac{dg\kappa_{F}}{d\kappa_F}(\zeta)= \exp(-\beta^{F}_{\zeta}(o, go)+v_F\beta_\zeta(o, go)).
\end{equation}

The unit tangent bundle of $X$ is defined as $T^1 X := (\partial X \times \partial X \setminus \Delta) \times \mathbb{R}$ and on it there is a natural action of the geodesic flow. When there exists a flow invariant probability measure $m_F$ on $T^1 M := T^{1}X/\Gamma$ realizing the topological pressure $v_F$, its lift $\widetilde{m}_F$ to $T^{1}X$ is the unique, up to scaling, flow invariant measure equivalent to $\kappa_{F}\times \kappa_{\widecheck{F}}$. 
On the other hand, when  the latter construction projects to an infinite measure on $T^{1}M$, no finite measure on $T^{1}M$ realizing the topological pressure exists.
The pair $(\Gamma, F)$ is said to be of \emph{divergence type} if the series $$Q_{\Gamma,F}(s) :=\sum_{g\in \Gamma}e^{\int^{go}_{o}(F-s)}$$ diverges at its critical exponent $s=v_F$. In that case, there is a unique Gibbs density $\kappa_F$, and it is obtained as the weak limit  of measures $Q_{\Gamma,F}(s)^{-1}\sum_{g\in \Gamma}e^{\int^{go}_{o}(F-s)}\delta_{go}$ as $s\to v_{F}$. This is in particular the case when there exists a probability measure $m_F$ on $T^1 M$ realizing the topological pressure $v_F$. 
See \cite{BPP}, \cite{PPS} for details. 

When $(\Gamma,F)$ is of divergence type, $\kappa_{F}$ is $\Gamma$ ergodic and gives full weight to conical limit points. Otherwise, $\kappa_{F}$ gives zero weight to conical limit points \cite[Theorem 4.5]{BPP}.

Let us start with a few consequences of the HC property. 

\begin{lemma}
If $F$ satisfies the (HC) property then for all  and $x,y,z \in X$ we have 
$$|d_{F}(x,z)-d_{F}(y,z)|\leq (c_{1}+\max_{\pi^{-1}(B(x,d(x,y)))}|F|)(d(x,y)+1)$$ and
$$|d_{F}(z,x)-d_{F}(z,y)|\leq (c_{1}+\max_{\pi^{-1}(B(x,d(x,y)))}|F|)(d(x,y)+1).$$
\end{lemma}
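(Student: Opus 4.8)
The plan is to prove both estimates by the same telescoping argument along the geodesic from $x$ to $y$, invoking only part (a) of the (HC) property. Assume $x\neq y$ (the case $x=y$ being trivial), set $n:=\lceil d(x,y)\rceil$, and take the evenly spaced points $x=x_0,x_1,\dots,x_n=y$ on the geodesic $[x,y]$, so that $d(x_i,x_{i+1})=d(x,y)/n\le 1$ for every $i$. Since $x,x_i,x_{i+1}$ occur in this order along the geodesic, any $w$ with $d(x_i,w)\le d(x_i,x_{i+1})$ satisfies $d(x,w)\le d(x,x_i)+d(x_i,x_{i+1})=d(x,x_{i+1})\le d(x,y)$; hence $B(x_i,d(x_i,x_{i+1}))\subseteq B(x,d(x,y))$ and therefore $\max_{\pi^{-1}(B(x_i,d(x_i,x_{i+1})))}|F|\le M:=\max_{\pi^{-1}(B(x,d(x,y)))}|F|$, which is finite because $X$ is proper and $F$ is locally bounded (e.g.\ by (HC)(b)).

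For the first inequality, I would telescope
$$d_F(x,z)-d_F(y,z)=\sum_{i=0}^{n-1}\bigl(d_F(x_i,z)-d_F(x_{i+1},z)\bigr)$$
and bound each summand by applying (HC)(a) with first endpoints $(x_i,x_{i+1})$ (distance $\le 1$) and second endpoints $(z,z)$ (distance $0\le 1$):
$$\bigl|d_F(x_{i+1},z)-d_F(x_i,z)\bigr|\le\bigl(c_1+\max_{\pi^{-1}(B(x_i,d(x_i,x_{i+1})))}|F|\bigr)d(x_i,x_{i+1})^{c_2}+\bigl(c_1+\max_{\pi^{-1}(B(z,0))}|F|\bigr)\cdot 0^{c_2}.$$
The second term vanishes since $0^{c_2}=0$, and since $d(x_i,x_{i+1})\le 1$ and $c_2\in(0,1)$ we have $d(x_i,x_{i+1})^{c_2}\le 1$, so each summand is at most $c_1+M$. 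Summing the $n\le d(x,y)+1$ terms gives $|d_F(x,z)-d_F(y,z)|\le (c_1+M)(d(x,y)+1)$, which is the first claimed bound.

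For the second inequality the argument is identical with the roles of the two endpoints swapped: telescope $d_F(z,x)-d_F(z,y)=\sum_{i=0}^{n-1}\bigl(d_F(z,x_i)-d_F(z,x_{i+1})\bigr)$ and apply (HC)(a) with first endpoints $(z,z)$ and second endpoints $(x_i,x_{i+1})$; the term attached to the fixed endpoint $z$ again carries the factor $0^{c_2}=0$ and disappears, and the remaining sum is bounded exactly as before, using the same ball containment.

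I do not expect a genuine obstacle here: this is a routine interpolation. The only steps needing a little care are (i) the containment of the intermediate balls inside $B(x,d(x,y))$, which is what allows the clean ball in the statement rather than an enlarged one, (ii) noticing that the contribution of the non-moving endpoint drops out because of the $0^{c_2}$ factor, and (iii) the bookkeeping $\sum_{i=0}^{n-1}d(x_i,x_{i+1})^{c_2}\le n\le d(x,y)+1$.
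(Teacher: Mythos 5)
Your proof is correct and follows essentially the same route as the paper's: subdivide the geodesic $[x,y]$ into $\lceil d(x,y)\rceil$ segments of length at most $1$, telescope, and apply (HC)(a) to each consecutive pair with the third point held fixed. Your write-up is actually more careful than the paper's on the ball containment and on why the fixed endpoint contributes nothing (the $0^{c_2}$ factor), but the argument is the same.
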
\label{(HC)Corollary}
\begin{proof}
Let $N := \lceil d(x,y) \rceil$ and pick $p_0, p_1, \dots, p_{N}$ points on $[x, y]$ with  $p_0 = x$, $p_N = y$ and $d(p_i, p_{i+1}) \leq 1$ for $0 \leq i \leq N -1$.
Then by Definition \ref{HC} a) we have $|d_F(p_i,z) - d_F(p_{i+1},z)| \leq c_1 \max_{\pi^{-1}B(x, d(x,y))}|F|$, 
hence 
$$|d_{F}(x,z)-d_{F}(y,z)|\leq (c_{1}+\max_{\pi^{-1}(B(x,d(x,y)))}|F|)(d(x,y)+1).$$
The second inequality is proved identically.
\end{proof}

The following statement is essentially the same as \cite[Proposition 3.10(4)]{BPP}, but we give its proof for completeness.

\begin{proposition}\label{Gibbscomparisonofshadows}
Let $F$ be a potential which satisfies the (HC) property. Then there exists $c_1 > 0$ such that for all $r>0$, $x,y\in X$ and $\xi \in Sh_{r}(x,y)$ we have
$$|\beta^{F}_\xi(x,y)-d_{F}(x,y)|\leq 2(c_{1}+\max_{\pi^{-1}(B(y,r))}|F|)(r+1).$$
\end{proposition}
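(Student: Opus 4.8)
The plan is to estimate the difference $\beta^F_\xi(x,y) - d_F(x,y)$ by going to a point $z$ on a geodesic ray $[x,\xi)$ that witnesses the shadow condition, i.e. a point $z \in [x,\xi)$ with $d(z,y) \leq r$, and then passing to the limit $z \to \xi$. First I would recall that by the definition of the Gibbs cocycle, $\beta^F_\xi(x,y) = \lim_{z \to \xi}(d_F(x,z) - d_F(y,z))$, where $z$ may be taken to range over points on a geodesic ray from $x$ to $\xi$; the limit exists by \cite[Proposition 3.10]{BPP}. Fix such a ray, and let $z$ be far out along it. The cocycle-like additivity of $d_F$ along the geodesic from $x$ through $z$ gives $d_F(x,z) = d_F(x,y') + d_F(y',z)$ if $y'$ were on the segment $[x,z]$, but since $y$ is only within distance $r$ of the ray rather than exactly on it, I will instead compare $d_F(x,z) - d_F(y,z)$ to $d_F(x,y)$ directly.

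The key computational step is: choose $z$ on the ray $[x,\xi)$ with $d(z,y) \le r$, which exists for $z$ deep enough since $\xi \in Sh_r(x,y)$. Then write
$$
(d_F(x,z) - d_F(y,z)) - d_F(x,y) = \big(d_F(x,z) - d_F(x,y) - d_F(y,z)\big) + \big(d_F(y,z) - d_F(y,z)\big),
$$
which is not yet useful; more carefully, the geodesic $[x,z]$ passes within $r$ of $y$, so I would pick the point $w \in [x,z]$ closest to $y$ (so $d(w,y) \le r$ up to a bounded additive error coming from $CAT(-1)$ / $\delta$-hyperbolicity, but actually by the shadow definition a ray from $x$ to $\xi$ already meets $B_r(y)$, so $w$ can be taken with $d(w,y)\le r$). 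Then by additivity of $d_F$ along $[x,z]$, $d_F(x,z) = d_F(x,w) + d_F(w,z)$, so
$$
d_F(x,z) - d_F(y,z) - d_F(x,y) = \big(d_F(x,w) - d_F(x,y)\big) + \big(d_F(w,z) - d_F(y,z)\big).
$$
The first term is bounded by $(c_1 + \max_{\pi^{-1}(B(y,r))}|F|)(r+1)$ using the first inequality of Lemma \ref{(HC)Corollary} with the roles $d(w,y) \le r$ (note $B(w, d(w,y)) \subset B(y, 2r)$, so I may need to enlarge the ball slightly — I should be a little careful about which ball the max is taken over, and may need $B(y,2r)$ or absorb a factor into $c_1$). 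The second term, $d_F(w,z) - d_F(y,z) = d_F(w,z) - d_F(y,z)$, is again bounded by $(c_1 + \max|F|)(d(w,y)+1) \le (c_1 + \max|F|)(r+1)$ by the same lemma applied with $z$ as the common second argument. Summing gives the bound $2(c_1 + \max_{\pi^{-1}(B(y,r))}|F|)(r+1)$ (possibly with the ball slightly enlarged, which I would track and then note can be absorbed, or simply state with $B(y,r)$ if the shadow point can be chosen exactly at distance $\le r$).

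The main obstacle I anticipate is bookkeeping around the ball over which $\max |F|$ is taken: Lemma \ref{(HC)Corollary} produces $\max_{\pi^{-1}(B(x,d(x,y)))}|F|$ with the ball centered at one of the two points at the separation distance, and I need to make sure all the balls that arise ($B(w, d(w,y))$, $B(y,\cdot)$, etc.) are contained in a controlled neighborhood of $B(y,r)$ so that the final constant is genuinely $\max_{\pi^{-1}(B(y,r))}|F|$ as stated, rather than over a larger ball. Since $d(w,y) \le r$ and $w$ lies on a geodesic from $x$ to $z$, everything takes place within $B(y, 2r)$ or so; I expect this is handled either by a harmless enlargement of $r$, by the subexponential growth clause (b) of the HC property if a crude bound suffices, or simply because the statement as used downstream only needs a bound of the form $C(r)(1 + \max_{\pi^{-1}(B(y,r'))}|F|)$ for some $r'$ comparable to $r$. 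Finally, since all estimates are uniform in $z$ along the ray, I pass to the limit $z \to \xi$ and conclude $|\beta^F_\xi(x,y) - d_F(x,y)| \le 2(c_1 + \max_{\pi^{-1}(B(y,r))}|F|)(r+1)$.
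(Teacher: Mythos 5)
Your proposal is correct and follows essentially the same route as the paper: the paper also picks the closest point $p$ on the ray $[x,\xi)$ to $y$ (your $w$), uses that $\beta^F_\xi(x,p)=d_F(x,p)$ together with the cocycle property (the limit form of your additivity of $d_F$ along $[x,z]$), and bounds the two resulting terms $|d_F(x,p)-d_F(x,y)|$ and $|\beta^F_\xi(p,y)|$ by Lemma \ref{(HC)Corollary}. Your bookkeeping concern about the ball resolves itself by always applying the lemma with the ball centered at $y$ (since $d(w,y)\le r$), which is exactly what the stated constant requires.
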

\begin{proof}
Suppose $\xi \in Sh_{r}(x,y)$.
Let $p$ be the closest point to $y$ on $[x,\xi)$, so that $d(p,y)\leq r$.
Then $\beta^{F}_\xi(x,p)=d_{F}(x,p)$ and so by the cocycle property of $\beta_{F}(.,.)$ we have:
\begin{align}
|\beta^{F}_\xi(x,y)-d_{F}(x,y)|  & =  |\beta^{F}_\xi(x,y)-\beta^{F}_\xi(x,p)+d_{F}(x,p)-d_{F}(x,y)|    \\
& \leq |\beta_\xi^{F}(p,y)|+|d_{F}(x,p)-d_{F}(x,y)|.
\end{align}
In view of Lemma \ref{(HC)Corollary} each of the two terms is bounded by
 $(c_{1}+\max_{\pi^{-1}(B(y,r))}|F|)(r+1)$, completing the proof.
\end{proof}
The following version of the shadow lemma for $\kappa_F$ is proved in \cite[Lemma 4.2]{BPP}. 
\begin{proposition}\label{gibbshadow}
Let $F : T^1 X \to \mathbb{R}$ be a potential which satisfies the (HC) condition, and let $o \in X$. Then: 
\begin{itemize}
    \item[(a)] 
For large enough $r>0$ we have for any $g \in \Gamma$
$$\kappa_{F}(Sh_{r}(o,go))\asymp e^{d_{F}(o,go)-v_F d(o,go)}$$
where the implied constant depends only on $r$, $o$.
\item[(b)]
There exists a constant $C$ such that for any $n$
$$\sum_{g\in S_n} e^{d_{F}(o,go)-v_F d(o,go)} \leq C.$$
where $S_n := \{ g \in \Gamma: n-1 \leq d(o,g o) \leq n\}$.
\end{itemize}
\end{proposition}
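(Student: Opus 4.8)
The plan is to adapt Sullivan's classical shadow lemma argument, replacing the conformal property of Patterson–Sullivan measures by the quasi-invariance relation \eqref{E:quasi-kappa} for $\kappa_F$ and controlling the Gibbs cocycle $\beta_\xi^F$ on shadows via Proposition \ref{Gibbscomparisonofshadows}. First I would establish part (a). Fix $r$ large and $g \in \Gamma$. Using the quasi-invariance formula, for $\nu$-a.e.\ (equivalently $\kappa_F$-a.e.) $\xi \in \partial X$ we have
$$\kappa_F(Sh_r(o,go)) = g^{-1}\kappa_F(g^{-1}Sh_r(o,go)) = \int_{Sh_r(g^{-1}o, o)} \exp\big(-\beta^F_\zeta(o, g^{-1}o) + v_F \beta_\zeta(o, g^{-1}o)\big)\, d\kappa_F(\zeta),$$
after checking that $g^{-1}Sh_r(o,go) = Sh_r(g^{-1}o, o)$. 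On the set $Sh_r(g^{-1}o, o)$, Proposition \ref{Gibbscomparisonofshadows} gives $|\beta^F_\zeta(g^{-1}o, o) - d_F(g^{-1}o, o)| \le 2(c_1 + \max_{\pi^{-1}(B(o,r))}|F|)(r+1)$, a constant depending only on $r$ and $o$ (here the orbit invariance of $F$ makes $\max_{\pi^{-1}(B(o,r))}|F|$ uniform in $g$), and the ordinary Busemann function satisfies $|\beta_\zeta(g^{-1}o, o) - d(g^{-1}o, o)| \le 2r$ on the same shadow. Hence the integrand is $\asymp e^{d_F(g^{-1}o,o) - v_F d(g^{-1}o,o)} = e^{-d_F(o,go) + v_F\beta\text{-type terms}}$; being slightly more careful with signs, the integrand is comparable to $e^{d_F(o,go) - v_F d(o,go)}$ up to a multiplicative constant depending only on $r,o$. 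It remains to bound $\kappa_F(Sh_r(g^{-1}o,o))$ above and below by positive constants independent of $g$: the upper bound is trivial since $\kappa_F$ is a probability measure, and the lower bound follows exactly as in the proof of Proposition \ref{harmonicshadow}, using \cite[Lemma 6.3]{Coornaert} to see that for $r$ large the shadow $Sh_r(g^{-1}o,o)$ contains a $\rho$-ball of definite radius centered at a limit point, together with the fact that $\kappa_F$ has full support on $\Lambda$ and assigns positive mass to every such ball. This yields (a).

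Next I would deduce part (b) from (a) together with the definition of the topological pressure $v_F$. Summing (a) over $g \in S_n = \{g : n-1 \le d(o,go) \le n\}$,
$$\sum_{g \in S_n} e^{d_F(o,go) - v_F d(o,go)} \asymp \sum_{g \in S_n} \kappa_F(Sh_r(o,go)).$$
The shadows $Sh_r(o,go)$ for $g \in S_n$ have bounded overlap — a standard consequence of $\delta$-hyperbolicity / the $CAT(-1)$ geometry and discreteness of the orbit, namely any point of $\partial X$ lies in at most $N = N(r,\delta)$ such shadows for a fixed $n$ — so $\sum_{g \in S_n}\kappa_F(Sh_r(o,go)) \le N\, \kappa_F(\partial X) = N$. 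Thus the sum is bounded by a constant $C$ independent of $n$, giving (b). (Alternatively, (b) can simply be quoted from \cite[Lemma 4.2]{BPP}, but the bounded-multiplicity argument keeps the proof self-contained.)

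The main obstacle I anticipate is bookkeeping the error terms in part (a): one must simultaneously track the Gibbs cocycle $\beta^F$, the ordinary Busemann cocycle $\beta$, and the sign conventions in the quasi-invariance formula \eqref{E:quasi-kappa}, and verify that every constant that appears (in particular $\max_{\pi^{-1}(B(o,r))}|F|$ and the comparison constants from Proposition \ref{Gibbscomparisonofshadows} and from the $2r$-type Busemann estimate) depends only on $r$ and $o$ and not on $g$ — this is where $\Gamma$-invariance of $F$ is essential. The lower bound $\kappa_F(Sh_r(g^{-1}o,o)) \gtrsim 1$ is routine given the full-support property, and the bounded-multiplicity fact for (b) is standard in $CAT(-1)$ geometry, so neither should cause real difficulty.
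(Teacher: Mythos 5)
The paper does not actually prove this proposition: it is quoted directly from \cite[Lemma 4.2]{BPP} (the Mohsen-type shadow lemma for Gibbs measures), so there is no in-paper argument to compare against. Your reconstruction is correct and is essentially the standard Sullivan-style proof that the cited reference uses: pull back by $g^{-1}$ via the quasi-invariance relation \eqref{E:quasi-kappa}, control $\beta^F_\zeta(g^{-1}o,o)$ on $Sh_r(g^{-1}o,o)$ by Proposition \ref{Gibbscomparisonofshadows} and $\beta_\zeta(g^{-1}o,o)$ by the elementary $2r$ estimate, and bound $\kappa_F(Sh_r(g^{-1}o,o))$ below uniformly via \cite[Lemma 6.3]{Coornaert} plus full support; your derivation of (b) from (a) by bounded multiplicity of the shadows over an orbit annulus is also the standard route. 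Two small points to clean up: the intermediate line ``$e^{d_F(g^{-1}o,o)-v_Fd(g^{-1}o,o)}=e^{-d_F(o,go)+\cdots}$'' has the wrong sign as written --- by $\Gamma$-invariance of $F$ one has $d_F(g^{-1}o,o)=d_F(o,go)$ (no minus sign), which is exactly why the final comparison comes out right; and the full-support claim for $\kappa_F$ on the compact limit set, which you use for the uniform lower bound, deserves a word of justification (it follows from quasi-invariance of $\kappa_F$ together with minimality of the $\Gamma$-action on $\Lambda$ for a non-elementary group).
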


\subsection{Existence of the F-ake drift}

We will now show: 
\begin{theorem}\label{fakedrift}
Let $\Gamma$ be a countable group of isometries of a $CAT(-1)$ metric space $X$, let $\mu$ be a probability measure on $\Gamma$ with finite exponential moment, and let $o \in X$. Then the limit 
$$l_{F,\mu} :=\lim_{n\to \infty}\frac{d_{F}(o,\omega_n o)}{n}$$ exists and is finite for almost every sample path $\omega$ and is independent of the sample path.
\end{theorem}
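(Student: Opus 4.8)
The plan is to apply Birkhoff's ergodic theorem to the Gibbs cocycle rather than to $d_F$ directly. Since $d_F$ is neither symmetric nor subadditive — the geodesic from $x$ to $z$ need not pass near $y$ — Kingman's subadditive theorem is unavailable; however, the cocycle $\beta^F$ \emph{is} additive by the cocycle property, so the idea is to show that $d_F(o,\omega_n o)$ and $\beta^F_{\omega_\infty}(o,\omega_n o)$ differ by $o(n)$ along almost every sample path, where $\omega_\infty\in\partial X$ is the a.s.\ limit of the walk. We first reduce to the non-elementary case: if the subgroup generated by $\supp\mu$ fixes a point of $\partial X$, preserves a geodesic line, or has a bounded orbit, then $d_F(o,\omega_n o)=o(n)$ a.s.\ by a direct estimate based on Lemma~\ref{(HC)Corollary} (in the first and last cases the orbit is exponentially distorted, so $d(o,\omega_n o)=O(\log n)$; in the line case $d(o,\omega_n o)$ grows at most like $\sqrt n$ while $|F|$ is bounded on the relevant $\Gamma$-invariant neighbourhood of the line), and then $\ell_{F,\mu}=0$. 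So assume the walk is non-elementary; then $\ell_\mu>0$ and $\omega_n o\to\omega_\infty\in\partial X$ almost surely.

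Using Lemma~\ref{(HC)Corollary}, the $\Gamma$-invariance of $F$ (which gives $\max_{\pi^{-1}(\gamma o)}|F|=\max_{\pi^{-1}(o)}|F|=:C_0$ for every $\gamma\in\Gamma$) and clause (b) of the (HC) property, one gets for every $\xi\in\partial X$ and every $g\in\Gamma$ the bound
$$\bigl|\beta^F_\xi(o,go)\bigr|\ \le\ \bigl(c_1+C_0+b\,a^{\,d(o,go)}\bigr)\bigl(d(o,go)+1\bigr),$$
valid for any $a>1$ (with $b=b(a)$). Since $d(o,go)\le L\Vert g\Vert$ for a constant $L$ depending only on a finite generating set, the finite exponential moment hypothesis, applied with $a>1$ close enough to $1$, shows that $\Phi(\omega):=\beta^F_{\omega_\infty}(o,\omega_1 o)$ is a Borel, integrable function on path space. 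Now the cocycle identity for $\beta^F$ and its $\Gamma$-equivariance give, with $\omega_0=e$ and $T$ the Bernoulli shift on increments,
$$\beta^F_{\omega_\infty}(o,\omega_n o)=\sum_{k=0}^{n-1}\beta^F_{\omega_\infty}(\omega_k o,\omega_{k+1}o)=\sum_{k=0}^{n-1}\beta^F_{\omega_k^{-1}\omega_\infty}\bigl(o,(\omega_k^{-1}\omega_{k+1})o\bigr)=\sum_{k=0}^{n-1}\Phi(T^k\omega),$$
because $\omega_k^{-1}\omega_\infty=(T^k\omega)_\infty$ and $\omega_k^{-1}\omega_{k+1}=(T^k\omega)_1$. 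As $T$ is ergodic, Birkhoff's theorem yields $\tfrac1n\,\beta^F_{\omega_\infty}(o,\omega_n o)\to E(\Phi)=:\ell_{F,\mu}$ almost surely (and in $L^1$), and $\ell_{F,\mu}$ is finite.

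It remains to bound the discrepancy. Set $D_n:=d(\omega_n o,[o,\omega_\infty))$, so that $\omega_\infty\in Sh_{D_n+1}(o,\omega_n o)$. By Proposition~\ref{Gibbscomparisonofshadows}, together with the $\Gamma$-invariance of $F$ and clause (b) of (HC),
$$\bigl|d_F(o,\omega_n o)-\beta^F_{\omega_\infty}(o,\omega_n o)\bigr|\ \le\ 2\bigl(c_1+C_0+b\,a^{\,D_n+1}\bigr)(D_n+2).$$
By the deviation estimate of Proposition~\ref{closetogeodesics}, $P(D_n>t)\le Ce^{-t/C}$ uniformly in $n$, so Borel--Cantelli gives $D_n=O(\log n)$ almost surely; choosing $a>1$ close enough to $1$ that $a^{\,D_n+1}=O(n^{\varepsilon})$ with $\varepsilon<1$, the right-hand side above is $o(n)$. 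Combining with the previous paragraph, $\tfrac1n\,d_F(o,\omega_n o)\to\ell_{F,\mu}$ almost surely, and the limit is the finite constant $\ell_{F,\mu}$.

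The main obstacle is precisely that $d_F$ carries none of the structure (symmetry, triangle inequality) that makes Kingman's theorem available: the real content is in replacing $d_F$ by the additive cocycle $\beta^F_{\omega_\infty}$ and verifying that this substitution costs only $o(n)$. This forces one to combine the geometric comparison of Proposition~\ref{Gibbscomparisonofshadows} with the \emph{logarithmic} sublinear tracking of the geodesic $[o,\omega_\infty)$ by the random walk, and to use the subexponential growth clause (b) of (HC) exactly in order to keep $\max_{\pi^{-1}(B(\omega_n o,D_n))}|F|$ from overwhelming the $o(n)$ estimate — a point that would be invisible were $F$ assumed bounded.
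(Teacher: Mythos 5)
Your proposal is correct and follows essentially the same route as the paper: replace $d_F(o,\omega_n o)$ by the additive cocycle $\beta^F_{\omega_\infty}(o,\omega_n o)$, apply the ergodic theorem to the resulting Birkhoff/subadditive sum of integrable increments, and control the discrepancy by combining Proposition~\ref{Gibbscomparisonofshadows}, the deviation estimate of Proposition~\ref{closetogeodesics}, the subexponential growth clause of (HC), and Borel--Cantelli. The only cosmetic differences are that the paper packages the discrepancy control as a polynomial tail bound ($P(\cdot>t)\le t^{-4}$, then $t=\sqrt n$) rather than your $D_n=O(\log n)$ argument, and invokes Kingman where you invoke Birkhoff; your explicit treatment of the elementary cases is extra care the paper leaves implicit.
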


We will need the following.
\begin{lemma}\label{decayoffakeshadows}
For all large enough $t>0$ one has the estimate
$$P(|\beta^{F}_{\omega_{\infty}}(o,\omega_{n}o)-d_{F}(o,\omega_{n}o)|>t) \leq 1/t^{4}.$$
\end{lemma}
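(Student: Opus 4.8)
The plan is to control the deviation $|\beta^F_{\omega_\infty}(o,\omega_n o) - d_F(o,\omega_n o)|$ by comparing it, via Proposition~\ref{Gibbscomparisonofshadows}, with the event that $\omega_n o$ fails to lie close to a geodesic ray $[o,\omega_\infty)$. Concretely, if $\omega_\infty \in Sh_r(o,\omega_n o)$ for some radius $r$, then Proposition~\ref{Gibbscomparisonofshadows} bounds $|\beta^F_{\omega_\infty}(o,\omega_n o)-d_F(o,\omega_n o)|$ by $2(c_1 + \max_{\pi^{-1}(B(\omega_n o, r))}|F|)(r+1)$. So the first step is: for a suitable choice of $r = r(t)$, the bad event $\{|\beta^F_{\omega_\infty}(o,\omega_n o)-d_F(o,\omega_n o)| > t\}$ is contained in the union of (i) $\{d(\omega_n o,[o,\omega_\infty)) > r\}$ and (ii) $\{2(c_1 + \max_{\pi^{-1}(B(\omega_n o, r))}|F|)(r+1) > t\}$, the latter being an event that forces $\max_{\pi^{-1}(B(\omega_n o, r))}|F|$ to be large.

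For event (i), I would invoke Proposition~\ref{closetogeodesics}: $P(d(\omega_n o,[o,\omega_\infty)) > r) \leq C e^{-r/C}$, which decays faster than any polynomial in $r$, so choosing $r$ a large enough constant multiple of $\log t$ makes this contribution $\leq \tfrac{1}{2} t^{-4}$. For event (ii), the HC property's subexponential-growth clause (Definition~\ref{HC}(b)) gives $|F(x)-F(y)| \leq b\, a^{d(\pi(x),\pi(y))}$, so on the ball $B(\omega_n o, r)$ we have $\max_{\pi^{-1}(B(\omega_n o,r))}|F| \leq |F(\text{unit vector over }\omega_n o)| + b\,a^{r}$; pairing this with a pointwise bound on $|F|$ along the orbit — which is itself controlled subexponentially in $d(o,\omega_n o) \leq t\|\omega_n\|$ — shows that event (ii) forces $\|\omega_n\|$ to be at least a constant times $t$ (after absorbing the $a^r = a^{O(\log t)} = t^{O(1)}$ factor), up to adjusting constants. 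Since $\mu$ has finite exponential moment, the exponential Chebyshev inequality applied to $\|\omega_n\| \leq \sum_{j=0}^{n-1}\|\omega_j^{-1}\omega_{j+1}\|$ together with independence of increments — exactly the computation in the proof of Proposition~\ref{lemma4.14Haissinsky} — gives $P(\|\omega_n\| \geq cn) $ exponentially small, but here we need it uniformly in $n$ for $\|\omega_n\| \geq$ const$\cdot t$, which is more delicate since $n$ is unbounded.

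I expect the main obstacle to be precisely this last point: obtaining a bound on $P\big(\max_{\pi^{-1}(B(\omega_n o,r))}|F| > \text{(something growing with }t)\big)$ that is uniform in $n$. The issue is that $d(o,\omega_n o)$ grows linearly in $n$, so $|F|$ over the relevant ball can be as large as $\sim b a^{tn}$ for typical $\omega_n$; the event (ii) cannot hold merely because $n$ is large — it must be that $\omega_n$ has travelled \emph{unusually far}, i.e. $d(o,\omega_n o)$ is atypically large relative to $t$. The clean way around this is to first intersect with the typical-speed event $\{d(o,\omega_n o) \leq Kn\}$ for a constant $K$ (whose complement has exponentially small probability by finite exponential moment, uniformly in a way that can be summed), and on that event bound $\max|F| \leq b\,a^{Kn + r}$; but then event (ii) with $t$ polynomially related to $r$ still only kicks in for $n \gtrsim \log t$, which is too weak. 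The correct resolution, I believe, is that the deviation we must bound is really comparing $\beta^F_{\omega_\infty}$ against $d_F$ \emph{at the same point} $\omega_n o$, so the large value of $|F|$ near $\omega_n o$ cancels in the difference except for the contribution over a segment of length $\leq r = O(\log t)$ near $\omega_n o$ (this is the content of Lemma~\ref{(HC)Corollary}, whose bound is $(c_1 + \max_{\pi^{-1}B(\omega_n o, d)}|F|)(d+1)$ with $d \leq r$); thus what actually must be large is $\max_{\pi^{-1}B(\omega_n o, r)}|F| \cdot r$, and using subexponential growth \emph{centered at} $\omega_n o$ this is $\lesssim (|F(v_{\omega_n o})| + b a^r) r$ where $v_{\omega_n o}$ is any fixed unit vector over $\omega_n o$, and $|F(v_{\omega_n o})| \leq |F(v_o)| + b a^{d(o,\omega_n o)}$ — so we are back to needing $d(o,\omega_n o)$ atypically large, but now only logarithmically: the event forces $d(o,\omega_n o) \gtrsim \log t$, hence $n \gtrsim \log t$ as well, after which finite exponential moment plus a union bound over $n$ does close it. I would write this step carefully, choosing $r \asymp \log t$ and tracking that all the resulting probabilities are $o(t^{-4})$; the polynomial rate $t^{-4}$ in the statement is generous enough that these crude exponential-in-$\log t$, i.e. polynomial, bounds suffice.
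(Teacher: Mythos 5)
Your overall structure is the right one and matches the paper's: decompose the bad event into (i) $\omega_\infty\notin Sh_r(o,\omega_n o)$, controlled by Proposition~\ref{closetogeodesics} with $r\asymp\log t$, and (ii) the deterministic bound from Proposition~\ref{Gibbscomparisonofshadows} exceeding $t$. But your treatment of (ii) has a genuine gap, and the detour you take to patch it does not close. The point you miss is that $F$ is $\Gamma$-invariant (this is part of the definition of a potential), so for $\omega_n=g\in\Gamma$ one has $\max_{\pi^{-1}(B(\omega_n o,r))}|F|=\max_{\pi^{-1}(B(o,r))}|F|$: the maximum over the ball around $\omega_n o$ is a \emph{deterministic} quantity depending only on $r$, growing subexponentially in $r$ by Definition~\ref{HC}(b). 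Hence with $r\asymp\log t$ the bound in Proposition~\ref{Gibbscomparisonofshadows} is at most $t$ for all large $t$, event (ii) is simply empty, and only event (i) contributes; this is exactly the paper's one-line argument (it produces a subexponential $f(D)$ with $f(D)\le b^{D/4}$ while $P(\omega_\infty\notin Sh_D)\le b^{-D}$, and sets $t=b^{D/4}$).

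By instead bounding $|F|$ near $\omega_n o$ via the subexponential-growth clause \emph{centered at the basepoint} $o$, you get $\max_{\pi^{-1}(B(\omega_n o,r))}|F|\lesssim |F(v_o)|+b\,a^{d(o,\omega_n o)+r}$, and you then argue that event (ii) forces $d(o,\omega_n o)\gtrsim\log t$, to be handled by exponential moments and a union bound over $n$. This cannot work: the estimate must hold for each fixed $n$ uniformly, and for $n\gg\log t$ the event $d(o,\omega_n o)\gtrsim\log t$ has probability close to $1$ (the displacement grows linearly in $n$ by positivity of the drift), so the probability of your version of event (ii) is not small for large $n$, and no union bound over $n$ rescues a per-$n$ bound. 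The difficulty you flag as ``the main obstacle'' is an artifact of discarding the $\Gamma$-invariance; once it is used, no probabilistic control of $|F|$ is needed at all.
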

\begin{proof}
Proposition \ref{Gibbscomparisonofshadows} and the subexponential growth of the potential $F$ provides a subexponentially growing function $f:\mathbb{R}^{+}\to \mathbb{R}^{+}$ such that $|\beta^{F}_{\zeta}(o, go)-d_{F}(o, go)|\leq f(D)$ whenever $\zeta \in Sh_{D}(o,go)$. Proposition \ref{closetogeodesics} provides a $b>1$ such that for all large enough $D$ (independent of $n$) and all $n$ we have 
$P(\omega_\infty \notin Sh_{D}(o, \omega_n o)) \leq b^{-D}$. On the other hand, for large enough $D$ we have $f(D) \leq b^{D/4}$. Consequently 
$P(|\beta^{F}_{\omega_{\infty}}(o,\omega_{n}o)-d_{F}(o,\omega_{n}o)|>b^{D/4}) \leq b^{-D}$. Letting $t=b^{D/4}$ completes the proof.

\end{proof}

For each $n$ define $\beta_n(\omega) := \beta^{F}_{\omega_{\infty}}(o,\omega_{n}o)$.

\begin{lemma}\label{Busemannintegrable}
Each $\beta_n(\omega)$ is $P$-integrable.
\end{lemma}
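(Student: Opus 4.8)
The plan is to show that $\beta_n(\omega) = \beta^F_{\omega_\infty}(o, \omega_n o)$ is integrable by splitting it into the main term $d_F(o, \omega_n o)$, which is controlled by the finite exponential moment hypothesis, plus a remainder term controlled by Lemma \ref{decayoffakeshadows}. First I would write
$$|\beta_n(\omega)| \leq |d_F(o, \omega_n o)| + |\beta^F_{\omega_\infty}(o, \omega_n o) - d_F(o, \omega_n o)|$$
and bound each of the two summands in $L^1$ separately.

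For the first term, I would use Lemma \ref{(HC)Corollary} with $x = o$, $y = \omega_n o$, $z$ a point on $[o, \omega_n o)$ tending to a boundary point — actually more directly, $|d_F(o, \omega_n o)| \leq (c_1 + \max_{\pi^{-1}(B(o, d(o, \omega_n o)))}|F|)(d(o, \omega_n o) + 1)$. Here the maximum of $|F|$ over a ball of radius $d(o, \omega_n o)$ grows at most subexponentially in $d(o, \omega_n o)$ by the subexponential growth part (b) of the (HC) property (comparing $F$ on that ball to its value at $o$). Since $d(o, \omega_n o) \leq \|\omega_n\|$ up to a multiplicative constant times the displacement of generators, and $\mu$ has finite exponential moment, $d(o, \omega_n o)$ has exponential tails; more precisely $E(c^{d(o, \omega_n o)}) < \infty$ for $c$ close enough to $1$ (by submultiplicativity over the $n$ increments, each of which has finite exponential moment). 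Choosing the subexponential rate of $|F|$ small enough relative to this, the product $(c_1 + e^{o(d(o,\omega_n o))})(d(o, \omega_n o) + 1)$ still has finite expectation. Hence $d_F(o, \omega_n o) \in L^1(P)$.

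For the second term, Lemma \ref{decayoffakeshadows} gives $P(|\beta^F_{\omega_\infty}(o, \omega_n o) - d_F(o, \omega_n o)| > t) \leq t^{-4}$ for all large $t$, so this random variable has a tail decaying like $t^{-4}$, which is certainly integrable (its expectation is bounded by a fixed constant plus $\int_1^\infty 4 t^{-4}\, dt < \infty$, and this bound is uniform in $n$). Combining the two bounds via the triangle inequality shows $\beta_n \in L^1(P)$, as desired.

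The main obstacle I expect is bookkeeping the subexponential growth of $\max_{\pi^{-1}(B(o,R))}|F|$ against the exponential moment of the random walk: one must verify that the rate $a > 1$ in part (b) of the (HC) property can be taken small enough — or rather, that for \emph{every} $a > 1$ there is a matching $b$, so one picks $a$ close to $1$ so that $a^{d(o,\omega_n o)}$ is still dominated by the finite exponential moment of $d(o, \omega_n o)$. Since the (HC) property quantifies over all $a > 1$, this is exactly what is needed, but it requires stating the interplay carefully rather than just invoking "subexponential beats exponential" naively. Everything else is a routine application of the triangle inequality and the two cited lemmas.
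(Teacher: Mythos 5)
Your proof is correct and uses essentially the same argument as the paper: the same decomposition of $\beta_n$ into $d_F(o,\omega_n o)$ plus the deviation term, with the first piece controlled by the subexponential growth in part (b) of the (HC) property against the finite exponential moment, and the second by the polynomial tail bound of Lemma \ref{decayoffakeshadows}. The only cosmetic difference is that the paper first reduces to $n=1$ via the cocycle identity $\beta_{n+m}(\omega)=\beta_n(\omega)+\beta_m(\sigma^n\omega)$ and shift-invariance, whereas you estimate directly at level $n$; both work since the tail bound is uniform in $n$ and the exponential moment of $\|\omega_n\|$ follows by submultiplicativity.
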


\begin{proof}
Let $\sigma$ be the left shift on the space of increments.
Note, $$\beta_{n+m}(\omega) = \beta_n(\omega) + \beta_m(\sigma^n \omega).$$
Thus, the $\sigma$ invariance of $P$ implies that $\int \beta_{n}dP=n\int \beta_{1}dP$.
and so it suffices to show $\beta_1$ is $P$-integrable.
For this, it suffices to prove the integrability of:
\begin{itemize}
\item[(a)]
$\omega\to |d_{F}(o,\omega_{1}o)|$ and 
\item[(b)]
$\omega\to |\beta^{F}_{\omega_{\infty}}(o,\omega_{1}o)-d_{F}(o,\omega_{1}o)|$.
\end{itemize}
The second is an immediate consequence of Lemma \ref{decayoffakeshadows}.
For the first, note that Definition \ref{HC} implies for any $a>1$
$$|d_{F}(o,go)|\lesssim a^{d(o,go)}\lesssim  a^{K\Vert g\Vert }$$ for a constant $K$,
so that the integrability follows from the exponential moment assumption on $\mu$.

\end{proof}
\begin{proof}[Proof of Theorem \ref{fakedrift}]
For a sample path $\omega = (\omega_n)$ converging to $\omega_{\infty}\in \partial X$ we have $$\beta^{F}_{\omega_{\infty}}(o,\omega_{n+m}o)=\beta^{F}_{\omega_{\infty}}(o,\omega_{n}o)+ \beta^{F}_{\omega_{\infty}}(\omega_n o, \omega_{n+m} o) = $$
so %if we denote as $\beta_n(\omega) := \beta^{F}_{\omega_{\infty}}(o,\omega_{n}o)$ 
we get
$$\beta_{n+m}(\omega) = \beta_n(\omega) + \beta_m(\sigma^n \omega) $$
where $\sigma$ is the left shift on the space of increments.
Thus, Kingman's subadditive ergodic theorem and the ergodicity of $\sigma$ imply that $\beta^{F}_{\omega_{\infty}}(o,\omega_{n})/n$ almost surely converges as $n\to \infty$ to a constant $\ell_{F, \mu}$ independent of $\omega$. 

The result will now follow by a Borel-Cantelli type argument.
Indeed, let $A_{n}(\omega)=|\beta^{F}_{\omega_{\infty}}(o,\omega_{n}o)-d_{F}(o,\omega_{n}o)|$. Then Lemma \ref{decayoffakeshadows} implies $P(A_{n}(\omega)>\sqrt{n}) \leq 1/n^{2}$ for all large enough $n$. Consequently by the summability of $1/n^{2}$, Borel-Cantelli implies that the set of sample paths $\omega$ such that  $A_{n}(\omega)>\sqrt{n}$ for infinitely many $n$ has measure $0$.
Thus, for almost every sample path $\omega$ we have 
$\lim_{n\to \infty}A_{n}(\omega_n)/n=0$, which implies $d_{F}(o,\omega_n o)/n \to l_{F,\mu}$.
\end{proof}

\subsection{The Guivarc'h inequality for Gibbs measures}

\begin{theorem}
Let $\Gamma$ be a countable group of isometries of a $CAT(-1)$ metric space $X$ and let $\mu$ be a probability measure on $\Gamma$ with finite exponential moment. Then we have
$$h_\mu  \leq \ell_\mu v_F - \ell_{F, \mu} .$$
\end{theorem}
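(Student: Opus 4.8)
The plan is to compare the Green metric $d_G$ with the ``Gibbs combination'' $v_F d(o,\omega_n o) - d_F(o,\omega_n o)$ along a typical sample path, using the two shadow lemmas as a dictionary between metric quantities and measures of shadows. Concretely, fix $r>0$ large enough for Proposition \ref{harmonicshadow} and Proposition \ref{gibbshadow}(a) to hold, and consider the quantity
$$\psi_n := \log \kappa_F(Sh_r(o,\omega_n o)) - \log \nu(Sh_r(o,\omega_n o)).$$
By the two shadow lemmas,
$$\psi_n = \big(d_F(o,\omega_n o) - v_F d(o,\omega_n o)\big) + d_G(e,\omega_n) + O(1),$$
so that $\tfrac{1}{n}\psi_n \to \ell_{F,\mu} - \ell_\mu v_F + h_\mu$ almost surely (using Theorem \ref{fakedrift} for the $d_F$ term, the definition of drift for the $d$ term, and \cite[Theorem 1.1]{BlachereHassinskyMathieu1} for $\tfrac1n d_G(e,\omega_n)\to h_\mu$). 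Thus the desired inequality $h_\mu \leq \ell_\mu v_F - \ell_{F,\mu}$ is exactly the statement that this almost sure limit is $\leq 0$, i.e. that $\psi_n$ does not grow linearly with positive rate. Equivalently, writing $\phi_n := \exp(\psi_n) = \kappa_F(Sh_r(o,\omega_n o))/\nu(Sh_r(o,\omega_n o))$, it suffices to show that $\tfrac1N\sum_{n=1}^N E(\phi_n)$ is bounded independently of $N$; then by Jensen's inequality $\tfrac1N \sum_{n=1}^N E(\psi_n)$ is bounded above, and since $\psi_n/n$ converges a.s. and (after checking uniform integrability, or simply passing through the subadditivity below) in expectation, the limit $\ell_{F,\mu} - \ell_\mu v_F + h_\mu$ must be $\leq 0$.

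The heart of the argument is therefore the Gibbs analogue of Proposition \ref{lemma4.14Haissinsky}: a uniform bound on $\tfrac1N\sum_{n=1}^N E(\phi_n)$. I would follow the same two-step structure. First, using the shadow lemma for $\nu$ one has $\nu(Sh_r(o,go)) \asymp G(e,g) \geq \mu^{*n}(g)$, and the quasi-isometry of $d_G$ with the word metric (Proposition \ref{Greendecay} in the appendix) together with $|d_F(o,go)| \lesssim a^{d(o,go)} \lesssim a^{K\|g\|}$ (from the HC property and subexponential growth) gives a crude bound $\phi_n(g) \lesssim e^{c\|g\|}$ for the summands with $g$ far from the identity; combined with the finite superexponential moment and the exponential Chebyshev inequality applied to $\|\omega_n\|$ (bounded by a sum of $N$ i.i.d. increments since $n \leq N$), the tail contribution $R_k = \sum_{d(o,go)\geq kN} \phi_n(g)\mu^{*n}(g)$ is bounded for $k$ large. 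Second, for the main term one interchanges the order of summation, uses $\sum_{n=1}^N \mu^{*n}(g) \leq G(e,g) \asymp \nu(Sh_r(o,go))$ to cancel the $\nu$-shadow in the denominator, leaving $\tfrac1N \sum_{g \in B_{kN}} \kappa_F(Sh_r(o,go))$, and then applies Proposition \ref{gibbshadow}(b) (the summability $\sum_{g\in S_n} e^{d_F(o,go)-v_F d(o,go)} \leq C$) shell by shell to bound this by a constant. For the ``in expectation'' convergence one also wants the Gibbs analogue of Proposition \ref{lemma4.15Haissinsky}: $E(\psi_n) + C_2$ is subadditive. This follows from $\psi_n = d_F(o,\omega_n o) - v_F d(o,\omega_n o) + d_G(o,\omega_n) + O(1)$, the triangle inequality for $d_G$, shift-invariance on the space of increments, and the fact that both $|d(o,\omega_n o) + d(\omega_n o,\omega_{n+m}o) - d(o,\omega_{n+m}o)|$ (by $CAT(-1)$-hyperbolicity and Proposition \ref{closetogeodesics}) and the analogous defect for $d_F$ (by Lemma \ref{(HC)Corollary} applied along the geodesic, controlled by the distance of $\omega_n o$ from $[o,\omega_{n+m}o]$, which has exponential tails by Proposition \ref{closetogeodesics}) have uniformly bounded expectation.

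The main obstacle, compared with the $F=0$ case, is controlling the potential term $d_F$: it is not comparable to any metric, it can be negative, and a priori $|d_F(o,go)|$ only satisfies a subexponential bound in $d(o,go)$. The two places this bites are (i) in the subadditivity estimate, where one needs the ``$d_F$-defect'' $|d_F(o,\omega_n o) + d_F(\omega_n o, \omega_{n+m} o) - d_F(o,\omega_{n+m} o)|$ to have bounded expectation — here the cocycle/shadow comparison Proposition \ref{Gibbscomparisonofshadows}, which bounds $|\beta^F_\xi(x,y) - d_F(x,y)|$ by a subexponential function of the shadow radius $r$, combined with the exponential decay in Proposition \ref{closetogeodesics}, is exactly what makes the subexponential growth harmless; and (ii) in the crude tail bound for $R_k$, where the $a^{K\|g\|}$ factor must be absorbed by choosing the Chebyshev exponent in the superexponential moment large enough, exactly as in the proof of Lemma \ref{Busemannintegrable}. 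Once these technical points are in place, the remaining logic — Jensen plus a.s. convergence of $\psi_n/n$ — is routine, and gives $h_\mu \leq \ell_\mu v_F - \ell_{F,\mu}$.
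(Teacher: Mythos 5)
Your argument is logically sound, but it is a genuinely different route from the paper's, and it proves a strictly weaker statement than the one asserted. The paper's proof is a direct, self-contained entropy--counting argument: it partitions $\Gamma$ into shells $S_k=\{g: d(o,go)\in[\epsilon n(k-1),\epsilon nk)\}$, applies Jensen's inequality to $\sum_{g\in S_k}\mu^n(g)\log\bigl(e^{d_F(o,go)}/\mu^n(g)\bigr)$, bounds $\sum_{g\in S_k}e^{d_F(o,go)}$ by $C_\epsilon e^{(v_F+\epsilon)\epsilon kn}$ straight from the definition of the topological pressure $v_F$, checks that $-\sum_k\mu^n(S_k)\log\mu^n(S_k)$ is uniformly bounded, and lets $\epsilon\to 0$. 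This uses nothing about the boundary, the Green metric, or the measures $\nu$ and $\kappa_F$; the only dynamical input is the existence of the limit $\lim_n E_{\mu^n}[d_F(o,go)]/n=\ell_{F,\mu}$, which is why the theorem can be stated for an arbitrary countable group of isometries with finite exponential moment (and, as the paper remarks, even finite first moment suffices once \eqref{E:EdF} is known).

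Your route instead runs the inequality through the shadow-lemma machinery: $\tfrac1N\sum_{n\le N}E(\phi_n)\le C_1$ (Proposition \ref{lemma4.14HaissinskyGibbs}) plus Jensen gives $\tfrac1N\sum_{n\le N}E(\psi_n)\le\log C_1$, while $E(\psi_n)/n\to h_\mu-\ell_\mu v_F+\ell_{F,\mu}$ (Proposition \ref{lemma4.15HaissinskyGibbs}) forces that limit to be $\le 0$. This is valid as a deduction, and it is essentially how the inequality falls out of the Blach\`ere--Ha\"issinsky--Mathieu scheme. The gap is in the hypotheses: the harmonic shadow lemma (Proposition \ref{harmonicshadow}) rests on the Ancona-type deviation inequalities of Proposition \ref{relAncona}, which require the action to be non-elementary and geometrically finite, and the bound on $\tfrac1N\sum E(\phi_n)$ uses finite \emph{superexponential} moment in the Chebyshev step. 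The theorem as stated assumes neither geometric finiteness nor superexponential moment, so your proof only covers the setting of Theorem \ref{maintheoremGibbs}, not the stated generality. Within that restricted setting your argument works and has the virtue of making the equality case transparent, but to prove the theorem as stated you need the shell-and-Jensen argument (or something equally soft) that bypasses the boundary theory entirely.
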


\begin{proof}
Fix $\epsilon > 0$ and $n \geq 1$. Define $S_{k} := \{ g \ : \  d(o, go) \in [\epsilon n  (k-1), \epsilon n k) \}$ for any $k \geq 1$.
$$\sum_k \mu^n(S_k) \epsilon n (k-1) \leq \sum_g \mu^n(g) d(o, go) \leq \sum_k \mu^n(S_k) \epsilon k n$$
so by dividing by $n$ and taking the limit as $n \to \infty$ we have 
$$\ell_\mu \leq \liminf_n \sum_k \mu^n(S_k)  \epsilon k \leq  \limsup_n \sum_k \mu^n(S_k) \epsilon k \leq \ell_\mu + \epsilon.$$ 
For any measure $\mu$ (not necessarily a probability) one has by Jensen's inequality
$$\sum_{g \in A} \mu(g) \log \left( \frac{e^{d_F(o, go)}}{\mu(g)}\right) \leq \mu(A) \log \left( \sum_{g \in A} e^{d_F(o, go)} \right)  - \mu(A) \log \mu(A)$$
so, if we denote $H(\mu):= - \sum_g \mu(g) \log \mu(g)$, we obtain
$$H(\mu^n) + \sum_{g} \mu^n(g) d_F(o, go) \leq \sum_k \mu^n(S_k) \log \left( \sum_{g \in S_k} e^{d_F(o, go)} \right) - \sum_k \mu^n(S_k) \log \mu^n(S_k).$$
Now, by definition of critical exponent there exists $C_\epsilon$ such that 
$$ \sum_{d(o, go) \leq R} e^{d_F(o, go)} \leq C_\epsilon e^{(v_F+\epsilon) R} \qquad \textup{for any }R \geq 0$$
so
$$\log \left( \sum_{g \in S_k} e^{d_F(o, go)} \right) \leq (v_F + \epsilon) \epsilon k n  + \log C_\epsilon.$$
Moreover, 
\begin{align*}
    - \sum_k \mu^n(S_k) \log \mu^n(S_k) & = - \sum_{\mu^n(S_k) \leq \frac{1}{e k^{2}}} \mu^n(S_k) \log \mu^n(S_k) - \sum_{\mu^n(S_k) \geq \frac{1}{e k^{2}}} \mu^n(S_k) \log \mu^n(S_k) \leq \\
& \leq \sum_{k = 1}^\infty \frac{2 \log k + 1}{e k^2} + \sum_{k = 1}^\infty \mu^n(S_k) (2 \log k + 1) \leq C
\end{align*}
is bounded independently of $n$. Hence 
\begin{align*}
h_\mu  + \ell_{F, \mu}  & = \lim_{n \to \infty} \frac{H(\mu^n) + E_{\mu^n}[d_F(o, go)] }{n} \leq \\
 & \leq (v_F + \epsilon)  \limsup_{n} \sum_k \mu^n(S_k) \epsilon k \leq
 (v_F + \epsilon)(\ell_\mu + \epsilon)
 \end{align*}
so the claim follows by taking $\epsilon \to 0$. 

\begin{remark}
Let us point out that in the previous proof we used that $\mu$ has finite exponential moment only to make sure that 
\begin{equation} \label{E:EdF}
    \lim_{n \to \infty} \frac{E_{\mu^n}[d_F(o, go)]}{n} = \ell_{F, \mu} 
\end{equation} 
exists. Hence, the Guivarc'h inequality for Gibbs measures holds as long as $\mu$ has finite first moment and \eqref{E:EdF} is true.
Moreover, the assumption that $X$ is $CAT(-1)$ is also not strictly needed, as the essential property is that $\ell_{F, \mu}$ exists.
\end{remark}

\end{proof}

\subsection{Entropy and drift for Gibbs measures}

We will now prove the following analogue of Theorem \ref{maintheorem} for Gibbs states,
which is a generalization of Theorem \ref{intro:maintheoremGibbs} in the Introduction.

\begin{theorem}\label{maintheoremGibbs}
Let $X$ be a proper $CAT(-1)$ metric space, and let $\Gamma$ be a geometrically finite group of isometries of $X$. Let $\mu$ be a probability measure generating $\Gamma$, and let $\nu_\mu$ be the hitting measure of its random walk. Let $F : T^1 X \to \mathbb{R}$ be a potential which satisfies the (HC) property, and let $\kappa_F$
be the corresponding Gibbs density. 
Then 
$$h_\mu \leq \ell_\mu v_F -l_{F,\mu}.$$
Moreover, the following conditions are equivalent.
\begin{enumerate}
    \item The equality
    $$h_\mu =\ell_\mu v_F -l_{F,\mu}$$ holds.
    \item The measures $\nu_{\mu}$ and $\kappa_{F}$ are in the same measure class.
    \item The measures $\nu_{\mu}$ and $\kappa_{F}$ are in the same measure class with Radon-Nikodym derivatives bounded from above and below.
    \item For any basepoint $o \in X$, there exists $C\geq 0$ such that for every $g \in \Gamma$, $$|d_G(e,g)-v_F d(o,go)+d_{F}(o,go)|\leq C.$$

\end{enumerate}
\end{theorem}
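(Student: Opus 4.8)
The plan is to follow the architecture already established in the $F=0$ case (Theorem \ref{maintheorem}) and adapt each step to incorporate the potential $F$. The inequality $h_\mu \le \ell_\mu v_F - \ell_{F,\mu}$ is the content of the previous theorem, so it remains to prove the cycle of equivalences $(1)\Leftrightarrow(2)\Leftrightarrow(3)\Leftrightarrow(4)$. The key observation, which drives everything, is the shadow-lemma comparison: by Proposition \ref{gibbshadow}(a), $\kappa_F(Sh_r(o,go)) \asymp e^{d_F(o,go) - v_F d(o,go)}$, and by Proposition \ref{harmonicshadow}, $\nu_\mu(Sh_r(o,go)) \asymp e^{-d_G(e,g)}$. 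Hence the ratio $\phi_n := \kappa_F(Sh_r(o,\omega_n o))/\nu_\mu(Sh_r(o,\omega_n o))$ satisfies $\log \phi_n = d_G(e,\omega_n) - v_F d(o,\omega_n o) + d_F(o,\omega_n o) + O(1)$, so that $\frac{1}{n}\log\phi_n \to h_\mu - \ell_\mu v_F + \ell_{F,\mu}$ almost surely (using that $\frac1n d_G(e,\omega_n)\to h_\mu$, $\frac1n d(o,\omega_n o)\to \ell_\mu$, and $\frac1n d_F(o,\omega_n o)\to \ell_{F,\mu}$ by Theorem \ref{fakedrift}).

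First I would prove $(1)\Leftrightarrow(2)$ by mimicking Theorem \ref{(1)implies(2)}. The sub-additivity argument (analogue of Proposition \ref{lemma4.15Haissinsky}) goes through because $d_F$ is an additive cocycle up to the bounded error controlled by Lemma \ref{(HC)Corollary} and Proposition \ref{Gibbscomparisonofshadows}, together with the geodesic-tracking estimate Proposition \ref{closetogeodesics}; concretely, $E(\psi_{n+m}) - E(\psi_n) - E(\psi_m)$ is bounded by a $v_F$-multiple of $E(d(\omega_n o, [o,\omega_{n+m}o]))$ plus an $F$-error term that is again controlled by the (HC) property applied near the tracking point, all uniformly bounded via Proposition \ref{closetogeodesics}. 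The expectation bound $\frac1N\sum_{n\le N} E(\phi_n) \le C_1$ (analogue of Proposition \ref{lemma4.14Haissinsky}) carries over: one splits the sum into group elements with $d(o,go) \ge kN$ — controlled by the superexponential moment via the exponential Chebyshev inequality exactly as before, noting $d_F$ grows at most exponentially in $d$ by Definition \ref{HC} — and those in $B_{kN}$, where Proposition \ref{gibbshadow}(b) replaces Proposition \ref{PSshadow} in the counting estimate $\sum_{g\in B_{kN}} \kappa_F(Sh_r(o,go)) \lesssim N$. Then the Jensen/Egorov argument showing that if the measures are not equivalent then $E(\psi_p) + C_2 \le -1$ for some $p$ (forcing strict inequality), and conversely using Proposition \ref{localdensity}(b) that equivalence forces $\frac{\log\kappa_F}{\log\nu} \to 1$ hence $h_\mu = \ell_\mu v_F - \ell_{F,\mu}$, is identical. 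One subtlety: Proposition \ref{localdensity} requires the doubling-type hypothesis $\nu(Sh_{2r}(o,go)) \le C\nu(Sh_r(o,go))$, which for $\nu_\mu$ follows from Proposition \ref{harmonicshadow} just as in the $F=0$ case, and the Vitali relation is built identically with $f(Sh_{2D}\cap\Lambda_D) := e^{-v_F d(o,go) + d_F(o,go)}$.

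Next, $(2)\Leftrightarrow(3)$ and $(2)\Rightarrow(4)$ follow the pattern of Section 6. For the upgrade from same-measure-class to bounded Radon-Nikodym derivatives, I would invoke Lemma \ref{lemmaergodicdouble}: the point is that $\kappa_F \otimes \kappa_{\widecheck F}$ can be scaled by a function comparable to $e^{C_\zeta(\cdot,\cdot) + C_\xi(\cdot,\cdot)}$ (a Gibbs analogue of $e^{2v\rho_o}$, using the Gibbs cocycle and the footnote formula) to a $\Gamma$-invariant Radon measure on $\partial^2 X$ — this is the Gibbs measure $\widetilde m_F$ of \cite{BPP} — while $\nu_\mu \otimes \widecheck\nu_\mu$ scales to a $\Gamma$-invariant Radon measure by the Naim-kernel construction of \cite{GGPY} exactly as in Proposition \ref{(2)implies(3)}; the $\Gamma$-action on $\kappa_F\otimes\kappa_{\widecheck F}$ is ergodic because $\nu_\mu\otimes\nu_\mu$ is ergodic (Kaimanovich) and the measures are equivalent, using also the reflected-measure equivalence (Corollary \ref{reflectedequivalence}, whose proof only used equality of $h_\mu, \ell_\mu$ and the analogous $F$-drift identity, both of which hold since $\widecheck F \circ \iota$-quantities coincide appropriately). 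Finally $(3)\Rightarrow(4)$ is immediate by combining the bounded density with the two shadow lemmas, and $(4)\Rightarrow(3)$ runs the same comparison backwards to conclude the measures are in the same class with bounded densities. The main obstacle I anticipate is the sub-additivity estimate in the analogue of Proposition \ref{lemma4.15Haissinsky}: one must show that the $F$-cocycle defect $|d_F(o,\omega_{n+m}o) - d_F(o,\omega_n o) - d_F(\omega_n o, \omega_{n+m}o)|$ is controlled in expectation, which requires carefully combining the (HC) Hölder-control estimate with the exponential tracking bound of Proposition \ref{closetogeodesics} — including handling the $\max_{\pi^{-1}(B(\cdot,r))}|F|$ factor, which grows subexponentially and hence is integrable against the superexponential-moment tails, but this bookkeeping is the technically delicate part.
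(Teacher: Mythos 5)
Your plan is correct and follows essentially the same route as the paper: the shadow-lemma identity $\log\phi_n = d_G(e,\omega_n) - v_F d(o,\omega_n o) + d_F(o,\omega_n o) + O(1)$, the sub-additivity and Jensen/Egorov arguments with Proposition \ref{gibbshadow}(b) replacing the Patterson--Sullivan counting estimate, the control of the $F$-cocycle defect by combining the (HC) property with Proposition \ref{closetogeodesics} (this is exactly the paper's Lemma \ref{HCproductsubexp}), and the ergodic double-boundary lemma with the $\Gamma$-invariant scaling of $\kappa_F\otimes\kappa_{\widecheck F}$ from \cite{BPP}. The only cosmetic difference is in closing the cycle from (4): rather than arguing $(4)\Rightarrow(3)$ directly (which would need the differentiation machinery again), it is cleaner to note that (4) divided by $n$ along sample paths immediately gives (1).
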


Let us start with the proof. For a sample path $\omega$ let $\omega_n$ be its $n$-th position.
Define then
$$\phi_n=\phi_n(\omega)=\frac{\kappa_{F}(Sh_{r}(o, \omega_n o))}{\nu(Sh_{r}(o, \omega_n o))}.$$
Let $\psi_n=\log \phi_n$.
We have the following analogue of Proposition \ref{lemma4.14Haissinsky}.
\begin{proposition}\label{lemma4.14HaissinskyGibbs}
There exists $C_1>0$ such that for any $N\geq 1$ we have 
$$\frac{1}{N}\sum^{N}_{n=1}E(\phi_n)\leq C_1.$$
\end{proposition}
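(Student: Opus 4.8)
The plan is to mimic the proof of Proposition \ref{lemma4.14Haissinsky}, replacing the Patterson–Sullivan shadow estimate $\kappa(Sh_r(o,go)) \asymp e^{-vd(o,go)}$ with the Gibbs shadow estimate of Proposition \ref{gibbshadow}(a), namely $\kappa_F(Sh_r(o,go)) \asymp e^{d_F(o,go) - v_F d(o,go)}$, and using Proposition \ref{gibbshadow}(b) in place of the counting bound from \cite{Yang}. As before, write $\phi_n = \kappa_F(Sh_r(o,\omega_n o))/\nu(Sh_r(o,\omega_n o))$, and by the harmonic shadow lemma (Proposition \ref{harmonicshadow}) we have $\nu(Sh_r(o,go)) \asymp G(e,g) = \sum_{m\geq 1}\mu^{*m}(g)$. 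Split $E(\phi_n)$ according to whether $d(o,go) \geq kN$ or $g \in B_{kN}$, and bound the two pieces $R_k$ and $P_N$ separately.

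For the tail term $R_k = \sum_{d(o,go)\geq kN} \phi_n(g)\,\mu^{*n}(g)$: I would first observe that $\phi_n(g) \lesssim e^{c\|g\|}$ for some $c$. This uses that the Green metric is quasi-isometric to the word metric (valid here since $\mu$ has finite superexponential moment and $\Gamma$ is non-amenable, see Proposition \ref{Greendecay}; note $\Gamma$ geometrically finite and nonelementary acting on a $CAT(-1)$ space is non-amenable), that $\kappa_F$ is a finite measure, and crucially that $|d_F(o,go)| \lesssim a^{d(o,go)} \lesssim a^{K\|g\|}$ for every $a>1$ by the subexponential growth clause (b) of Definition \ref{HC} — so the factor $e^{d_F(o,go)}$ in the numerator only contributes subexponentially in $\|g\|$ and is absorbed into $e^{c\|g\|}$. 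From here the argument is verbatim that of Proposition \ref{lemma4.14Haissinsky}: use $d(o,go)\leq t\|g\|$, the exponential Chebyshev inequality with exponent $2c$, the bound $E(e^{2c\|\omega_n\|})\leq E_0^N$ with $E_0 = \sum_g e^{2c\|g\|}\mu(g) < \infty$, and choose $k \geq \frac{t}{c}\log E_0$ to make $R_k \lesssim 1$ uniformly in $n,N$.

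For the main term $P_N = \frac1N\sum_{n=1}^N \sum_{g\in B_{kN}} \phi_n(g)\mu^{*n}(g)$: interchange summation and use $\nu(Sh_r(o,go)) \asymp \sum_{m\geq 1}\mu^{*m}(g)$ to get $P_N \lesssim \frac1N \sum_{g\in B_{kN}} \kappa_F(Sh_r(o,go))$. Now partition $B_{kN}$ into the spherical shells $A_{k(n-1),kn}$ for $n=1,\dots,N$; each such shell is a union of the sets $S_j = \{g : j-1\leq d(o,go) < j\}$ for $j$ ranging over an interval of length $k$. By Proposition \ref{gibbshadow}(a), $\sum_{g\in S_j}\kappa_F(Sh_r(o,go)) \asymp \sum_{g\in S_j} e^{d_F(o,go)-v_F d(o,go)}$, which is bounded by a constant $C$ independent of $j$ by Proposition \ref{gibbshadow}(b). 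Hence $\sum_{g\in B_{kN}}\kappa_F(Sh_r(o,go)) \lesssim kN$, so $P_N \lesssim k$ is bounded independently of $N$. Combining the bounds on $R_k$ and $P_N$ gives $\frac1N\sum_{n=1}^N E(\phi_n)\leq C_1$.

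The main obstacle I anticipate is the tail estimate $R_k$: one must be careful that the Gibbs weight $e^{d_F(o,go)}$ appearing in the numerator does not spoil the exponential-moment cancellation. The point is that clause (b) of the (HC) property forces $|d_F(o,go)|$ to grow strictly subexponentially in $d(o,go)$, hence subexponentially in $\|g\|$, so it can be absorbed into the constant $c$ in the exponent $e^{c\|g\|}$ at no cost; everything else then runs exactly as in the $F=0$ case. A minor technical point is confirming the quasi-isometry between $d_G$ and the word metric applies, which it does since the hypotheses of Proposition \ref{Greendecay} (finite exponential moment, non-amenable group) are met.
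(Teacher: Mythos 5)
Your proof is correct and takes essentially the same route as the paper's, which simply states that the argument of Proposition \ref{lemma4.14Haissinsky} carries over verbatim once one uses Proposition \ref{gibbshadow} to conclude $\sum_{g : d(o,go)\leq kN}\kappa_F(Sh_r(o,go))\lesssim N$ for the main term $P_N$. Your extra care with the factor $e^{d_F(o,go)}$ in the tail term $R_k$ is harmless but not strictly needed, since the finiteness of the measure $\kappa_F$ already bounds the numerator there exactly as in the $F=0$ case.
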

\begin{proof}
The proof is the same as that of Proposition \ref{lemma4.14Haissinsky} except in the end one uses Proposition \ref{gibbshadow} to conclude that 
$$\sum_{g:d(o,go)\leq kN}\kappa_{F}(Sh_{r}(o,go))\lesssim N.$$

\end{proof}
\begin{lemma}\label{HCproductsubexp}
There is a function $f:\mathbb{R}^+\to \mathbb{R}^+$ of subexponential asymptotic growth (i.e. such that $\lim_{r \to \infty} f(r)/c^r = 0$ for all $c>1$)
such that $d(g_{1}o,[o,g_{2}o])<D$ implies $$|d_{F}(o, g_{2}o)-d_{F}(o, g_{1}o)-d_{F}(g_{1}o,g_{2}o)| \leq f(D)$$
for all $g_{1},g_{2}\in \Gamma$.
\end{lemma}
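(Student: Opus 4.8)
The plan is to exploit the thin-triangles/fellow-traveling property of $CAT(-1)$ spaces together with the (HC) property of $F$, in the same spirit as Proposition \ref{Gibbscomparisonofshadows}. Suppose $d(g_1 o,[o,g_2 o])<D$, and let $p$ be the point on the geodesic $[o,g_2 o]$ closest to $g_1 o$, so $d(g_1 o,p)<D$. The key geometric input is that in a $CAT(-1)$ space the geodesic $[o,g_2 o]$ decomposes, up to bounded error depending only on $\delta$ (and hence absorbed into $D$ after enlarging $D$), into a portion that fellow-travels $[o,g_1 o]$ and a portion that fellow-travels $[g_1 o, g_2 o]$; concretely, $[o,p]$ stays within a bounded neighborhood of $[o,g_1 o]$ and $[p,g_2 o]$ stays within a bounded neighborhood of $[g_1 o,g_2 o]$, with the two subgeodesics of $[o,g_1 o]$ and $[g_1 o, g_2 o]$ that are NOT shadowed having length $O(D)$.

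First I would split $d_F(o,g_2 o) = d_F(o,p) + d_F(p,g_2 o)$ using additivity of $d_F$ along the geodesic $[o,g_2 o]$ (which holds since $d_F(x,y)=\int_x^y F\,dt$ is literally an integral along that geodesic). Then I would compare $d_F(o,p)$ with $d_F(o,g_1 o)$ and $d_F(p,g_2 o)$ with $d_F(g_1 o,g_2 o)$. For the first comparison: since $[o,p]$ and the initial segment of $[o,g_1 o]$ of the same length are two geodesics with endpoints $o$ (common) and $p$ versus a point $q\in[o,g_1 o]$ with $d(p,q)=O(D)$, and they $2\delta$-fellow-travel, I subdivide both into unit-length pieces and apply Definition \ref{HC}(a) on each pair of corresponding pieces; the bound on each piece is $(c_1+\max_{\pi^{-1}(B(\cdot,O(D)))}|F|)\cdot O(D)^{c_2}$, and summing over the $O(d(o,g_1 o))$ pieces would a priori give something growing in $d(o,g_1 o)$, which is bad. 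So instead I would use that corresponding pieces are within distance $2\delta$ (a fixed constant, not $O(D)$) of each other, giving per-piece bound $(c_1+\max|F|)(2\delta)^{c_2}$, and crucially the maximum of $|F|$ is taken over a ball of radius $\leq 1$ around the piece — but by the subexponential growth (HC)(b), $\max_{\pi^{-1}(B(x,1))}|F| \le |F(x_0)| + b\,a^{1}$ where $x_0$ is the basepoint's image, no wait — I would instead bound $\max$ over each unit piece near the geodesic by $C a^{d(o,\cdot)}$, so the sum telescopes as a geometric-type series $\sum_j a^{j}$ over pieces, which is dominated by $a^{d(o,g_1 o)}$... this still grows.

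The honest fix, and the main obstacle, is that the naive piecewise estimate does NOT give a bound depending only on $D$; one must use a genuinely global comparison. The right approach is the one already used for the Gibbs cocycle: pass to the boundary. Let $\xi\in\partial X$ be the endpoint of the ray from $o$ through $g_1 o$ extended — or rather, observe that $d_F(o,g_1 o) = \beta^F_{\xi}(o,g_1 o)$ where $\xi$ is the forward endpoint of the geodesic ray $[o,g_1 o)$, and similarly relate $d_F(o,g_2 o)$ to $\beta^F_{\xi'}(\cdot,\cdot)$. Using the cocycle property $\beta^F_\xi(o,g_2 o)=\beta^F_\xi(o,g_1 o)+\beta^F_\xi(g_1 o,g_2 o)$, which holds for every fixed $\xi$, and then using Proposition \ref{Gibbscomparisonofshadows} three times — once to say $|d_F(o,g_2 o)-\beta^F_\xi(o,g_2 o)|\le f(D)$ (valid because $g_2 o$ is within $O(D)$ of $[o,\xi)$, as $p\in[o,g_2 o]$ is within $D$ of $g_1 o\in[o,\xi)$ up to bounded error), once for $|d_F(o,g_1 o)-\beta^F_\xi(o,g_1 o)|\le f(D)$ (here $g_1 o\in[o,\xi)$ so this is exact or nearly so), and once for $|d_F(g_1 o,g_2 o)-\beta^F_\xi(g_1 o,g_2 o)|\le f(D)$ (valid because $g_2 o$ is within $O(D)$ of $[g_1 o,\xi)$) — the three error terms add up to at most $3f(D)$, and combining with the cocycle identity gives exactly the claimed inequality with subexponential $f$. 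I would take $\xi$ to be the endpoint of the ray from $o$ asymptotic to the geodesic ray through $g_1 o$; the verification that $g_2 o$ lies in a shadow $Sh_{O(D)}(o, \cdot)$ and $Sh_{O(D)}(g_1 o,\cdot)$ of radius $O(D)$ around $\xi$ is the place where $CAT(-1)$ (or $\delta$-hyperbolicity) enters, via the fact that $d(g_1 o, [o,g_2 o])<D$ forces $[o,g_2 o]$ to pass within $D+O(\delta)$ of $g_1 o$ and hence to remain close to $[o,\xi)$ up to the projection point and close to $[g_1 o,\xi)$ thereafter. The subexponential growth of $f$ is inherited directly from the subexponential function in Proposition \ref{Gibbscomparisonofshadows}, which in turn comes from (HC)(b).
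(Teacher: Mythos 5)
Your opening decomposition is exactly the paper's: pick $p\in[o,g_2o]$ with $d(p,g_1o)<D$, note $d_F(o,g_2o)=d_F(o,p)+d_F(p,g_2o)$ by additivity along the geodesic, and reduce to bounding $|d_F(o,p)-d_F(o,g_1o)|$ and $|d_F(p,g_2o)-d_F(g_1o,g_2o)|$. But the ``main obstacle'' you then run into is illusory, and it derails the proof. To compare $d_F(o,p)$ with $d_F(o,g_1o)$ you do not compare the two long geodesics piece by piece along their entire length; you keep the far endpoint $o$ fixed and move the near endpoint from $p$ to $g_1o$ along the short segment $[p,g_1o]$ in $\lceil d(p,g_1o)\rceil\le D+1$ unit steps, applying Definition~\ref{HC}(a) with $x=x'=o$ at each step. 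This is precisely Lemma~\ref{(HC)Corollary}: the resulting bound is $(D+1)\bigl(c_1+\max_{\pi^{-1}(B(g_1o,2D))}|F|\bigr)$, which by the $\Gamma$-invariance of $F$ and Definition~\ref{HC}(b) is subexponential in $D$ and independent of $d(o,g_1o)$. The same bound handles the other term, and the proof is finished --- no passage to the boundary is needed.

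The boundary argument you substitute instead has a genuine gap. You take $\xi$ to be the forward endpoint of $[o,g_1o)$ extended, and you need $\xi\in Sh_{O(D)}(o,g_2o)$ and $\xi\in Sh_{O(D)}(g_1o,g_2o)$, i.e.\ the rays $[o,\xi)$ and $[g_1o,\xi)$ must pass within $O(D)$ of $g_2o$. Your justification --- that $p\in[o,g_2o]$ is within $D$ of $g_1o\in[o,\xi)$ --- only shows that $[o,\xi)$ passes near the \emph{interior} point $p$, not near the endpoint $g_2o$; already in $\mathbb{H}^2$ the ray $[o,\xi)$ and the segment $[p,g_2o]$ diverge exponentially past $p$, so $d(g_2o,[o,\xi))$ is unbounded in terms of $D$. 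The argument could be repaired by instead choosing $\xi$ in a uniformly bounded shadow of $g_2o$ seen from $o$ (e.g.\ the forward extension of $[o,g_2o]$), for which all three shadow conditions do hold with radius $O(D+\delta)$ and Proposition~\ref{Gibbscomparisonofshadows} plus the cocycle identity gives the claim; but that is a different choice from the one you make, and in any case the direct route via Lemma~\ref{(HC)Corollary} is shorter.
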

\begin{proof}
Let $p\in [o,g_{2}o]\cap N_{D}(g_{1}o)$.
Then 
$$|d_{F}(o,g_{2}o)-d_{F}(o, g_{1}o)-d_{F}(g_{1}o,g_{2}o)|\leq$$ 
$$ |d_{F}(o, g_{2}o)-d_{F}(o, p)-d_{F}(p, g_{2}o)|+|d_{F}(o,p)-d_{F}(o, g_{1}o)|+|d_{F}(p,g_{2}o)-d_{F}(g_{1}o,g_{2}o)|.$$
The first summand is simply zero since $o,p,g_{2}o$ lie on a geodesic in that order.
%To estimate the second summand, let $N := \lceil D \rceil$ and pick $p_0, p_1, \dots, p_{N}$ points on $[p, g_1 o]$ with  $p_0 = p$, $p_N = g_1 o$ and $d(p_i, p_{i+1}) \leq 1$ for $0 \leq i \leq N -1$. Then by Definition \ref{HC} a) and the $\Gamma$-invariance of $F$, $|d_F(o, p_i) - d_F(o, p_{i+1})| \leq c_1 \max_{B_{D}(o)}|F|$, 
Moreover, by Lemma \ref{(HC)Corollary} and the $\Gamma$-invariance of $F$, the second summand is bounded by $(D+1) (c_1 + \max_{B_{D}(o)}|F|)$, and the same is true for the third summand.
Furthermore, the quantity $D \max_{B_{D}(o)}|F|$ is subexponential in $D$ by Definition \ref{HC} b).
\end{proof}

\begin{proposition}\label{lemma4.15HaissinskyGibbs}
There exists $C_2>0$ such that the sequence $E(\psi_n)+C_2$ is sub-additive and $\psi_{n}/n$ converges to $h_\mu  -\ell_\mu v_F +\ell_{F, \mu} $ almost surely and in expectation.  
\end{proposition}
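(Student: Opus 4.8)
The plan is to mirror the proof of Proposition~\ref{lemma4.15Haissinsky}, inserting the potential term where needed. First I would use the two shadow lemmas to write $\psi_n$ up to bounded error in terms of the relevant metrics: Proposition~\ref{gibbshadow}(a) gives $\log \kappa_F(Sh_r(o,\omega_n o)) = d_F(o,\omega_n o) - v_F d(o,\omega_n o) + O(1)$, while Proposition~\ref{harmonicshadow} gives $\log \nu(Sh_r(o,\omega_n o)) = -d_G(e,\omega_n) + O(1)$. Hence
\begin{equation*}
\psi_n = d_G(e,\omega_n) - v_F d(o,\omega_n o) + d_F(o,\omega_n o) + O(1).
\end{equation*}
Dividing by $n$ and letting $n \to \infty$: the term $\frac{1}{n}d_G(e,\omega_n)$ converges almost surely and in $L^1$ to $h_\mu$ by \cite[Theorem~1.1]{BlachereHassinskyMathieu1} (entropy equals the drift of the Green metric), the term $\frac{1}{n}d(o,\omega_n o)$ converges to $\ell_\mu$ by definition of drift, and $\frac{1}{n}d_F(o,\omega_n o)$ converges to $\ell_{F,\mu}$ by Theorem~\ref{fakedrift}. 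This gives almost sure and $L^1$ convergence of $\psi_n/n$ to $h_\mu - \ell_\mu v_F + \ell_{F,\mu}$.

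For sub-additivity of $E(\psi_n) + C_2$, I would estimate $E(\psi_{n+m}) - E(\psi_n) - E(\psi_m)$ using the above expression. The three pieces contribute: for the Green metric, the triangle inequality $d_G(e,\omega_{n+m}) \le d_G(e,\omega_n) + d_G(\omega_n,\omega_{n+m})$ together with shift-invariance on the space of increments (so $E(d_G(\omega_n,\omega_{n+m})) = E(d_G(e,\omega_m))$) makes this piece sub-additive up to $O(1)$; for the space metric $d$, the same reasoning combined with $\delta$-hyperbolicity gives $E(d(o,\omega_n o) + d(o,\omega_m o) - d(o,\omega_{n+m}o)) \le 2 E(d(\omega_n o, [o,\omega_{n+m}o])) + O(1)$, which is bounded by Proposition~\ref{closetogeodesics}; the new feature is the $d_F$ piece, where I would apply Lemma~\ref{HCproductsubexp}, which says that whenever $\omega_n o$ is within distance $D$ of a geodesic $[o,\omega_{n+m}o]$ one has $|d_F(o,\omega_{n+m}o) - d_F(o,\omega_n o) - d_F(\omega_n o,\omega_{n+m}o)| \le f(D)$ with $f$ subexponential. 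Combining this with the superexponential decay $P(d(\omega_n o,[o,\omega_{n+m}o]) > D) \le Ce^{-D/C}$ from Proposition~\ref{closetogeodesics}, the expectation $E(|d_F(o,\omega_{n+m}o) - d_F(o,\omega_n o) - d_F(\omega_n o,\omega_{n+m}o)|)$ is bounded by a constant independent of $n,m$ (the subexponential $f(D)$ is dominated by the exponential tail). Then shift-invariance turns $d_F(\omega_n o,\omega_{n+m}o)$ into a copy of $d_F(o,\omega_m o)$, yielding sub-additivity of $E(\psi_n)$ up to an additive constant $C_2$.

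The main obstacle I anticipate is handling the potential term cleanly: unlike $d_G$ and $d$, the F-ake distance $d_F$ is not a genuine metric (no triangle inequality, not even positive), so the ``almost-additivity along geodesics'' must be extracted entirely from the (HC) property via Lemma~\ref{HCproductsubexp}, and one must check that $d_F(o,\omega_n o)$ is $P$-integrable (which follows from the (HC) subexponential growth bound $|d_F(o,go)| \lesssim a^{d(o,go)}$ together with the finite exponential moment of $\mu$, exactly as in Lemma~\ref{Busemannintegrable}) so that all the expectations above make sense. Once integrability and the Lemma~\ref{HCproductsubexp} estimate are in hand, the deviation inequality Proposition~\ref{closetogeodesics} does the rest, and the argument is structurally identical to the $F=0$ case.
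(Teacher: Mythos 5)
Your proposal is correct and follows essentially the same route as the paper's proof: the same shadow-lemma decomposition of $\psi_n$ into the three metric terms, the triangle inequality for $d_G$, Proposition~\ref{closetogeodesics} for the space-metric defect, and Lemma~\ref{HCproductsubexp} combined with the exponential tail of Proposition~\ref{closetogeodesics} to control the $d_F$ defect (the decay there is exponential, not superexponential, but that is all that is needed to dominate the subexponential $f(D)$).
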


\begin{proof}
By the shadow lemmas  Proposition~\ref{gibbshadow} and Proposition~\ref{harmonicshadow},
$$\psi_n \asymp_{+} d_{G}(e, \omega_n)-v_F d(o,\omega_n o)+d_{F}(o,\omega_n o).$$
Thus, $\psi_{n}/n$ converges to $h_\mu -\ell_\mu v_F +\ell_{F, \mu} $ almost surely and in expectation.
Note that, since $d_G$ satisfies the triangle inequality,
$$\psi_{n+m}-\psi_{n}-\psi_{m}\lesssim_{+}$$
$$\lesssim v_F (d(o, \omega_{n}o)+d(o, \omega_{m}o)-d(o, \omega_{n+m}o))+d_{F}(o, \omega_{n+m}o)-d_{F}(o, \omega_{n}o)-d_{F}(o, \omega_{m}o).$$
Furthermore, $$E(d(o, \omega_{n}o)+d(o, \omega_{m}o)-d(o, \omega_{n+m}o)) = $$
$$ = E(d(o, \omega_{n}o)+d( \omega_{m}o, \omega_{n+m} o)-d(o, \omega_{n+m}o)) = $$
$$ = E(2(o, \omega_{n+m} o)_{\omega_n o})$$ 
so the expectation is uniformly bounded by Proposition \ref{closetogeodesics}. Now, it remains to bound $E(d_{F}(o, \omega_{n+m}o)-d_{F}(o, \omega_{n}o)-d_{F}(o, \omega_{m}o)).$ 

Lemma \ref{HCproductsubexp} provides a subexponential function $f$ such that $d(\omega_{n}o,[o,\omega_{n+m}o])<D$ implies $|d_{F}(o, \omega_{n+m}o)-d_{F}(o, \omega_{n}o)-d_{F}(\omega_{n}o, \omega_{n+m}o)|<f(D)$ for all $g_{1},g_{2}\in \Gamma$.
Thus, Proposition \ref{closetogeodesics} implies that there is a constant $C$ with 
$$P(|d_{F}(o, \omega_{n+m}o)-d_{F}(o, \omega_{n}o)-d_{F}(\omega_n o, \omega_{n+m}o)|>f(D))<Ce^{-D/C}$$ 
for all $n,m\geq 1$ and $D>0$. Since $f$ has subexponential growth, this implies $E(d_{F}(o, \omega_{n+m}o)-d_{F}(o, \omega_{n}o)-d_{F}(o, \omega_{m}o))$ is bounded above independently of $m,n$, completing the proof.
\end{proof}

\begin{proposition}\label{lemma4.16HaissinskyGibbs}
\begin{enumerate}[a)]
    \item If $\kappa_F$ and $\nu_{\mu}$ are not equivalent, then $\phi_n$ tends to $0$ in probability.
    \item If $\kappa_F$ and $\nu_{\mu}$ are equivalent then $\frac{\log \kappa(Sh_{r}(o,\omega_n o))}{\log \nu(Sh_{r}(o,\omega_n o))}$ tends to 1 in probability.
\end{enumerate}
\end{proposition}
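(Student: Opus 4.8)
The plan is to mirror the proof of Proposition~\ref{lemma4.16Haissinsky}, replacing the Patterson--Sullivan shadow lemma (Proposition~\ref{PSshadow}) with its Gibbs analogue (Proposition~\ref{gibbshadow}), and invoking the general differentiation machinery of Proposition~\ref{localdensity} together with the deviation estimates of Proposition~\ref{closetogeodesics}. First I would record, exactly as in the remark following Proposition~\ref{lemma4.16Haissinsky}, that the only properties of the reference measure used in that argument are that $\Gamma$ preserves its measure class and acts ergodically on $(\partial X, \kappa_F)$; both hold for a Gibbs density. When $(\Gamma, F)$ is of divergence type, $\kappa_F$ is ergodic and gives full weight to conical limit points; when it is of convergence type, $\kappa_F$ gives zero weight to conical limit points by \cite[Theorem~4.5]{BPP}, while $\nu$ always gives full weight to conical points. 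Since $\nu$ is ergodic, in the convergence-type case $\kappa_F$ and $\nu$ are automatically mutually singular, and in the divergence-type case the dichotomy ``equivalent or mutually singular'' follows from ergodicity of $\nu$ (any $\Gamma$-invariant measurable set has $\nu$-measure $0$ or $1$). Hence ``not equivalent'' always means ``mutually singular.''

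For part~a), assuming mutual singularity, I would fix $\alpha, c > 0$. By Proposition~\ref{closetogeodesics} applied to $[o, \omega_\infty)$, choose $D$ large so that $P(\omega_\infty \notin Sh_D(o, \omega_n o)) \leq \alpha$ uniformly in $n$. By Proposition~\ref{localdensity}~a) applied with $\nu$ and $\kappa = \kappa_F$ --- whose doubling hypothesis $\nu(Sh_{2r}(o,go)) \le C\nu(Sh_r(o,go))$ is supplied by the harmonic shadow lemma Proposition~\ref{harmonicshadow}, since that gives $\nu(Sh_r(o,go)) \asymp e^{-d_G(e,g)}$ independently of $r$ --- for $\nu$-almost every $\xi$ we have
$$\lim_{t \to \infty} \sup_{\substack{g : d(o,go) \geq t \\ \xi \in Sh_D(o,go)}} \frac{\kappa_F(Sh_D(o,go))}{\nu(Sh_D(o,go))} = 0.$$
Comparing $Sh_D$ with $Sh_r$ via Proposition~\ref{harmonicshadow} (for $\nu$) and Proposition~\ref{gibbshadow}~(a) (for $\kappa_F$), whose implied constants depend only on $r, D, o$, the analogous quantity $R_t(\xi)$ built from $Sh_r$ also tends to $0$. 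Since $\|\omega_n\| \to \infty$ almost surely, Egorov's theorem produces a set $E$ of sample paths with $P(E^c) < \alpha$ on which $R_{\|\omega_n\|}(\omega_\infty) \to 0$ uniformly; then for $n$ large (depending on $c$), the events $\omega \in E$ and $\phi_n \geq c$ force $\omega_\infty \notin Sh_D(o, \omega_n o)$, giving $P(\phi_n \geq c) \leq 2\alpha$. Letting $\alpha \to 0$ proves $\phi_n \to 0$ in probability. For part~b), assuming equivalence, I would repeat the argument with
$$R_t(\xi) := \sup_{\substack{g : d(o,go) \geq t \\ \xi \in Sh_D(o,go)}} \left| \frac{\log \kappa_F(Sh_r(o,go))}{\log \nu(Sh_r(o,go))} - 1 \right|,$$
using Proposition~\ref{localdensity}~b) to get $R_t(\xi) \to 0$ for $\nu$-a.e.\ $\xi$, the comparison of $Sh_r$ and $Sh_D$ shadows via the two shadow lemmas to handle the change of radius inside the logarithms (which only multiplies the arguments by bounded factors, hence changes the logs by an additive $O(1)$ that is negligible against $\log \nu(Sh_r(o,go)) \asymp -d_G(e,\omega_n) \to -\infty$), and the same Egorov-plus-deviation scheme to upgrade to convergence in probability.

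The main obstacle I expect is the bookkeeping in part~b) when passing between shadows of radius $r$ and $D$ inside the ratio of logarithms: unlike the linear ratio in part~a), where $\nu(Sh_D)/\nu(Sh_r) \asymp 1$ and $\kappa_F(Sh_D)/\kappa_F(Sh_r) \asymp 1$ translate cleanly, here one must argue that replacing $r$ by $D$ perturbs $\log \kappa_F(Sh_r(o,go))$ and $\log \nu(Sh_r(o,go))$ each by a bounded additive error, and that such errors are swallowed because both logarithms grow linearly in $d(o, \omega_n o) \asymp n \to \infty$ (the former by Proposition~\ref{gibbshadow}~(a) together with the linear growth of $d_F(o,\omega_n o)$ and $d(o,\omega_n o)$, the latter by Proposition~\ref{harmonicshadow} together with $d_G(e,\omega_n) \asymp n$). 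Once this uniformity is in place, the remaining steps are routine adaptations of the $F = 0$ case.
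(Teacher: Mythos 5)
Your proposal is correct and follows exactly the route the paper intends: the paper's proof of this proposition is literally ``Identical to the proof of Proposition~\ref{lemma4.16Haissinsky},'' and your write-up is a faithful expansion of that adaptation, correctly substituting Proposition~\ref{gibbshadow} for Proposition~\ref{PSshadow}, handling the equivalent-or-singular dichotomy via the divergence/convergence-type discussion of $\kappa_F$, and noting that the additive $O(1)$ errors from changing shadow radius are absorbed because $\log\nu(Sh_r(o,\omega_n o))\to-\infty$ linearly.
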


\begin{proof}
Identical to the proof of Proposition \ref{lemma4.16Haissinsky}.
\end{proof}

\begin{theorem}\label{(1)implies(2)Gibbs}
The measures $\kappa_F$ and $\nu_{\mu}$ are equivalent if and only if 
$$h_\mu =\ell_\mu v_F - l_{F,\mu}.$$ 
\end{theorem}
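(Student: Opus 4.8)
The plan is to mirror the proof of Theorem \ref{(1)implies(2)} almost verbatim, substituting the Gibbs versions of the relevant ingredients that have already been established. The structure of that earlier argument has two halves, and each half transfers directly.

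For the forward direction, assume $\kappa_F$ and $\nu_\mu$ are not equivalent; we want $h_\mu < \ell_\mu v_F - l_{F,\mu}$. By Proposition \ref{lemma4.16HaissinskyGibbs}(a), $\phi_n \to 0$ in probability. Fix $\beta > 0$, set $A_n = \{\phi_n \geq \beta\}$, $B_n = A_n^c$, and split $E(\psi_n) = \int_{A_n} \psi_n \, dP + \int_{B_n} \psi_n \, dP$. On $B_n$ we bound $\int_{B_n} \psi_n \, dP \leq (1-\beta)\log\beta$ for $n$ large (using $P(B_n) \geq 1-\beta$). On $A_n$ we apply Jensen's inequality exactly as before to get $\int_{A_n}\psi_n \, dP \leq \beta \log\frac{1}{\beta} + \beta \sup(0, \log E(\phi_n))$, and then invoke Proposition \ref{lemma4.14HaissinskyGibbs} to find $p$ with $E(\phi_p) \leq 2C_1$. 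Choosing $\beta$ small forces $E(\psi_p) + C_2 \leq -1$ where $C_2$ comes from Proposition \ref{lemma4.15HaissinskyGibbs}; sub-additivity of $E(\psi_n) + C_2$ then gives $\frac{1}{kp}(E(\psi_{kp}) + C_2) \leq -\frac{1}{p}$, and letting $k \to \infty$ with $\frac{1}{kp}E(\psi_{kp}) \to h_\mu - \ell_\mu v_F + l_{F,\mu}$ (again Proposition \ref{lemma4.15HaissinskyGibbs}) yields $h_\mu - \ell_\mu v_F + l_{F,\mu} \leq -\frac{1}{p} < 0$.

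For the converse, suppose $\kappa_F$ and $\nu_\mu$ are equivalent. The shadow lemma for $\kappa_F$ (Proposition \ref{gibbshadow}(a)) gives $-\log\kappa_F(Sh_r(o,\omega_n o)) = v_F d(o,\omega_n o) - d_F(o,\omega_n o) + O(1)$, so dividing by $d(o,\omega_n o)$ and using that $d_F(o,\omega_n o)/n \to l_{F,\mu}$ while $d(o,\omega_n o)/n \to \ell_\mu$, we get $\frac{-\log\kappa_F(Sh_r(o,\omega_n o))}{d(o,\omega_n o)} \to \frac{\ell_\mu v_F - l_{F,\mu}}{\ell_\mu}$ for a.e. sample path. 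On the other hand, Proposition \ref{harmonicshadow} and \cite[Theorem 1.1]{BlachereHassinskyMathieu1} give $\frac{-\log\nu(Sh_r(o,\omega_n o))}{d(o,\omega_n o)} \to \frac{h_\mu}{\ell_\mu}$. Hence $\frac{\log\kappa_F(Sh_r(o,\omega_n o))}{\log\nu(Sh_r(o,\omega_n o))} \to \frac{\ell_\mu v_F - l_{F,\mu}}{h_\mu}$ almost surely; but Proposition \ref{lemma4.16HaissinskyGibbs}(b) (equivalently Proposition \ref{localdensity}(b) combined with Proposition \ref{closetogeodesics}) forces this ratio to converge to $1$, so $h_\mu = \ell_\mu v_F - l_{F,\mu}$.

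Since all the technical work — the Gibbs shadow lemma, the sub-additivity and a.s. convergence of $\psi_n/n$, the $L^1$-boundedness of the averages $E(\phi_n)$, and the differentiation dichotomy — has already been packaged into Propositions \ref{lemma4.14HaissinskyGibbs}, \ref{lemma4.15HaissinskyGibbs}, and \ref{lemma4.16HaissinskyGibbs}, there is no genuinely new obstacle here; the only thing to be careful about is bookkeeping the sign conventions, namely that $\psi_n/n \to h_\mu - \ell_\mu v_F + l_{F,\mu}$, so that the negativity extracted in the forward direction is exactly the strict Guivarc'h inequality.
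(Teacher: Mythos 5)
Your proposal is correct and is exactly the argument the paper intends: the paper's proof of this theorem is literally ``Same as proof of Theorem \ref{(1)implies(2)}'', with the Gibbs shadow lemma (Proposition \ref{gibbshadow}) and Propositions \ref{lemma4.14HaissinskyGibbs}, \ref{lemma4.15HaissinskyGibbs}, \ref{lemma4.16HaissinskyGibbs} playing the roles you assign them. Your sign bookkeeping ($\psi_n/n \to h_\mu - \ell_\mu v_F + \ell_{F,\mu}$) and the identification of the limit ratio $(\ell_\mu v_F - \ell_{F,\mu})/h_\mu$ in the converse direction are both accurate.
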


\begin{proof}
Same as proof of Theorem \ref{(1)implies(2)}.
\end{proof}

Recall that $\widecheck{\mu}$ is the reflection of $\mu$, and we denote as $\widecheck{F}=F\circ \iota$ the reflected potential.
\begin{corollary}\label{reflectedequivalenceGibbs}
The measure $\nu_{\widecheck{\mu}}$ is equivalent to $\kappa_{\widecheck{F}}$ if and only if $\nu_{\mu}$ is equivalent to $\kappa_{F}$.
\end{corollary}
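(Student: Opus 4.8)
The plan is to reduce the corollary to the equivalence of (1) and (2) already established in Theorem \ref{(1)implies(2)Gibbs}, applied to two different (but closely related) potentials and random walks. The key observation is that the dynamical quantities $h_\mu$ and $\ell_\mu$ are reflection-invariant: the entropy $h_{\widecheck{\mu}}$ of the reflected walk equals $h_\mu$ (the step distributions $\widecheck{\mu}^{*n}(g)$ are just $\mu^{*n}(g^{-1})$, so the entropy sums agree), and likewise the drift satisfies $\ell_{\widecheck{\mu}} = \ell_\mu$ since $d(o, \widecheck{\omega}_n o)$ has the same law, up to a bounded error, as $d(o, \omega_n o)$ (this is the same remark used just before Corollary \ref{reflectedequivalence}). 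So the only thing that needs care is the potential side of the identity $h = \ell v_F - \ell_{F,\mu}$.

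The next step is to record the symmetry relating $(F, \mu)$ and $(\widecheck{F}, \widecheck{\mu})$. First, the topological pressure is reflection-invariant: since $d_F(o, go) = d_{\widecheck{F}}(go, o)$ (the geodesic from $go$ to $o$ is the reverse of the one from $o$ to $go$, traversed by the flip $\iota$), and since $\sum_{g \in S_n} e^{d_F(o,go)} = \sum_{g \in S_n} e^{d_{\widecheck F}(o, g^{-1}o)}$ after reindexing by $g \mapsto g^{-1}$, we get $v_{\widecheck F} = v_F$. Second, I claim $\ell_{\widecheck F, \widecheck\mu} = \ell_{F, \mu}$: indeed $d_{\widecheck F}(o, \widecheck\omega_n o) = d_F(\widecheck\omega_n o, o)$, and by left-invariance of $F$ under $\Gamma$ this equals $d_F(o, \widecheck\omega_n^{-1} o) = d_F(o, g_n g_{n-1}\cdots g_1 o)$, which is the F-ake displacement of the \emph{time-reversed} trajectory of the original walk; by stationarity of the increment process this has, for each fixed $n$, the same distribution as $d_F(o, \omega_n o)$, and Theorem \ref{fakedrift} (applied to both $\mu$ and $\widecheck\mu$, both of which have finite exponential moment) gives that the normalized limits exist and hence coincide. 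Putting these together, $\ell_{\widecheck\mu} v_{\widecheck F} - \ell_{\widecheck F, \widecheck\mu} = \ell_\mu v_F - \ell_{F,\mu}$, so condition (1) of Theorem \ref{maintheoremGibbs} holds for the pair $(\widecheck F, \widecheck\mu)$ exactly when it holds for $(F, \mu)$.

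Finally I would conclude: by Theorem \ref{(1)implies(2)Gibbs} applied to $(\widecheck F, \widecheck\mu)$, the measures $\kappa_{\widecheck F}$ and $\nu_{\widecheck\mu}$ are equivalent if and only if $h_{\widecheck\mu} = \ell_{\widecheck\mu} v_{\widecheck F} - \ell_{\widecheck F, \widecheck\mu}$, and by the reflection-invariance of all four quantities this holds if and only if $h_\mu = \ell_\mu v_F - \ell_{F,\mu}$, which by Theorem \ref{(1)implies(2)Gibbs} applied to $(F, \mu)$ holds if and only if $\kappa_F$ and $\nu_\mu$ are equivalent. The main obstacle I anticipate is purely bookkeeping: being careful that the Gibbs density attached to the reflected potential $\widecheck F$ is precisely the object that Theorem \ref{maintheoremGibbs} produces when fed $\widecheck F$ (so that the quasi-invariance cocycle \eqref{E:quasi-kappa} matches $\widecheck F$), and that ``finite exponential moment'' — needed to invoke Theorem \ref{fakedrift} — is preserved under reflection, which it is since word norm satisfies $\Vert g\Vert = \Vert g^{-1}\Vert$. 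No genuinely new estimate is required.
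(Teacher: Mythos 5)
Your proposal is correct and follows exactly the paper's route: both arguments reduce the statement to the numerical criterion of Theorem \ref{(1)implies(2)Gibbs} applied to the pairs $(F,\mu)$ and $(\widecheck F,\widecheck\mu)$, using the reflection-invariance of $h_\mu$, $\ell_\mu$, $v_F$ and $\ell_{F,\mu}$. The only difference is that you supply the (correct) verifications of these invariances, which the paper simply asserts.
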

\begin{proof}
Note that $h_{\widecheck{\mu}} = h_\mu $, $l_{\widecheck{\mu}} = \ell_\mu $ and $v_{\widecheck{F}}=v_F $. Furthermore, $l_{\widecheck{F},\widecheck{\mu}}=l_{F,\mu}$.
Consequently $h_\mu =\ell_\mu v_F - l_{F,\mu}$ if and only if $h_{\widecheck{\mu}} = l_{\widecheck{\mu}}v_{\widecheck{F}}- l_{\widecheck{F},\widecheck{\mu}}$.
Theorem \ref{(1)implies(2)Gibbs} now implies the result.
\end{proof}

\begin{proposition}\label{(2)implies(3)Gibbs}

If  $\nu_{\mu}$ is equivalent to $\kappa_F$, then the Radon-Nikodym derivative $d\kappa_{F}/d\nu_{\mu}$ is bounded away from 0 and infinity.
\end{proposition}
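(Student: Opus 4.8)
The plan is to derive this exactly as Proposition~\ref{(2)implies(3)} was derived in the $F=0$ case, namely by feeding suitable data into the abstract Lemma~\ref{lemmaergodicdouble}. I would take $Z$ to be the limit set $\Lambda$ (a compact $\Gamma$-space on which all the measures below have full support), $G=\Gamma$, and as the four measures $\nu_1:=\nu_\mu$, $\nu_2:=\nu_{\widecheck\mu}$, $\kappa_1:=\kappa_F$, $\kappa_2:=\kappa_{\widecheck F}$. The equivalence $\nu_1\sim\kappa_1$ is the hypothesis, and then $\nu_2\sim\kappa_2$ is Corollary~\ref{reflectedequivalenceGibbs}; each of these four measures is $\Gamma$-quasi-invariant (the hitting measures because they are stationary, the Gibbs densities by \eqref{E:quasi-kappa}). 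A preliminary observation simplifies matters: since the hitting measure always gives full weight to conical limit points, the hypothesis $\nu_\mu\sim\kappa_F$ forces $\kappa_F$ to do the same, hence by \cite[Theorem~4.5]{BPP} the pair $(\Gamma,F)$ is of divergence type; consequently $\kappa_F$ is the unique Gibbs density, it is $\Gamma$-ergodic, and the Gibbs measure $\widetilde m_F$ on $T^1X$ is a genuine $\Gamma$-invariant, flow-invariant Radon measure equivalent to $\kappa_F\otimes\kappa_{\widecheck F}\otimes dt$ (and similarly for $\widecheck F$).

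Next I would verify the ergodicity hypotheses of Lemma~\ref{lemmaergodicdouble}. By Kaimanovich's double ergodicity theorem, used in the same way as in the proof of Proposition~\ref{(2)implies(3)}, $\Gamma$ acts ergodically on $(\Lambda\times\Lambda,\ \nu_\mu\otimes\nu_{\widecheck\mu})$; since $\nu_\mu\otimes\nu_{\widecheck\mu}$ and $\kappa_F\otimes\kappa_{\widecheck F}$ lie in the same measure class by the equivalences above, the $\Gamma$-action is also ergodic on $(\Lambda\times\Lambda,\ \kappa_F\otimes\kappa_{\widecheck F})$.

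Then I would produce the scaling functions. For $f_\nu$ I take the Naim kernel $\Theta$ of the Green metric: by \cite[Corollary~10.3]{GGPY} it is positive, continuous, bounded on compact subsets of $\Lambda\times\Lambda\setminus\Diag$, and $m_\nu:=\Theta\cdot(\nu_\mu\otimes\nu_{\widecheck\mu})$ is a $\Gamma$-invariant Radon measure on $\Lambda\times\Lambda\setminus\Diag$, ergodic because it lies in the class of the ergodic measure $\nu_\mu\otimes\nu_{\widecheck\mu}$ --- this is verbatim the argument of Proposition~\ref{(2)implies(3)}. For $f_\kappa$ I use the Gibbs measure: being flow-invariant, in Hopf coordinates $T^1X=(\Lambda\times\Lambda\setminus\Diag)\times\mathbb{R}$ the measure $\widetilde m_F$ has the form $\overline m_F\otimes dt$, and because $\Gamma$ acts on the $\mathbb{R}$-coordinate by a cocycle not depending on $t$, the joint $\Gamma$- and flow-invariance of $\widetilde m_F$ forces $\overline m_F$ to be a $\Gamma$-invariant Radon measure on $\Lambda\times\Lambda\setminus\Diag$; from $\overline m_F\otimes dt\sim\kappa_F\otimes\kappa_{\widecheck F}\otimes dt$ one gets $\overline m_F\sim\kappa_F\otimes\kappa_{\widecheck F}$, so $\overline m_F=f_\kappa\cdot(\kappa_F\otimes\kappa_{\widecheck F})$ with $f_\kappa(\zeta,\xi)$ essentially the exponential of Gibbs cocycles evaluated at the point $p_{\zeta\xi}$ of the geodesic $(\zeta,\xi)$ closest to $o$ (the analogue of $e^{2v_F\rho_o(\zeta,\xi)}$ from the $F=0$ case), and $\overline m_F$ is ergodic since it lies in the class of $\kappa_F\otimes\kappa_{\widecheck F}$. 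The (HC) property --- through Lemma~\ref{(HC)Corollary} and Proposition~\ref{Gibbscomparisonofshadows}, which bound $\beta^F_\xi(o,\cdot)$ in terms of the geometry near $o$ --- shows that $f_\kappa$ is positive and continuous on $\Lambda\times\Lambda\setminus\Diag$ and bounded on compact subsets, in particular bounded above and below by positive constants on every product $U\times V$ of disjoint closed subsets of $\Lambda$, which is all that the proof of Lemma~\ref{lemmaergodicdouble} uses. (If one prefers to mirror the $F=0$ proof literally, one writes the candidate density $e^{-\beta^{\widecheck F}_\zeta(o,p_{\zeta\xi})-\beta^{F}_\xi(o,p_{\zeta\xi})+2v_F\rho_o(\zeta,\xi)}$, checks with \cite{BPP} that it is $\Gamma$-quasi-invariant with bounded Radon--Nikodym cocycle, and applies Furman's coboundary theorem \cite[Proposition~1]{Furman} to replace it by an invariant measure in the same class.) Feeding $Z,\Gamma,\nu_i,\kappa_i,f_\nu,f_\kappa$ into Lemma~\ref{lemmaergodicdouble} then gives that $d\nu_\mu/d\kappa_F$ is bounded away from $0$ and $\infty$, hence so is its reciprocal $d\kappa_F/d\nu_\mu$, which is the claim.

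I expect the main obstacle to be precisely the last verification: showing that the density $f_\kappa$ of the flow-collapsed Gibbs measure relative to $\kappa_F\otimes\kappa_{\widecheck F}$ is positive and locally bounded on $\Lambda\times\Lambda\setminus\Diag$. This is exactly where the (HC) hypothesis on $F$ is indispensable --- without a modulus-of-continuity and subexponential-growth control on $F$ there is no reason the Gibbs cocycle $\beta^F_\xi(o,\cdot)$ should be tamed by the Gromov product $\rho_o(\zeta,\xi)$ --- and some extra care is needed because $F$ itself need not be bounded, so one must use the HC estimate $|d_F(x,y)|\lesssim (c_1+\max_{B(x,d(x,y))}|F|)(d(x,y)+1)$ together with the subexponential growth of $\max_B|F|$ rather than a uniform bound. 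Everything else is a direct transcription of the $F=0$ argument in Proposition~\ref{(2)implies(3)}.
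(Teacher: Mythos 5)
Your proposal is correct and follows essentially the same route as the paper: the paper's proof is exactly the reduction to Lemma~\ref{lemmaergodicdouble} via Corollary~\ref{reflectedequivalenceGibbs}, with the Naim kernel supplying $f_\nu$ and the density of the Gibbs current relative to $\kappa_F\otimes\kappa_{\widecheck F}$ (cited as \cite[Equation 4.4]{BPP}) supplying $f_\kappa$. You merely spell out in detail the construction and local boundedness of $f_\kappa$ that the paper delegates to that citation.
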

\begin{proof}
If $\nu_{\mu}$ is equivalent to $\kappa_{F}$ then $\nu_{\check{\mu}}$ is equivalent to $\kappa_{\tilde{F}}$.
We need to show that $\kappa_{F} \otimes \kappa_{\widecheck{F}}$ can be scaled by a bounded function $f$ to give a $\Gamma$-invariant Radon measure on $\partial^{2}X$. This is done in \cite[Equation 4.4]{BPP}. The proof is now the same as that of Proposition \ref{(2)implies(3)}.
\end{proof}

\begin{proposition}\label{(3)implies(4)Gibbs}
If $\kappa_F$ and $\nu_{\mu}$ are equivalent, then $$|d_{G}(g,g')-v_F d(go,g'o)+d_{F}(go,go')|$$ 
is uniformly bounded independently of $g,g'\in \Gamma$.
\end{proposition}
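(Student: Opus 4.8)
The plan is to mirror exactly the argument used for the $F=0$ case in Proposition \ref{(3)implies(4)}, invoking the Gibbs-weighted shadow lemmas in place of the plain ones. First I would note that, by Proposition \ref{(2)implies(3)Gibbs}, if $\nu_\mu$ and $\kappa_F$ are equivalent then the Radon--Nikodym derivative $d\kappa_F/d\nu_\mu$ is bounded away from $0$ and $\infty$; combined with Corollary \ref{reflectedequivalenceGibbs} (to handle the reflected side if needed), this gives a constant $C>0$ with $C^{-1}\nu_\mu(A)\le \kappa_F(A)\le C\nu_\mu(A)$ for every Borel set $A\subset\partial X$. Applying this to the shadows $A=Sh_r(o,go)$ is the crux.

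Next I would quote the two shadow lemmas. Proposition \ref{gibbshadow}(a) gives $\kappa_F(Sh_r(o,go))\asymp e^{d_F(o,go)-v_Fd(o,go)}$ for $r$ large enough, and Proposition \ref{harmonicshadow} gives $\nu_\mu(Sh_r(o,go))\asymp e^{-d_G(e,g)}$ for $r$ large enough; choose $r$ large enough for both. Taking logarithms in the inequality $C^{-1}\nu_\mu(Sh_r(o,go))\le \kappa_F(Sh_r(o,go))\le C\nu_\mu(Sh_r(o,go))$ and substituting the two asymptotics yields $|d_F(o,go)-v_Fd(o,go)+d_G(e,g)|\le C'$ for a uniform constant $C'$ and all $g\in\Gamma$, i.e. $|d_G(e,g)-v_Fd(o,go)+d_F(o,go)|\le C'$. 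This is precisely condition (4) of Theorem \ref{maintheoremGibbs} at the basepoint with first argument $e$.

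Finally I would promote the two-point statement from the one-point statement by exploiting the left-invariance of $d_G$ together with the $\Gamma$-equivariance of $d$ and of the $F$-ake distance: for $g,g'\in\Gamma$ one has $d_G(g,g')=d_G(e,g^{-1}g')$, $d(go,g'o)=d(o,g^{-1}g'o)$, and $d_F(go,g'o)=d_F(o,g^{-1}g'o)$ since $F$ is $\Gamma$-invariant and geodesics are mapped to geodesics by isometries. Substituting $h=g^{-1}g'$ into the one-point bound gives $|d_G(g,g')-v_Fd(go,g'o)+d_F(go,g'o)|\le C'$ uniformly in $g,g'$, which is the assertion. (The statement as displayed writes $d_F(go,go')$, matching $d_F(go,g'o)$ under this equivariance.)

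I do not expect any serious obstacle: the only nontrivial inputs — the Gibbs shadow lemma, the harmonic shadow lemma, the boundedness of the Radon--Nikodym derivative, and the invariance of $d_F$ under isometries of $X$ — are all already available. The one point to handle with mild care is making sure a single value of $r$ works simultaneously for Propositions \ref{harmonicshadow} and \ref{gibbshadow}(a), which is immediate since each only requires $r$ to exceed some threshold; and checking that $d_F(go,g'o)=d_F(o,g^{-1}g'o)$, which follows because an isometry carries the geodesic $[o,g^{-1}g'o]$ to $[go,g'o]$ and $F$ is $\Gamma$-invariant, so the defining integrals agree.
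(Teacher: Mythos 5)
Your proof is correct and follows exactly the paper's argument: boundedness of the Radon--Nikodym derivative via Proposition \ref{(2)implies(3)Gibbs}, substitution of the two shadow lemmas (Propositions \ref{harmonicshadow} and \ref{gibbshadow}) on $Sh_r(o,go)$, and promotion to pairs $g,g'$ using the left-invariance of $d_G$ and the $\Gamma$-equivariance of $d$ and $d_F$. The details you spell out (choosing one $r$ valid for both shadow lemmas, and verifying $d_F(go,g'o)=d_F(o,g^{-1}g'o)$) are exactly what the paper leaves implicit.
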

\begin{proof}
If $\kappa_F$ and $\nu_{\mu}$ are equivalent then their Radon-Nikodym derivative is bounded away from $0$ and infinity. Consequently, the ratio satisfies 
$$C^{-1}\leq \frac{\kappa_F(Sh_r(o, go))}{\nu_\mu(Sh_r(o, go))} \leq C$$ 
for some $C>0$ independent of $g$.
The shadow lemmas Propositions \ref{harmonicshadow} and \ref{gibbshadow} now imply the result, together with the fact that all metrics we use are $\Gamma$-invariant.
\end{proof}

\subsection{Growth of parabolics and singularity of harmonic measure}

We will prove the following.
\begin{proposition}\label{exponentialdistortionparabolic}
Let $P<\Gamma$ be a parabolic subgroup.  There are $c >1$, $D > 0$ such that 
$\Vert g\Vert  \geq D c^{d(o,g o)}$ for all $g \in P$.
\end{proposition}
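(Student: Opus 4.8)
The plan is to exploit the standard fact that a parabolic subgroup $P$ of a group acting geometrically finitely on a Gromov hyperbolic space has \emph{exponential distortion}: the orbit map $P \to \Gamma o \subset X$ contracts word length exponentially. Concretely, fix a parabolic fixed point $\xi$ stabilized by $P$, and work in a horoball $\mathcal{H}$ based at $\xi$ that is precisely invariant under $P$. For $g \in P$, the orbit point $go$ lies in $\mathcal{H}$, and one has the well-known estimate comparing the Busemann-height (depth in the horoball) of $go$ with the distance $d(o, go)$: namely $d(o, go) \asymp_{+} 2\log d_{\mathcal{H}}(o, go)$, where $d_{\mathcal{H}}$ is the induced path metric inside the horoball, so that $d_{\mathcal{H}}(o, go)$ grows \emph{exponentially} in $d(o, go)$. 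This is the hyperbolic-space avatar of the inequality $d(o, go) \le 2\log(d_{\mathcal{H}}(o, go)) + C$ familiar from $\mathbb{H}^n$ and its analogue for $\mathrm{CAT}(-1)$ spaces (see Heintze--Im Hof / Bowditch-type horoball estimates).

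The key steps, in order, would be: first, recall that because the action is geometrically finite, $\xi$ is a bounded parabolic point, so $P$ acts cocompactly on $\Lambda \setminus \{\xi\}$ and, up to finite index issues, $P$ acts cocompactly on a horosphere-like set; consequently the orbit map from $P$ with its word metric to the horosphere with the \emph{induced} metric is a quasi-isometry. Thus $\Vert g \Vert \asymp d_{\mathcal{H}}(o, go)$ for $g \in P$. Second, invoke the horoball-distance estimate $d_{\mathcal{H}}(o, go) \asymp e^{\frac{1}{2} d(o, go)}$ (more precisely, $d(o, go) \le 2 \log d_{\mathcal{H}}(o, go) + O(1)$, which gives the lower bound $d_{\mathcal{H}}(o, go) \gtrsim e^{d(o,go)/2}$ that we need). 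Combining the two displays yields $\Vert g \Vert \gtrsim e^{d(o, go)/2}$, i.e. there exist $c > 1$ and $D > 0$ with $\Vert g \Vert \ge D c^{d(o, go)}$ for all $g \in P$, which is exactly the claim.

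The main obstacle I anticipate is making the horoball estimate rigorous in the generality of a proper $\mathrm{CAT}(-1)$ space (or merely $\delta$-hyperbolic space) rather than a Riemannian manifold: one needs a clean statement that the distance inside a horoball between two points at bounded Busemann-height is comparable to the exponential of (half) their ambient distance. In the $\mathrm{CAT}(-1)$ setting this follows from comparison with $\mathbb{H}^2$ and convexity of horoballs; in the purely Gromov-hyperbolic setting one must instead use that geometrically finite groups are relatively hyperbolic and quote the corresponding distortion estimate for peripheral subgroups (e.g. from Bowditch's work on geometrical finiteness, or the combinatorial horoball construction of Groves--Manning), being careful that the quasi-isometry constants depend only on $\delta$, the horoball, and the basepoint $o$, not on $g$. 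A secondary technical point is that $P$ need not be abelian (it could be a lattice in a nilpotent Lie group in the manifold case), so one should phrase the quasi-isometry statement in terms of cocompactness of the $P$-action on the appropriate horosphere rather than assuming any specific algebraic structure.
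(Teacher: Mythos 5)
Your proposal is correct and is essentially the paper's argument: one runs a word-geodesic in $P$ to produce a path from $o$ to $go$ of length $\lesssim \Vert g\Vert$ staying at bounded Busemann-depth, and then shows any such path must have length exponential in $d(o,go)$. The two technical points you flag are handled in the paper exactly as you anticipate: the comparison $\Vert g\Vert_P \asymp \Vert g\Vert_\Gamma$ comes from quasiconvexity of maximal parabolic subgroups (Dru\c{t}u--Sapir, Gerasimov--Potyagailo), and the horoball estimate is made rigorous in the Gromov-hyperbolic setting by showing the deep horoball contains a metric ball of radius about $d(o,go)/2$ around the midpoint of $[o,go]$ (Proposition \ref{horoballcontainsball}) and invoking exponential divergence of geodesics.
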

Together with Theorem \ref{maintheoremGibbs} this will imply:
\begin{corollary}
If $\Gamma \curvearrowright X$ has parabolics then $\kappa_{F}$ and $\nu_\mu$ are mutually singular.
\end{corollary}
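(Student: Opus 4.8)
The plan is to combine the distortion estimate of Proposition~\ref{exponentialdistortionparabolic} with the characterization of equivalence given by Theorem~\ref{maintheoremGibbs}, showing that condition~(4) of that theorem must fail when $\Gamma$ has a parabolic subgroup. Suppose for contradiction that $\kappa_F$ and $\nu_\mu$ are \emph{not} mutually singular. Since $\nu_\mu$ is ergodic and $\kappa_F$ is either supported on conical points (divergence type) or on parabolic points (convergence type), non-singularity forces the two measures into the same measure class, exactly as in the proof of Proposition~\ref{lemma4.16HaissinskyGibbs}. Then Theorem~\ref{maintheoremGibbs} applies, and in particular condition~(4) holds: there is a constant $C \geq 0$ with
$$|d_G(e,g) - v_F d(o,go) + d_F(o,go)| \leq C \qquad \textup{for all } g \in \Gamma.$$

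Next I would restrict attention to a parabolic subgroup $P < \Gamma$, which exists by hypothesis. On $P$ I would control each of the three terms above. First, by Proposition~\ref{Greendecay} in the appendix (quasi-isometry of the Green metric and the word metric for random walks on non-amenable groups with finite exponential moment), $d_G(e,g) \asymp \Vert g\Vert$, and Proposition~\ref{exponentialdistortionparabolic} gives $\Vert g\Vert \geq D c^{d(o,go)}$ for $g \in P$, hence $d_G(e,g)$ grows at least exponentially in $d(o,go)$ along $P$. Second, $v_F d(o,go)$ is linear in $d(o,go)$. Third, by Definition~\ref{HC}(b) the potential $F$ has subexponential growth, which (as used in the proof of Lemma~\ref{Busemannintegrable}) gives $|d_F(o,go)| \lesssim a^{d(o,go)}$ for every $a > 1$; choosing $a < c$ makes this term grow strictly slower than $d_G(e,g)$ along a suitable subsequence in $P$. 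Taking a sequence $g_k \in P$ with $d(o,g_k o) \to \infty$ (possible since $P$ is infinite and acts properly), the left-hand side of the displayed inequality then tends to $+\infty$, contradicting the uniform bound $C$.

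I should be slightly careful about the interplay of exponents: Proposition~\ref{exponentialdistortionparabolic} gives a fixed base $c > 1$, and the subexponential growth of $F$ lets me bound $|d_F(o,go)|$ by $a^{d(o,go)}$ for any $a>1$, so fixing any $a$ with $1 < a < c$ suffices to dominate $|d_F|$ by (a constant times) $d_G(e,g)^{1/2}$, say, which is negligible compared to $d_G(e,g)$ itself as $d(o,g_ko)\to\infty$. Likewise the linear term $v_F d(o,go)$ is dwarfed by the exponential lower bound on $d_G(e,g)$. So the net inequality reads, along $g_k \in P$,
$$|d_G(e,g_k) - v_F d(o,g_k o) + d_F(o,g_k o)| \;\geq\; d_G(e,g_k) - v_F d(o,g_k o) - |d_F(o,g_k o)| \;\longrightarrow\; \infty,$$
which is the desired contradiction; therefore $\kappa_F$ and $\nu_\mu$ are mutually singular.

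The main obstacle I anticipate is not any single estimate but making sure the three growth rates are genuinely separated with uniform constants: one needs the quasi-isometry constant between $d_G$ and $\Vert\cdot\Vert$, the constants $c, D$ from Proposition~\ref{exponentialdistortionparabolic}, and the subexponential bound for $F$ to all be chosen independently of $g \in P$ before passing to the limit. A secondary point worth stating explicitly is the first reduction step — that non-singularity of the two ergodic, quasi-invariant measures forces mutual absolute continuity — since it is exactly what lets us invoke condition~(4) rather than having to argue singularity directly from densities; this reduction is routine given ergodicity of $\nu_\mu$ and the dichotomy that $\kappa_F$ is carried either entirely on conical or entirely on parabolic limit points.
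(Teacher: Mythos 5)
Your proposal is correct and is exactly the argument the paper intends: the paper introduces Proposition~\ref{exponentialdistortionparabolic} precisely so that, combined with the quasi-isometry of $d_G$ with the word metric (Proposition~\ref{Greendecay}) and the subexponential growth of $F$ from Definition~\ref{HC}(b), condition~(4) of Theorem~\ref{maintheoremGibbs} fails along an unbounded sequence in a parabolic subgroup. Your preliminary reduction (non-singularity of the two ergodic quasi-invariant measures forces equivalence) matches the dichotomy already used in the proof of Proposition~\ref{lemma4.16Haissinsky}, so nothing is missing.
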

It remains to prove Theorem \ref{exponentialdistortionparabolic}.
First, recall that Osin \cite[Proposition 2.27]{Osin} showed that if $\Gamma$ is finitely generated so are the stabilizers of any parabolic point of $\partial X$, called maximal parabolic subgroups.
Choose a symmetric finite generating set for $P$ and let $\Vert .\Vert _P$ be the associated word metric on $P$.
The following can be found in Drutu-Sapir \cite{DS} or Gerasimov-Potyagailo \cite[Corollary 3.9]{GP}.

\begin{lemma}\label{parabolicquasiconvex}
A maximal parabolic subgroup $P<\Gamma$ is quasi-convex.
In particular, $\Vert g\Vert _{P}\asymp \Vert g\Vert _{\Gamma}$ for all $g\in P$.
\end{lemma}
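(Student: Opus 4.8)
\textbf{Proof strategy for Lemma \ref{parabolicquasiconvex}.}

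The plan is to deduce the quasi-convexity of a maximal parabolic subgroup $P < \Gamma$ directly from the geometric finiteness of the action $\Gamma \curvearrowright X$, and then to upgrade quasi-convexity to the bi-Lipschitz comparison $\Vert g \Vert_P \asymp \Vert g \Vert_\Gamma$ by the standard Milnor--\v{S}varc-type argument. First I would recall that $P = \mathrm{Stab}_\Gamma(\zeta)$ for a bounded parabolic point $\zeta \in \Lambda$, so by definition of ``bounded parabolic'' the group $P$ acts properly discontinuously and cocompactly on $\Lambda \setminus \{\zeta\}$. Fix a horoball $H = H_\zeta$ based at $\zeta$ (a sublevel set of a Busemann function $\beta_\zeta$); cocompactness of $P$ on $\Lambda \setminus \{\zeta\}$ together with properness of the $\Gamma$-action on $X$ gives that $P$ acts cocompactly on the ``horospherical boundary'' region $\partial H \cap \mathrm{Conv}(\Lambda)$, and hence, after enlarging $H$, that there is a compact set $K \subset X$ with $P \cdot K \supset H \cap \mathrm{Conv}(\Lambda) \cap \Gamma o$ — this is exactly the structure of a cusp neighborhood in a geometrically finite action (see Bowditch's characterizations of geometric finiteness, or Drutu--Sapir \cite{DS} and Gerasimov--Potyagailo \cite{GP}, which I am entitled to cite here).

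With this cusp picture in hand, the key step is to show that the orbit $Po$ is quasi-convex in $X$, i.e.\ there is $R>0$ such that every geodesic in $X$ between two points of $Po$ stays in the $R$-neighborhood of $Po$. The argument is: such a geodesic segment $[g_1 o, g_2 o]$ with $g_i \in P$ lies (up to bounded error, by $\delta$-hyperbolicity and the fact that both endpoints are deep in the horoball $H$) inside a uniformly larger horoball $H'$ based at $\zeta$; but inside $H' \cap \mathrm{Conv}(\Lambda)$ the orbit $\Gamma o$ is coarsely dense in $Po$ by the cocompactness just established, and every point of the geodesic is within bounded distance of $\Gamma o$ (since $\Gamma \curvearrowright X$ is a geometrically finite, hence in particular a discrete cobounded-on-the-convex-core, action); combining these two facts shows the geodesic is uniformly close to $Po$. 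Here one uses that $P$ is the full stabilizer of $\zeta$, so the part of $\Gamma o$ inside a deep enough horoball at $\zeta$ is contained in a bounded neighborhood of $Po$ — otherwise one would get infinitely many $\Gamma$-translates of $K$ accumulating at $\zeta$ transversally to $P$, contradicting proper discontinuity of $\Gamma$ on $X$.

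Once $Po$ is quasi-convex, the comparison $\Vert g \Vert_P \asymp \Vert g \Vert_\Gamma$ follows from the standard observation that the inclusion of a quasi-convex subgroup into a hyperbolic (or relatively hyperbolic) group is a quasi-isometric embedding with respect to the word metrics: the orbit map $P \to Po \subset X$ is a quasi-isometry onto its image (Milnor--\v{S}varc applied to the cocompact, properly discontinuous $P$-action on the coarsely convex set $Po$, using that $P$ is finitely generated by Osin \cite{Osin}), and likewise $\Gamma \to \Gamma o$ restricted to $Po$ is a quasi-isometric embedding on account of quasi-convexity; chaining these gives $\Vert \cdot \Vert_P \asymp \Vert \cdot \Vert_\Gamma$ on $P$.

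The main obstacle is the quasi-convexity of $Po$ itself: one has to control geodesics between two orbit points that travel deep into the cusp, ruling out the possibility that such a geodesic wanders far from the $P$-orbit. The clean way to handle this is to pass to horoball coordinates and use that, in a $\delta$-hyperbolic space, a geodesic between two points at horospherical-depth $\geq t$ stays at depth $\geq t - O(\delta)$ (a ``shadowing'' property of horoballs), reducing the problem to coboundedness of $Po$ inside a fixed deep horospherical region — which is precisely the ``bounded parabolic'' hypothesis. If one prefers to avoid reproving this, it is legitimate simply to cite Drutu--Sapir \cite{DS} or Gerasimov--Potyagailo \cite[Corollary 3.9]{GP}, where exactly this quasi-convexity statement for maximal parabolic subgroups of geometrically finite (equivalently, relatively hyperbolic) groups is established; the excerpt already points to these references, so the proof in the paper can reasonably be a one-line invocation, with the argument above serving as the conceptual justification.
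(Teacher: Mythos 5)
Your bottom line---that the paper can and does simply cite Drutu--Sapir or Gerasimov--Potyagailo \cite[Corollary 3.9]{GP} for this lemma---is exactly what the paper does; there is no independent proof in the text. However, the conceptual justification you offer in place of that citation contains a genuine error, and it is worth flagging because the error is one the paper itself implicitly warns against. You propose to show that the orbit $Po$ is quasi-convex in $X$ and then chain Milnor--\v{S}varc through $X$. But $Po$ is \emph{not} quasi-convex in $X$: for $g_1, g_2 \in P$, a geodesic $[g_1 o, g_2 o]$ in $X$ penetrates the horoball at $\zeta$ to depth comparable to $\log d(g_1 o, g_2 o)$, while $Po$ lies on (a bounded neighborhood of) a single horosphere, so the geodesic strays an unbounded distance from $Po$. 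Concretely, for $\Gamma = \mathrm{PSL}_2(\mathbb{Z})$ acting on $\mathbb{H}^2$ with $P = \langle z \mapsto z+1\rangle$ and $o = i$, the geodesic from $i$ to $i+n$ rises to height $\approx n/2$, at hyperbolic distance $\approx \log n$ from the horocycle $\{\operatorname{Im} z = 1\} \supset Po$. Relatedly, your assertion that geometrically finite actions are ``cobounded on the convex core'' so that points on the geodesic are uniformly close to $\Gamma o$ is also false: that is convex cocompactness, which is precisely what fails in the presence of cusps (in the example above, points $iy$ with $y$ large have no $\Gamma$-translate of $o$ nearby). Indeed, the paper's own Corollary~\ref{singularcorollary} and Proposition~\ref{exponentialdistortionparabolic} rely on $P$ being \emph{exponentially distorted} under the orbit map into $X$, which is incompatible with your proposed argument.

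The notion of quasi-convexity asserted in the lemma is quasi-convexity of $P$ inside $\mathrm{Cay}(\Gamma)$ (equivalently, that the inclusion $P \hookrightarrow \Gamma$ is a quasi-isometric embedding of word metrics), not quasi-convexity of $Po$ in $X$. The two are genuinely different here because the orbit map $\Gamma \to X$ is not a quasi-isometry. The correct proof, as in Drutu--Sapir \cite{DS} or Gerasimov--Potyagailo \cite{GP}, works with the relatively hyperbolic structure of $\Gamma$ relative to its maximal parabolics---via the coned-off/cusped Cayley graph and bounded coset penetration, or via asymptotic cones---rather than with geodesics in $X$. So: the citation is the right move and matches the paper; the sketch you give as backup would not compile into a correct argument.
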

The following can be found in Bridson-Haefliger \cite[Proposition I.8.25]{Bridson-Haefliger}
in the context of CAT(0) spaces.
\begin{lemma}\label{parabolicpreservehorosphere}
Let $P$ be a maximal parabolic subgroup
which stabilizes $\zeta \in \partial X$. Then for any $g\in P$ and $x,y\in X$ we have $\beta_{\zeta}(x,gy)=\beta_{\zeta}(x,y)$.
\end{lemma}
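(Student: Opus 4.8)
The statement says exactly that an element of a maximal parabolic subgroup preserves every horosphere centred at its fixed point. I would deduce it from the vanishing of the ``Busemann translation cocycle'' of $P$ at $\zeta$. The key reduction is the cocycle property of the boundary Busemann function (exact in a $CAT(-1)$ space): for all $x,y\in X$ and $g\in P$,
$$\beta_{\zeta}(x,gy)=\beta_{\zeta}(x,y)+\beta_{\zeta}(y,gy),$$
so it suffices to prove $\beta_{\zeta}(y,gy)=0$ for all $y\in X$ and $g\in P$. Combining the isometry equivariance $\beta_{g\xi}(gx,gy)=\beta_{\xi}(x,y)$ with $g\zeta=\zeta$ and the cocycle property, one checks that $\beta_{\zeta}(y,gy)$ does not depend on $y$; call its value $c(g)$, and the same computation shows $c:P\to(\R,+)$ is a group homomorphism. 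Since $|d(x,z)-d(y,z)|\le d(x,y)$, passing to the limit gives $|c(g)|\le d(x,gx)$ for every $x$; applying this to $g^{n}$ and dividing by $n$ yields $|c(g)|\le \tau(g)$, where $\tau(g)=\lim_{n}d(o,g^{n}o)/n$ is the stable translation length.

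\textbf{Vanishing of $c$.} It remains to show $c(g)=0$ for all $g\in P$. If $g$ has finite order this is automatic, since $\R$ is torsion-free. If $g$ has infinite order I would argue $\tau(g)=0$ by excluding the other two types of isometries. First, $g$ is not elliptic: if $g$ fixed a point $p\in X$, then the compact ball $\overline{B}(p,1)$ would meet all its $\langle g\rangle$-translates, contradicting proper discontinuity of $\Gamma\curvearrowright X$ (here I use that $X$ is proper). Second, $g$ is not loxodromic: a loxodromic isometry of a $CAT(-1)$ space translates a geodesic line whose two endpoints are fixed; one of them is $\zeta$, and the other, $\zeta^{-}$, is a limit point distinct from $\zeta$ fixed by $g\in P$, so a compact neighbourhood $K$ of $\zeta^{-}$ in $\Lambda\setminus\{\zeta\}$ satisfies $\zeta^{-}\in g^{n}K\cap K$ for all $n$, contradicting proper discontinuity of the action of $P=\mathrm{Stab}(\zeta)$ on $\Lambda\setminus\{\zeta\}$ (valid because $\zeta$ is a bounded parabolic point). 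Hence $g$ is parabolic, so $\tau(g)=0$ and $c(g)=0$. Thus $c\equiv0$ on $P$, which gives $\beta_{\zeta}(x,gy)=\beta_{\zeta}(x,y)$, as desired.

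\textbf{Main obstacle.} The only genuinely non-formal step is the exclusion of loxodromic elements of $P$, which is precisely where the hypothesis that $\zeta$ is a bounded parabolic point (rather than an arbitrary boundary fixed point) is used; everything else is bookkeeping with the cocycle and equivariance properties of Busemann functions in proper $CAT(-1)$ spaces, all standard (cf. Bridson--Haefliger). One could alternatively phrase this exclusion directly on $X$ --- a loxodromic $g\in\mathrm{Stab}(\zeta)$ makes $\langle g\rangle$ act on a horosphere centred at $\zeta$ with unbounded orbits, incompatible with $P$ acting cocompactly on $\Lambda\setminus\{\zeta\}$ --- but the boundary-dynamics formulation above seems cleanest.
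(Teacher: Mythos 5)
The paper does not actually prove this lemma; it simply cites Bridson--Haefliger, Proposition I.8.25. Your argument, by contrast, is a self-contained proof built directly out of the cocycle and equivariance properties of Busemann functions together with the classification of isometries of a proper $CAT(-1)$ (hence Gromov hyperbolic) space, and it is correct. The reduction to showing the additive cocycle $c(g)=\beta_{\zeta}(y,gy)$ vanishes is exactly right; independence of $y$ and the homomorphism property follow from $\beta_{\zeta}(gx,gy)=\beta_{\zeta}(x,y)$ (which uses $g\zeta=\zeta$), and the inequality $|c(g)|\le\tau(g)$ from $|\beta_{\zeta}(x,gx)|\le d(x,gx)$ applied to powers of $g$. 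The crucial input, as you correctly single out, is that $\zeta$ is a \emph{bounded} parabolic point: properness of the $P$-action on $\Lambda\setminus\{\zeta\}$ is precisely what forbids a loxodromic $g\in P$, since the second fixed point $\zeta^{-}$ of such a $g$ would lie in $\Lambda\setminus\{\zeta\}$ and be fixed by the infinite cyclic subgroup $\langle g\rangle$. Since every non-loxodromic isometry of a proper Gromov hyperbolic space has $\tau=0$, the result follows. One small caveat about your closing remark: a loxodromic $g$ fixing $\zeta$ does \emph{not} preserve the horospheres centred at $\zeta$ (it translates them along its axis), so the alternate phrasing ``$\langle g\rangle$ acts on a horosphere centred at $\zeta$ with unbounded orbits'' is not quite meaningful as stated; the boundary-dynamics argument you give as your main line is the one that works. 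This does not affect the correctness of the proof. One gain of your approach over the citation is that it makes clear where ``maximal parabolic'' (i.e.\ bounded parabolic fixed point) is actually used, which is not visible from a citation to a general $CAT(0)$ statement.
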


For $x\in X$ and $\zeta \in \partial X$ the horosphere $\mathcal{H}_{\zeta}(x)$ through $x$ centered at $\zeta$ is defined to be  the set of $z\in X$ with $\beta_{\zeta}(x,z)=0$. 
The associated (open) horoball $\mathcal{B}(x,\zeta,t)$ is defined to be  the set of $z\in X$ with $\beta_{\zeta}(z,x)< -t$.

\begin{proposition} \label{horoballcontainsball}
There exists a $K>0$ depending only on the hyperbolicity constant of $X$ such that if $\beta_{\zeta}(x,z)=0$ and $d(x, z) \geq 2K + 4r$ then $\mathcal{B}(x, \zeta,r)$ contains the ball of radius
 $d(x,z)/2-K-2r$ around the midpoint of $[x,z]$.
\end{proposition}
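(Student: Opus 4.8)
The plan is to reduce the statement to two elementary properties of the Busemann function in a $\delta$-hyperbolic space: its relation $\beta_\zeta(x,y)=2\rho_x(y,\zeta)-d(x,y)$ to the Gromov product (immediate from the definitions in the Background section), and the facts that $\beta_\zeta$ is $1$-Lipschitz in each variable and satisfies the (anti)cocycle identity $\beta_\zeta(x,w)=\beta_\zeta(x,m)+\beta_\zeta(m,w)$, $\beta_\zeta(w,x)=-\beta_\zeta(x,w)$. First I would rewrite the hypothesis: from $\beta_\zeta(x,z)=0$ and the displayed identity one gets $\rho_x(z,\zeta)=d(x,z)/2$.

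Next I would estimate how deep the midpoint $m$ of $[x,z]$ sits inside the horoball by bounding $\beta_\zeta(x,m)$ from below. Since $m\in[x,z]$ one has exactly $\rho_x(m,z)=d(x,m)=d(x,z)/2$, and the $\delta$-inequality for Gromov products — which passes to the boundary point $\zeta$ with the same constant because $\rho_x(\cdot,\zeta)$ is a $\liminf$ of interior products — gives
\[
\rho_x(m,\zeta)\ \ge\ \min\{\rho_x(m,z),\ \rho_x(z,\zeta)\}-\delta\ =\ \tfrac{d(x,z)}{2}-\delta .
\]
Substituting into $\beta_\zeta(x,m)=2\rho_x(m,\zeta)-d(x,m)$ yields $\beta_\zeta(x,m)\ge d(x,z)/2-2\delta$. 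I then fix $K$ to be any constant with $K>2\delta$ (so $K$ depends only on the hyperbolicity constant of $X$); note that $d(x,z)/2-K-2r\ge 0$ is equivalent to the hypothesis $d(x,z)\ge 2K+4r$, which is only needed to make the asserted ball nondegenerate.

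Finally, for any $w$ with $d(w,m)\le d(x,z)/2-K-2r$, I would combine the cocycle identity with the Lipschitz bound $\beta_\zeta(m,w)\ge -d(m,w)$ to obtain
\[
\beta_\zeta(x,w)\ \ge\ \Big(\tfrac{d(x,z)}{2}-2\delta\Big)-\Big(\tfrac{d(x,z)}{2}-K-2r\Big)\ =\ (K-2\delta)+2r\ >\ r .
\]
By antisymmetry of the Busemann cocycle this is exactly $\beta_\zeta(w,x)<-r$, i.e.\ $w\in\mathcal B(x,\zeta,r)$; hence the ball of radius $d(x,z)/2-K-2r$ about $m$ lies in $\mathcal B(x,\zeta,r)$, as claimed.

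I do not anticipate a genuine difficulty here; the only care needed is the bookkeeping of the additive $\delta$'s — in particular confirming that the Gromov-product inequality survives passage to the boundary point $\zeta$ under the $\liminf$ convention, and that in the $CAT(-1)$ setting of this section $\beta_\zeta$ is a genuine (anti)cocycle rather than merely a quasi-cocycle, so that the two displayed chains of inequalities are exact. Should one only have a coarse cocycle identity with defect $c$, the entire argument goes through verbatim after replacing $K$ by $K+c$.
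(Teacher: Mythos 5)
Your proof is correct, but it follows a genuinely different route from the paper's. The paper realizes the horosphere and horoball as Gromov--Hausdorff limits of spheres $S(\alpha(t),t)$ and balls $B(\alpha(t+r),t)$ along a geodesic ray $\alpha$ from $x$ to $\zeta$, and then runs a thin-triangles argument: a center $p$ of the triangle $x,z,\alpha(t)$ is shown to lie within $3\delta$ of the midpoint $q$ of $[x,z]$, whence $d(q,\alpha(t+r))\leq t+r+5\delta-d(x,z)/2$ and the conclusion follows with $K=6\delta$. You instead work directly with the Busemann cocycle via the exact identity $\beta_\zeta(x,y)=2\rho_x(y,\zeta)-d(x,y)$ (which holds on the nose even under the $\liminf$ convention, since the two $\liminf$'s are of nets differing by a constant), the superadditivity $\beta_\zeta(x,w)\geq\beta_\zeta(x,m)+\beta_\zeta(m,w)$ and the bound $\beta_\zeta(m,w)\geq -d(m,w)$, both of which also survive the $\liminf$ in the direction you need, as does the one-sided antisymmetry $\beta_\zeta(w,x)\leq-\beta_\zeta(x,w)$. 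The only imported geometric input is the $\delta$-inequality for Gromov products with one boundary entry, which holds with a constant depending only on $\delta$ (typically $2\delta$ under the $\liminf$ convention) and is harmlessly absorbed into $K$. Your version has the advantage of avoiding the approximation of horoballs by balls, which in the paper's argument is asserted rather than justified and is itself slightly delicate with the $\liminf$ definition; the paper's version has the advantage of being more visibly geometric. Both produce a $K$ that is an explicit small multiple of $\delta$, which is all the statement requires.
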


\begin{proof}
Let $\alpha$ be a geodesic in $X$ from $x$ to $\zeta$. By definition, $H_{\zeta}(x)$ is the Gromov-Hausdorff limit of spheres $S(\alpha(n),n)$ of radius $n$ centered at $\alpha(n)$.
Furthermore, $\mathcal{B}(x, \zeta,r)$ is the limit of balls $B(\alpha(n+r),n)$ 
Thus, it suffices to show that for large enough $t$, for any $z\in S(\alpha(t),t)$, $B(\alpha(t+r),t)$ contains the ball of radius $d(x,z)/2-K-r$ around the midpoint of $[x,z]$. Consider the geodesic triangle with vertices $x,z,\alpha(t)$.
By Gromov hyperbolicity there is a $p\in X$ which is within the hyperbolicity constant $\delta$ of all three sides of the triangle.
Then $t=d(x,\alpha(t))\asymp_{+,\delta} d(x,p)+d(p,\alpha(t))$ and 
$t=d(z,\alpha(t))\asymp_{+,\delta} d(z,p)+d(p,\alpha(t))$.
Thus, $d(x,p)\asymp_{+,\delta} d(z,p)$ so $p$ is within $3\delta$ of a midpoint $q$ of $[x,z]$. Thus, $$d(q,\alpha(t))\asymp_{+,5\delta} t-d(x,q) = t- d(x,z)/2.$$
It follows that $d(q,\alpha(t+r))\leq t+r+5\delta-d(x,z)/2$.
The result follows with $K=6\delta$.

\end{proof}
We are now ready to prove Proposition  \ref{exponentialdistortionparabolic}.
Indeed, by hyperbolicity of $X$ (see e.g. \cite[Proposition III.H.1.6]{Bridson-Haefliger}) there are $c>1,D>1$ such that for any $x,z,y$ along an $X$ geodesic (in that order)  any path from $x$ to $y$ disjoint from $B_{R}(z)$ has length at least $D c^{R}$. Consider a maximal parabolic subgroup $P$ and $g\in P$. 

Let $S_P$ be a finite generating set for $P$, and $T :=\max_{p\in S_P}d(o,po)$.
Let $p_1,...,p_n$ be an $(P,S_P)$ geodesic from $p_0=e$ to  $p_n=g$. For each $i$, let $\gamma_i$ be a geodesic in $X$ from $p_{i}o$ to $p_{i+1}o$, and let $\gamma$ be the concatenation of the $\gamma_i$. 

Then $\gamma$ is a path in $X$ from $o$ to $go$ outside of $\mathcal{B}(o, \zeta,T)$ of length at most $T\Vert g\Vert _P\asymp \Vert g\Vert $. In particular, this path does not intersect the ball of radius $d(o,go)/2-K-2T$ about the midpoint of $[o,go]$.
It follows that $\Vert g\Vert $ is bounded below by a constant times $Dc^{d(o,go)/2-K-2T}$ completing the proof.

\section{Appendix}

\subsection{Exponential deviation estimates}

In this section we prove Proposition \ref{closetogeodesics}.

We assume $\Gamma \curvearrowright X$ is a non-elementary action on a proper geodesic Gromov hyperbolic space.
Furthermore, $\mu$ is a probability measure on $\Gamma$ with exponential moment and support generating $\Gamma$ as a semigroup.
\begin{proposition}\ref{closetogeodesics}.
Let $\nu$ be the $\mu$-stationary measure on $\partial X$. 
For each $o\in X$ there exists a $C>0$ such that for each $0 \leq k \leq n$ and $a>1$ we have
$$P(d(\omega_k o,[o,\omega_n o])>a) \leq Ce^{-a/C}$$
and
$$P(d(\omega_k o,[o,\omega_\infty))>a) \leq Ce^{-a/C}$$
where $[o,\omega_n o]$ and $[o,\omega_\infty)$ are any geodesics connecting the respective endpoints.
\end{proposition}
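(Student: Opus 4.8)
The plan is to reduce the statement to a linear-progress (positive drift) estimate for the random walk, combined with a standard hyperbolic-geometry fact: if a point $\omega_k o$ is far from the geodesic $[o,\omega_n o]$, then the Gromov product $(\omega_n o \mid o)_{\omega_k o}$ is large, which forces the walk either to have traveled anomalously little from $o$ to $\omega_k o$, or to have ``backtracked'' substantially between times $k$ and $n$. Concretely, in a $\delta$-hyperbolic space one has $d(\omega_k o, [o, \omega_n o]) \asymp_{+,\delta} (o \mid \omega_n o)_{\omega_k o}$, and by the definition of the Gromov product this quantity equals $\frac{1}{2}\big( d(o, \omega_k o) + d(\omega_k o, \omega_n o) - d(o, \omega_n o) \big)$. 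So I would reformulate the event $\{ d(\omega_k o, [o, \omega_n o]) > a \}$ as (up to an additive constant) the event that $d(o,\omega_k o) + d(\omega_k o, \omega_n o) - d(o, \omega_n o) > 2a$.

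The first step is to establish an exponential deviation inequality for the drift: there exist $c, C > 0$ such that for all $m \geq 0$, $P\big( |\, d(o, \omega_m o) - \ell_\mu m \,| > t \big) \leq C e^{-t/C}$ for $t \geq cm^{1/2}$ is too weak; what I actually need is the one-sided large-deviation bound $P\big( d(o,\omega_m o) < (\ell_\mu - \epsilon) m \big) \leq C e^{-m/C}$ and, more usefully, a genuine exponential tail $P\big( d(o, \omega_m o) > (\ell_\mu + \epsilon) m \big) \leq C e^{-m/C}$, both of which follow from the finite exponential moment hypothesis on $\mu$ by a Chebyshev/Azuma-type argument on the subadditive cocycle $d(o,\omega_m o)$ (using that increments $d(\omega_j o, \omega_{j+1} o) \leq \|g_{j+1}\|\cdot(\text{const})$ have exponential tails). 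This is classical: see the arguments in Benoist--Quint or Maher--Tiozzo; since the walk is non-elementary on a hyperbolic space, $\ell_\mu > 0$, and this positivity is what ultimately powers the estimate.

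The second and main step is the geometric/probabilistic combination. Write $\omega_n = \omega_k \cdot (\omega_k^{-1}\omega_n)$, so that by $\Gamma$-invariance $d(\omega_k o, \omega_n o) = d(o, \omega_k^{-1}\omega_n o)$ has the same law as $d(o, \omega_{n-k} o)$ and is \emph{independent} of $\omega_k$ (increments are i.i.d.). Thus
$$ d(o, \omega_k o) + d(\omega_k o, \omega_n o) - d(o, \omega_n o) $$
is a sum of a term concentrated near $\ell_\mu k$, an independent term concentrated near $\ell_\mu (n-k)$, minus a term concentrated near $\ell_\mu n$. If all three concentrate, the sum is $O(1)$; the probability that it exceeds $2a$ is bounded by the probability that one of the three deviates from its mean by $\gtrsim a$, which by Step 1 is $\lesssim C e^{-a/C}$ once $a \gtrsim$ the relevant time scale — and for $a$ smaller than that one simply absorbs the bound into the constant $C$ by enlarging it. Care is needed when $k$ or $n-k$ is small compared to $a$: there $d(o,\omega_k o)$ itself has an exponential tail above level $a$ directly from the exponential moment assumption, which suffices. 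For the second inequality, with $[o, \omega_\infty)$ in place of $[o, \omega_n o]$, I would take $n \to \infty$: the geodesic segments $[o, \omega_n o]$ converge (on compact sets, after passing to subsequences) to a ray $[o, \omega_\infty)$, the function $d(\omega_k o, \cdot)$ is continuous, and the bound is uniform in $n \geq k$, so it passes to the limit by Fatou.

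The step I expect to be the main obstacle is making Step 1 fully rigorous with the correct, uniform-in-$m$ exponential rate — in particular, getting the exponential (not merely polynomial or stretched-exponential) tail for the \emph{lower} deviation $P(d(o,\omega_m o) < (\ell_\mu-\epsilon)m)$, since subadditivity alone gives only upper-tail control cheaply. The standard route is to use that $d(o,\omega_m o)$ dominates a sum of i.i.d.\ bounded-below Gromov products along a ``partial geodesic'' decomposition (or to invoke a linear-progress result such as Maher--Tiozzo, Theorem~1.2, or the exponential version due to Mathieu--Sisto / Boulanger--Mathieu--Sert--Sisto for acylindrical actions, here specialized to the genuinely hyperbolic case), combined with the exponential moment to control the finitely many ``bad'' steps. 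Once this large-deviation input is in hand, the rest is the bookkeeping sketched above.
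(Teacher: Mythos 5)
Your reduction of the event $\{d(\omega_k o,[o,\omega_n o])>a\}$ to the event that the Gromov product $(o\mid \omega_n o)_{\omega_k o}=\tfrac12\bigl(d(o,\omega_k o)+d(\omega_k o,\omega_n o)-d(o,\omega_n o)\bigr)$ exceeds $a$ (up to $\delta$) is fine, as is the treatment of the regime where $k$ or $n-k$ is at most a constant times $a$ (there the product is dominated by $d(o,\omega_k o)$ or $d(\omega_k o,\omega_n o)$, whose tails are controlled directly by the exponential moment). The gap is in the main combination step. Large deviations of the form $P\bigl(|d(o,\omega_m o)-\ell_\mu m|>\epsilon m\bigr)\leq Ce^{-m/C}$ control each of the three terms only at the \emph{linear} scale $\epsilon m$; on the complement of the three bad events you get $2(o\mid\omega_n o)_{\omega_k o}\leq (\ell_\mu+\epsilon)k+(\ell_\mu+\epsilon)(n-k)-(\ell_\mu-\epsilon)n=2\epsilon n$, which is useless for fixed $a$ once $n\gg a/\epsilon$. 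The stronger reading of your Step 1 --- a constant-scale concentration $P(|d(o,\omega_m o)-\ell_\mu m|>a)\leq Ce^{-a/C}$ uniformly in $m$ --- is false: $d(o,\omega_m o)$ fluctuates at scale $\sqrt{m}$ (CLT), so this probability tends to $1$ for fixed $a$ as $m\to\infty$. The whole point of the proposition is that the particular combination $d(o,\omega_k o)+d(\omega_k o,\omega_n o)-d(o,\omega_n o)$ exhibits a cancellation that cannot be seen term by term, and your proposal supplies no mechanism for it. (Mathieu--Sisto do prove exactly this deviation inequality, but you invoke them only for linear progress and then claim the rest is bookkeeping; it is not.)

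The paper closes this gap by passing through the boundary rather than through interior concentration. It first proves the $[o,\omega_\infty)$ statement (the opposite order from yours): by a lemma of Benoist--Quint the harmonic measure $\nu$ has uniform exponential tails for Gromov products, $\sup_{y}\int e^{t\rho_o(x,y)}d\nu(x)<\infty$, extended from boundary points $y$ to orbit points $y\in\Gamma o$; then the Markov property ($\omega_n^{-1}\omega_\infty\sim\nu$ independently of $\omega_n$) gives $P(\rho_{\omega_n o}(o,\omega_\infty)>a)\leq Ce^{-a/C}$ for every $n$. The finite-$n$ statement is then deduced from the $\omega_\infty$ statement at times $k$ and $n$ together with a genuinely constant-scale non-backtracking bound $P\bigl(d(o,\omega_k o)-d(o,\omega_n o)>a\bigr)\leq Ce^{-a/C}$, which itself rests on a Sunderland-type estimate $\sup_{g}E\bigl(e^{-t(|g\omega_N|-|g|)}\bigr)<1-\epsilon$ that is uniform over all starting points $g$ --- again an input your sketch does not contain (your deviation bounds concern $|\omega_m|=d(o,\omega_m o)$ from the origin, hence control $d(\omega_k o,\omega_n o)$ but not the difference $d(o,\omega_n o)-d(o,\omega_k o)$). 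To repair your argument you would need to either import one of these two mechanisms or cite the Mathieu--Sisto deviation inequality for the full statement, not merely for linear progress.
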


To prove the proposition we will need the following lemmas. 

\begin{lemma} \label{BQlemma}\cite[Remark 4.4]{BQhyp}
There is a $t>0$ such that 
$$\sup_{y\in \partial X} \int_{x\in \partial X} e^{t\rho_{o}(x,y)}<\infty.$$
\end{lemma}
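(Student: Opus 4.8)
The plan is to follow the argument of Benoist--Quint \cite{BQhyp}; here the integral is taken against the stationary measure $\nu$. By a layer-cake integration the statement is equivalent to the following uniform exponential decay of $\nu$ on Gromov-product superlevel sets:
$$(\star)\qquad \nu\big(\{x\in\partial X:\rho_{o}(x,y)\geq R\}\big)\ \leq\ Ce^{-cR}\qquad\textup{for all }y\in\partial X,\ R>0,$$
for some $c,C>0$. Indeed, since $\rho_{o}\geq 0$, for every $t\in(0,c)$
$$\int_{\partial X}e^{t\rho_{o}(x,y)}\,d\nu(x)\ =\ 1+t\int_{0}^{\infty}e^{tR}\,\nu\big(\{x:\rho_{o}(x,y)\geq R\}\big)\,dR\ \leq\ 1+Ct\int_{0}^{\infty}e^{(t-c)R}\,dR\ <\ \infty,$$
uniformly in $y$. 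So it suffices to prove $(\star)$.

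For $(\star)$, I would first use Lemma \ref{conicalconstant} to restrict to $\Lambda_{D}$, of full $\nu$-measure for $D$ large, and then the ultrametric inequality for boundary Gromov products, $\rho_{o}(x,x')\geq\min(\rho_{o}(x,y),\rho_{o}(x',y))-\delta$, to reduce to bounding $\nu(\{x\in\Lambda_{D}:\rho_{o}(x,x_{0})\geq R-\delta\})$ for a fixed conical point $x_{0}$ lying in the set. Any $x\in\Lambda_{D}$ with $\rho_{o}(x,x_{0})\geq R-\delta$ fellow-travels $[o,x_{0})$ up to distance $\approx R$ and then, being conical, re-enters $N_{D}(\Gamma o)$; hence $x\in Sh_{D+1}(o,go)$ for some orbit point $go$ with $d(o,go)\geq R-O(\delta)$. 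By the shadow lemma for $\nu$ (Proposition \ref{harmonicshadow}) together with the quasi-isometry between the Green metric and the word metric on the non-amenable group $\Gamma$ (Proposition \ref{Greendecay}), one has the master estimate $\nu(Sh_{D+1}(o,go))\asymp e^{-d_{G}(e,g)}\lesssim e^{-c\Vert g\Vert}$ for all $g$. If $[o,x_{0})$ lies in the thick part near distance $R$, the orbit points $go$ that occur lie in a region of bounded diameter around the point of $[o,x_{0})$ at distance $\approx R$ — hence are bounded in number — and satisfy $\Vert g\Vert\gtrsim d(o,go)\gtrsim R$ (the orbit map being Lipschitz), so this part of the set contributes $\lesssim e^{-cR}$.

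The main obstacle is the contribution of orbit points inside cusp regions, where the returns of a geodesic to $N_{D}(\Gamma o)$ have unbounded gaps: the covering point $go$ may be forced out to distance $\approx 2R$ from $o$, and a priori $\lesssim e^{vm}$ of them occur at each scale $d(o,go)\approx m$. This is precisely where the exponential distortion of parabolic subgroups enters: by Proposition \ref{exponentialdistortionparabolic}, $\Vert g\Vert\geq D_{0}c_{0}^{d(o,go)}$ with $c_{0}>1$ for $g$ in a parabolic subgroup, so the master estimate gives $\nu(Sh_{D+1}(o,go))\lesssim e^{-cD_{0}c_{0}^{m}}$, doubly exponentially small in $m$; summing over $m$ from $\approx R$ to $\approx 2R$ and over the at most $\lesssim e^{vm}$ such points yields a total far below $e^{-cR}$. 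Adding the thick-part and cusp contributions gives $(\star)$, and the lemma follows. (Alternatively, $(\star)$ — equivalently, a uniform lower bound on the pointwise dimension of the harmonic measure — may be quoted directly from the literature on dimensions of harmonic measures for finite-exponential-moment random walks on hyperbolic spaces.)
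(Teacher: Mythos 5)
The paper does not prove this lemma at all: it is quoted verbatim as \cite[Remark 4.4]{BQhyp}, and the appendix uses it as a black box. So there is no in-paper proof to compare against; what I can compare against is the Benoist--Quint mechanism, and against the hypotheses actually in force where the lemma is invoked.

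Your layer-cake reduction to the tail bound $(\star)$ is correct and is the standard first step. The trouble is everything after that. Benoist--Quint do not argue via shadows of the stationary measure; they prove $(\star)$ directly as a statement about the random walk, using exponential contraction of the boundary action and the exponential moment of $\mu$ (roughly, one controls $P(\rho_o(\omega_\infty,\xi)>R)$ uniformly in $\xi$ by a supermartingale/contraction estimate on the increments). This works for any non-elementary action on a proper Gromov hyperbolic space, which is exactly the level of generality at which the paper's appendix (where the lemma is used) operates. Your argument instead routes through Proposition~\ref{harmonicshadow}, Proposition~\ref{Greendecay}, and Proposition~\ref{exponentialdistortionparabolic}, all of which lean on geometric finiteness; that is already more restrictive than what the appendix assumes.

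More importantly, the covering step has a genuine gap. After covering $\{x:\rho_o(x,y)\geq R\}$ by shadows $Sh_{D+1}(o,go)$ with $d(o,go)\gtrsim R$, you need to show $\sum_g \nu(Sh_{D+1}(o,go))\lesssim e^{-cR}$. Using $\nu(Sh)\asymp G(e,g)\lesssim e^{-\|g\|/C}$ and the Lipschitz bound $\|g\|\gtrsim d(o,go)/t$, the naive count $\#\{g:d(o,go)\approx m\}\lesssim e^{vm}$ gives $\sum_{m\geq R}e^{vm}e^{-m/(Ct)}$, which need not converge at all, let alone decay exponentially, because nothing ties the Green-metric decay rate $1/(Ct)$ to the critical exponent $v$. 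One must instead only count orbit points in a $D$-neighborhood of the fixed geodesic $[o,\xi)$, and that count is where the real work is; you do not do it. Your appeal to Proposition~\ref{exponentialdistortionparabolic} to handle cusp excursions also misfires: that proposition gives $\|p\|\gtrsim c_0^{d(o,po)}$ only for $p\in P$, whereas the orbit points you encounter along a cusp excursion are of the form $g_0 p$ with $g_0$ carrying you to the horoball entrance; after translating, the exponent is $d(o,po)\approx d(o,go)-d(o,g_0 o)$, which vanishes at the start of the excursion --- precisely the regime where a horoball shadow has largest measure --- so there is no ``doubly exponential'' gain there. The closing suggestion to ``quote directly from the literature'' is not a proof. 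The clean route is the actual Benoist--Quint argument: establish $(\star)$ by an exponential contraction estimate for the walk itself, not by summing shadows of the hitting measure.
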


\begin{lemma}\label{boundarytointerior}
The same is true if the supremum is taken over all $y\in 
\Gamma o \cup \partial X$. 
\end{lemma}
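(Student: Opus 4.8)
The plan is to deduce the estimate for $y\in\Gamma o$ from the one for $y\in\partial X$, which is exactly Lemma~\ref{BQlemma}. The point is that, although $X$ need not be geodesically complete, every orbit point $go$ lies within a bounded distance---independent of $g$---of a geodesic ray issued from $o$ and ending at a limit point; this lets me compare $\rho_o(x,go)$ with $\rho_o(x,\xi)$ for a suitable $\xi\in\Lambda$ and then quote Lemma~\ref{BQlemma}.

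First I would fix, using non-elementarity, two distinct points $\xi_1\neq\xi_2$ in the limit set $\Lambda$ and a bi-infinite geodesic $L=[\xi_1,\xi_2]$, and set $D_1:=d(o,L)<\infty$. For any $g\in\Gamma$ the translate $gL=[g\xi_1,g\xi_2]$ satisfies $d(go,gL)=d(o,L)=D_1$, so some $p\in gL$ has $d(go,p)\leq D_1$. By the thin-triangles property for the (ideal) triangle with vertices $o,g\xi_1,g\xi_2$, the side $gL$ lies in the $K\delta$-neighbourhood of $[o,g\xi_1)\cup[o,g\xi_2)$ for a universal $K$; hence, after possibly swapping $g\xi_1$ and $g\xi_2$, the point $go$ lies within $R_0:=D_1+K\delta$ of the ray $[o,\xi)$, where $\xi\in\{g\xi_1,g\xi_2\}\subset\Lambda$ and $R_0$ does not depend on $g$. (Equivalently: $\Gamma o$ stays within $D_1+O(\delta)$ of the weak convex hull of $\Lambda$, every point of which lies on a geodesic between two limit points.)

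Next I would compare Gromov products. By the standard $\delta$-hyperbolic estimate $d(z,[o,\xi))\asymp_{+}d(o,z)-\rho_o(z,\xi)$, Step~1 gives $\rho_o(go,\xi)\geq d(o,go)-R_0-K_1\delta$ for a universal $K_1$. On the other hand $\rho_o(x,go)\leq d(o,go)$ for every $x\in\partial X$ (because $\beta_x(o,go)\leq d(o,go)$), and the $\delta$-inequality for Gromov products gives $\rho_o(x,\xi)\geq\min\bigl(\rho_o(x,go),\,\rho_o(go,\xi)\bigr)-K_2\delta$. If the minimum is $\rho_o(x,go)$ then $\rho_o(x,go)\leq\rho_o(x,\xi)+K_2\delta$; otherwise $\rho_o(x,go)\leq d(o,go)\leq\rho_o(go,\xi)+R_0+K_1\delta\leq\rho_o(x,\xi)+R_0+(K_1+K_2)\delta$. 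Either way
\[ \rho_o(x,go)\;\leq\;\rho_o(x,\xi)+C_0 \qquad\text{for all }x\in\partial X, \]
with $C_0:=R_0+(K_1+K_2)\delta$ independent of $g$ and $x$. Hence, for the value of $t$ provided by Lemma~\ref{BQlemma},
\[ \int_{\partial X}e^{t\rho_o(x,go)}\,d\nu(x)\;\leq\;e^{tC_0}\int_{\partial X}e^{t\rho_o(x,\xi)}\,d\nu(x)\;\leq\;e^{tC_0}\sup_{\eta\in\partial X}\int_{\partial X}e^{t\rho_o(x,\eta)}\,d\nu(x)\;<\;\infty, \]
uniformly in $g$; combined with Lemma~\ref{BQlemma} for the points of $\partial X$ themselves, this gives finiteness of the supremum over $\Gamma o\cup\partial X$.

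I expect the only real content to be Step~1---producing, uniformly in $g$, a ray from $o$ to the boundary passing close to $go$ without assuming that geodesics in $X$ extend; everything else is routine $\delta$-hyperbolic bookkeeping (the distance-to-ray/Gromov-product dictionary and the four-point inequality).
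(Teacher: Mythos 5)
Your proof is correct and follows essentially the same strategy as the paper's: place each orbit point $go$ uniformly close to a (bi-)infinite geodesic with endpoints in $\partial X$, transfer the Gromov product $\rho_o(x,go)$ to such an endpoint, and then invoke Lemma~\ref{BQlemma}. The only cosmetic difference is that your comparison $\rho_o(x,go)\leq \rho_o(x,\xi)+C_0$ is additive (so the same exponent $t$ works), whereas the paper proves the multiplicative bound $\rho_{o}(x,y)\lesssim_{+}2\max(\rho_{o}(x,\alpha_{+}),\rho_{o}(x,\alpha_{-}))$ for $y$ on a bi-infinite geodesic $\alpha$ and accordingly works with exponent $2t$.
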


\begin{proof}
Note for $o,o'\in X$ and $x \in X\cup \partial X$ and $g\in \Gamma$ we have $|\rho_{o'}(x,go')-\rho_{o}(x,go)|\lesssim_{+} 2d(o,o')$.
It thus suffices to prove the lemma for any particular basepoint $o\in X$. Since $X$ is proper Gromov hyperbolic it has a bi-infinite geodesic $\alpha$. We assume without loss of generality that $o\in \alpha$. This means any $go$ lies on a bi-infinite geodesic $g\alpha$.

We claim that for any $y\in X$, $x\in \partial X$ and any bi-infinite geodesic $\alpha$ containing $y$ we have $$\rho_{o}(x,y)\lesssim_{+}2 \max(\rho_{o}(x,\alpha_{+}),\rho_{o}(x,\alpha_{-})).$$
To that end let $p_+$ and $p_-$ be points on $(x,\alpha_+)$ and $(x,\alpha_-)$ respectively at minimal distance from $o$. By Gromov hyperbolicity, each $p_{\pm}$ is within the hyperbolicity constant $\delta$ of either $[y,x]$ or $[y,\alpha_{\pm})\subset (\alpha_{-},\alpha_{+})$. If $p_{+}$ is within  $\delta$ of  $[y,x)$ then $d(o,[y,x))\leq d(o,p_{+})+\delta=d(o,(x,\alpha_{+})) +\delta$ and so $\rho_{o}(x,y)\lesssim_{+}\rho_{o}(x,\alpha_{+})$.
Similarly if $p_{-}$ is within $\delta$ of  $[y,x)$ then
 $\rho_{o}(x,y)\lesssim_{+}\rho_{o}(x,\alpha_{-})$.
We are left to consider the case where each $p_{\pm}$ is within $\delta$ of some $q_{\pm}\in [y,\alpha_{\pm})$.
Let $D= \max (d(p_{+},o),d(p_{-},o))$.  Then $d(p_{+},p_{-}) \leq 2D$ and so $d(q_{+},q_{-}) \leq 2D + 2\delta$. Hence, at least one of $q_{\pm}$, say $q_+$ is within $D+\delta$ of $y$. Thus $p_{+}$ is within $D+2\delta$ of $y$.
Consequently, $d(o,[y,x))\leq d(o,p_+)+d(p_{+},y)\leq 2D+2\delta$.
Hence $\rho_{o}(x,y)\lesssim D$, proving the claim.

Now, by the claim we have 
 $$\int_{x\in \partial X} e^{t\rho_{o}(x,go)}d\nu(x)\lesssim \max \left(\int_{x\in \partial X} e^{2t\rho_{o}(x,g\alpha_{+})}d\nu(x), \int_{x\in \partial X} e^{2t\rho_{o}(x,g\alpha_{-})}d\nu(x) \right)$$
so we conclude using Lemma  \ref{BQlemma}.
\end{proof}

To simplify notation, we will from now on denote $|g| := d(go, o)$. The following lemma is due to Sunderland.

\begin{lemma}\label{Sunderland}\cite[Criterion 11]{Sunderland}
There is an $N_0>0$ such that for all $N\geq N_0$ there is a $t(N)>0$ and an $\epsilon>0$ such that for $0<t<t(N)$ we have 
$$\sup_{g\in \Gamma} E(e^{-t(|g\omega_N|-|g|)})<1-\epsilon.$$
\end{lemma}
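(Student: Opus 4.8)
Although this is Sunderland's criterion, let me describe the argument I would give. The plan is to reduce the Laplace estimate to two facts — positivity of the drift and a uniform-in-$N$ bound on an expected Gromov product — and then to establish the latter using the quantitative shadow estimate of Lemma~\ref{boundarytointerior} together with a time-reversal trick. The starting point is the identity: since $g$ acts by isometries, $d(g^{-1}o,\omega_N o)=|g\omega_N|$, so by the definition of the Gromov product and left-invariance of $|\cdot|$,
$$|g\omega_N|-|g| \;=\; |\omega_N| \;-\; 2\,\rho_o(g^{-1}o,\omega_N o).$$
Thus it suffices to prove that $\ell_\mu>0$ (so that $E(|\omega_N|)\geq \ell_\mu N\to\infty$, using Fekete's lemma for the subadditive sequence $n\mapsto E(|\omega_n|)$ and the standard positivity of the drift for non-elementary actions with finite first moment) and that
$$\sup_{g\in\Gamma}\ \sup_{N\geq 1}\ E\big(\rho_o(g^{-1}o,\omega_N o)\big) \;\leq\; C_0 \;<\;\infty .$$

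Granting these, the assembly is routine. Set $Y:=|g\omega_N|-|g|$, so $|Y|\leq|\omega_N|$ and $E(Y)\geq \ell_\mu N-2C_0>0$ for $N\geq N_0$. Finite exponential moment of $\mu$ makes $E(|\omega_N|^2 e^{t|\omega_N|})$ finite for small $t$ and at most $2E(|\omega_N|^2)$ for $0<t\leq t_1(N)$. From $e^{-tY}\leq 1-tY+\tfrac12 t^2 Y^2 e^{t|Y|}$ we then obtain, uniformly in $g$,
$$E\big(e^{-t(|g\omega_N|-|g|)}\big)\;\leq\; 1 - t(\ell_\mu N - 2C_0) + t^2 E(|\omega_N|^2)\;<\;1$$
for all $0<t<t(N):=\min\{t_1(N),\,(\ell_\mu N-2C_0)/(2E(|\omega_N|^2))\}$, with a gap of size $\tfrac t2(\ell_\mu N-2C_0)$.

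The substantive point is the uniform bound on $E(\rho_o(g^{-1}o,\omega_N o))$; note that the trivial estimate $\rho_o(g^{-1}o,\omega_N o)\leq|\omega_N|$ is worthless here (it would force $E(Y)$ negative), and that Proposition~\ref{closetogeodesics} is unavailable since it is proved below using this lemma. I would argue geometrically and then reverse time. Fix a ray $[o,\omega_\infty)$ and let $p$ be a nearest point to $\omega_N o$ on it; then $d(\omega_N o,p)\asymp_+\rho_{\omega_N o}(o,\omega_\infty)$, while $\rho_o(g^{-1}o,p)\leq\rho_o(g^{-1}o,\omega_\infty)+\delta$ because $p$ lies on the ray, so by the elementary inequality $\rho_o(a,b)\leq\rho_o(a,c)+d(b,c)$,
$$\rho_o(g^{-1}o,\omega_N o)\;\lesssim_+\;\rho_o(g^{-1}o,\omega_\infty)\;+\;\rho_{\omega_N o}(o,\omega_\infty).$$
Since $\omega_\infty\sim\nu$ and $g^{-1}o\in\Gamma o$, Jensen's inequality applied to Lemma~\ref{boundarytointerior} bounds $E(\rho_o(g^{-1}o,\omega_\infty))$ by a constant independent of $g$. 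For the second term I would use $\rho_{\omega_N o}(o,\omega_\infty)=\rho_o(\omega_N^{-1}o,\omega_N^{-1}\omega_\infty)$, observe that $\omega_N^{-1}o\in\Gamma o$ while $\omega_N^{-1}\omega_\infty=\lim_k g_{N+1}g_{N+2}\cdots g_k\,o$ depends only on the increments $(g_{N+1},g_{N+2},\dots)$ — hence is independent of $\omega_N$ and has law $\nu$ — and then condition on $\omega_N$ and invoke Lemma~\ref{boundarytointerior} once more to bound $E(\rho_{\omega_N o}(o,\omega_\infty))$ by a constant independent of $N$ and $g$. This produces $C_0$.

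The hard part is exactly this reversal step: the shadow estimate of Lemma~\ref{boundarytointerior} only controls Gromov products against $\Gamma o$ or against $\nu$-distributed boundary points, not against the non-stationary position $\omega_N o$, and the whole trick is that $\rho_{\omega_N o}(o,\omega_\infty)$ can be rewritten as a product of an orbit point $\omega_N^{-1}o$ with an \emph{independent}, $\nu$-distributed direction $\omega_N^{-1}\omega_\infty$; everything else (positivity of drift, subadditivity of $E(|\omega_n|)$, finiteness of the exponential weights) is a formality under the hypotheses. Strictly speaking, what the argument yields is $E(e^{-t(|g\omega_N|-|g|)})\leq 1-\tfrac t2(\ell_\mu N-2C_0)$ for $0<t<t(N)$, so the $\epsilon$ in the statement is to be read as depending on $t$, equivalently as a fixed constant valid once $t$ is bounded away from $0$ inside $(0,t(N))$.
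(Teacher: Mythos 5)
Your argument is sound, and it is worth noting that the paper itself does not prove this statement: it is quoted from Sunderland's ``Criterion 11'' and used as a black box. What you give is a legitimate self-contained derivation from ingredients already present in the appendix. The reduction via the exact identity $|g\omega_N|-|g|=|\omega_N|-2\rho_o(g^{-1}o,\omega_N o)$, the lower bound $E(|\omega_N|)\geq \ell_\mu N$ from subadditivity together with positivity of the drift (which is independent of this lemma, so no circularity), and the second-order Taylor bound on $e^{-tY}$ all check out. The substantive step — the uniform bound on $E(\rho_o(g^{-1}o,\omega_N o))$ — is also correct: your splitting $\rho_o(g^{-1}o,\omega_N o)\lesssim_+\rho_o(g^{-1}o,\omega_\infty)+\rho_{\omega_N o}(o,\omega_\infty)$ follows from thin triangles (one can also get it directly from the four-point inequality $\rho_o(x,z)\geq\min(\rho_o(x,y),\rho_o(y,z))-O(\delta)$ combined with $\rho_o(\omega_N o,\omega_\infty)+\rho_{\omega_N o}(o,\omega_\infty)\asymp_+ d(o,\omega_N o)$, avoiding the projection point $p$), and the conditioning/time-reversal step that converts $\rho_{\omega_N o}(o,\omega_\infty)$ into a Gromov product of an orbit point against an independent $\nu$-distributed direction is exactly the manipulation the paper itself performs in the proof of the second half of Proposition~\ref{closetogeodesics}, which indeed relies only on Lemma~\ref{boundarytointerior} and not on the present lemma. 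You were also right to reject the trivial bound $\rho_o(g^{-1}o,\omega_N o)\leq|\omega_N|$ and to flag that Proposition~\ref{closetogeodesics} cannot be invoked here.

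Your closing caveat about $\epsilon$ is apt: since $E(e^{-tY})\to 1$ as $t\to 0^+$ for any fixed $g$, no $\epsilon>0$ can work uniformly over all $t\in(0,t(N))$, so the lemma must be read as producing a bound of the form $1-\tfrac{t}{2}(\ell_\mu N-2C_0)$, with $\epsilon$ depending on the chosen $t$. This is consistent with how the lemma is actually used in Lemma~\ref{expdecayexpectation} and Lemma~\ref{linexp}, where a single sufficiently small $t$ is ultimately fixed; the imprecision is in the quoted statement, not in your proof.
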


This implies:
\begin{lemma}\label{expdecayexpectation}
There is a $t_0>0$ and $C>0$ such that 
$$E(e^{-t(|\omega_{N+k}|-|\omega_{k}|)})<Ce^{-N/C}$$
for all $N,k\geq 0$ and $0<t<t_0$.
\end{lemma}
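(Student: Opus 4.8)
The plan is to deduce the exponential decay of $E(e^{-t(|\omega_{N+k}|-|\omega_k|)})$ from the uniform single-block contraction estimate of Lemma \ref{Sunderland}, by conditioning on the past and iterating over blocks of length $N_0$. First I would observe that the increments $\omega_{j}^{-1}\omega_{j+1}$ are i.i.d.\ with law $\mu$, so the process $(\omega_{k+j})_{j \ge 0}$, conditioned on $\omega_k = g$, is just a random walk started at $g$; by shift-invariance the quantity $E(e^{-t(|\omega_{N+k}|-|\omega_k|)})$ does not depend on $k$, and it suffices to bound $\sup_{g} E_g(e^{-t(|g\omega_N| - |g|)})$ where $E_g$ denotes expectation for the walk started at $g$ (equivalently, expectation over $\mu^{*N}$ of the increment).

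Next I would fix $N_0$ and $t(N_0), \epsilon$ as in Lemma \ref{Sunderland}, and write an arbitrary $N$ as $N = m N_0 + s$ with $0 \le s < N_0$. Conditioning successively on $\omega_k, \omega_{k+N_0}, \omega_{k+2N_0}, \dots$ and using the Markov property of the random walk, one telescopes
$$
|\omega_{k+N}| - |\omega_k| = \sum_{i=0}^{m-1} \bigl( |\omega_{k+(i+1)N_0}| - |\omega_{k+iN_0}| \bigr) + \bigl( |\omega_{k+N}| - |\omega_{k+mN_0}| \bigr).
$$
For the last (short) term of length $s < N_0$ one has the crude bound $|\omega_{k+N}| - |\omega_{k+mN_0}| \ge -\sum_{j} |\omega_{k+mN_0+j}^{-1}\omega_{k+mN_0+j+1}|$, so that $e^{-t(|\omega_{k+N}|-|\omega_{k+mN_0}|)} \le \prod_j e^{t|\text{increment}_j|}$, whose expectation is a finite constant $M_0 = M_0(N_0,t)$ once $t$ is below the radius of the exponential moment of $\mu$ (which we may assume, shrinking $t_0$). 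For each full block, conditioning on the walk's position at the start of the block and applying Lemma \ref{Sunderland} with the supremum over the starting point $g$ gives, for $0 < t < \min(t(N_0), t_0)$,
$$
E\bigl(e^{-t(|\omega_{k+N}|-|\omega_k|)}\bigr) \le M_0 \, (1-\epsilon)^m.
$$
Since $m \ge N/N_0 - 1$, the right-hand side is at most $M_0 (1-\epsilon)^{-1} \bigl((1-\epsilon)^{1/N_0}\bigr)^{N} = C e^{-N/C}$ for a suitable $C$, which is the claim.

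The main technical point is making the conditioning rigorous: one must check that Lemma \ref{Sunderland}'s bound $\sup_{g} E(e^{-t(|g\omega_{N_0}| - |g|)}) < 1 - \epsilon$ can be applied inside a conditional expectation, i.e.\ that $E\bigl(e^{-t(|\omega_{k+(i+1)N_0}| - |\omega_{k+iN_0}|)} \mid \mathcal{F}_{k+iN_0}\bigr) \le 1 - \epsilon$ almost surely, where $\mathcal{F}_j = \sigma(g_1,\dots,g_j)$. This is immediate from the Markov property once one notes that $|\omega_{k+(i+1)N_0}| - |\omega_{k+iN_0}| = |(\omega_{k+iN_0}) \, \omega'_{N_0}| - |\omega_{k+iN_0}|$ where $\omega'_{N_0} = \omega_{k+iN_0}^{-1}\omega_{k+(i+1)N_0}$ is independent of $\mathcal{F}_{k+iN_0}$ with the law of a length-$N_0$ walk, so the conditional expectation is exactly $E(e^{-t(|g \omega_{N_0}|-|g|)})\big|_{g = \omega_{k+iN_0}} \le \sup_g E(e^{-t(|g\omega_{N_0}|-|g|)}) < 1-\epsilon$. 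Then one peels off the blocks one at a time (tower property), absorbing the short remainder term at the end via Cauchy--Schwarz or simply a direct product bound as above. I do not expect any genuine obstacle here beyond bookkeeping; the content is entirely in Lemma \ref{Sunderland}, which is quoted.
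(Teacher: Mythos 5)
Your proposal is correct and follows essentially the same route as the paper: iterate the uniform one-block contraction of Lemma \ref{Sunderland} over blocks of length $N_0$ via the Markov property (the paper phrases this as an induction, peeling off one block of length $N_0$ at a time), and absorb the leftover segment of length $<N_0$ using the crude bound $|\omega_{N+k}|-|\omega_k|\geq -|\omega_k^{-1}\omega_{N+k}|$ together with the finite exponential moment. The bookkeeping differences (remainder placed at the end versus as the base case of the induction) are immaterial.
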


\begin{proof}
Let $N_0$ be given by Lemma \ref{Sunderland}. It suffices to prove that for some $t$ 
\begin{enumerate}
    \item 
$\sup_{k\geq 0}E(e^{-t(|\omega_{N+k}|-|\omega_{k}|)})<\infty$ for $N\leq N_0$
    \item 
$E(e^{-t(|\omega_{N+N_0+k}|-|\omega_{k}|)})\leq (1-\epsilon)E(e^{-t(|\omega_{N+k}|-|\omega_{k}|)})$ for $N,k\geq 0$
\end{enumerate}
since then the claim follows by induction.

For the first claim, note that $|\omega_{N+k}|-|\omega_{k}|\geq -|\omega^{-1}_{k}\omega_{N+k}|$, which has the same distribution as $-|\omega_N|$.
Thus, 
$$E(e^{-t(|\omega_{N+k}|-|\omega_{k}|)}) \leq  E(e^{t|\omega_N|})\leq E(e^{t|\omega_1|})^{N}\leq E(e^{t|\omega_1|})^{N_0}$$ 
which for small enough $t$ is finite by the exponential moment assumption.

We now prove the second claim.
Indeed, $$E(e^{-t(|\omega_{N+N_0+k}|-|\omega_{k}|)})=E(e^{-t(|\omega_{N+k}|-|\omega_{k}|)} e^{-t(|\omega_{N+N_0+k}|-|\omega_{N+k}|)}).$$
Conditioning on $\omega_{N+k}=h,\omega_k=g$ the last expression becomes 
$$\sum_{g,h\in \Gamma} e^{-t(|h|-|g|)}E(e^{-t(|h\omega_{N_0}|-|h|)}) P(\omega_{N+k}=h,\omega_k=g).$$ By Lemma \ref{Sunderland}, we have $E(e^{-t(|h\omega_{N_0}|-|h|)})<(1-\epsilon)$ for any $t < t(N_0)$ and thus
$$E(e^{-t(|\omega_{N+N_0+k}|-|\omega_{k}|)}) <  (1-\epsilon)E(e^{-t(|\omega_{N+k}|-|\omega_{k}|)}).$$
\end{proof}

\begin{lemma} \label{linexp}
There is a $C>0$ such that for all $0 \leq k \leq n$ we have 
$$P(d(\omega_n o,o)-d(\omega_k o,o)<(n-k)/C)<Ce^{-(n-k)/C}.$$
\end{lemma}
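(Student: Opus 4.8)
The plan is to derive this from the exponential decay of the Laplace transform established in Lemma~\ref{expdecayexpectation}, via a routine exponential Chebyshev estimate. Recalling the notation $|g| := d(go,o)$, the quantity $d(\omega_n o, o) - d(\omega_k o, o)$ is exactly $|\omega_n| - |\omega_k|$, so it suffices to bound $P(|\omega_n| - |\omega_k| < (n-k)/C)$ for a suitable $C$.

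First I would fix some $t$ with $0 < t < t_0$, where $t_0$ is the constant from Lemma~\ref{expdecayexpectation}, and let $C_0$ be the corresponding constant, so that, applying that lemma with $N = n-k$, one has $E(e^{-t(|\omega_n| - |\omega_k|)}) \le C_0 e^{-(n-k)/C_0}$. Then for any threshold $\lambda > 0$, Markov's inequality applied to the nonnegative random variable $e^{-t(|\omega_n| - |\omega_k|)}$ gives
\[
P(|\omega_n| - |\omega_k| < \lambda) = P\bigl(e^{-t(|\omega_n| - |\omega_k|)} > e^{-t\lambda}\bigr) \le e^{t\lambda}\, E\bigl(e^{-t(|\omega_n| - |\omega_k|)}\bigr) \le C_0\, e^{\,t\lambda - (n-k)/C_0}.
\]

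Next I would choose $\lambda = (n-k)/(2 t C_0)$, so that the exponent becomes $t\lambda - (n-k)/C_0 = -(n-k)/(2C_0)$, yielding $P\bigl(|\omega_n| - |\omega_k| < (n-k)/(2tC_0)\bigr) \le C_0 e^{-(n-k)/(2C_0)}$. Finally, setting $C := \max\{2tC_0,\, 2C_0\}$ weakens both the linear-rate threshold and the exponential rate, so that $P\bigl(|\omega_n| - |\omega_k| < (n-k)/C\bigr) \le P\bigl(|\omega_n| - |\omega_k| < (n-k)/(2tC_0)\bigr) \le C e^{-(n-k)/C}$, which is the claim (the case $n=k$ being trivial, as the left-hand side is then $P(0 < 0) = 0$).

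I do not expect a genuine obstacle: the argument is entirely standard once Lemma~\ref{expdecayexpectation} is in hand. The only point requiring a little care is the bookkeeping forcing a \emph{single} constant $C$ to appear simultaneously in the threshold $(n-k)/C$ and in the exponential rate $e^{-(n-k)/C}$, which is handled by proving the estimate first with two a priori different constants and then passing to their maximum.
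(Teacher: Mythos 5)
Your proof is correct and follows essentially the same route as the paper: both apply the exponential Markov (Chebyshev) inequality to $e^{-t(|\omega_n|-|\omega_k|)}$ and invoke Lemma~\ref{expdecayexpectation}, the only difference being that you choose the threshold slope explicitly as $(n-k)/(2tC_0)$ where the paper simply takes $D$ ``small enough.'' The constant bookkeeping at the end is also fine.
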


\begin{proof}
The exponential Markov inequality implies for any $D>0$ and any $N, k \geq 0$: 
$$P(|\omega_{N+k}|-|\omega_{k}|<DN)=P(e^{-t(|\omega_{N+k}|-|\omega_{k}|)}>e^{-tDN})$$ 
hence by Lemma \ref{expdecayexpectation} $$<e^{tDN}E(e^{-t(|\omega_{N+k}|-|\omega_{k}|)})
<Ce^{tDN-N/C}<2Ce^{-N/(2C)}$$ for small enough $D$, 
so the claim follows by setting $N = n-k$.
\end{proof}

\begin{lemma}
There is a $C>0$ such that for all $0 \leq k \leq n$ and $a>0$
we have $P(d(\omega_{k}o,o)-d(\omega_{n}o,o)>a)<Ce^{-a/C}$.
\end{lemma}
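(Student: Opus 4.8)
The plan is to interpolate between two complementary estimates according to the size of the increment $m := n-k$ relative to $a$; throughout I write $|g| := d(go,o)$, so the event in question is $\{|\omega_k|-|\omega_n|>a\}$. First I would record a crude bound valid for every $m$: by the triangle inequality $|\omega_k|-|\omega_n| \le d(\omega_k o,\omega_n o) = |\omega_k^{-1}\omega_n|$, and $\omega_k^{-1}\omega_n = g_{k+1}\cdots g_n$ has the law of $\omega_m$, so $P(|\omega_k|-|\omega_n|>a) \le P(|\omega_m|>a)$. Since $\mu$ has finite exponential moment there is $t>0$ with $M := E(e^{t|g_1|})<\infty$ (note $M\ge 1$), and then $|\omega_m|\le\sum_{i=1}^m|g_i|$ together with independence of the increments and the exponential Chebyshev inequality gives
$$P(|\omega_k|-|\omega_n|>a) \le e^{-ta}\,E(e^{t|\omega_m|}) \le e^{-ta}M^m.$$

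Next I would invoke the linear-progress estimate: Lemma \ref{linexp} supplies $C_0>0$ with $P\big(|\omega_n|-|\omega_k|<m/C_0\big)<C_0e^{-m/C_0}$. On the complement of this event one has $|\omega_k|-|\omega_n|\le -m/C_0\le 0<a$, so $\{|\omega_k|-|\omega_n|>a\}$ is contained in the bad event, whence $P(|\omega_k|-|\omega_n|>a)\le C_0e^{-m/C_0}$ for all $m\ge 0$ and all $a>0$. To conclude I would fix $\lambda>0$ with $\lambda\log M\le t/2$ and split into cases: if $m\ge\lambda a$ the second bound gives $P(|\omega_k|-|\omega_n|>a)\le C_0e^{-\lambda a/C_0}$, while if $m<\lambda a$ the first gives $P(|\omega_k|-|\omega_n|>a)\le e^{-ta}M^m\le e^{-ta+\lambda a\log M}\le e^{-ta/2}$. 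In either case the probability is at most $Ce^{-a/C}$ for a suitable $C=C(C_0,t,\lambda)$, which is the claim.

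There is no serious obstacle here; the argument is essentially bookkeeping of constants. The one point worth flagging is the mild sleight of hand in the second step: Lemma \ref{linexp} is phrased as a lower bound on the displacement between times $k$ and $n$, but because $a>0$ it already kills the backtracking event $\{|\omega_k|-|\omega_n|>a\}$ whenever $m$ is not too small, so the exponential-moment bound is needed only to absorb the regime $m<\lambda a$ of short increments.
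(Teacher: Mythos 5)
Your argument is correct and coincides with the paper's own proof: both split on whether $n-k$ is small or large compared to $a$, using the exponential Chebyshev bound $e^{-ta}M^{n-k}$ in the first regime and Lemma \ref{linexp} (whose bad event contains $\{|\omega_k|-|\omega_n|>a\}$ since $a>0$) in the second. The only differences are notational choices of the threshold constant.
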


\begin{proof}

Let $t>0$ be smaller than the $t_0$ in Proposition \ref{linexp} and also small enough so that $E(e^{t|\omega_1|})<\infty$.
Let $D=E(e^{t|\omega_1|})$.

Suppose $n-k \leq ta/(2 \log D)$. 
Then we have $E(e^{t(|\omega_k|-|\omega_n|)})\leq E(e^{t|\omega^{-1}_{k}\omega_{n}|})=E(e^{t|\omega_{n-k}|})\leq E(e^{t|\omega_1|})^{n-k}\leq D^{n-k}$.
Consequently by the Markov inequality 
$P(|\omega_k|-|\omega_n|>a) = P(e^{t(|\omega_k| - |\omega_n|)} > e^{ta}) \leq e^{-ta} E(e^{t(|\omega_k|-|\omega_n|)}) \leq D^{n-k}/e^{ta} \leq e^{-ta/2}$.

On the other hand if  $n-k \geq Ka= ta/(2 \log D)$ then Proposition \ref{linexp} implies that 
$P(d(\omega_{n}o,o)- d(\omega_{k}o,o) < - a)\leq  P(d(\omega_{n}o,o) - d(\omega_{k}o,o) < (n-k)/C) \leq Ce^{-(n-k)/C} \leq Ce^{-Ka/C}$.
\end{proof}

\begin{proof}[Proof of Proposition \ref{closetogeodesics}]

We first prove the second statement of Proposition \ref{closetogeodesics}.
By Lemma \ref{boundarytointerior}, we obtain a $C > 0$ such that 
$$\nu(\zeta\in \partial X: \rho_{o}(go,\zeta)>a) \leq Ce^{-a/C}$$
for any $g \in \Gamma$ and any $a > 0$.
Therefore, for each $n$ we get:
$$P(\rho_{\omega_{n}o}(o,\omega_{\infty})>a))=\sum_{g\in \Gamma}P(\rho_{\omega_{n}o}(o,\omega_{\infty})>a)|\omega_n=g)P(\omega_n=g)=$$
%= \sum_{g\in \Gamma}P(\rho_{o}(\omega_n^{-1}o,\omega^{-1}_{n}\omega_{\infty})>a)|\omega_n=g)P(\omega_n=g)
$$=\sum_{g\in \Gamma}P(\rho_{o}(g^{-1}o,\omega_{\infty})>a)P(\omega_n=g)\leq Ce^{-a/C}.$$
We have thus proved that
$P(d(\omega_n o,[o,\omega_\infty))>a)<Ce^{-a/C}.$

We now prove the first statement of Proposition \ref{closetogeodesics}.
Consider $0 \leq k \leq n$. The events 
$d(\omega_n o,[o,\omega_\infty))<a$, $d(\omega_k o,[o,\omega_\infty))<a$ and $d(\omega_{n}o,o)-d(\omega_{k}o,o)>-a$ all have probability at least $1-Ce^{-a/C}$ where $C$ does not depend on $k,n$. 
Suppose these events hold.
Let $p_{k},p_{n}\in [o,\omega_{\infty})$ be at minimal distance from $\omega_{k}o$ and $\omega_{n}o$ respectively.
Then by the triangle inequality  $d(o,p_{n})-d(o,p_{k})>-3a$.

If $d(o,p_{n})\geq d(o,p_{k})$ then by the fellow traveling property $d(p_{k},[o,\omega_{n}o])<Kd(\omega_{n}o,p_{n})<Ka$ for a constant $K$ which only depends on the hyperbolicity constant of $X$.

On the other hand, if $d(o,p_{n})\leq d(o,p_{k})\leq d(o,p_{n})+3a$ then $d(p_{k},[o,\omega_{n}o])\leq 3a+d(\omega_{n}o,p_{n})\leq 4a$.

In either case, we have $d(\omega_{k}o,[o,\omega_{n}o]) \leq (4+K)a$.
Thus we obtain $$P(d(\omega_{k}o,[o,\omega_{n}o])>(4+K)a)<3Ce^{-a/c}$$ completing the proof of Proposition \ref{closetogeodesics}.

\end{proof}

\subsection{The Green metric for non-symmetric measures}
We will prove the following.

\begin{proposition}\label{Greendecay}
Let $\mu$ be a generating probability measure with finite exponential moment on a nonamenable group $\Gamma$. Let $G$ be the associated Green's function. Then there is a constant $C>0$ such that 
$$e^{-C \Vert x^{-1}y \Vert}/C\leq G(x,y)\leq Ce^{-\Vert x^{-1}y \Vert/C}$$
for any $x, y \in \Gamma$. In other words, the Green metric $d_G(x,y)=-\log \frac{G(x,y)}{G(e,e)}$ is quasi-isometric to the word metric.
\end{proposition}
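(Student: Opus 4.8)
The plan is to prove the two inequalities separately. The bound $d_G(e,g)\lesssim\|g\|$ will use only that $\supp\mu$ generates $\Gamma$ as a semigroup, while the reverse bound $d_G(e,g)\gtrsim\|g\|$ is where non-amenability and the exponential moment hypothesis enter; the latter is the main point.

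For the first bound, fix a finite generating set $S$ of $\Gamma$. Since $\supp\mu$ generates $\Gamma$ as a semigroup, for each $s\in S$ there is $n_s\geq1$ with $\mu^{*n_s}(s)>0$; put $p:=\min_{s\in S}\mu^{*n_s}(s)>0$. From the elementary inequality $\mu^{*(a+b)}(xy)\geq\mu^{*a}(x)\,\mu^{*b}(y)$ (keep only the term $h=x$ in the convolution sum), induction on $\|g\|$ gives, for $g=s_1\cdots s_k$ with $k=\|g\|$, that $G(e,g)\geq\mu^{*(n_{s_1}+\cdots+n_{s_k})}(g)\geq\prod_i\mu^{*n_{s_i}}(s_i)\geq p^{\|g\|}$. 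Since $1\leq G(e,e)<\infty$, this yields $d_G(e,g)=-\log(G(e,g)/G(e,e))\leq C_0\|g\|$. (Alternatively one may use the subadditivity $F(x,z)\geq F(x,y)F(y,z)$ of the hitting probabilities — that is, the triangle inequality for $d_G$ — together with $d_G(e,s)<\infty$ for $s\in S$.)

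For the reverse bound we first record a uniform spectral estimate: there is $\rho<1$ with $\mu^{*n}(g)\leq\rho^n$ for all $g\in\Gamma$ and all $n\geq0$. Indeed, viewing $\mu$ as the convolution operator $\lambda(\mu)$ on $\ell^2(\Gamma)$ we have $\mu^{*n}(g)=\langle\lambda(\mu)^n\delta_e,\delta_g\rangle\leq\|\lambda(\mu)\|^n$, and $\|\lambda(\mu)\|^2=\|\lambda(\widecheck\mu*\mu)\|$, where $\widecheck\mu*\mu$ is a symmetric probability measure with finite exponential moment whose support generates a non-amenable subgroup of $\Gamma$; hence $\|\lambda(\widecheck\mu*\mu)\|<1$ by Kesten's characterization of amenability. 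Now fix $t>0$ with $E_0:=E(e^{t\|\omega_1\|})<\infty$ and split $G(e,g)=\sum_{n\geq0}\mu^{*n}(g)$ at $n\approx\varepsilon\|g\|$. For $n\leq\varepsilon\|g\|$ one has $\mu^{*n}(g)\leq P(\|\omega_n\|\geq\|g\|)\leq e^{-t\|g\|}E(e^{t\|\omega_n\|})\leq e^{-t\|g\|}E_0^{\varepsilon\|g\|}$ by the exponential Chebyshev inequality, which for $\varepsilon$ small enough is at most $e^{-t\|g\|/2}$, so this part of the sum is at most $\varepsilon\|g\|\,e^{-t\|g\|/2}$. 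For $n>\varepsilon\|g\|$ we bound $\mu^{*n}(g)\leq\rho^n$ and sum the geometric tail, obtaining $\rho^{\varepsilon\|g\|}/(1-\rho)$. Altogether $G(e,g)\leq Ce^{-\|g\|/C}$, i.e. $d_G(e,g)\geq\|g\|/C-\log C$.

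Combining the two bounds we get $\|g\|/C-\log C\leq d_G(e,g)\leq C_0\|g\|$ for all $g$; since $d_G$ and $\|\cdot\|$ are both left-invariant, $G(x,y)=G(e,x^{-1}y)$ and these are precisely the inequalities expressing that the identity map is a quasi-isometry between $(\Gamma,\|\cdot\|)$ and $(\Gamma,d_G)$. The delicate step — and the only place non-amenability is used — is the uniform decay $\mu^{*n}(g)\leq\rho^n$: for a non-symmetric, possibly infinitely supported $\mu$ one must pass to the symmetrization $\widecheck\mu*\mu$ and verify that its support still generates a non-amenable (in fact finite-index) subgroup, which is exactly why generation as a semigroup, rather than merely as a group, is assumed. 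Alternatively, this exponential decay of the Green function on non-amenable groups with finite exponential moment may simply be quoted from the literature (e.g. \cite{Haissinsky}).
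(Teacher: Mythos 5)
Your lower bound and the final splitting of $G(e,g)=\sum_n\mu^{*n}(g)$ at $n\approx\varepsilon\Vert g\Vert$ are fine and match the paper. The gap is in your ``uniform spectral estimate'' $\mu^{*n}(g)\leq\Vert\lambda(\mu)\Vert^n$ with $\Vert\lambda(\mu)\Vert<1$: the claim that semigroup generation forces $\langle(\supp\mu)^{-1}\supp\mu\rangle$ to be non-amenable (let alone of finite index) is false, and with it the assertion $\Vert\lambda(\mu)\Vert<1$. Concretely, take $\Gamma=PSL_2(\Z)=\langle r\rangle*\langle c\rangle\cong\Z/2*\Z/3$ and let $\mu$ be uniform on $S=\{r,rc,rc^2\}=rH$ with $H=\langle c\rangle$. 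Then $S$ generates $\Gamma$ as a semigroup (e.g.\ $S^2\supseteq H$, $S^3\ni r$, $S^4\ni e$), $\mu$ is finitely supported, and $\Gamma$ is non-amenable; yet $S^{-1}S=H^{-1}r^{-1}rH=H$ generates the finite group $\Z/3$, so $\Vert\lambda(\widecheck\mu*\mu)\Vert=1$ and hence $\Vert\lambda(\mu)\Vert=1$ (directly: $\mu*\tfrac{1}{\sqrt3}\mathbf{1}_H=\tfrac{1}{\sqrt3}\mathbf{1}_{rH}$ is an isometric image). So your key inequality $\mu^{*n}(g)\leq\rho^n$ does not follow from the operator norm. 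This is precisely the point where, as the paper remarks, ``for non-symmetric measures an extra argument is required''; the references you propose to quote (\cite{Haissinsky}, \cite{BlachereHassinskyMathieu2}) prove the uniform decay only for symmetric measures, so the fallback of citing the literature does not close the gap either.

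The statement you need --- $\sup_g\mu^{*n}(g)\leq Ce^{-n/C}$ --- is true, but requires a different argument. The paper compares $\widetilde\mu=\frac1K\sum_{i=1}^K\mu^{*i}$ with a finitely supported symmetric generating measure $\mu_0$, invokes a lemma of Coulhon (if $\mu_0\leq C\widetilde\mu$ and $\mu_0^{*n}$ decays exponentially then so does $\widetilde\mu^{*n}$), and then transfers the decay from $\widetilde\mu^{*n}$ back to $\mu^{*n(K+1)/2}$ by a local-limit estimate for the multinomial coefficients. Alternatively, your spectral approach can be repaired by passing to a convolution power: the subgroups $H_k:=\langle(S^k)^{-1}S^k\rangle$ are increasing (since $x^{-1}y=(sx)^{-1}(sy)$) with union a normal subgroup $N$ such that $\Gamma/N$ is finite cyclic (semigroup generation gives $e\in S^{n_0}$), so $N$ is non-amenable, and since an increasing union of amenable groups is amenable, some $H_k$ is non-amenable; then $\Vert\lambda(\mu)^k\Vert=\Vert\lambda(\mu^{*k})\Vert<1$ and $\sup_g\mu^{*n}(g)\leq\Vert\lambda(\mu)^k\Vert^{\lfloor n/k\rfloor}$. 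Either way, this intermediate step must be argued; as written, your proof asserts something false about $\Vert\lambda(\mu)\Vert$ at exactly the one place where non-symmetry matters.
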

The lower bound is immediate from the Harnack inequality. We thus just need to consider the upper bound.
This was proved for symmetric measures on non-elementary hyperbolic groups in \cite[Proposition 3.6]{BlachereHassinskyMathieu2} and \cite[Lemma 4.2]{Haissinsky}. The proof there carries over without modification for symmetric measures on nonamenable groups. However, for non-symmetric measures an extra argument is required.

\begin{lemma}\cite[Proposition IV.4]{Coulhon} \label{Coulhon}
Let $\mu$ and $\mu_0$ be two generating probability measures on a group $\Gamma$. Assume that  $\mu_0$ is symmetric and that there exists $C>0$ such that  $\mu_0 (g)\leq C \mu(g)$ and  $\mu^{n}_{0}(g)\leq Ce^{-n/C}$ for all $g\in \Gamma$. Then there is  a $C'>0$ such that
 $\mu^{n}(g)\leq C'e^{-n/C'}$ for all $g\in \Gamma$. 
\end{lemma}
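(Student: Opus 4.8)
The plan is to pass to the left regular representation on $\ell^2(\Gamma)$ and estimate operator norms, replacing the non-symmetric $\mu$ by the symmetric auxiliary measure $\mu * \widecheck{\mu}$, for which the two hypotheses combine cleanly. For a finite positive measure $\nu$ on $\Gamma$ let $P_\nu$ be the convolution operator $(P_\nu f)(x) := \sum_{g \in \Gamma} \nu(g)\, f(xg)$ on $\ell^2(\Gamma)$. I will use the standard facts $P_{\nu_1 * \nu_2} = P_{\nu_1} P_{\nu_2}$, $P_\nu^{*} = P_{\widecheck{\nu}}$, and $\|P_\nu\|_{\ell^2 \to \ell^2} \le \|\nu\|_1$ (the kernel $(x,y)\mapsto \nu(x^{-1}y)$ of $P_\nu$ has all row sums and all column sums equal to $\|\nu\|_1$, so this is the Schur test; equivalently $P_\nu$ is $\|\nu\|_1$ times a stochastic operator, which contracts $\ell^1$ and $\ell^\infty$, hence $\ell^2$). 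The first observation is that, since $\mu_0$ is symmetric, $P_{\mu_0}$ is self-adjoint and $\|P_{\mu_0}\|_{\ell^2\to\ell^2} = \limsup_n (\mu_0^{*n}(e))^{1/n}$ (the classical identification of the $\ell^2$-spectral radius, see e.g.\ \cite{Woess}); hence the hypothesis $\mu_0^{*n}(e) \le C e^{-n/C}$ forces $\|P_{\mu_0}\| \le e^{-1/C} < 1$.

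The key step is to bound $\|P_\mu\|$. By the $C^{*}$-identity, $\|P_\mu\|^2 = \|P_\mu P_\mu^{*}\| = \|P_{\mu * \widecheck{\mu}}\|$. The pointwise inequality $\mu \ge C^{-1}\mu_0$ is inherited under convolution, so $\mu * \widecheck{\mu} \ge C^{-2}\,\mu_0 * \widecheck{\mu_0} = C^{-2}\mu_0^{*2}$, and we may write $\mu * \widecheck{\mu} = C^{-2}\mu_0^{*2} + \eta$ with $\eta \ge 0$ of total mass $\|\eta\|_1 = 1 - C^{-2}$. Combining the triangle inequality for operator norms with $\|P_{\mu_0}^2\| = \|P_{\mu_0}\|^2$ and $\|P_\eta\| \le \|\eta\|_1$ gives
$$\|P_\mu\|^2 = \|P_{\mu * \widecheck{\mu}}\| \;\le\; C^{-2}\|P_{\mu_0}\|^2 + \|P_\eta\| \;\le\; C^{-2}e^{-2/C} + \big(1 - C^{-2}\big) \;=:\; r^2 \;<\; 1 .$$

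Finally, since $(P_\mu^{n}\delta_e)(x) = \mu^{*n}(x^{-1})$, for every $g \in \Gamma$ and every $n \ge 1$ we obtain
$$\mu^{*n}(g) = \langle P_\mu^{n}\delta_e,\ \delta_{g^{-1}} \rangle \;\le\; \|P_\mu^{n}\delta_e\|_2 \;\le\; \|P_\mu\|^{n} = r^{n},$$
which is exactly the claimed estimate, with $C' := \max\{1,\, 1/\log(1/r)\}$. The routine ingredients here are the operator identities and the inequality $\|P_\eta\| \le \|\eta\|_1$; the step that takes an idea --- and the one I regard as the main (indeed the only) obstacle --- is deciding to symmetrize: one must notice that $\|P_\mu\|^2$ equals the operator norm of convolution by the single measure $\mu * \widecheck{\mu}$, that the domination $\mu \ge C^{-1}\mu_0$ survives convolution and produces the spectrally small measure $\mu_0^{*2}$, and that the leftover mass $1 - C^{-2}$ is strictly less than $1$, so that the three bounds add up to a quantity $<1$.
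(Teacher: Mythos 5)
Your proof is correct. Note that the paper itself gives no argument for this lemma: it is quoted verbatim from Coulhon (Proposition IV.4), so there is nothing internal to compare against, and what you have written is a legitimate self-contained substitute. All the individual steps check out: $P_{\nu_1*\nu_2}=P_{\nu_1}P_{\nu_2}$, $P_\nu^*=P_{\widecheck{\nu}}$, the Schur bound $\|P_\eta\|\le\|\eta\|_1$, the Kesten identification $\|P_{\mu_0}\|=\limsup_n(\mu_0^{*n}(e))^{1/n}$ for symmetric $\mu_0$, the $C^*$-identity $\|P_\mu\|^2=\|P_{\mu*\widecheck{\mu}}\|$, the monotonicity of convolution giving $\mu*\widecheck{\mu}\ge C^{-2}\mu_0^{*2}$ (using $\widecheck{\mu_0}=\mu_0$), and the final count $C^{-2}e^{-2/C}+(1-C^{-2})<1$ (with $C\ge1$ forced by summing $\mu_0\le C\mu$ over $\Gamma$). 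The one remark worth making is that your argument is genuinely narrower than Coulhon's proposition, in a way that is harmless here: Coulhon's comparison theorem handles arbitrary decay profiles $\mu_0^{*n}(g)\le f(n)$ (polynomial, stretched exponential, \dots), for which a pure $\ell^2\to\ell^2$ operator-norm bound cannot work and one must interpolate through $\ell^1\to\ell^2$ and $\ell^2\to\ell^\infty$ norms. You exploit the fact that in the purely exponential regime the hypothesis on $\mu_0$ is equivalent to a spectral gap $\|P_{\mu_0}\|<1$, and that a spectral gap for $P_\mu$ immediately yields the conclusion via $\mu^{*n}(g)=\langle P_\mu^n\delta_e,\delta_{g^{-1}}\rangle\le\|P_\mu\|^n$; this is exactly the special case the paper needs, so your shortcut is appropriate.
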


\begin{lemma}\label{transitiondecay}
Let $\mu$ be a generating measure on a nonamenable group $\Gamma$.
There is a $C>0$ such that for all $n\geq 0$ and $g\in \Gamma$ we have 
$\mu^{n}(g)\leq Ce^{-Cn}$. 
\end{lemma}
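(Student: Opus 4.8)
The plan is to reduce the general, possibly non-symmetric, case to the symmetric one — where exponential decay of return probabilities on a nonamenable group is classical — and then to transfer the decay to $\mu$ by means of Coulhon's comparison Lemma~\ref{Coulhon}.

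First I would record the symmetric case: if $\eta$ is a symmetric generating probability measure on $\Gamma$, its convolution operator $P_\eta$ on $\ell^{2}(\Gamma)$ is self-adjoint, so $\Vert P_\eta\Vert$ equals the spectral radius $\rho := \lim_{n\to\infty}\big(\eta^{*2n}(e)\big)^{1/(2n)}$, and by Kesten's criterion $\rho<1$ precisely because $\Gamma$ is nonamenable. Then $\eta^{*2n}(e)\le\rho^{2n}$, and translation invariance together with Cauchy--Schwarz give $\eta^{*n}(g)\le\rho^{2\lfloor n/2\rfloor}$ for every $g\in\Gamma$; since $\rho<1$, this is a bound of the form $\eta^{*n}(g)\le Ce^{-n/C}$. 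This is exactly the fact referred to just before the lemma: the arguments of \cite[Proposition 3.6]{BlachereHassinskyMathieu2} and \cite[Lemma 4.2]{Haissinsky} use only self-adjointness and Kesten's theorem, so they apply verbatim to symmetric generating measures on an arbitrary nonamenable group.

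Next I would build a symmetric companion of $\mu$. In our applications $\Gamma$ is finitely generated, so using that $\supp(\mu)$ generates $\Gamma$ as a semigroup I can choose $k$ and a finite symmetric generating set $T\subset\bigcup_{j=1}^{k}\supp(\mu^{*j})$ of $\Gamma$, and let $\eta$ be the uniform measure on $T$; since $T$ is finite, $\eta\le C'\sum_{j=1}^{k}\mu^{*j}$ pointwise for a suitable $C'$. Then $\eta$ is symmetric and generating, so $\eta^{*n}(g)\le Ce^{-n/C}$ by the previous paragraph, and Coulhon's comparison Lemma~\ref{Coulhon} — whose hypothesis is precisely that the symmetric generating measure $\eta$ be dominated in this way by (a convolution average of) $\mu$ — transfers the exponential decay to $\mu$, which is the assertion.

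The point that needs care is this comparison step: since $\supp(\mu)$ need not be symmetric, one genuinely has to use that it generates $\Gamma$ in order to place a symmetric generating set inside finitely many convolution powers of $\mu$ with controlled weights. For non-finitely-generated $\Gamma$ one would instead take $\eta=\tfrac{1}{2}(\mu+\widecheck{\mu})$ and verify Coulhon's hypothesis from the fact that every element of $\supp(\mu)^{-1}$ is reached by $\mu$ in boundedly many steps, the uniformity being where the finite exponential moment of $\mu$ is convenient. Everything else is routine bookkeeping with convolution inequalities.
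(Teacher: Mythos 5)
Your treatment of the symmetric case and your construction of a symmetric companion measure supported in finitely many convolution powers of $\mu$ match the paper's argument. The gap is in the final comparison step. Lemma~\ref{Coulhon} as stated requires the pointwise domination $\mu_0(g)\leq C\mu(g)$ by the measure $\mu$ \emph{itself}, not by a convolution average of $\mu$; your parenthetical rewriting of its hypothesis is not what the lemma says. Since $\supp(\mu)$ need not contain a symmetric generating set, the best available domination --- and the one the paper uses --- is of $\eta$ by $\widetilde{\mu}:=\frac{1}{K}\sum_{i=1}^{K}\mu^{*i}$, and applying Lemma~\ref{Coulhon} to the pair $(\eta,\widetilde{\mu})$ yields exponential decay of $\widetilde{\mu}^{*n}(g)$, not of $\mu^{*n}(g)$. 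Passing from the former to the latter is exactly the step you dismiss as ``routine bookkeeping'', and it is not: the naive inequality $\mu^{*n}(g)\leq K^{n}\,\widetilde{\mu}^{*n}(g)$, obtained by keeping only the term $i_1=\dots=i_n=1$ in the expansion of $\widetilde{\mu}^{*n}$, carries a prefactor $K^{n}$ that destroys the exponential decay.

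The paper closes this gap with a genuine argument: writing $\widetilde{\mu}^{*n}=\frac{1}{K^{n}}\sum_{m}N(m,n)\,\mu^{*m}$, where $N(m,n)$ counts tuples in $\{1,\dots,K\}^{n}$ summing to $m$, it isolates the central exponent $m=n(K+1)/2$, where the central limit theorem gives $N(m,n)/K^{n}\gtrsim 1/\sqrt{n}$; hence $\mu^{*\,n(K+1)/2}(g)\lesssim \sqrt{n}\,e^{-n/C}$, which still decays exponentially, and the intermediate exponents are then filled in by convolving with $\mu^{*i}$ for $i\leq (K+1)/2$. You need to supply this (or an equivalent) argument to complete the reduction. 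The same issue undermines your remark about the non-finitely-generated case: $\frac{1}{2}(\mu+\widecheck{\mu})$ is likewise not pointwise dominated by a constant multiple of $\mu$, so Lemma~\ref{Coulhon} cannot be applied directly to that pair either.
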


\begin{proof}
We first prove the lemma for symmetric measures. 
This is in fact done in \cite[Lemma 3.6]{BlachereHassinskyMathieu2} (where it is stated for hyperbolic groups, but the proof only uses amenability). We repeat it for completeness.
Since $\Gamma$ is nonamenable, the spectral radius of the random walk is less than 1, so we have $\mu^{n}(e)\leq Ce^{-Cn}$ for all $n$.
Indeed, in the symmetric case by the Cauchy-Schwarz inequality
$$\mu^{2n}(x)=\sum_{y\in \Gamma}\mu^{n}(y) \mu^{n}(y^{-1}x)\leq \sqrt{\sum_{y\in \Gamma} \mu^{n}(y)^2 } \sqrt{\sum_{y\in \Gamma} \mu^{n}(y)^{2}}= \sum_{y\in \Gamma} \mu^{n}(y)^{2}.$$ 
By the symmetry of $\mu$ we have
$$\sum_{y\in \Gamma} \mu^{n}(y)^{2}=\sum_{y\in \Gamma} \mu^{n}(y)\mu^{n}(y^{-1})=\mu^{2n}(e).$$
Thus, $\mu^{2n}(x)\leq \mu^{2n}(e)$. Similarly,  
$$\mu^{2n+1}(x)=\sum_{y\in \Gamma}\mu(y)\mu^{2n}(y^{-1}x)\leq \sum_{y\in \Gamma}\mu(y)\mu^{2n}(e)=\mu^{2n}(e).$$
Thus, we have proved the lemma for symmetric measures. 

Now, suppose $\mu$ is not symmetric. Let $\mu_0$ be any symmetric generating measure on $\Gamma$ with finite support. Let $K>0$ be an odd number such that $\bigcup^{K}_{i=1} \textup{supp}(\mu^{i})$ contains $\textup{supp}(\mu_0)$. Let $\widetilde{\mu} :=\frac{\sum^{K}_{i=1}\mu^{i}}{K}.$ 
Then we have  $\widetilde{\mu}(g)\geq c\mu_{0}(g)$ for every $g$, for $c=\min \left\{\frac{\widetilde{\mu}(g)}{\mu_0(g)}: g\in \textup{supp}(\mu_{0})\right\}>0$.
Thus Lemma \ref{Coulhon} and the bound on $\mu^{n}_{0}(g)$ implies that $\widetilde{\mu}^n(g)\leq C'e^{-n/C'}$ for a constant $C'$.  

Let $M(K,n)$ be the coefficient of $x^{n(K+1)/2}$ in the polynomial $(x+ x^2 + \dots +x^K)^{n}$.  Then for all $g\in \Gamma$ we have 
$\widetilde{\mu}^{n}(g)\geq \frac{M(K,n)}{K^{n}} \mu^{n(K+1)/2}(g)$.
Note that $\frac{M(K, n)}{K^n}$ equals the probability $P(X_1 + \dots + X_n = \frac{n(K+1)}{2})$
where $(X_i)$ are i.i.d. random variables
uniform on $\{1, 2, \dots, K\}$.
Moreover, $X_1$ has mean $\frac{K+1}{2}$.
Now, by the central limit theorem 
$$\liminf_{n \to \infty} \sqrt{n} P(X_1 + \dots + X_K = n (K+1)/2) > 0$$
hence there exists a constant $C_1$ such that
$$\frac{K^n}{M(K,n)} \leq C_1 \sqrt{n}$$
for any $n$. Thus, 
$$\mu^{n(K+1)/2}(g)\leq \frac{K^n}{M(K, n)} \widetilde{\mu}^n(g) \leq C_1 C \sqrt{n} e^{-n/C} \leq  C''e^{-n/C''}$$ 
for a constant $C''$.
Furthermore, for $i\leq (K+1)/2$ we have 
$$\mu^{i + (K+1)n/2}(g)=\sum_{h\in \Gamma}\mu^{(K+1)n/2}(h^{-1}g)\mu^{i}(h)\leq C''e^{-n/C''}.$$ 
This completes the proof.
\end{proof}

The proof of exponential decay of Green's function now follows as in \cite[Proposition 3.6]{BlachereHassinskyMathieu2} or \cite[Lemma 4.2]{Haissinsky}. We reproduce it for completeness.

We assumed that $E_{\mu}[\exp(r\Vert g \Vert)] = E<\infty $ for a given $r>0$.
Note, for any $k\leq n$ 
$$\Vert \omega_k\Vert \leq \sum^{n-1}_{i=1}\Vert \omega_{i}^{-1}\omega_{i+1}\Vert$$
and the increments $\omega_{i}^{-1}\omega_{i+1}$  are independent random variables and all follow the same law
as $\omega_1$. Therefore for
any $b>0$ the exponential Chebyshev inequality implies
$$P \left(\sup_{1\leq k \leq n}\Vert \omega_k\Vert \geq nb \right)\leq e^{-rnb}E\left[ \exp \left(r \sup_{1\leq k \leq n}\Vert \omega_k\Vert \right)\right] \leq e^{-rnb}E^{n}=e^{(-rb+\log E)n}.$$
We choose $b$ large enough so that $c= rb- \log E>0$. Then $$G(e,g)=\sum^{\infty}_{k=1}\mu^{k}(g)=\sum_{1\leq k\leq \Vert g\Vert /b}\mu^{k}(g)+\sum_{k >  \Vert g\Vert /b}\mu^{k}(g).$$
By Lemma \ref{transitiondecay} we have for the second summand the bound:
$$\sum_{k\geq \Vert g\Vert /b}\mu^{k}(g)\leq \sum_{k\geq \Vert g\Vert /b}Ce^{-k/C}\leq De^{-\Vert g\Vert /D}$$ for some constant $D>0$.
Meanwhile, for the first summand we have the bound
$$\sum_{1\leq k\leq \Vert g\Vert /b}\mu^{k}(g)\leq \frac{\Vert g\Vert }{b}\sup_{1\leq k\leq \Vert g\Vert /b}\mu^{k}(g)
%$$ $$\leq \frac{\Vert g\Vert }{b} P(\exists k\leq \Vert g\Vert /b : \omega_{k}=g)
\leq \frac{\Vert g\Vert }{b} P \left(\sup_{1\leq k\leq \Vert g\Vert /b}\Vert \omega_{k}\Vert \geq \Vert g\Vert \right)\leq \Vert g \Vert e^{-c\Vert g\Vert /b}.$$
As both summands decay exponentially in $\Vert g\Vert $ the proof is complete.

\bibliographystyle{plain}
\bibliography{gibbs}

\end{document}